\newtheorem{theorem}{Theorem}[section]
\newtheorem{proposition}[theorem]{Proposition}
\newtheorem{lemma}[theorem]{Lemma}
\newtheorem{corollary}[theorem]{Corollary}
\newtheorem{question}[theorem]{Question}
\theoremstyle{definition}
\newtheorem{definition}[theorem]{Definition}
\newtheorem*{remark*}{Remark}
\newtheorem{remark}[theorem]{Remark}
\numberwithin{equation}{section}
\newcommand{\cA}{\mathcal{A}}
\newcommand{\cB}{\mathcal{B}}
\newcommand{\cH}{\mathcal{H}}
\newcommand{\cS}{\mathcal{S}}
\newcommand{\cY}{\mathcal{Y}}
\newcommand{\R}{\mathbb{R}}
\newcommand{\N}{\mathbb{N}}
\newcommand{\Z}{\mathbb{Z}}
\newcommand{\D}{\mathbb{D}}
\newcommand{\B}{\mathbb{B}}
\DeclareMathOperator{\Ric}{Ric}
\DeclareMathOperator{\Lip}{Lip}
\DeclareMathOperator{\dist}{dist}
\DeclareMathOperator{\intr}{int}
\newcommand{\eps}{\varepsilon}
\newcommand{\wto}{\overset{\ast}{\rightharpoonup}}
\title[Optimal regularity for minimizers of the PMC functional over isotopies]{Optimal regularity for minimizers of the prescribed mean curvature functional over isotopies}
\author{Lorenzo Sarnataro}
\address{Department of Mathematics, Princeton University, Princeton, NJ 08540, USA}
\email{lorenzos@princeton.edu}
\author{Douglas Stryker}
\address{Department of Mathematics, Princeton University, Princeton, NJ 08540, USA}
\email{dstryker@princeton.edu}
\begin{document}

\begin{abstract}
    We prove the optimal $C^{1,1}$ regularity for minimizers of the prescribed mean curvature functional over isotopy classes. As an application, we find an embedded sphere of prescribed mean curvature in the round 3-sphere for an open dense set of prescribing functions with $L^{\infty}$ norm at most 0.547.
\end{abstract}

\maketitle


\section{Introduction}
Given a Riemannian manifold $(M^n, g)$ and a function $h: M \to \R$, a two-sided immersion $\Sigma^{n-1} \to M$ is said to have \emph{prescribed mean curvature} $h$ if the mean curvature of the immersion satisfies $H_{\Sigma} = h\vert_{\Sigma} \cdot \nu_{\Sigma}$, where $\nu_{\Sigma}$ is a continuous unit normal vector field along $\Sigma$. The problem of the existence of prescribed mean curvature hypersurfaces has been a rich source of motivation for work in geometric analysis.

An essential observation is that $h$-prescribed mean curvature hypersurfaces arise as critical points of the \emph{$h$-prescribed mean curvature functional}
\begin{equation}\label{eqn:ah_functional}
    \cA^h(\Omega) := \cH^{n-1}(\partial \Omega) - \int_{\Omega} h\ d\cH^n,
\end{equation}
where $\Omega \subset M$ is an open set with smooth boundary. As in the study of minimal hypersurfaces and the area functional, this observation enables the utilization of variational techniques to produce $h$-prescribed mean curvature hypersurfaces.

The simplest variational technique to produce critical points of a functional is to minimize the functional over some nontrivial class. In ambient dimensions 3 through 7, classical work of \cite{FedFlem}, \cite{Flem}, \cite{DG}, \cite{Alm66}, \cite{simons}, and \cite{hardtsimon79} guarantees that any minimizer of area in a Riemannian manifold over integral currents with a fixed boundary is a smooth embedded minimal hypersurface. In the same dimensions, these results have been adapted by \cite{morgan} (see \cite[Theorem 2.2]{ZZPMC}) to the prescribed mean curvature setting, where the minimization of $\cA^h$ occurs over Caccioppoli sets that are allowed to differ from a fixed initial Caccioppoli set in a sufficiently small region $U \subset M$. In this setting, the boundary of any minimizer in $U$ is a smooth embedded $h$-prescribed mean curvature hypersurface.

While this regularity theory is very powerful, it is important to remember that the classes over which the minimization occurs are very large. In particular, elements in the same class can have arbitrarily complicated topology. Hence, we cannot draw any general conclusions about the topology of minimizers, which would be a natural hope for any geometric problem.

In the case of the area functional (i.e.\ $h=0$) in 3-manifolds, there has been significant work studying the regularity of minimizers of area over smaller classes with controlled topology. A major breakthrough was the work of \cite{AS}, where minimizers of the area functional in $\R^3$ over embedded disks with certain fixed boundaries are shown to be smooth embedded minimal surfaces. By an ingenious procedure to simplify the topology of elements in a minimizing sequence, \cite{MSY} used the regularity of \cite{AS} to show that minimizers of the area functional in a closed 3-manifold over an isotopy class\footnote{We emphasize that isotopies preserve the topology of any submanifold.} are smooth embedded minimal surfaces.

Despite the successful application of the regularity theory of \cite{AS} and \cite{MSY} in producing minimal surfaces with controlled topology in 3-manifolds (see for example \cite{smith}, \cite{Colding_DeLellis}, \cite{DeLellis_Pell}, \cite{HaslKet}, \cite{Ketover}), there has been no generalization\footnote{A result along these lines is claimed in \cite{yaublackhole}; see later in the introduction for a discussion of this paper.} of these techniques to the functional $\cA^h$.

In this paper, we generalize of the ideas of \cite{AS} and \cite{MSY} to the $\cA^h$ functional. For an open set $U \subset M$, we let $\mathcal{I}(U)$ denote the set of isotopies $\phi : [0, 1] \times M \to M$ satisfying:
\begin{itemize}
    \item $\phi(t, \cdot) : M \to M$ is a smooth diffeomorphism,
    \item $\phi(0, \cdot) = \mathrm{id}$,
    \item and $\phi(t, x) = x$ for $x \notin U$.
\end{itemize}
Our goal is to deduce the regularity of minimizers for the problem
\[ \inf_{\phi \in \mathcal{I}(U)} \cA^h(\phi(1, \Omega_0)). \]
We deduce the following regularity statement.

\begin{theorem}\label{thm:main_isotopy_regularity}
Let $h: M \to \R$ be a smooth function, and let $c := \sup_M |h|$. Let $U \subset M$ be a sufficiently small (see \S\ref{sec:MSYintreg} for a precise definition) open set with $C^1$ boundary, and let $\Omega_0 \subset M$ be an open subset with smooth boundary having tranverse intersection with $\partial U$. Let $\{\phi_k\}_{k\in\N} \subset \mathcal{I}(U)$, $\Omega_k := \phi_k(1, \Omega_0)$, and $\Sigma_k := \partial \Omega_k \cap U$ be a sequence satisfying
\begin{equation}\label{eqn:main_isotopy_problem}
    \cA^h(\Omega_k) \leq \inf_{\phi \in \mathcal{I}(U)} \cA^h(\phi(1, \Omega_0)) + \eps_k
\end{equation}
for $\eps_k \to 0$. Then there is a varifold $V$, an open set $\Omega \subset U$, and a subsequence (not relabeled) so that
\[ \mathbf{1}_{\Omega_k \cap U} \xrightarrow{L^1} \mathbf{1}_{\Omega},\ \ D\mathbf{1}_{\Omega_k \cap U} \wto D\mathbf{1}_{\Omega},\ \ \mathbf{v}(\Sigma_k) \rightharpoonup V. \]
$V$ is an integer rectifiable varifold with $c$-bounded first variation. For every $x \in \mathrm{spt}\|V\|$, we have $\Theta^2(\|V\|, x) = n_x \in \N$. If $h(x) \neq 0$, then $n_x \in \{1\} \cup 2\N$. For every $x \in \mathrm{spt}\|V\|$, there is a neighborhood $W_x$ of $x$ so that the following hold.
\begin{itemize}
    \item $V \llcorner G(W_x,2) = \sum_{l=1}^{n_x}\mathbf{v}(N_l, 1)$, where $N_l \subset W_x$ is a $C^{1,1}$ surface with $c$-bounded first variation. Moreover, each $N_l$ is on one side of $N_{l'}$ intersecting tangentially at $x$ for any $l,\ l' \in \{1, \hdots, n_x\}$.
    \item If $n_x = 1$, then
    \begin{itemize}
        \item $\|V\| \llcorner W_x = |D\mathbf{1}_{\Omega}| \llcorner W_x$,
        \item $V \llcorner G(W_x,2) = \mathbf{v}(N, 1)$, where $N \subset W_x$ is a smooth stable surface with prescribed mean curvature $h$ with respect to the set $\Omega \cap W_x$.
    \end{itemize}
    \item If $h(x) \neq 0$ and $n_y = 2n$ for all $y \in \mathrm{spt}\|V\| \cap W_x'$ for an open set $W_x' \subset W_x$ containing $x$, then $V \llcorner G(W'_x,2) = \mathbf{v}(N, 2n)$, where $N\subset W_x'$ is a smooth stable minimal surface. Moreover, $\Omega \cap W_x' = W_x' \setminus N$ if $h(x) > 0$, and $\Omega \cap W_x' = \varnothing$ if $h(x) < 0$.
    \item If $h(x) \neq 0$, $n_x = 2n$, and there is a sequence $x_j \to x$ with $x_j \in \mathrm{spt}\|V\|$ and $n_{x_j} \neq 2n$, then $n = 1$ and the surfaces $N_1,\ N_2 \subset W_x$ additionally satisfy that the generalized mean curvature of $N_1$ points towards $N_2$ and vice versa. Moreover, the set $\Gamma=\partial(N_1 \cap N_2) \cap W_x$ is locally contained in a $C^{1,\alpha}$ curve and satisfies $\cH^1(\Gamma)<+\infty$.
\end{itemize}
Suppose additionally that $U$ is a sufficiently small geodesic ball. Then there is a neighborhood $Y$ of $\partial U$ with $V \llcorner G(Y, 2) = \mathbf{v}(N, 1)$, where $N \subset Y$ is a smooth stable surface of prescribed mean curvature $h$ with respect to $\Omega$ satisfying $\partial N = \partial \Sigma$.
\end{theorem}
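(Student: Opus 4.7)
The plan is to adapt the combined machinery of Almgren--Simon (AS) regularity for area-minimizing disks and Meeks--Simon--Yau (MSY) topology simplification for isotopy-minimizing surfaces to the $\cA^h$ setting, and to combine it with the De~Giorgi--Morgan type regularity for Caccioppoli minimizers of $\cA^h$ (Theorem~2.2 of ZZPMC quoted above) in the multiplicity-one regime. The key new feature is the volume term $-\int_\Omega h$, whose variation under an isotopy is pointwise dominated by $c$ times the corresponding area variation, so that almost-area-minimality for small local comparisons can be extracted from almost-$\cA^h$-minimality of the sequence.

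First I would set up compactness. From $\cA^h(\Omega_k)$ bounded and $|\int_{\Omega_k}h|\le c\,\cH^n(U)$ one gets $\cH^{n-1}(\Sigma_k)\le C$, so BV and Allard compactness produce $\Omega$ and $V$ along a subsequence. Testing the almost-minimizing inequality against the flow of a fixed vector field $X$ compactly supported in $U$ yields $|\delta V(X)|\le c\int|X|\,d\|V\|$ in the limit, hence $c$-bounded first variation and the monotonicity formula for $e^{cr}r^{1-n}\|V\|(B_r)$. A positive lower density bound then follows by comparing $\Sigma_k\cap B_r$ to a cone over $\Sigma_k\cap\partial B_r$, which can be realized by an admissible isotopy after smoothing.

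The heart of the argument is the local structure near $x\in\spt\|V\|$, which I would obtain by adapting the MSY $\gamma$-reduction to the PMC setting. On a sufficiently small ball $W_x$ I would prove a replacement lemma: there is a modified minimizing sequence $\{\tilde\phi_k\}\subset\mathcal{I}(U)$ with $\tilde\phi_k\equiv\phi_k$ outside $W_x$, such that $\tilde\Sigma_k\cap W_x$ is a union of topological disks and $\cA^h(\tilde\Omega_k)\le\cA^h(\Omega_k)+o(1)$. The MSY surgery cuts a tubular neck by two nearly parallel disks; the new PMC input is to verify that the $\int h$ term changes by at most $c$ times the $\cH^n$-measure of the slab between these two cutting disks, which in turn is bounded by the neck diameter times the saved area, hence $o(1)$. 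This is the main obstacle of the proof. Once the $\tilde\Sigma_k$ are disks, the AS regularity theorem in its $h$-almost-minimizing version, together with Schoen--Simon curvature and stability estimates, passes to the limit and produces $n_x$ sheets through $x$ which by the strong maximum principle for the difference of PMC graphs are ordered one-sidedly and tangent at $x$. The $C^{1,1}$ regularity is optimal and comes from the obstacle structure imposed by the adjacent sheets together with the pointwise bound $|H|\le c$.

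Finally I would treat the four regimes. If $n_x=1$, the identity $\|V\|=|D\mathbf{1}_\Omega|$ forces $\Omega\cap W_x$ to be a local $\cA^h$-minimizer among Caccioppoli sets, and the De~Giorgi--Morgan theorem yields a smooth stable PMC surface. If $n_x=2n$ throughout a neighborhood $W_x'$, the oriented sheets alternate in their contribution to $\int h$ so these contributions cancel in pairs, the limit is stationary, Schoen--Simon gives a smooth stable minimal $2n$-sheet, and the orientation pattern combined with $L^1$-convergence forces the stated form of $\Omega\cap W_x'$ depending on the sign of $h(x)$. In the mixed case where $n_x=2n$ is approached by points of different density, an $\cA^h$-adapted strong maximum principle forces $n=1$ and the two $C^{1,1}$ sheets to be tangent with mean curvatures pointing into each other; the boundary $\Gamma=\partial(N_1\cap N_2)$ of the coincidence set then lies in a $C^{1,\alpha}$ curve with $\cH^1(\Gamma)<+\infty$ by free-boundary regularity for the associated two-obstacle PMC problem. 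For the boundary statement, transversality of $\partial\Omega_0$ with $\partial U$ is preserved by every $\phi\in\mathcal{I}(U)$, so in a collar $Y$ of $\partial U$ the minimizing sequence has the fixed boundary $\partial\Sigma_0\cap U$, and AS boundary regularity with prescribed boundary yields a smooth stable PMC surface in $Y$ with $\partial N=\partial\Sigma$.
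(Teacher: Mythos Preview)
Your high-level architecture is correct---MSY $\gamma$-reduction followed by an AS-style stacked-disk analysis, then a free-boundary upgrade---and matches the paper. However, you have misidentified where the essential difficulty lies. You write that controlling the volume change in the MSY neck surgery ``is the main obstacle of the proof,'' but the paper explicitly notes that steps (1) and (2) of the MSY/AS scheme (reduction to disks, reduction to stacked disks in a cylinder) go through with only technical modifications: the volume error from surgery is absorbed via the isoperimetric inequality, exactly as you suggest.

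The genuine obstruction is in the step you treat as a black box: ``Once the $\tilde\Sigma_k$ are disks, the AS regularity theorem in its $h$-almost-minimizing version \ldots\ passes to the limit and produces $n_x$ sheets.'' There is no such off-the-shelf theorem. In the area case, AS isolate one disk from many by a cut-and-paste disentanglement that does not change total area. For $\cA^h$ this fails: when a competitor for one sheet crosses another, the region between the two is covered with a different multiplicity after disentangling, so the volume integral changes in a way that is not $o(1)$ at the relevant scale (see \S\ref{sec:example_main}). The paper's substitute is a detailed resolution-of-overlaps procedure (\S\ref{sec:resolution}) that tracks exactly which region gains or loses volume, yielding two weaker conclusions: (i) each individual sheet minimizes $\cA^{\tilde h}$ for a \emph{measurable} $\tilde h$ with $|\tilde h|\le c$ (Lemma~\ref{lem:resolution_stacked_one}), which via Allard gives only $C^{1,\alpha}$; and (ii) when $h$ has a sign, a decomposition into 1-stacks and 2-stacks (Lemmas~\ref{lem:resolution_decompose_one}--\ref{lem:resolution_decompose_two}) that identifies where the limit is PMC versus where it is multiplicity-two minimal. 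The $C^{1,1}$ upgrade is then not a direct consequence of ``obstacle structure plus $|H|\le c$'' but requires the quasilinear $n$-membrane estimate of \cite{WZ_multiple} (\S\ref{sec:freeboundary}). Your proposal contains no mechanism for any of this, and without it the passage from stacked disks to ordered $C^{1,1}$ sheets does not go through.

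Two smaller points: the density lower bound is not obtained by coning (that competitor is not in the disk class) but via the filigree argument (Lemma~\ref{lem:filigree}); and the boundary regularity follows \cite{DeLellis_Pell} (wedge property, Allard boundary monotonicity, reduction to the area case by blow-up), not AS.
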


In \S\ref{sec:examples}, we provide the details of examples that show that this regularity is sharp. We note that an adaptation of \cite{MSY} to the constant mean curvature setting with a stronger regularity conclusion than Theorem \ref{thm:main_isotopy_regularity} was claimed in \cite[Theorem 2.1]{yaublackhole}, but without a detailed proof. It appears that the example in \S\ref{sec:example_main} supplies a counterexample to this statement. In any case, this example illustrates an essential obstruction to a proof along the lines of \cite{AS} and \cite{MSY}.

\begin{remark}
In groundbreaking work, \cite{4spheres} prove the existence of 4 embedded minimal spheres in a generic metric on $S^3$, which solves an important and longstanding conjecture in the theory of minimal surfaces. The proof of \cite{4spheres} uses extensively Theorem \ref{thm:main_isotopy_regularity} as well as the new ingredients of the proof we develop in this work.
\end{remark}

\subsection{Application to existence in the 3-sphere}
Before unpacking the ideas of our proof of Theorem \ref{thm:main_isotopy_regularity}, we present a modest application of the theory to the existence of embedded $h$-prescribed mean curvature 2-spheres in some metrics on $S^3$. Since any embedded 2-sphere in $S^3$ can be moved by isotopies to a constant map, minimization techniques fail to produce nontrivial examples. Hence, we require a min-max variational approach.

To motivate our result, we recall the following question of Yau.
\begin{question}[{\cite[Problem 59]{yauQuestions}}]\label{quest:yau}
For which smooth functions $h : \R^3 \to \R$ does there exist an embedded sphere with prescribed mean curvature $h$ (with respect to the flat metric)?
\end{question}

For progress towards Question \ref{quest:yau} (and related problems) relying more on PDE techniques, we refer to \cite{BK}, \cite{TW}, \cite{gerhardt}, \cite{ye}, \cite{PX}, and \cite{smallspheres}. There is also partial progress outside the min-max framework in \cite{yauremark}. For progress towards \ref{quest:yau} without topological control, we refer to \cite{liam}.

Since noncompact ambient spaces present unique difficulties for min-max theory, we consider the analogous question in $S^3$.
\begin{question}\label{quest:sphere1}
For which smooth functions $h : S^3 \to \R$ does there exist an embedded sphere with prescribed mean curvature $h$ (with respect to the round metric)?
\end{question}

For the area functional, a min-max variational framework was developed by \cite{almgren}, \cite{pitts}, and \cite{SchoenSimon} to produce a closed embedded minimal hypersurface in any closed Riemannian manifold of dimension between 3 and 7. These techniques have been refined to produce an abundance of closed minimal hypersurfaces (see \cite{MNinfinitlymany}, \cite{LMNweyl}, \cite{IMNdensity}, \cite{MNSequidistribution}, and \cite{song}). This approach was successfully adapted to the functional $\cA^h$ in \cite{ZZCMC} and \cite{ZZPMC}. In these works, min-max is carried out over similarly large classes of hypersurfaces without controlled topology, and the regularity theory relies on the classical  regularity theory for minimizers discussed earlier in the introduction. Hence, we cannot draw any general conclusions about the topology of the resulting minimal hypersurfaces.

An alternative min-max approach in the minimal hypersurfaces case was developed by Guaraco (\cite{guaraco}) and Gaspar-Guaraco (\cite{GG_AC}, \cite{GG_Weyl}) using phase transitions for the Allen-Cahn equation, building on the regularity theory of Hutchinson-Tonegawa \cite{Hutchinson2000}, Tonegawa \cite{Tonegawa2005} and Tonegawa-Wickramasekera \cite{Tonegawa2012}. (We refer the reader to \cite{Dey2022} for a comparison between the Almgren-Pitts theory and the Allen-Cahn theory.) Similarly, \cite{BW} developed an analogue of the Allen-Cahn min-max approach for prescribed mean curvature hypersurfaces, building on the regularity theory developed in \cite{BWCMC} and \cite{BWPMC}. We emphasize that \cite{BW} only applies to nonnegative prescribing functions. As above, the topology of the produced hypersurfaces from these techniques is uncontrolled.

A min-max approach for the area functional in closed 3-manifolds that controls the genus of the produced minimal surface, called Simon-Smith min-max, was developed by \cite{smith} (see \cite{Colding_DeLellis} and \cite{DeLellis_Pell}), building on the regularity theory of \cite{AS} and \cite{MSY}. In particular, it follows from these works that every metric on $S^3$ admits an embedded minimal sphere. For progress towards the existence of more embedded minimal spheres using this approach, we refer the reader to \cite{HaslKet}.

As an application of our regularity theory, we develop a min-max approach for the $\cA^h$ functional along the lines of Simon-Smith. In particular, we give the following partial answer to Question \ref{quest:sphere1}.

\begin{theorem}\label{thm:PMC_in_sphere}
There is an open dense set $\mathfrak{P} \subset \{h \in C^{\infty}(S^3) \mid |h| \leq 0.547\}$ so that there is an embedded sphere with prescribed mean curvature $h$ with respect to the round metric for any $h \in \mathfrak{P}$. 
In particular, $\{h \in C^{\infty}(S^3) \mid 0 < h \leq 0.547\} \subset \mathfrak{P}$.
\end{theorem}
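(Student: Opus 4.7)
The approach is a Simon--Smith style min-max for $\cA^h$ using Theorem \ref{thm:main_isotopy_regularity} as the regularity input, in direct analogy with how \cite{smith}, \cite{Colding_DeLellis}, and \cite{DeLellis_Pell} exploit the \cite{AS}--\cite{MSY} regularity theory in the minimal surface case. I would start from the standard Heegaard sweep-out of $S^3$ by round $2$-spheres, inducing a continuous path $t \mapsto \Omega_t$ of Caccioppoli sets with $\Omega_0 = \varnothing$ and $\Omega_1 = S^3$, and let $\Pi$ denote its isotopy saturation. Define the width
\[
W(h) := \inf_{\{\Omega_t\} \in \Pi} \max_{t \in [0,1]} \cA^h(\Omega_t).
\]

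The first task is the nontriviality bound $W(h) > \max(\cA^h(\varnothing), \cA^h(S^3)) = \max(0, -\int_{S^3} h)$ for $\|h\|_\infty \leq 0.547$. Every path in $\Pi$ passes through slices of every prescribed volume, so combining the isoperimetric profile of $(S^3, g_{\mathrm{round}})$ with the crude bound $|\int_{\Omega_t} h| \leq \|h\|_\infty \cdot \mathrm{Vol}(\Omega_t)$ gives a uniform lower bound on $\max_t \cA^h(\Omega_t)$. The numerical constant $0.547$ is calibrated so that this lower bound strictly exceeds the two trivial endpoint values simultaneously. With nontriviality in hand, the pull-tight procedure of \cite{Colding_DeLellis} and \cite{DeLellis_Pell}, transposed to $\cA^h$, produces a min-max sequence $\{\Omega_k\}$ that is almost minimizing over isotopies on every sufficiently small open set in the sense of \eqref{eqn:main_isotopy_problem}. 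Theorem \ref{thm:main_isotopy_regularity} then yields a subsequential varifold limit $V$ with the structure described there.

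The crucial next step, and in my view the main obstacle, is to exclude the higher-multiplicity minimal surface alternative allowed by Theorem \ref{thm:main_isotopy_regularity}. Any such limit would contribute mass at least $2 \cdot 4\pi = 8\pi$, twice the area of the round equator, the smallest minimal $2$-sphere in $S^3$. On the other hand, the family of geodesic spheres furnishes the complementary upper bound $W(h) < 8\pi$ whenever $\|h\|_\infty \leq 0.547$. Hence every point of $\mathrm{spt}\|V\|$ has $n_x = 1$, and the multiplicity-one clause of Theorem \ref{thm:main_isotopy_regularity} forces the limit to be a smooth stable embedded $h$-PMC surface. A Simon--Smith type genus bound, adapted to the PMC setting and using that isotopies preserve topology, then shows that this surface is topologically a sphere.

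Openness of $\mathfrak{P}$ follows from the implicit function theorem applied to the PMC operator at a non-degenerate solution, while density follows from a Sard--Smale argument showing that non-degeneracy is generic among $h$. The last assertion of the theorem is then immediate: when $0 < h \leq 0.547$ one has $\cA^h(S^3) = -\int_{S^3} h < 0 = \cA^h(\varnothing) < W(h)$, so the min-max critical point cannot coincide with either trivial configuration and is therefore a genuine embedded $h$-PMC sphere.
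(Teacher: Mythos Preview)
Your overall architecture (sweepout, pull-tight, almost-minimizing, replacements, genus bound) matches the paper's, but the heart of the argument---ruling out higher multiplicity---has a genuine gap, and the role of the constant $0.547$ is misidentified.

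You claim that if the min-max limit $V$ has a point of density $\geq 2$, then it ``would contribute mass at least $2\cdot 4\pi = 8\pi$, twice the area of the round equator.'' This presupposes that such a $V$ contains a closed minimal sphere with multiplicity $2$. Theorem~\ref{thm:main_isotopy_regularity} does not say that: its conclusion allows the limit to be a $C^{1,1}$ immersion in which a multiplicity-two minimal piece splits into two prescribed-mean-curvature sheets along a contact set (exactly the behaviour exhibited in \S\ref{sec:example_main}). So a single density-two point gives no closed minimal surface to which the $4\pi$ lower bound could be applied. Worse, you cannot even invoke the genus bound or the replacement gluing until you already know density one everywhere, since unique continuation fails for the $C^{1,1}$ objects of Theorem~\ref{thm:main_isotopy_regularity}; the logic becomes circular.

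The paper breaks this circle with a different lower bound that needs only $c$-bounded first variation, not minimality: by the Simon--Topping monotonicity formula for the Willmore energy in $\R^N$, any integer-rectifiable $2$-varifold in $S^3\subset\R^4$ with a point of density $\geq 2$ and generalized mean curvature bounded by $c$ satisfies $\|V\|(S^3)\geq \tfrac{8\pi}{1+c^2/4}$. This is compared with the upper bound $\|V\|(S^3)\leq 4\pi + 2\pi^2 c$, coming from $\omega_1(g,0)\leq 4\pi$ (Marques--Neves rigidity) plus the volume term. The constant $0.547$ is precisely the threshold making $\tfrac{8\pi}{1+c^2/4} > 4\pi + 2\pi^2 c$; it has nothing to do with nontriviality of the width, which holds for all bounded $h$ by isoperimetry. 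Finally, the open dense set $\mathfrak{P}$ is not obtained by a Sard--Smale nondegeneracy argument but is the Zhou--Zhu class of prescribing functions satisfying (\dag) or (\ddag), needed for unique continuation in the replacement gluing; this class contains all positive functions, which gives the last clause of the theorem.
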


The restriction to an open dense set is the same restriction from \cite{ZZPMC}. We recall that unique continuation is an important ingredient in the regularity theory for min-max surfaces. It follows from \cite[Proposition 3.8 and Corollary 3.18]{ZZPMC} that unique continuation holds for an open dense set of prescribing functions that includes all positive functions.

The restriction to functions with $L^{\infty}$ norm bounded by 0.547 is the more significant assumption. This restriction is a consequence of the more delicate regularity theory for minimizers of the functional (\ref{eqn:ah_functional}) over isotopies, as compared to the cleaner regularity conclusion for the area functional. We emphasize that our regularity theory is sharp, so it is not clear how to extend the existence theory beyond this class of prescribing functions using the techniques developed in this paper. Moreover, we are not aware of any counterexamples for large prescribing functions in this setting, so the question remains open.

Outside the setting of the round metric on $S^3$, it is natural to consider the following related problem.

\begin{question}\label{quest:sphere2}
For which metrics $g$ on $S^3$ and which constants $c \in \R_{>0}$ does there exist an embedded sphere with constant mean curvature $c$ (with respect to the metric $g$)?\footnote{As a point of reference, it is conjectured in the introduction of \cite{RSdegree} that there is a positive answer to \ref{quest:sphere2} for any $c$ and any metric $g$ with positive sectional curvature.}
\end{question}

Outside the min-max framework, there has been considerable progress on the existence of constant mean curvature immersed spheres in homogeneous 3-manifolds (see \cite{DMsol}, \cite{meeksSol}, \cite{MMPRmanifolds}, and \cite{MMPRsphere}).

A min-max approach using the Dirichlet energy was developed in \cite{sacksuhlenbeck} to produce branched immersed minimal spheres. This approach was adapted to the constant mean curvature setting by \cite{chengzhou}, where they show that every metric on $S^3$ admits a branched immersed sphere of constant mean curvature $c$ for almost every $c \in \R$. However, it is unclear if the branched immersions produced by these techniques are genuine immersions, let alone embeddings.

Using our regularity theory, we provide the following partial answer to (\ref{quest:sphere2}).

\begin{theorem}\label{thm:CMC_in_sphere}
There are explicit constants $\eps_0 > 0$ and $c_0 > 0$ so that if $(S^3, g)$ is the induced metric of a graph over the round 3-sphere $\mathbf{S}^3 \subset \R^4 \subset \R^N$ with $C^2$ norm bounded by $\eps_0$\footnote{By this, we mean that $(S^3, g)$ is the induced metric of $\{x + f(x) \mid x_1^2 + x_2^2 + x_3^2 + x_4^2 = 1,\ x_5, \hdots, x_N = 0\}$ where $N \geq 4$ and $f : S^3 \to \R^N$ satisfies $\|f\|_{C^2} \leq \eps_0$.} and $0 \leq c \leq c_0$, then $(S^3, g)$ admits an embedded sphere with constant mean curvature $c$.
\end{theorem}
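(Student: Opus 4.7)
The plan is to directly adapt the Simon-Smith style min-max construction for $\cA^h$ used in Theorem \ref{thm:PMC_in_sphere} from the round $S^3$ to its $C^2$-small perturbations, with prescribing function $h \equiv c$ a nonnegative constant. For $c = 0$ this reduces to the classical Simon-Smith minimal sphere theorem \cite{smith}, so the substantive case is $0 < c \leq c_0$.

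First, I would set up a min-max on $(S^3, g)$ over continuous sweepouts $\{\Sigma_t\}_{t \in [0,1]}$ by embedded 2-spheres degenerating to points at the endpoints, and define the width
\[ W^c(g) := \inf_{\{\Sigma_t\}} \sup_{t \in [0,1]} \cA^c(\Omega_t), \]
where $\Omega_t$ is the region bounded by $\Sigma_t$. The canonical sweepout by graphical perturbations of the round latitude spheres gives a uniform upper bound $W^c(g) \leq 4\pi + C\eps_0$ for $c$ small, while a standard isoperimetric argument (noting that some $\Sigma_t$ must enclose half the ambient volume) gives a strictly positive lower bound. Using pull-tight and the combinatorial arguments of \cite{pitts} adapted to the Simon-Smith setting (\cite{smith}, \cite{Colding_DeLellis}), one produces an almost-minimizing min-max sequence $\{\Sigma_k\}$, to which Theorem \ref{thm:main_isotopy_regularity} applies and yields a limit varifold $V$ with $c$-bounded first variation and $C^{1,1}$ support. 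The Simon-Smith genus bound, preserved through local isotopies, ensures the limit has the topology of a sphere.

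The main obstacle is ruling out the higher-multiplicity scenarios allowed by Theorem \ref{thm:main_isotopy_regularity}. By choosing $\eps_0$ and $c_0$ so that $W^c(g)$ is strictly less than twice the least area of an embedded stable minimal 2-sphere in $(S^3, g)$, one excludes the case that $V$ is globally a pure multiplicity-$2n$ minimal surface. The mixed configurations, involving an interface between a multiplicity-$1$ CMC piece and a multiplicity-$2$ minimal piece, are the most delicate point: here one combines the orientation constraint from the final bullet of Theorem \ref{thm:main_isotopy_regularity} (the two multiplicity-$1$ sheets have mean curvature vectors pointing towards each other) with the global orientation consistency of the unit normal along the embedded boundary of $\Omega$ to rule them out for strictly positive $c$. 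Once $V$ is known to be multiplicity-$1$ on its support, standard elliptic regularity for the constant mean curvature equation upgrades the $C^{1,1}$ regularity to smoothness, producing the desired embedded smooth CMC sphere.
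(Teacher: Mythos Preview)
Your overall architecture (sweepout, pull-tight, almost-minimizing, replacements, genus bound) matches the paper's, but your mechanism for excluding higher multiplicity is both different from the paper's and incomplete. The paper does \emph{not} argue by comparing the width to twice the area of a stable minimal sphere, nor by any case analysis into ``pure multiplicity-$2n$ minimal'' versus ``mixed'' configurations. Instead it uses a Willmore-type monotonicity formula (Lemma~\ref{lem:willmore_monotone}, after \cite{Simon_Willmore} and \cite{Topping1998}) applied to the isometric embedding $(S^3,g)\hookrightarrow\R^N$: any integer varifold with $c$-bounded first variation and a single point of density $\geq 2$ must have total mass at least $8\pi/((1+\kappa)^2+c^2/4)$. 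This is then played against the upper bound $\|V\|(S^3)\leq 4\pi+vc$ coming from $\omega_1(g,0)\leq 4\pi$ via \cite{MNrigidity}; the explicit inequality \eqref{eqn:constraints} is exactly what makes these two bounds incompatible, yielding density one everywhere (Corollary~\ref{cor:density_one}) with no case analysis at all.

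Your proposed route has a genuine gap beyond being different. The ``mixed configuration'' argument presupposes that the min-max limit itself carries the fine structure of Theorem~\ref{thm:main_isotopy_regularity}, but that theorem applies to minimizers, hence only to the \emph{replacements} in annuli. To promote replacement regularity to regularity of $V$ one must glue successive replacements, and this gluing relies on unique continuation---which, as the paper stresses in the introduction, fails for $C^{1,1}$ surfaces of the type allowed by Theorem~\ref{thm:main_isotopy_regularity} (a multiplicity-two minimal piece can split into two PMC sheets). The paper breaks this circularity by establishing density one for both $V$ and its replacements \emph{first} via the mass argument above; only then do replacements become smooth PMC surfaces for which unique continuation holds, and the Colding--De Lellis gluing goes through. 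Your orientation-consistency argument never gets off the ground because you cannot assume the global limit has the sheet structure you want before the gluing is done. Likewise, your ``width $<$ twice the least stable minimal area'' comparison does not obviously exclude a limit that is only \emph{partially} a higher-multiplicity minimal surface.
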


We remark that a key feature of Theorems \ref{thm:PMC_in_sphere} and \ref{thm:CMC_in_sphere} is that the restrictions on the prescription function and metric are explicit. Indeed, we expect that versions of these theorems hold with inexplicit constants by an implicit function theorem argument, using only the well-established existence theory for minimal spheres. Unlike such a result, our work provides actual testable criteria for existence.

\subsection{Idea of the proof of regularity}
The proof of \ref{thm:main_isotopy_regularity} follows the same strategy as \cite{AS} and \cite{MSY}. For clarity, we outline the approach of \cite{AS} and \cite{MSY}, and then highlight the key differences.

\begin{enumerate}
    \item \emph{Reduce the problem of minimizing area in an isotopy class to the problem of minimizing area over disks.} In \cite{MSY}, a procedure called $\gamma$-reduction is introduced. In essence, $\gamma$-reduction is used to delete small necks that either generate nontrivial topology or separate large components. By applying the $\gamma$-reduction procedure a maximal number of times to a minimizing sequence for area over isotopies (and checking that the limit surface does not change), \cite{MSY} use the $\gamma$-irreducibility property to show that the new sequence is a minimizing sequence for area over disks in any small ball.
    \item \emph{Reduce the problem of minimizing area over disks to the problem of minimizing area over ``stacked disks'' in a cylinder.} By zooming in along a minimizing sequence for area over disks at a point where the limit has a tangent plane, \cite{AS} use area comparison estimates to throw away components that are not close to the tangent plane in a small ball, without changing the limit. Therefore, it suffices to study a minimizing sequence for area over disjoint unions of disks with homotopically nontrivial boundary in a fixed cylinder. We call disks in this arrangement ``stacked disks''.
    \item \emph{Reduce the problem of minimizing area over many stacked disks in a cylinder to the problem of minimizing area over one stacked disk in a cylinder.} To isolate one disk from many in the stacked disks arrangement, \cite{AS} develop a simple procedure to disentangle intersecting disks without increasing area. It then suffices to study a minimizing sequence for area over stacked disks with only one disk.
    \item \emph{Conclude by Allard's regularity.} For a minimizing sequence for area over stacked disks with one disk, it is easy to obtain good lower and upper bounds for area. These estimates allow \cite{AS} to deduce the smooth regularity of the limit using the regularity theorem of \cite{allard}.
\end{enumerate}

Steps (1) and (2) work for $\cA^h$ with only technical modifications. For example, we make frequent use of the following basic idea. In the $\cA^h$ case, inequalities will have an extra volume term as compared to the area functional case. We absorb the volume term into the area terms using the isoperimetric inequality. The resulting inequalities resemble the analogous inequalities in the case of the area functional, only with slightly worse coefficients. 

The essential difficulty is adapting steps (3) and (4). To highlight the key difference from the area functional, consider the case illustrated in Figure \ref{fig:stack}. In this case, we have a stack of two disks in a cylinder which is the boundary of the disjoint union of the set above the top disk and the set below the bottom disk. If we replace the bottom disk with a disk that passes through the top disk, the volume between the two disks is now covered twice. So, no matter how we disentangle these intersecting disks, the volume that was covered twice can now only be covered once. Hence, any disentangling procedure will affect $\cA^h$ in a complicated way, unlike the area functional.

\begin{figure}
    \centering
    \includegraphics[width = 0.8 \textwidth]{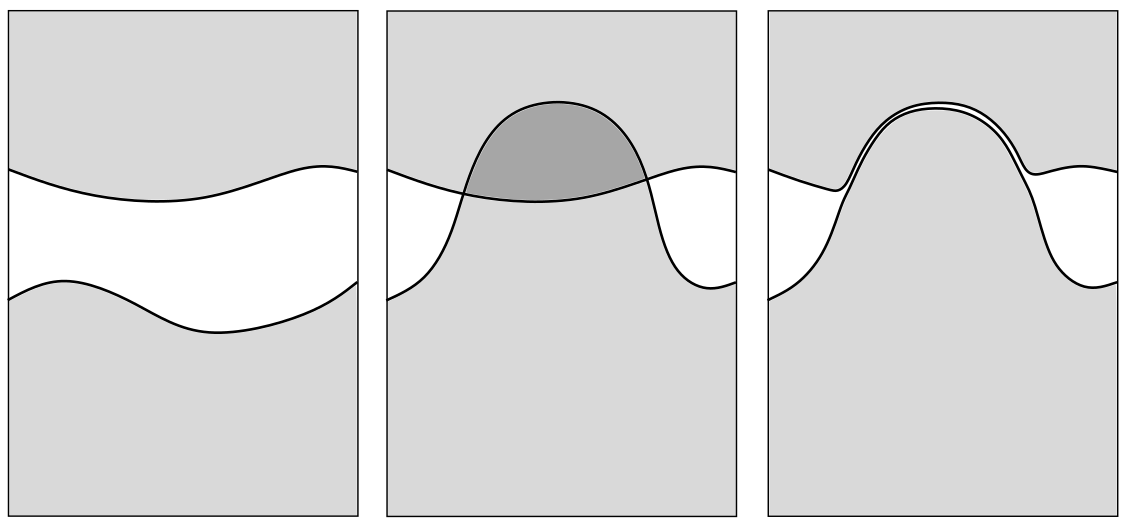}
    \caption{The left image depicts the initial stack of two disks. The middle image depicts a replacement for the bottom disk that intersects the top disk. The right image depicts a possible disentanglement of the disks.}
    \label{fig:stack}
\end{figure}

In the course of our proof, we develop two weaker versions of steps (3) and (4). Together, the weak versions of these steps can be used to deduce Theorem \ref{thm:main_isotopy_regularity} with sub-optimal $C^{1,\alpha}$ regularity.

\emph{First weak version}. In this version of steps (3) and (4), we deduce the $C^{1,\alpha}$ regularity of the limit of each disk in the stacked disk arrangement.

\begin{enumerate}
    \item[(3')] \emph{Each disk in a stacked disk arrangement minimizing $\cA^h$ is itself minimizing for $\cA^{\tilde{h}}$.} Using a delicate disentanglement procedure, we find that the limit of each disk is minimizing for $\cA^{\tilde{h}}$, where $\tilde{h}$ is a measurable function bounded by $\sup |h|$ that depends on the limit. We emphasize that $\tilde{h}$ cannot be chosen to be continuous in general.
    \item[(4')] \emph{Conclude by Allard's regularity.} Using the same area estimates as step (4), we can apply the theorem of \cite{allard}. However, the first variation of the limit is only known to be $L^{\infty}$ by step (3'), so the limit of each disk is $C^{1,\alpha}$.
\end{enumerate}

\emph{Second weak version}. In this version of steps (3) and (4), we determine where the limit is a smooth multiplicity one $h$-prescribed mean curvature surface and where is it a smooth even multiplicity minimal surface, in the case where $h$ is positive.

\begin{enumerate}
    \item[(3'')] \emph{Reduce the problem of minimizing $\cA^h$ over many stacked disks in a cylinder to the problem of minimizing $\cA^h$ over stacked disks with only one or two disks.} When $h$ has a sign, it is easier to keep track of volumes under disentanglement. Similar in spirit to step (3) from \cite{AS}, we use this observation in combination with our disentanglement procedure to decompose large stacks of disks into units consisting of only one or two disks (depending on orientations).
    \item[(4'')] \emph{Conclude by Allard's regularity.} The case of only one stacked disk now follows exactly as in step (4) from \cite{AS}, where we find that the limit is a smooth embedded $h$-prescribed mean curvature surface. In the two stack case, if the limits of the two disks agree on some open set, we see that the limit is stationary. We then follow step (4) from \cite{AS} to find that the limit is a smooth embedded minimal surface with multiplicity two.
\end{enumerate}

After deducing the $C^{1,\alpha}$ version of Theorem \ref{thm:main_isotopy_regularity}, we use some tools from the theory of free boundary problems to upgrade the regularity of our minimizers to $C^{1,1}$. Indeed, we can show that minimizers locally solve an $n$-membrane problem for a quasilinear elliptic operator (see \cite{VC74}, \cite{SilvestreTM} and \cite{SavinYu1} for more background), to which the recent results of \cite{WZ_multiple} apply.

\subsection{Idea of the proof of existence in the 3-sphere}
There are a few key difficulties in carrying over the min-max program of \cite{Colding_DeLellis} and \cite{DeLellis_Pell} to our setting, though much remains unchanged. Unique continuation and strong compactness are the two major obstacles.

In the classical min-max settings, unique continuation for surfaces with zero or prescribed mean curvature is used in an essential way to patch together iterated replacements to prove regularity. However, unique continuation does not hold for surfaces with the regularity of Theorem \ref{thm:main_isotopy_regularity} (e.g.\ a multiplicity two minimal surface can suddenly split into two prescribed mean curvature surfaces).

Ignoring the full power of our regularity theory, we overcome the unique continuation issue by finding situations in which min-max solutions and replacement varifolds have density one everywhere. In this case, the regularity of Theorem \ref{thm:main_isotopy_regularity} is the same as in the Almgren-Pitts prescribed mean curvature setting (e.g.\ smooth stable surfaces with prescribed mean curvature). Unique continuation then applies (assuming the prescribing function is sufficiently nice).

To achieve density one, we make two observations about varifolds in the round 3-sphere. First, a varifold limit of a min-max sequence for the $\cA^h$ functional has a mass upper bound (in terms of a bound on the prescribing functional) by comparing with the optimal sweepout for the area functional. Second, a monotonicity formula for the Willmore energy in Euclidean space due to \cite{Simon_Willmore} and \cite{Topping1998} (suitably modified for the varifold setting) gives a lower bound for the mass of a varifold in $S^3 \subset \R^4$ with a point of density at least two (in terms of a bound on the first variation). These two bounds contradict when the prescribing function is bounded in absolute value by $0.547$.

Another essential ingredient in both the regularity theory and genus bounds is a strong compactness theory for minimizers. In the more classical settings, this strong compactness follows from curvature estimates for stable minimal surfaces. Since we only expect $C^{1,1}$ regularity in general, this approach requires special attention. In fact, there is an important and subtle step in the standard point picking contradiction argument for the curvature estimates of stable minimal surfaces that uses the Schauder estimates to upgrade from $C^{1,\alpha}$ convergence to $C^2$ convergence. This approach manifestly fails in our setting, because minimizers are not even $C^2$. Instead, we use a $C^{1,1}$ estimate of \cite{WZ_multiple} for the $n$-membrane problem that allows us to close the usual point picking argument. This approach supplies a sufficiently strong compactness theorem for the rest of the regularity theory and genus bounds.

\subsection{Outline of the paper}
In \S\ref{sec:notation}, we establish notations and conventions for the rest of the paper. In \S\ref{sec:examples}, we carry out the details of two illustrative examples. In Part \ref{part:disks}, we modify the arguments of \cite{AS} (e.g.\ minimizing area over disks) to the the prescribed mean curvature setting. In Part \ref{part:isotopy}, we modify the arguments of \cite{MSY} (e.g.\ minimizing over isotopy classes) to the prescribed mean curvature setting, and prove Theorem \ref{thm:main_isotopy_regularity}. In Part \ref{part:compactness}, we develop a strong compactness theory for minimizers with only $C^{1,1}$ regularity. In Part \ref{part:min-max}, we apply the above regularity theory in the Simon-Smith min-max program to prove Theorems \ref{thm:PMC_in_sphere} and \ref{thm:CMC_in_sphere}.

\subsection{Acknowlegements}
The authors are indebted to their advisor Fernando Cod\'a Marques for suggesting this topic and conversing regularly about the problem. The authors would also like to thank Camillo De Lellis for answering several key questions during the completion of this work, as well as Ovidiu Savin and Hui Yu for providing useful insights about membrane problems. The authors are grateful to Xin Zhou, Daniel Ketover, and Costante Bellettini for enlightening discussions surrounding this work. The authors are grateful to Paul Minter, Zhihan Wang, and Thomas Massoni for helping work out various technical issues. Finally, the authors would like to thank Zhichao Wang and Xin Zhou for pointing out a gap in the original version of \S\ref{sec:freeboundary}, and for sharing their work \cite{WZ_multiple}, as well as the anonymous referee for several comments that improved the clarity of the exposition.

D.S.\ was supported by an NDSEG fellowship.

\section{Notation and Conventions}\label{sec:notation}
\begin{itemize}
    \item $\B$ denotes $B_1^{\R^3}(0)$.
    \item $\D$ denotes $\overline{B}_1^{\R^2}(0)$.
    \item $G(U, 2)$ denotes the Grassmannian of tangent 2-planes over the set $U \subset M$.
    \item $\mathcal{I}(U)$ is the set of isotopies $\phi : [0, 1] \times M \to M$ satisfying
    \[ \phi(0, \cdot) = \mathrm{id}\ \ \text{and}\ \ \phi(t, \cdot)\vert_{M \setminus U} = \mathrm{id}. \]
    \item $\cH^n$ denotes the $n$-dimensional Hausdorff measure on $(M,g)$.
    \item $\cA^h$ denots the $h$-prescribed mean curvature functional; namely, for $h : M \to \R$ and a set of finite perimeter $\Omega \subset M$, we define
    \[ \cA^h(\Omega) := \cH^2(\partial \Omega) - \int_{\Omega} h\ d\cH^3. \]
    \item $\mathbf{v}(N, m)$ denotes the varifold associated to a $C^1$ surface $N \subset M$ with multiplicity $m \in \N$.
    \item $\delta V(X)$ is the first variation of a varifold $V$ with respect to the vector field $X$. In the case $V$ has bounded first variation, we use the convention
    \[ \delta V(X) = \int_{G(M,2)} \mathrm{div}_{\pi}X\ dV(x, \pi) = \int_M X \cdot H_V\ d\|V\|, \]
    where $H_V$ is the generalized mean curvature vector of $V$. We note that with this convention, the unit round sphere $S^2 \subset \R^3$ has mean curvature pointing out of the unit ball with length $2$.
    \item $V_k \rightharpoonup V$ denotes convergence in the sense of varifolds.
    \item $\mu_k \wto \mu$ denotes weak star convergence in the sense of \cite[\S4.3]{maggi}.
    \item $[\Sigma]$ denotes the integer 2-current associated to an oriented surface $\Sigma$.
    \item $[\Omega]$ denotes the integer 3-current associated to a set of finite perimeter $\Omega$, oriented so that $\partial [\Omega]$ is the integer 2-current associated to $\partial \Omega$ with orientation given by the outward pointing unit normal.
    \item $\mathbb{M}(T)$ denotes the mass of an integral current.
    \item $d_U$ denotes the distance function from $U$ in $M$.
    \item $U(\theta)$ denotes the set $\{x \in M \mid d_U(x) < \theta\}$.
    \item $U_{\xi}$ denotes the set $\{x \in U \mid d(x, \partial U) > \xi\}$.
\end{itemize}

\section{Examples}\label{sec:examples}
\subsection{Failure of prescribed mean curvature}
Here we supply a general class of examples where the limit of any minimizing sequence does not have prescribed mean curvature.

Let $M^3 = S^1 \times \Sigma$ where $\Sigma$ is a closed surface. Let $g$ be any metric on $M$, and $h : M \to \R$ any positive function. Let $\Omega_0 := I \times \Sigma \subset M$, where $I$ is an open interval in the $S^1$ factor. Let $\Sigma_1$ and $\Sigma_2$ be the components of $\partial \Omega_0$, which are embedded surfaces in the same isotopy class as $\{0\} \times \Sigma \subset M$. We define
\[ m_0 := \inf_{\phi \in \mathcal{I}(M)} \cH^2(\phi(1, \Sigma_1)) = \inf_{\phi \in \mathcal{I}(M)} \cH^2(\phi(1, \Sigma_2)). \]
By treating the area and volume terms separately, we see that
\[ \inf_{\phi \in \mathcal{I}(M)} \cA^h(\Omega_0) \geq 2m_0 - \int_{M} h. \]
Moreover, this lower bound can be achieved by taking a sequence satisfying
\[ \mathbf{1}_{\phi_k(1, \Omega_0)} \xrightarrow{L^1} \mathbf{1}_{M} \]
and
\[ \lim_{k \to \infty} \cH^2(\phi_k(1, \Sigma_1)) = \lim_{k \to \infty} \cH^2(\phi_k(1, \Sigma_2)) = m_0. \]
By \cite{MSY}, the varifold limit of the boundaries is a multiplicity 2 stable minimal surface, and therefore does not have prescribed mean curvature $h > 0$.

\subsection{Failure of smoothness}\label{sec:example_main}
We give an example where the limit of any minimizing sequence is not $C^2$. This example is a prototype of the ``stacked disk'' minimization problem used to prove regularity, which is outlined in the introduction. The failure of smoothness in this example therefore illustrates the essential reason for the failure of smoothness in general.

Let $U := B_1^{\R^2}(0) \times (-1, 1) \subset \R^3$. Let
\[ \Omega_1 := B_1^{\R^2}(0) \times (-1, -\eps)\ \ \text{and}\ \ \Omega_2 := B_1^{\R^2}(0) \times (\eps, 1). \]

\begin{proposition}
The problem
\[ \inf_{\phi \in \mathcal{I}(U)} \cA^c(\phi(1, \Omega_1)) \]
has a unique minimizer whose boundary in $U$ is the upward curved spherical cap of radius $2/c$ with boundary $\partial U \cap \{x_3 = -\eps\}$.
\end{proposition}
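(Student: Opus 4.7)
The plan is to reduce the isotopy-minimization to a minimization of $\cA^c$ over Caccioppoli sets that agree with $\Omega_1$ outside $U$, where the classical PMC regularity theory and an Alexandrov-type uniqueness theorem apply. First I would exhibit the target cap as an admissible competitor: set $R := 2/c$ and $p := (0,0,-\eps-\sqrt{R^2-1})$, and let $\Omega_{\text{cap}}\subset U$ be the region enclosed by the upward spherical cap $D_{\text{cap}}$ of radius $R$ centered at $p$ together with the fixed portion $S := \overline{\Omega_1}\cap\partial U$. Both $\Omega_{\text{cap}}$ and $\Omega_1$ are topological balls in $U$ sharing the same boundary on $\partial U$, so a vertical interpolation of their defining top graphs yields an explicit isotopy in $\mathcal{I}(U)$ carrying $\Omega_1$ to $\Omega_{\text{cap}}$; hence $\Omega_{\text{cap}}$ is admissible and the isotopy infimum is at most $\cA^c(\Omega_{\text{cap}})$.

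Next I would pass from the isotopy problem to the relaxed Caccioppoli problem $\inf\{\cA^c(\Omega) : \mathbf{1}_\Omega - \mathbf{1}_{\Omega_1}\text{ compactly supported in }U\}$. The isotopy class is contained in this Caccioppoli class, so the isotopy infimum dominates the Caccioppoli infimum; conversely, the direct method (BV compactness and lower semicontinuity of $\cA^c$) yields a Caccioppoli minimizer $\Omega^*$, and by the classical PMC regularity theory of Morgan (cf.\ \cite[Theorem 2.2]{ZZPMC}), for $U$ sufficiently small the free part $D^* := \partial^*\Omega^*\cap U$ is a smooth embedded CMC-$c$ surface with boundary $\partial B_1\times\{-\eps\}$.

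To conclude, I would combine a height-function reflection argument with Alexandrov-type uniqueness. The identity $\Delta_{D^*}x_3 = -c\,\nu_3$ (with $\nu$ the outward normal of $\Omega^*$) together with the sign $c>0$ allows one, via a reflection/sliding argument, to show $D^*\subset\{x_3\ge-\eps\}$: any portion dipping below the boundary plane can be eliminated in favor of a competitor with strictly smaller $\cA^c$, contradicting minimality. Once $D^*$ lies on one side of the plane containing its boundary circle, Alexandrov's moving planes method applied to vertical planes forces rotational symmetry about the $x_3$-axis, and the only smooth embedded CMC-$c$ surface of revolution with the given boundary is the upward spherical cap of radius $2/c$. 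Uniqueness of this cap immediately yields uniqueness of the Caccioppoli minimizer, and a fortiori of the isotopy minimizer. The most delicate step is the reflection/elimination argument confining $D^*$ to the closed upper half-space: it has to interact with the lateral boundary $S\subset\partial U$, and the classical moving planes technique must be adapted to this boundary configuration.
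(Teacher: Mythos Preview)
Your approach is genuinely different from the paper's. The paper gives a short calibration proof: it writes the cap as the graph of a function $u$ satisfying the CMC equation, builds the $2$-form $\omega(X,Y)=\det(X,Y,\nu)$ from the (vertically extended) unit normal $\nu$, checks that $d\omega = c\,dx\wedge dy\wedge dz$ and that $|\omega|\le 1$ with equality only along the cap, and then applies Stokes' theorem to compare $\cA^c(\Omega^*)$ with $\cA^c(\Omega')$ for any competitor. Existence and strict uniqueness follow in one stroke, with no regularity theory, no moving planes, and no reflection argument.

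Your route---relax to Caccioppoli sets, invoke the PMC regularity theory to produce a smooth CMC disk $D^*$, then force $D^*$ to be the upward cap via confinement plus Alexandrov---could in principle be made to work, but several of the steps you outline are not routine. The confinement $D^*\subset\{x_3\ge-\eps\}$ is the weakest point: the identity $\Delta_{D^*}x_3=-c\,\nu_3$ does not by itself give a sign, since $\nu_3$ is not a priori signed, and your proposed ``eliminate the part that dips below'' competitor has to be built carefully so that it still agrees with $\Omega_1$ on the lateral cylinder and on the bottom face while genuinely decreasing $\cA^c$. Alexandrov moving planes for CMC surfaces with boundary is likewise not off-the-shelf; you must check that reflection across vertical planes through the axis preserves both the fixed boundary circle and the lateral constraint, and that the first-touch argument does not fail at $\partial D^*$. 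Finally, even among rotationally symmetric CMC disks with the given boundary circle there are two spherical caps of radius $2/c$, so you need the orientation/mean-curvature sign convention to single out the upward one. None of this is fatal, but the calibration argument sidesteps every one of these issues in a few lines---when the candidate minimizer is explicit, calibration is the natural tool.
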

\begin{proof}
We use the calibration technique, see \cite[Lemma 1.1.1]{CM}.

Let $\Sigma^*$ denote the surface of the upward curved spherical cap of radius $2/c$ with boundary $\partial U \cap \{x_3 = -\eps\}$. Let $\Omega_1^* \subset U$ be the set below $\Sigma^*$ in $U$. It is clear that $\Omega_1^* = \phi^*(1, \Omega_1)$ for some $\phi^* \in \mathcal{I}(U)$.

Note that $\Sigma^*$ is the graph of a function $u$ over $B_1^{\R^2}(0)$. Moreover, $u$ satisfies the mean curvature equation
\[ \mathrm{div}\left(\frac{\nabla u}{\sqrt{1 + |\nabla u|^2}}\right) = -c. \]
Let $\omega$ be the 2-form given by
\[ \omega(X, Y) = \mathrm{det}(X, Y, \nu), \]
where
\[ \nu := \frac{(-u_x, -u_y, 1)}{\sqrt{1 + |\nabla u|^2}}. \]
By the computation from \cite[Lemma 1.1.1]{CM}, we have
\[ \omega = \frac{dx \wedge dy - u_xdy \wedge dz - u_ydz\wedge dx}{\sqrt{1+|\nabla u|^2}} \]
and
\begin{align*}
d\omega
& = \left[\left(\frac{-u_x}{\sqrt{1+|\nabla u|^2}}\right)_x + \left(\frac{-u_y}{\sqrt{1+|\nabla u|^2}}\right)_y\right] dx\wedge dy \wedge dz = c\ dx\wedge dy \wedge dz.
\end{align*}
Note that $|\omega(X,Y)| \leq 1$ for any pair of orthogonal unit vectors, with equality if and only if $X, Y \in T\Sigma^*$.

Now let $\Omega_1' = \phi(1, \Omega_1)$ be any competitor with $\phi \in \mathcal{I}(U)$. Let $\Sigma' = \partial \Omega_1' \cap U$. Suppose $\Sigma' \neq \Sigma^*$. Then by Stokes' theorem, we have
\begin{align*}
    \cA^c(\Omega^*)
    & = \cH^2(\Sigma^*) - c\cH^3(\Omega^*)\\
    & = \int_{\Sigma^*} \omega - \int_{\Omega^*} d\omega\\
    & = \int_{\Sigma'} \omega - \int_{\Omega'} d\omega\\
    & < \cH^2(\Sigma') - c\cH^3(\Omega') = \cA^c(\Omega').
\end{align*}
The conclusion follows.
\end{proof}

By vertical reflection, the analogous statement holds for $\Omega_2$. Fix $c \leq 1$. Then for $\eps$ sufficiently small, we shall see that any minimizer for the problem
\[ \inf_{\phi \in \mathcal{I}(U)} \cA^c(\phi(1, \Omega_1 \cup \Omega_2)) \]
must have a minimal interface, and therefore cannot be $C^2$.

We give a more precise description of minimizers.

\begin{proposition}
Any minimizer for the problem
\begin{equation}\label{eqn:min_problem}
    \inf_{\phi \in \mathcal{I}(U)} \cA^c(\phi(1, \Omega_1 \cup \Omega_2))
\end{equation} 
is graphical, symmetric under vertical reflection, and rotationally symmetric.
\end{proposition}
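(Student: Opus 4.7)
The plan is to prove all three symmetries via a two-step rearrangement argument applied to a minimizer $\Omega^* = \Omega_1^* \cup \Omega_2^*$, realized as an $L^1$ limit of isotopy images along a minimizing sequence $\{\phi_k\} \subset \mathcal{I}(U)$. Passing to a subsequence, $\mathbf{1}_{\phi_k(1,\Omega_i)} \to \mathbf{1}_{\Omega_i^*}$ in $L^1$ for $i = 1, 2$, and the components $\Omega_1^*$, $\Omega_2^*$ are almost everywhere disjoint because $\mathbf{1}_{\phi_k(1,\Omega_1)} + \mathbf{1}_{\phi_k(1,\Omega_2)} \leq \mathbf{1}_U$ pointwise. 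Moreover, each $\Omega_i^*$ coincides with $\Omega_i$ in a neighborhood of $\partial U$ since every isotopy in $\mathcal{I}(U)$ is the identity on $\partial U$.

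For the first step, I would apply classical vertical Steiner symmetrization to the middle region $R^* := U \setminus \Omega^*$: for each $(x,y) \in B_1^{\R^2}(0)$, replace the vertical slice $R^* \cap (\{(x,y)\} \times (-1,1))$ with the interval centered at $z = 0$ of the same one-dimensional measure. The resulting set $R^{**}$ satisfies the Steiner inequality $\cH^2(\partial^* R^{**} \cap U) \leq \cH^2(\partial^* R^* \cap U)$; the sharp form of this inequality forces any equality to imply that $R^*$ is vertically reflection symmetric (where the fixed boundary data on $\partial U$ precludes a nontrivial translation). Setting $\Omega^{(s)} := U \setminus R^{**}$ gives $\cA^c(\Omega^{(s)}) \leq \cA^c(\Omega^*)$ since $|R^{**}| = |R^*|$. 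The vertical slices of $\Omega^{(s)}$ are of the form $(-1, -t(x,y)) \cup (t(x,y), 1)$ for a function $t : B_1^{\R^2}(0) \to [0, 1]$, so $\Omega^{(s)}$ is simultaneously graphical and vertically reflection symmetric. Minimality then yields $\Omega^* = \Omega^{(s)}$ up to null sets.

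For the second step, I would apply spherical (Schwarz) symmetrization to $\Omega^{(s)}$ with respect to the vertical axis: for each $z \in (-1, 1)$, replace the horizontal slice $\Omega^{(s)} \cap \{z = \mathrm{const}\}$ with the disk of equal area centered on the axis. The Schwarz rearrangement inequality gives $\cH^2(\partial^* \Omega^{(s,*)} \cap U) \leq \cH^2(\partial^* \Omega^{(s)} \cap U)$ with equality iff each horizontal slice is already a disk centered on the axis, while preserving volume. Hence $\cA^c(\Omega^{(s,*)}) \leq \cA^c(\Omega^{(s)})$, and minimality forces equality, yielding rotational symmetry.

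The main obstacle in this approach is verifying that the symmetrized sets $\Omega^{(s)}$ and $\Omega^{(s,*)}$ lie in the $L^1$-closure of $\{\phi(1, \Omega_1 \cup \Omega_2) : \phi \in \mathcal{I}(U)\}$, so that the minimality comparison actually applies. For the smooth graphical symmetric sets produced here, one mollifies the defining function $t$ (or its rotationally symmetric counterpart) and constructs an explicit isotopy carrying the initial flat disks $\partial \Omega_1, \partial \Omega_2$ to the approximated graphs while remaining supported in $U$; this is routine because the approximants differ from the initial configuration only on a compactly contained subset of $U$, and the matching boundary conditions on $\partial U$ are preserved throughout. The $L^1$ limit of such isotopy images recovers the symmetrized set, closing the argument.
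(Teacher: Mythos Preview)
Your Steiner step on $R^*=U\setminus\Omega^*$ does yield graphicality, and this part is essentially the paper's argument (the paper symmetrizes $\Omega^*$ rather than its complement, which amounts to the same thing). The gap is in the passage to the two symmetries. From $\cA^c(\Omega^{(s)})=\cA^c(\Omega^*)$ you obtain equality in the Steiner perimeter inequality, but the standard equality characterization (e.g.\ \cite[Theorem~14.4]{maggi}, which the paper cites) only asserts that almost every vertical slice of $R^*$ is an interval; it does \emph{not} say those intervals are centered, so it does not give $R^*=R^{**}$. Your parenthetical about boundary data ruling out translations suggests you are implicitly invoking a much deeper rigidity theorem of Chleb\'{\i}k--Cianchi--Fusco type (equality forces the set to be a translate of its symmetral), but that result carries nondegeneracy hypotheses you have not verified, and in any case it is far from the elementary ``sharp form'' your phrasing suggests. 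The identical objection applies to your Schwarz step: the classical equality case only forces horizontal slices to be disks, not centered disks, so you cannot conclude $\Omega^{(s)}=\Omega^{(s,*)}$ from minimality alone. At best your argument shows that \emph{some} minimizer has the stated symmetries, not that every minimizer does.

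The paper closes this gap by a different and more elementary route. Once the minimizer is known to be graphical, say with sheets $u_1\geq u_2$ and boundary values $\pm\eps$, one compares with the reflection-averaged competitor $\tilde u_1=(u_1-u_2)/2$, $\tilde u_2=-\tilde u_1$: this preserves the enclosed volume, while strict convexity of $p\mapsto\sqrt{1+|p|^2}$ gives
\[
2\int\sqrt{1+|\nabla\tilde u_1|^2}\;\leq\;\int\sqrt{1+|\nabla u_1|^2}+\int\sqrt{1+|\nabla u_2|^2}
\]
with equality only if $\nabla u_1=-\nabla u_2$ a.e., hence $u_1=-u_2$ by the boundary conditions. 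Rotational symmetry is then handled by the analogous convexity-based comparison on the single remaining graph. This avoids any appeal to sharp symmetrization rigidity.
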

\begin{proof}
We first show that minimizers must be graphical. Let $m$ denote the infimum for the above problem \eqref{eqn:min_problem}. Let $\Omega^* \subset U$ and $V^*$ be the Caccioppoli limit and varifold limit respectively of a minimizing sequence. Let $\Omega^*_1$ and $\Omega^*_2$ respectively be the limits of $\Omega_1$ and $\Omega_2$ under the isotopies of the minimizing sequence. Then $\Omega^* = \Omega_1^* \cup \Omega_2^*$, and
\[ m = \|V^*\|(U) - c\cH^3(\Omega^*) \geq P(\Omega_1^*; U) + P(\Omega_2^*; U) - c\cH^3(\Omega_1^*) - c\cH^3(\Omega_2^*).  \]
Suppose for contradiction that at least one of $\partial \Omega_1^*\cap U$ or $\partial \Omega_2^* \cap U$ is not graphical over $B_1^{\R^2}(0)$. For $\mathbf{x} \in B_1^{\R^2}(0)$, let $l_{\mathbf{x}}$ denote the vertical line $\{(\mathbf{x}, z) \mid z \in \R\}$. Consider new sets
\begin{align*}
    & \Omega_1' := \{(\mathbf{x}, z) \mid -1 < z < \cH^1(\overline{\Omega^*} \cap l_{\mathbf{x}})/2 - 1\},\\
    & \Omega_2' := \{(\mathbf{x}, z) \mid 1 - \cH^1(\overline{\Omega^*} \cap l_{\mathbf{x}})/2 < z < 1\}.
\end{align*}
By Steiner symmetrization (see \cite[Theorem 14.4]{maggi}), we have
\[ \cH^3(\Omega_1' \cup \Omega_2') = \cH^3(\Omega^*) \ \ \text{and} \ \ P(\Omega_1'; U) + P(\Omega_2'; U) \leq P(\Omega_1^*; U) + P(\Omega_2^*; U), \]
where equality holds for the perimeters if and only if $\Omega_i^* \cap l_{\mathbf{x}}$ is connected for all $\mathbf{x}$. Hence, we have
\[ P(\Omega_1'; U) + P(\Omega_2'; U) - c\cH^3(\Omega_1') - c\cH^3(\Omega_2') < m. \]
Since $\partial \Omega_i' \cap U$ is graphical over $B_1^{\R^2}(0)$ and $\partial \Omega_i' \setminus U = \partial \Omega_i^* \setminus U$ by construction, $\Omega' := \Omega_1' \cup \Omega_2'$ is in the closure of the isotopy class of $\Omega$ (e.g.\ use a smooth kernel to approximate the boundary of $\Omega_i'$ by smooth disks with the same boundary), which yields a contradiction.

The proofs of symmetry under reflection and rotation follow similarly by the natural symmetrization procedures. Indeed, these steps are easier, as one can exploit the fact that minimizers are graphical and use the convexity of the function $\sqrt{1 + |x|^2}$.
\end{proof}

In MATLAB, we programmed a discrete version of the $\cA^1$ functional for rotationally and vertically symmetric graphs. Using MATLAB's fmincon, we computed minima for different values of $\eps$, and observed that the minimizers were robust under random initial conditions for fmincon. In Figures \ref{fig:example1}, \ref{fig:example2}, and \ref{fig:example3}, we provide graphics of the solution for values of $\eps$ where the two sheets are disjoint ($\eps = 0.4$), where the two sheets touch at one point ($\eps = 2 -\sqrt{3}$), and where a minimal interface forms ($\eps = 0.1$).

\begin{figure}
    \centering
    \begin{minipage}{0.32\textwidth}
        \includegraphics[width = 0.9 \textwidth]{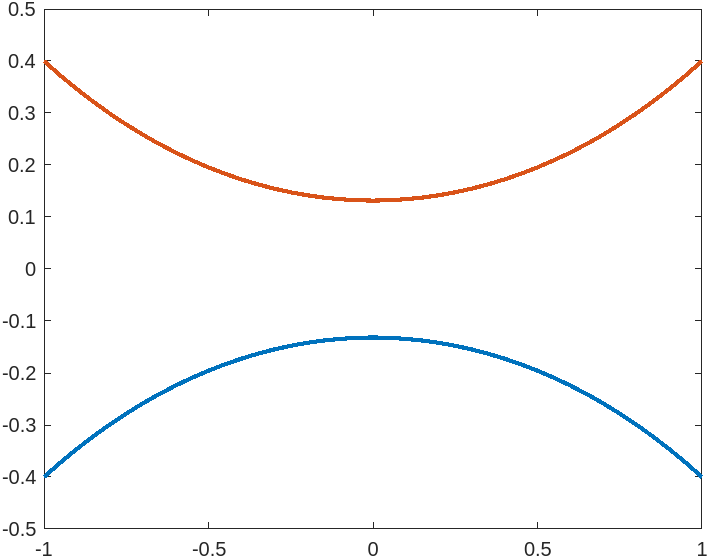}
        \caption{\newline $\eps = 0.4$}
        \label{fig:example1}
    \end{minipage}
    \begin{minipage}{0.32\textwidth}
        \includegraphics[width = 0.9 \textwidth]{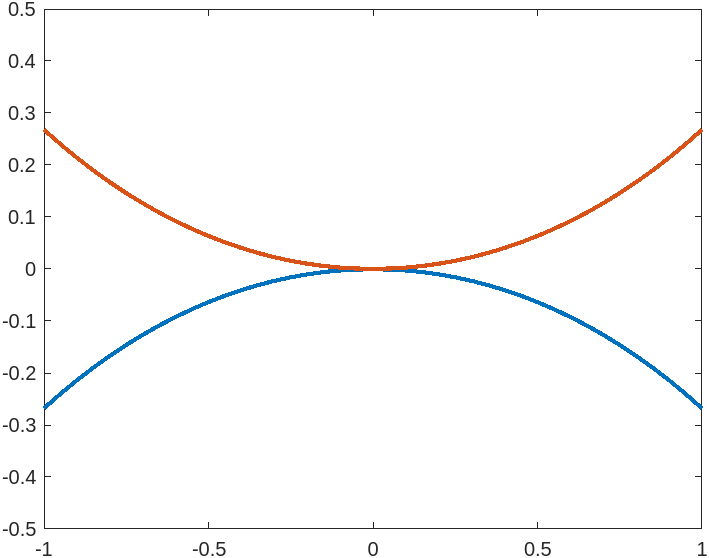}
        \caption{\newline $\eps = 2-\sqrt{3}$}
        \label{fig:example2}
    \end{minipage}
    \begin{minipage}{0.32\textwidth}
        \includegraphics[width = 0.9 \textwidth]{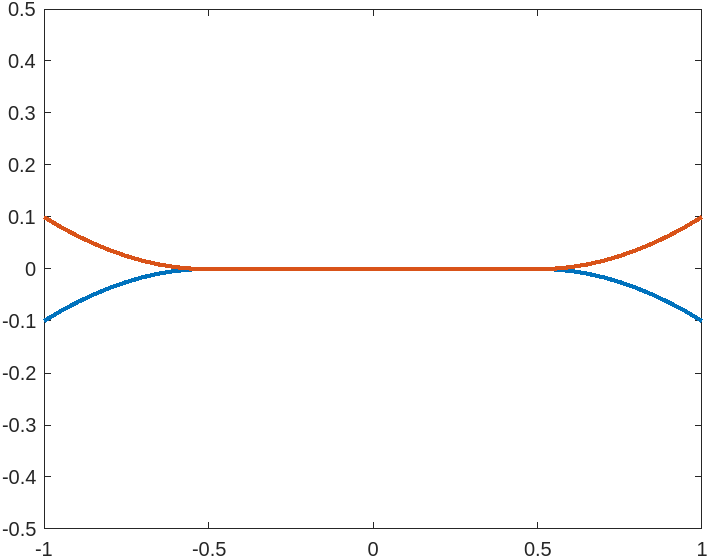}
        \caption{\newline $\eps = 0.1$}
        \label{fig:example3}
    \end{minipage}
\end{figure}

\part{Minimization over disks}\label{part:disks}
We follow the argument of \cite{AS}, with  appropriate modifications to the manifold setting as in \cite{MSY}.

\section{Preliminaries}
Let $(M^3, g)$ be a connected closed\footnote{As in \cite{MSY}, we only need to assume that $M$ is homogeneously regular. In this case, we should also assume that $h$ is in $L^{\infty}$.} Riemannian 3-manifold. Let $h : M \to \R$ be a smooth function with $c \coloneqq \sup_M |h|$. Before we formulate the minimization problem, we need some preliminary facts about the prescribed mean curvature functional and disks in manifolds.

\subsection{Local control}\label{sec:local_control}
Since $M$ is compact, there are constants $\rho_0 > 0$, $\mu > 0$, and $\beta_0 \geq 1$ so that the following hold:
\begin{itemize}
    \item For each $x \in M$, the restriction of the domain and range of $\exp_x$ gives a diffeomorphism
    \[ \phi_x : \overline{B}_{\rho_0}^{T_xM}(0) \to \overline{B}_{\rho_0}^{M}(x). \]
    \item For each $\xi \in \overline{B}_{\rho_0}^{T_xM}(0)$ and $y \in \overline{B}_{\rho_0}^M(x)$, we have
    \begin{equation}\label{eqn:prelim_dexp_small}
        \|d_\xi\phi_x\|,\ \|d_y\phi_x^{-1}\| \leq 2.
    \end{equation}
    \item In geodesic normal coordinates $x^1, x^2, x^3$ for $B_{\rho_0}^M(x)$, we have
    \begin{equation}\label{eqn:prelim_dg_small}
        \sup_{\overline{B}_{\rho_0}^{T_xM}(0)} \left|\frac{\partial g_{ij}}{\partial x^k}\right| \leq \mu/\rho_0,\ \ \sup_{\overline{B}_{\rho_0}^{T_xM}(0)} \left|\frac{\partial^2 g_{ij}}{\partial x^k\partial x^l}\right| \leq \mu/\rho_0^2.
    \end{equation}
    \item For any $\rho \in (0,\rho_0)$ and $x \in M$, we have \begin{equation}\label{eqn:prelim_convex}
        d_{B_{\rho}(x)} \coloneqq \mathrm{dist}(B_{\rho}(x), \cdot) \ \text{is\ convex}
    \end{equation}
    on $B_{\rho_0}(x)\setminus \overline{B_\rho(x)}$.
    \item For any $\rho \in (0,\rho_0)$ and $x \in M$, we have
    \begin{equation}\label{eqn:prelim_mean_convex}
        \Delta d_{B_{\rho}(x)} \geq c
    \end{equation}
    on $B_{\rho_0}(x)\setminus \overline{B_\rho(x)}$.
    \item For any $\rho \in (0, \rho_0)$ and $x \in M$, we have
    \begin{equation}\label{eqn:prelim_isoperimetric}
        \min\{\cH^2(E),\ \cH^2(\partial B_{\rho}(x) \setminus E)\} \leq \beta_0(\cH^1(\partial E))^2
    \end{equation}
    for any set of finite perimeter $E \subset \partial B_{\rho}(x)$.
    \item For any $0 < \rho_1 < \rho_2 \leq \rho_0$ and $x \in M$, we have
    \begin{equation}\label{eqn:prelim_boundary_area}
        \cH^2(\partial B_{\rho_1}(x)) \leq \cH^2(\partial B_{\rho_2}(x)).
    \end{equation}
\end{itemize}

\begin{remark}
By rescaling the metric and prescribing function, we can assume $\rho_0 = 1$. We henceforth make this assumption for ease of notation.
\end{remark}

\subsection{Thin tube isoperimetric inequality}
We make frequent use of the following Lemma from \cite{MSY}.
\begin{lemma}[{\cite[Lemma 1]{MSY}}]\label{lem:thin_isoperimetric_MSY}
There is a constant $\delta \in (0, 1)$ independent of $M$ and $g$ so that if $\Sigma$ is a smooth embedded surface in $M$ and satisfies
\begin{equation}\label{eqn:isoperimetric_smallness_assumption}
    \cH^2(\Sigma \cap B_1(x)) < \delta^2
\end{equation}
for all $x \in M$, then there is a unique compact $K_{\Sigma} \subset M$ with $\partial K_{\Sigma} = \Sigma$ and
\begin{equation}\label{eqn:thin_isoperimetric_smallness_conclusion}
    \cH^3(K_{\Sigma} \cap B_1(x)) \leq \delta^3
\end{equation}
for all $x\in M$, and
\begin{equation}\label{eqn:thin_isoperimetric_MSY}
    \cH^3(K_{\Sigma}) \leq c_1\cH^2(\Sigma)^{3/2},
\end{equation}
where $c_1 = c_1(\mu)$. Moreover, if $\Sigma$ is diffeomorphic to $S^2$, then $K_{\Sigma}$ is diffeomorphic to $\overline{\B}$.\footnote{The exponent of $\delta$ in (\ref{eqn:thin_isoperimetric_smallness_conclusion}) is written as 2 in the statement of \cite[Lemma 1]{MSY}, but a quick examination of the proof reveals that it should be 3.}
\end{lemma}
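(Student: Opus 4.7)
The plan is to construct $K_\Sigma$ by separating the components of $M \setminus \Sigma$ into a single ``large'' one and several ``small'' ones via the relative isoperimetric inequality in geodesic balls. The uniform metric bounds (\ref{eqn:prelim_dg_small}) yield a $\mu$-dependent relative isoperimetric constant $c_{\mathrm{iso}}$ such that for any Caccioppoli set $E$,
\[
\min\{\cH^3(E \cap B_1(x)),\ \cH^3(B_1(x) \setminus E)\}^{2/3} \leq c_{\mathrm{iso}}\, P(E; B_1(x)).
\]
Decomposing $M \setminus \Sigma$ into its finitely many connected components $U_1, \ldots, U_N$ and applying the inequality with $E = U_j$, the hypothesis $P(U_j; B_1(x)) \leq \cH^2(\Sigma \cap B_1(x)) < \delta^2$ forces, once $\delta$ is chosen small enough depending on $\mu$, the dichotomy that for every $j$ and every $x$, either $\cH^3(U_j \cap B_1(x)) \leq \delta^3$ or $\cH^3(B_1(x) \setminus U_j) \leq \delta^3$.

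Because $\cH^3(B_1(x))$ admits a uniform lower bound, at most one component can be ``large'' (i.e., almost fill) at any given $x$. I would then define a map $f : M \to \{1, \ldots, N\}$ sending $x$ to the index of the large component over $B_1(x)$, and show $f$ is locally constant: if $y$ is close to $x$, then $B_1(x) \cap B_1(y)$ has large volume, and both $U_{f(x)}$ and $U_{f(y)}$ must occupy most of it, so they coincide as disjoint open sets. Since $M$ is connected, $f \equiv j^*$ for some $j^*$, and I set $K_\Sigma := \overline{M \setminus U_{j^*}}$. By construction $\partial K_\Sigma = \Sigma$, and the local volume bound $\cH^3(K_\Sigma \cap B_1(x)) \leq \delta^3$ follows from the dichotomy applied to each $U_j \neq U_{j^*}$, after absorbing any multiplicative constants into the choice of $\delta$. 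Uniqueness is immediate, since any other compact set with the same boundary and local volume bound must be disjoint from the large component $U_{j^*}$ and hence agree with $K_\Sigma$.

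For the global bound (\ref{eqn:thin_isoperimetric_MSY}), a finite covering of $M$ by unit balls together with the local estimate gives $\cH^3(K_\Sigma) \leq C(M,\mu)\delta^3$, small enough to place $K_\Sigma$ in the regime where the ambient global isoperimetric inequality on $M$ applied to $K_\Sigma$ yields $\cH^3(K_\Sigma) \leq c_1(\mu) \cH^2(\Sigma)^{3/2}$. Finally, if $\Sigma$ is diffeomorphic to $S^2$, then $M \setminus \Sigma$ has exactly two components and $K_\Sigma$ is connected with $S^2$ boundary; its smallness in every ball confines it to a region covered by geodesic coordinate charts, where the smooth Schönflies/Alexander theorem in $\R^3$ (or alternatively a mean-convex foliation argument using (\ref{eqn:prelim_convex}) and (\ref{eqn:prelim_mean_convex})) identifies $K_\Sigma \cong \overline{\B}$. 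The main obstacle I anticipate is the chaining step establishing a unique large component and ruling out pathological configurations of $\Sigma$ near where distinct $U_j$'s meet; this is the essential local-to-global content of the lemma, and the topological conclusion in the sphere case requires care in translating the metric smallness of $K_\Sigma$ into a topologically trivial ambient neighborhood.
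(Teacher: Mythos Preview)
The paper does not supply its own proof of this lemma; it is quoted verbatim from \cite[Lemma 1]{MSY} and used as a black box. Your outline is essentially the argument given in \cite{MSY}: the relative isoperimetric inequality in unit balls forces the small/large dichotomy for each component of $M\setminus\Sigma$, connectedness of $M$ and an overlap argument single out a unique large component $U_{j^*}$, and $K_\Sigma$ is its complement.

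One point to tighten: your route to (\ref{eqn:thin_isoperimetric_MSY}) invokes a \emph{global} isoperimetric inequality on $M$, which would make $c_1$ depend on $M$ rather than only on $\mu$. The statement (and its use in the homogeneously regular setting, cf.\ the footnote after \S\ref{sec:local_control}) requires $c_1=c_1(\mu)$. The correct derivation sums the local relative isoperimetric estimate $\cH^3(K_\Sigma\cap B_1(x))\leq c(\mu)\,\cH^2(\Sigma\cap B_1(x))^{3/2}$ over a Besicovitch-type cover with $\mu$-bounded multiplicity, and then uses $\sum_i a_i^{3/2}\leq(\sup_i a_i^{1/2})\sum_i a_i\leq\delta\cdot C(\mu)\cH^2(\Sigma)$ together with $\cH^2(\Sigma)<\delta^2$ in each ball; no global isoperimetric constant enters. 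Your sketch of the $\Sigma\cong S^2\Rightarrow K_\Sigma\cong\overline{\B}$ step is also the MSY idea, though in \cite{MSY} the smallness is exploited via an isotopy that pushes $K_\Sigma$ into a single coordinate ball before invoking Alexander's theorem, rather than an abstract appeal to Sch\"onflies.
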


\subsection{Area comparison}
The following lemma is the appropriate modification of \cite[Lemma 3]{MSY} for our setting. Essentially, we use the divergence theorem to show that moving pieces of a set inside a $c$-mean convex domain decreases the $\cA^c$ functional.

\begin{lemma}\label{lem:area_comparison}
Let $U \subset B_{1/2}(x_0)$ and $\theta \in (0, 1/2)$ satisfy
\begin{itemize}
    \item $x_0 \in U$,
    \item $\overline{U}$ is diffeomorphic to $\overline{\B}$,
    \item $d_U$ is convex on $U(\theta)\setminus \overline{U}$,
    \item $\Delta d_U \geq c$ on $U(\theta)\setminus \overline{U}$.
\end{itemize}
Let $\beta \geq 1$ be a constant so that, for each $s \in (\theta /2, \theta)$,
\begin{equation}\label{eqn:area_comparison_isoperimetric_assumption}
    \min\{\cH^2(E),\ \cH^2(\partial U(s) \setminus E)\} \leq \beta(\cH^1(\partial E))^2
\end{equation}
whenever $E$ is a surface in $\partial U(s)$. Let
\begin{equation}\label{eqn:area_comparison_delta_assumption}
    \delta_1 = \min\left\{\delta,\ \frac{\theta}{(1+128c_1)\beta^{1/2}},\ \frac{\theta}{9\beta^{1/2}(1/32 + cc_1)^{1/2}},\ \frac{\cH^3(M)^{1/3}}{(4c_1)^{1/3}},\ 1\right\},
\end{equation}
(where $c_1$ and $\delta$ are as in Lemma \ref{lem:thin_isoperimetric_MSY}). Let $\Sigma$ be a smooth embedded disk intersecting $\partial U$ transversally with $\partial \Sigma \subset M \setminus U$. Suppose
\begin{equation}\label{eqn:area_comparison_smallness_assumption}
    \cH^2(\partial U) + \cH^2(\Sigma\setminus U) \leq \delta_1^2/32.
\end{equation}

If $\Lambda$ is any component of $\Sigma \setminus U$ with $\partial \Sigma \cap \Lambda = \varnothing$, then there is a unique compact $K_{\Lambda} \subset M \setminus U$ so that
\begin{equation}\label{eqn:area_comparison_smallness_conclusion}
    \cH^3(K_{\Lambda}) \leq c_1\delta_1^3,\ \ \partial K_{\Lambda} = \Lambda \cup F,
\end{equation}
where $F \subset \partial U$ is a surface with $\partial F = \partial \Lambda$ satisfying
\begin{equation}\label{eqn:area_comparison}
    \cH^2(F) < \cH^2(\Lambda \cap U(\theta)) - c\cH^3(K_{\Lambda}).
\end{equation}
\end{lemma}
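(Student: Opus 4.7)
The plan is to enclose a small region $K_\Lambda$ on the outside of $U$ by capping $\Lambda$ with a surface $F \subset \partial U$, apply the thin-tube isoperimetric inequality (Lemma \ref{lem:thin_isoperimetric_MSY}) to identify this region, and derive the area-volume inequality via a calibration with the distance function $d_U$. First I would construct $F$ by applying the isoperimetric assumption (\ref{eqn:area_comparison_isoperimetric_assumption}) in the limit $s \to 0^+$ on $\partial U$, producing $F \subset \partial U$ with $\partial F = \partial \Lambda$ and $\cH^2(F) \leq \beta(\cH^1(\partial \Lambda))^2$. After smoothing the corner along $\partial \Lambda$, the smallness (\ref{eqn:area_comparison_smallness_assumption}) together with the constraints (\ref{eqn:area_comparison_delta_assumption}) ensure the hypothesis of Lemma \ref{lem:thin_isoperimetric_MSY} is satisfied. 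This yields a unique compact $K_\Lambda$ with $\partial K_\Lambda = \Lambda \cup F$, $\cH^3(K_\Lambda) \leq c_1\delta_1^3$, and $\cH^3(K_\Lambda \cap B_1(x)) \leq \delta^3$ for every $x \in M$. The containment $K_\Lambda \subset M \setminus U$ follows from the uniqueness clause of Lemma \ref{lem:thin_isoperimetric_MSY} combined with the last condition in (\ref{eqn:area_comparison_delta_assumption}): the component of $M \setminus (\Lambda \cup F)$ containing the interior of $U$ has volume too large to be the small component $K_\Lambda$.

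The critical technical step is upgrading to $K_\Lambda \subset U(\theta)$, equivalently $\Lambda \subset U(\theta)$ (since $\Lambda \subset \partial K_\Lambda$). The plan is to argue by contradiction: if $\Lambda$ reached a point at distance $\geq \theta$ from $\partial U$, one can derive a lower bound on $\cH^2(\Sigma \setminus U)$ via a coarea slicing of $\Sigma \setminus U$ by the level sets $\partial U(s)$, combined with the isoperimetric inequality (\ref{eqn:area_comparison_isoperimetric_assumption}) on these slices and the thin-tube volume control. The elaborate form of $\delta_1$ in (\ref{eqn:area_comparison_delta_assumption}), involving both $\beta^{1/2}$ and $c_1$, is tuned so that the resulting bound strictly exceeds the smallness threshold $\delta_1^2/32$ in (\ref{eqn:area_comparison_smallness_assumption}), yielding the desired contradiction. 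This quantitative propagation from area smallness to extent smallness is the chief obstacle.

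With $K_\Lambda \subset U(\theta) \setminus \overline{U}$ established, the conclusion follows from a calibration. Consider the 2-form $\omega := \iota_{\nabla d_U}\,\mathrm{vol}_M$, smooth on a neighborhood of $K_\Lambda$. Since $|\nabla d_U| \equiv 1$, we have $|\omega(\pi)| = |\langle \nabla d_U, \nu_\pi \rangle| \leq 1$ on every oriented 2-plane $\pi$, and (\ref{eqn:prelim_mean_convex}) gives $d\omega = (\Delta d_U)\,\mathrm{vol}_M \geq c\,\mathrm{vol}_M$. Stokes' theorem applied to $K_\Lambda$ yields
\begin{equation*}
c\,\cH^3(K_\Lambda) \leq \int_{K_\Lambda} d\omega = \int_\Lambda \omega + \int_F \omega.
\end{equation*}
Along $F \subset \partial U$, the outward unit normal to $K_\Lambda$ is $-\nabla d_U$ (since $K_\Lambda$ sits on the far side of $\partial U$ from $U$), so $\int_F \omega = -\cH^2(F)$; on $\Lambda \subset U(\theta)$, $|\omega|\leq 1$ gives $\int_\Lambda \omega \leq \cH^2(\Lambda) = \cH^2(\Lambda \cap U(\theta))$. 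Combining yields $\cH^2(F) \leq \cH^2(\Lambda \cap U(\theta)) - c\,\cH^3(K_\Lambda)$. Strict inequality follows because the transverse intersection of $\Sigma$ with $\partial U$ forces $T_x\Lambda$ to have a nonzero component along $\nabla d_U$ near $\partial \Lambda$, so $\omega(T_x\Lambda) < 1$ on a set of positive measure in $\Lambda$.
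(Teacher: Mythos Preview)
Your calibration with $\omega = \iota_{\nabla d_U}\mathrm{vol}_M$ is morally the same as the paper's divergence-theorem computation, and the construction of $K_\Lambda$ via Lemma~\ref{lem:thin_isoperimetric_MSY} is close (though note the hypothesis~\eqref{eqn:area_comparison_isoperimetric_assumption} is only given for $s\in(\theta/2,\theta)$, not at $s=0$; the paper just takes any $F_0\subset\partial U$ with $\partial F_0=\partial\Lambda$ and uses $\cH^2(F_0)\leq\cH^2(\partial U)\leq\delta_1^2/32$).

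The real gap is your ``critical technical step'': the containment $K_\Lambda\subset U(\theta)$ is \emph{false in general}, so your contradiction argument cannot succeed. Consider a thin tube of radius $r$ in $\Sigma\setminus U$ that pokes out from $\partial U$ to distance $L>\theta$ and returns; its area $\approx 2\pi rL$ can be made smaller than $\delta_1^2/32$ by taking $r$ small, and the enclosed $K_\Lambda$ has volume $\approx\pi r^2L$, easily below $c_1\delta_1^3$. No coarea-plus-isoperimetric bound forces such a tendril to be short. Consequently your global calibration on $K_\Lambda$ is illegitimate: the inequality $\Delta d_U\geq c$ is only assumed on $U(\theta)\setminus\overline{U}$, and you would only recover $\cH^2(F)\leq\cH^2(\Lambda)-c\cH^3(K_\Lambda\cap U(\theta))$, which has the wrong $\Lambda$-term and the wrong volume term relative to~\eqref{eqn:area_comparison}.

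The paper never claims containment. Instead it applies the calibration on each truncated layer $K_t=K_\Lambda\cap U(t)$, obtaining $\cH^2(F)\leq\cH^2(F_t)+\cH^2(E_t)-c\cH^3(K_t)$ with $F_t=K_\Lambda\cap\partial U(t)$ and $E_t=\Lambda\cap U(t)$. When $K_\Lambda\not\subset U(\theta)$, it combines this with the isoperimetric bound $\cH^2(F_t)\leq\beta(\tfrac{d}{dt}\cH^2(E_t))^2$ on $(\theta/2,3\theta/4)$ to get a differential inequality for $t\mapsto \cH^2(F)-\cH^2(E_t)+c\cH^3(K_\Lambda)$; integrating this and invoking the quantitative form of $\delta_1$ forces that quantity to be $\leq 0$ at $t=3\theta/4$, which is~\eqref{eqn:area_comparison}. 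The ODE argument is the substitute for your missing containment, not a way to prove it.
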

\begin{proof}
Let $F_0 \subset \partial U$ satisfy $\partial F_0 = \partial \Lambda$, which exists because $\partial U$ is diffeomorphic to a 2-sphere. Then by (\ref{eqn:area_comparison_delta_assumption}) and (\ref{eqn:area_comparison_smallness_assumption}), we have
\[ \cH^2(F_0) + \cH^2(\Lambda) < \delta^2. \]
By Lemma \ref{lem:thin_isoperimetric_MSY}, there is a compact set $W \subset M$ with
\[ \cH^3(W) \leq c_1\delta_1^3,\ \ \partial W = F_0 \cup \Lambda. \]
Let $K_{\Lambda} = W \setminus U$, in which case $F$ is either $F_0$ (if $W \cap U = \varnothing$) or $\partial U \setminus F_0$ (if $U \subset W$). Since $K_{\Lambda}$ must satisfy $\cH^3(K_{\Lambda}) \leq \cH^3(M)/4$ by the definition of $\delta_1$, $K_{\Lambda}$ is unique.

Define $F_t = K_{\Lambda} \cap \partial U(t)$, $E_t = \Lambda \cap U(t)$, and $K_t = K_{\Lambda} \cap U(t)$. We compute
\begin{align}\label{eqn:area_comparison_computation}
    c\cH^3(K_{t_2} \setminus \overline{K}_{t_1})
    & \leq \int_{K_{t_2} \setminus \overline{K}_{t_1}} \Delta d_U\notag\\
    & = \cH^2(F_{t_2}) - \cH^2(F_{t_1}) + \int_{\overline{E}_{t_2} \setminus E_{t_1}} \langle \nu, \nabla d_U\rangle\\
    & \leq \cH^2(F_{t_2}) - \cH^2(F_{t_1}) + \cH^2(\overline{E}_{t_2}) - \cH^2(E_{t_1}).\notag
\end{align}
Equality in (\ref{eqn:area_comparison_computation}) requires $E_{t_1} = E_{t_2}$, because the identity $\nu(y) = \nabla d_U(y)$ for almost every $y \in \overline{E}_{t_2}\setminus E_{t_1}$ implies $\overline{E}_{t_2} \setminus E_{t_1} \subset \partial U(s)$ for some $s$.

By setting $t_1 = 0$ and $t_2 = t > 0$ in (\ref{eqn:area_comparison_computation}), we get
\begin{equation}\label{eqn:area_comparison_application}
    \cH^2(F) < \cH^2(F_t) + \cH^2(\overline{E}_t) - c\cH^3(K_t).
\end{equation}
If $K_{\theta} = K_{\Lambda}$, then we obtain (\ref{eqn:area_comparison}) by setting $t = \theta$. Henceforth, we assume $K_{\theta} \neq K_{\Lambda}$.

By the coarea formula, we have
\[ \int_0^{\theta} \cH^2(F_s)\ ds \leq \cH^3(K_{\theta}) \leq c_1\delta_1^3. \]
Hence,
\[ \cH^2(F_t) \leq 4c_1\theta^{-1}\delta_1^3 \]
for a set of $t \in [0, \theta]$ of measure at least $\frac{3}{4}\theta$. Since $\delta_1 < (128c_1)^{-1}\theta$, we have
\begin{equation}\label{eqn:area_comparison_smallslice1}
    \cH^2(F_t) \leq \delta_1^2/32
\end{equation}
on a set of measure at least $\frac{3}{4}\theta$. Then there is a $t^* \in [\frac{3}{4}\theta, \theta]$ so that (\ref{eqn:area_comparison_smallslice1}) holds at $t^*$. Then (\ref{eqn:area_comparison_computation}) and (\ref{eqn:area_comparison_smallness_assumption}) imply
\begin{equation}\label{eqn:area_comparison_smallslice2}
    \cH^2(F_t) \leq \delta_1^2/32 + \delta_1^2/32 = \delta_1^2/16
\end{equation}
for all $t \leq \frac{3}{4}\theta$.

Since $B_{\theta/2}(x_0) \subset U(\theta/2)$ and $U(\theta) \subset B_1(x_0)$, (\ref{eqn:prelim_dexp_small}) and the isoperimetric inequality in $\R^3$ (since the $\R^3$ isoperimetric constant is $\sim 4.8 \geq 4$) imply
\[ \cH^2(\partial U(t)) \geq \frac{1}{4}\cH^2(\partial(\phi_{x_0}^{-1}(U(t)))) \geq (\cH^3(\phi_{x_0}^{-1}(B_{\theta/2}(x_0))))^{2/3} \geq \theta^2/8 \]
for $t \in [\theta/2, \theta]$. Since $\delta_1\leq \theta$,  (\ref{eqn:area_comparison_smallslice2}) implies
\[ \cH^2(F_t) \leq \frac{1}{2}\cH^2(\partial U(t)) \]
for all $t \in [\theta/2, 3\theta/4]$. Then by (\ref{eqn:area_comparison_isoperimetric_assumption}) and the coarea formula, we have for almost every $t \in [\theta/2, 3\theta/4]$
\[ \cH^2(F_t) \leq \beta(\cH^1(\partial F_t))^2 \leq \beta \left(\frac{d}{dt} \cH^2(E_t)\right)^2. \]
Hence, (\ref{eqn:area_comparison_application}) implies
\[ \cH^2(F) - \cH^2(E_t) + c\cH^3(K_{\Lambda}) \leq \beta\left(\frac{d}{dt}(\cH^2(F)-\cH^2(E_t) + c\cH^3(K_{\Lambda}))\right)^2
\]
for almost every $t \in [\theta/2, 3\theta/4]$. By integration, we have
\begin{align*}
    \sqrt{\cH^2(F) - \cH^2(E_{\theta/2}) + c\cH^3(K_{\Lambda})} - & \sqrt{\cH^2(F) - \cH^2(E_{3\theta/4}) + c\cH^3(K_{\Lambda})}
    \geq \beta^{-1/2}\theta/8
\end{align*}
provided $\cH^2(F) - \cH^2(E_{3\theta/4}) + c\cH^3(K_{\Lambda}) > 0$. By (\ref{eqn:area_comparison_smallness_assumption}), (\ref{eqn:area_comparison_smallness_conclusion}), and (\ref{eqn:area_comparison_delta_assumption}), we have
\begin{align*}
    \sqrt{\cH^2(F) + c\cH^3(K_{\Lambda})}
    & \leq \sqrt{\delta_1^2/32 + cc_1\delta_1^3}\\
    & \leq \delta_1\sqrt{1/32 + cc_1}\\
    & \leq \beta^{-1/2}\theta/9,
\end{align*}
so we reach a contradiction. Hence, we have
\[ \cH^2(F) \leq \cH^2(E_{3\theta/4}) - c\cH^3(K_{\Lambda}) < \cH^2(E_{\theta}) - c\cH^3(K_{\Lambda}), \]
as desired.
\end{proof}

\section{Minimization Problem}
In this section, we rigorously formulate the problem of minimizing the prescribed mean curvature functional over disks in a manifold.

\subsection{Setup}\label{sec:disk_notation}
Let $c_1$ and $\delta$ as in Lemma \ref{lem:thin_isoperimetric_MSY}. As in the statement of Lemma \ref{lem:area_comparison}, we define
\begin{equation}\label{eqn:delta_assumption}
    \delta_2 \coloneqq \min\left\{\delta,\ \frac{\rho_0/2}{(1+128c_1)\beta_0^{1/2}},\ \frac{\rho_0/2}{9\beta_0^{1/2}(1/32 + cc_1)^{1/2}},\ \frac{\cH^3(M)^{1/3}}{(4c_1)^{1/3}},\ 1\right\},
\end{equation}
where $\rho_0$ and $\beta_0$ are defined in \S\ref{sec:local_control}.

Let $U \subset B_{1/2}(x_0)$ and $\theta \in (0, 1/2)$ satisfy
\begin{itemize}
    \item $x_0 \in U$,
    \item $\overline{U}$ is diffeomorphic to $\overline{\B}$ (denoted by $\overline{U}\approx \overline{\B}$),
    \item $d_U$ is convex on $U(\theta)\setminus \overline{U}$,
    \item $\Delta d_U \geq c$ on $U(\theta)\setminus \overline{U}$,
    \item $\cH^2(\partial U) < \delta_2^2/32$.
\end{itemize}
Let $\beta$ and $\delta_1$ be defined in terms of $U$ and $\theta$ as in Lemma \ref{lem:area_comparison}. Note that by (\ref{eqn:prelim_convex}), (\ref{eqn:prelim_mean_convex}), (\ref{eqn:prelim_isoperimetric}), there are valid choices $U$ and $\theta$ so that $\delta_2 \leq \delta_1$ (e.g. we can take $U = B_\rho(x_0)$ for $\rho \ll \rho_0/2$ and $\theta = \rho_0/2$).

We define the following notation.
\begin{itemize}
    \item $\mathcal{D}$ is the set of oriented smooth embedded 2-disks $\Sigma \subset M$ with smooth boundary.
    \item $[\Sigma]$ is the integral $2$-current corresponding to $\Sigma \in \mathcal{D}$.
    \item $\mathcal{D}[\Sigma]$ is the set of disks $\Sigma' \in \mathcal{D}$ with $\partial[\Sigma'] = \partial[\Sigma]$.
    \item $\mathcal{D}(U)$ is the set of disks $\Sigma \in \mathcal{D}$ so that
    \begin{itemize}
        \item $\partial \Sigma \cap U = \varnothing$,
        \item $\Sigma$ has transverse (or empty) intersection with $\partial U$,
        \item there is an open set $\Omega \subset U$ so that $\partial [\Omega] \llcorner U = [\Sigma] \llcorner U$.
    \end{itemize}
    \item $\mathcal{D}^*(U)$ is the set of disks $\Sigma \in \mathcal{D}(U)$ so that
    \begin{itemize}
        \item $\cH^2(\partial U) + \cH^2(\Sigma \setminus U) \leq \delta_1^2/32$,
        \item $\partial \Sigma \setminus \partial U$ is not contained in $K_{\Lambda}$ for any component $\Lambda$ of $\Sigma \setminus U$ with $\partial \Sigma \cap \Lambda = \varnothing$, where $K_{\Lambda}$ is as in Lemma \ref{lem:area_comparison}.
    \end{itemize}
    \item $\mathcal{D}(U)[\Sigma]$ is the set of $\Sigma' \in \mathcal{D}(U) \cap \mathcal{D}[\Sigma]$ satisfying
    \begin{itemize}
        \item $\Sigma' \setminus U \subset \Sigma \setminus U$,
        \item if $x \in \Sigma' \setminus U$, then $\Sigma$ and $\Sigma'$ have the same orientation at $x$.
    \end{itemize}
\end{itemize}

We note that if $\Sigma \in \mathcal{D}^*(U)$ then $\mathcal{D}(U)[\Sigma] \subset \mathcal{D}^*(U)$.

\subsection{Fill-ins}
The prescribed mean curvature functional requires integration over a region. We formulate these regions here.

\begin{proposition}\label{prop:fill-in}
Let $\Sigma \in \mathcal{D}^*(U)$ and $\Sigma_1,\ \Sigma_2 \in \mathcal{D}(U)[\Sigma]$. There is a unique integral 3-current $T(\Sigma_1, \Sigma_2)$ satisfying
\begin{equation}\label{eqn:rel_current}
    \partial T(\Sigma_1, \Sigma_2) = [\Sigma_1] - [\Sigma_2]\ \ \text{and}\ \ \mathrm{spt}(T(\Sigma_1, \Sigma_2)) \subset \overline{U} \cup \bigcup_{\Lambda} K_{\Lambda},
\end{equation}
where $\Lambda$ are the components of $\Sigma \setminus U$ with $\Lambda \cap \partial \Sigma = \varnothing$, and $K_{\Lambda}$ is the corresponding set from Lemma \ref{lem:area_comparison}.
\end{proposition}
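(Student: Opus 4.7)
My plan is to construct $T(\Sigma_1,\Sigma_2)$ explicitly as an integer combination of $[\Omega_1]-[\Omega_2]$ and the cap fillings $[K_\Lambda]$, to verify its boundary by a careful orientation computation on $\partial U$, and to deduce uniqueness from the constancy theorem for top-dimensional integral currents.

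The first structural observation I would establish is that $\Sigma_i \setminus U$ is a union of complete connected components of $\Sigma \setminus U$. At any interior point $x$ of $\Sigma_i \setminus \overline{U}$, the smooth $2$-submanifolds $\Sigma_i \subseteq \Sigma$ of $M$ agree on a neighborhood of $x$, since local containment of embedded submanifolds of equal dimension forces local equality. Hence for each component $C$ of $\Sigma \setminus \overline{U}$ the intersection $C \cap \Sigma_i$ is clopen in $C$, so $C \subset \Sigma_i$ or $C \cap \Sigma_i = \varnothing$. Since $\partial\Sigma_i = \partial\Sigma$, both $\Sigma_1$ and $\Sigma_2$ contain the unique main component of $\Sigma \setminus U$ meeting $\partial\Sigma$, and therefore they differ outside $U$ only in which cap components $\Lambda$ they incorporate.

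For existence, let $\Omega_i \subset U$ be the open set provided by $\Sigma_i \in \mathcal{D}(U)$, satisfying $\partial[\Omega_i] \llcorner U = [\Sigma_i]\llcorner U$. Orient each $[K_\Lambda]$ so that $\partial[K_\Lambda] = [\Lambda] - [F_\Lambda]$ for some oriented $F_\Lambda \subset \partial U$, where $[\Lambda]$ carries the orientation inherited from $[\Sigma]$. For each cap $\Lambda$ set $\varepsilon_\Lambda := \chi_{\{\Lambda \subset \Sigma_1\}} - \chi_{\{\Lambda \subset \Sigma_2\}} \in \{-1,0,1\}$ and define
\[
T(\Sigma_1,\Sigma_2) := [\Omega_1] - [\Omega_2] + \sum_\Lambda \varepsilon_\Lambda [K_\Lambda].
\]
The support condition in \eqref{eqn:rel_current} is immediate. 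Using $\partial[\Omega_i] = [\Sigma_i \cap U] + [\partial U \cap \overline{\Omega}_i]$ and $\partial[K_\Lambda] = [\Lambda] - [F_\Lambda]$, the identity $\partial T = [\Sigma_1]-[\Sigma_2]$ reduces to the $\partial U$-balance
\[
[\partial U \cap \overline{\Omega}_1] - [\partial U \cap \overline{\Omega}_2] = \sum_\Lambda \varepsilon_\Lambda [F_\Lambda].
\]
I expect this to be the main technical obstacle: both sides are integral $2$-currents on the $2$-sphere $\partial U$ whose boundaries coincide with $[\Sigma_1 \cap \partial U]-[\Sigma_2\cap\partial U]$ (noting that $\partial F_\Lambda = \partial \Lambda$ contributes to $\Sigma_i \cap \partial U$ exactly when $\Sigma_i \supset \Lambda$), so their difference is a $2$-cycle on $\partial U$ and hence an integer multiple of $[\partial U]$; the mass bound $\cH^2(\partial U) < \delta_2^2/32$ weighed against the small mass of each side forces the multiple to vanish.

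For uniqueness, any two candidates differ by an integral $3$-cycle $T-T'$ on the closed connected $3$-manifold $M$ with support in the compact set $K := \overline{U}\cup\bigcup_\Lambda K_\Lambda$. By \eqref{eqn:delta_assumption} and \eqref{eqn:area_comparison_smallness_conclusion}, one checks that $\cH^3(K)<\cH^3(M)$, so $M \setminus K$ is a nonempty open set. The constancy theorem for top-dimensional integral currents on a connected manifold then represents $T-T'$ as a locally constant integer multiple of the fundamental class (working in local orientation charts if $M$ is not globally orientable), and this multiplicity must vanish since $T-T'$ is zero on the open set $M \setminus K$.
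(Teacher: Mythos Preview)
Your approach is more explicit than the paper's: you try to write down the fill-in directly as $[\Omega_1]-[\Omega_2]+\sum_\Lambda \varepsilon_\Lambda[K_\Lambda]$, whereas the paper first subtracts the cap contributions $S=\sum \pm[K_\Lambda]$ to reduce to a $2$-cycle supported in $\overline{U}$ and then invokes $H_2(\overline{\B},\Z)=0$ abstractly to find the remaining piece $T$. Your uniqueness argument via the constancy theorem is essentially the same as the paper's one-line remark that there are no nonzero closed $3$-currents supported in $K$.

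There is, however, a genuine gap in your existence argument. The mass bound you invoke to force the integer $k$ in the ``$\partial U$-balance'' to vanish does not work as stated: while $\mathbb{M}([R_1]-[R_2])\le \cH^2(\partial U)$, the term $\sum_\Lambda \varepsilon_\Lambda[F_\Lambda]$ can have mass as large as $\sum_\Lambda \cH^2(F_\Lambda)$, and although each $\cH^2(F_\Lambda)<\cH^2(\Lambda)$ by Lemma~\ref{lem:area_comparison}, the total is only bounded by $\cH^2(\Sigma\setminus U)<\delta_1^2/32$, which need not be small relative to $\cH^2(\partial U)$ (indeed $\cH^2(\partial U)$ can be arbitrarily small compared to $\delta_1^2/32$). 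So you cannot conclude $|k|\cH^2(\partial U)<\cH^2(\partial U)$ from this.

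The cleanest repair is to sidestep the balance entirely: you have shown that your candidate $T_0$ satisfies $\partial T_0-([\Sigma_1]-[\Sigma_2])=k[\partial U]$ for some integer $k$. Since $[\partial U]=\partial[U]$ and $\mathrm{spt}[U]\subset\overline{U}\subset K$, the corrected current $T_0-k[U]$ has the required boundary and support. This recovers existence, and uniqueness then pins down the answer. Note that this is precisely what the paper's use of $H_2(\overline{\B},\Z)=0$ accomplishes without the explicit bookkeeping on $\partial U$; your route has the advantage of identifying the fill-in concretely (up to the correction $k[U]$), which may be useful elsewhere, but at the cost of this extra step.
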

\begin{proof}
Let $K \coloneqq \overline{U} \cup \bigcup_{\Lambda} K_{\Lambda}$. Note that there is a 3-current $S$ with $\mathrm{spt}(S) \subset K$ obtained by adding $\pm[K_{\Lambda}]$ for each $\Lambda \subset \Sigma_i$ for some $i$ with the appropriate sign so that
\[ \mathrm{spt}([\Sigma_1] - [\Sigma_2] - \partial S) \subset \overline{U}. \]
Since $\overline{U}$ is diffeomorphic to $\overline{\B}$ and $H_2(\overline{\B}, \Z) = 0$, there is a current $T$ with support in $\overline{\B}$ so that
\[ [\Sigma_1] - [\Sigma_2] = \partial(S + T). \]
We take $T(\Sigma_1, \Sigma_2) := S + T$. The uniqueness follows by the fact there are no nonzero closed 3-currents in $M$ with support in $K$.
\end{proof}

\begin{definition}\label{def:fill-in}
We call $T(\Sigma_1, \Sigma_2)$ from Proposition \ref{prop:fill-in} the \emph{fill-in} of $\Sigma_1$ and $\Sigma_2$. We let $F: I_3(M, \Z) \to L^1(M, \Z)$ denote the canonical isomorphism of normed linear spaces, and we define the \emph{fill-in integral} as
\begin{equation}\label{eqn:fill-in_h}
    T_h(\Sigma_1, \Sigma_2) \coloneqq \int_{M} h\ F(T(\Sigma_1, \Sigma_2))\ d\cH^3.
\end{equation}
\end{definition}

\begin{proposition}\label{prop:fill-in_properties}
Let $\Sigma \in \mathcal{D}^*(U)$. For $\Sigma_1,\ \Sigma_2,\ \Sigma_3 \in \mathcal{D}(U)[\Sigma]$, we have
\begin{align}\label{eqn:fill-in_antisym}
    T(\Sigma_1, \Sigma_2) & = -T(\Sigma_2, \Sigma_1),\\
    T_h(\Sigma_1, \Sigma_2) & = -T_h(\Sigma_2, \Sigma_1),\notag
\end{align}
\begin{align}\label{eqn:fill-in_cyclic}
    T(\Sigma_1, \Sigma_2) + T(\Sigma_2, \Sigma_3) & = T(\Sigma_1, \Sigma_3),\\
    T_h(\Sigma_1, \Sigma_2) + T_h(\Sigma_2, \Sigma_3) & = T_h(\Sigma_1, \Sigma_3).\notag
\end{align}
\end{proposition}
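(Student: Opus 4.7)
The plan is to deduce both identities directly from the uniqueness assertion of Proposition~\ref{prop:fill-in}. The crucial point is that all the fill-ins appearing in the statement are taken with respect to the same reference disk $\Sigma \in \mathcal{D}^*(U)$, so the ambient ``allowed support'' set
\[ K \coloneqq \overline{U} \cup \bigcup_{\Lambda} K_{\Lambda}, \]
where $\Lambda$ ranges over components of $\Sigma \setminus U$ disjoint from $\partial \Sigma$, is common to all three fill-ins $T(\Sigma_i,\Sigma_j)$. Uniqueness then says that an integer $3$-current supported in $K$ is completely determined by its boundary.

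First I would handle antisymmetry. The current $-T(\Sigma_2,\Sigma_1)$ is supported in $K$ and has boundary $-([\Sigma_2]-[\Sigma_1]) = [\Sigma_1]-[\Sigma_2]$, so by the uniqueness part of Proposition~\ref{prop:fill-in} it equals $T(\Sigma_1,\Sigma_2)$. For the cocycle identity, the current $T(\Sigma_1,\Sigma_2) + T(\Sigma_2,\Sigma_3)$ is supported in $K$ (support is subadditive under addition of currents) and has boundary $([\Sigma_1]-[\Sigma_2]) + ([\Sigma_2]-[\Sigma_3]) = [\Sigma_1]-[\Sigma_3]$, so by uniqueness it equals $T(\Sigma_1,\Sigma_3)$. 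The analogous identities for $T_h$ then follow immediately from the linearity of the isomorphism $F$ and of integration against $h$; for instance,
\[ T_h(\Sigma_1,\Sigma_2) + T_h(\Sigma_2,\Sigma_3) = \int_M h\, F\bigl(T(\Sigma_1,\Sigma_2) + T(\Sigma_2,\Sigma_3)\bigr)\, d\cH^3 = T_h(\Sigma_1,\Sigma_3), \]
and similarly for antisymmetry.

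I do not anticipate any serious obstacle; the only mild point to verify is that the uniqueness assertion applies to the differences or sums of fill-ins above. This reduces to the claim, already used in the proof of Proposition~\ref{prop:fill-in}, that there is no nonzero closed integer $3$-current on $M$ supported in $K$. This holds because $K$ is a proper compact subset of the connected manifold $M$ (indeed $K \subset B_1(x_0)$ has $\cH^3$-measure at most $\cH^3(\overline U)$ plus a finite sum of terms bounded by $c_1\delta_1^3$, which by the choice of $\delta_1$ in \eqref{eqn:area_comparison_delta_assumption} is strictly less than $\cH^3(M)$), so the locally constant integer function corresponding to such a current via $F$ must vanish identically.
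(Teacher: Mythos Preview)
Your proof is correct and follows essentially the same approach as the paper: both deduce the identities directly from the uniqueness assertion of Proposition~\ref{prop:fill-in} by computing boundaries. You include a bit more detail (explicitly checking the support condition and the passage to $T_h$ via linearity of $F$), but the argument is the same.
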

\begin{proof}
We compute
\[ \partial (-T(\Sigma_2, \Sigma_1)) = -[\Sigma_2] + [\Sigma_1]. \]
By the uniqueness of $T(\Sigma_1, \Sigma_2)$, (\ref{eqn:fill-in_antisym}) follows.

We compute
\[ \partial (T(\Sigma_1, \Sigma_2) + T(\Sigma_2, \Sigma_3)) = [\Sigma_1] - [\Sigma_2] + [\Sigma_2] - [\Sigma_3] = [\Sigma_1] - [\Sigma_3]. \]
By the uniqueness of $T(\Sigma_1, \Sigma_3)$, (\ref{eqn:fill-in_cyclic}) follows.
\end{proof}

\subsection{Problem formulation}
In this part of the paper, we consider the following question.
\begin{question}\label{question:main}
If $\{\Sigma_k\}_{k \in \N} \subset \mathcal{D}^*(U)$ satisfies
\begin{equation}\label{eqn:min_seq_setup_disk}
    \cH^2(\Sigma_k) \leq \cH^2(\Sigma_k') + T_h(\Sigma_k, \Sigma_k') + \eps_k
\end{equation}
for all $\Sigma_k' \in \mathcal{D}(U)[\Sigma_k]$ with $\eps_k \to 0$, does (a subsequence of) $\Sigma_k$ converge to a regular limit with prescribed mean curvature $h$?
\end{question}

\section{Limits of Minimizing Sequences}
We apply the standard theory for sets of finite perimeter to deduce some basic preliminary conclusions about the limit in Question \ref{question:main}. In later sections we upgrade these basic observations to stronger regularity results.

\begin{theorem}\label{thm:compactness}
Suppose $\{\Sigma_k\}_{k \in \N} \subset \mathcal{D}^*(U)$ satisfies
\begin{equation}\label{eqn:min_seq_compactness}
    \cH^2(\Sigma_k) \leq \cH^2(\Sigma_k') + T_h(\Sigma_k, \Sigma_k') + \eps_k
\end{equation}
for all $\Sigma_k' \in \mathcal{D}(U)[\Sigma_k]$ with $\eps_k \to 0$. Let $\Omega_k \subset U$ be the open set satisfying
\[ \partial [\Omega_k] \llcorner U =  [\Sigma_k] \llcorner U. \]

If $\sup_k \cH^2(\Sigma_k \cap U) < \infty$, then there is a subsequence (not relabeled), a set of finite perimeter $\Omega \subset U$, and a varifold $V$ in $U$ so that
\[ \mathbf{1}_{\Omega_k} \xrightarrow{L^1} \mathbf{1}_{\Omega},\ \ D\mathbf{1}_{\Omega_k} \wto D\mathbf{1}_{\Omega},\ \ \mathbf{v}(\Sigma_k \cap U) \rightharpoonup V. \]

Furthermore,
\begin{itemize}
    \item $\Omega$ and $V$ satisfy
    \begin{equation}\label{eqn:variational}
        \|V\|(U) - \int_{\Omega} h\ d\cH^3 \leq \|\phi_{\#}V\|(U) - \int_{\phi(\Omega)} h\ d\cH^3
    \end{equation}
    for any diffeomorphism $\phi$ of $U$ equal to the identity outside a compact subset of $U$.
    \item the first variation of $V$ satisfies
    \begin{equation}\label{eqn:first_variation_formula}
        \delta V(X) = \int_{\partial^*\Omega} X \cdot h\nu_{\partial^*\Omega}\ d|D\mathbf{1}_{\Omega}|
    \end{equation}
    for any $C^1$ vector field $X$ with $\mathrm{spt}(X) \subset\subset U$.
    \item $V$ has $c$-bounded first variation, in the sense that
    \begin{equation}\label{eqn:bounded_first_variation}
        \|\delta V\|(B_r(x)) \leq c\|V\|(B_r(x))
    \end{equation}
    for all $B_r(x) \subset U$.
    \item The function
    \begin{equation}\label{eqn:monotonicity}
        r \mapsto f(r)\frac{\|V\|(B_r(x))}{\pi r^2}
    \end{equation}
    is monotone increasing in $r$, where $f$ is a continuous function depending only on $c$ and $(M,g)$ satisfying $f(r) \to 1$ as $r \to 0$.
\end{itemize}
\end{theorem}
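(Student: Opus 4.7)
The plan is to first extract convergent subsequences by standard BV and Radon measure compactness, then derive the variational inequality (\ref{eqn:variational}) from the almost-minimizing condition by inserting diffeomorphism-generated competitors, and finally obtain the first variation identity, the $c$-bound, and the monotonicity formula from (\ref{eqn:variational}) by the usual vector-field calculus. For the convergence, the uniform bound $\sup_k \cH^2(\Sigma_k\cap U) < \infty$ controls $|D\mathbf{1}_{\Omega_k}|(U) = \cH^2(\Sigma_k\cap U)$, so BV compactness delivers a subsequence with $\mathbf{1}_{\Omega_k} \xrightarrow{L^1} \mathbf{1}_\Omega$ and $D\mathbf{1}_{\Omega_k} \wto D\mathbf{1}_\Omega$; passing to a further subsequence, standard varifold compactness gives $\mathbf{v}(\Sigma_k \cap U) \rightharpoonup V$.

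For (\ref{eqn:variational}), fix a diffeomorphism $\phi$ of $U$ which equals the identity outside a compact $K \subset U$, and test against $\Sigma_k' := \phi(\Sigma_k) \in \mathcal{D}(U)[\Sigma_k]$. Because $\phi$ is supported in $K$, the fill-in $T(\Sigma_k, \Sigma'_k)$ of Proposition \ref{prop:fill-in} is forced to equal $[\Omega_k] - [\phi(\Omega_k)]$ by the uniqueness clause (boundaries and supports match), so $T_h(\Sigma_k, \Sigma'_k) = \int_{\Omega_k} h\, d\cH^3 - \int_{\phi(\Omega_k)} h\, d\cH^3$. Plugging into (\ref{eqn:min_seq_compactness}) and choosing an open $U' \subset\subset U$ with $K \subset U'$ whose boundary is null for both $\|V\|$ and $\|\phi_\# V\|$, varifold convergence gives $\cH^2(\Sigma_k \cap U') \to \|V\|(U')$ and $\cH^2(\phi(\Sigma_k)\cap U') \to \|\phi_\# V\|(U')$ (using continuity of pushforward under smooth diffeomorphisms), while $L^1$ convergence handles the volume terms. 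Since both sides agree identically on $U \setminus U'$, (\ref{eqn:variational}) follows after adding back the contributions outside $U'$.

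The first variation formula (\ref{eqn:first_variation_formula}) then comes by differentiating (\ref{eqn:variational}) at $t=0$ along a one-parameter family $\phi_t$ generated by an arbitrary compactly supported vector field $X$, and repeating with $-X$ to upgrade to equality; the volume-side derivative reduces to $\int_{\partial^* \Omega} X \cdot h\nu_{\partial^*\Omega}\, d|D\mathbf{1}_\Omega|$ by the divergence theorem for sets of finite perimeter. The bound (\ref{eqn:bounded_first_variation}) is then immediate from $|h|\leq c$ combined with the measure inequality $|D\mathbf{1}_\Omega| \leq \|V\|$, which follows from the per-slice identity $|D\mathbf{1}_{\Omega_k}| = \|\mathbf{v}(\Sigma_k \cap U)\|$ together with lower semicontinuity of total variation under weak star convergence and the continuity $\|V_k\|(O) \to \|V\|(O)$ for open $O$ with $\|V\|(\partial O) = 0$. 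Finally, (\ref{eqn:monotonicity}) is a standard consequence of (\ref{eqn:bounded_first_variation}): test the first variation against a radial cut-off vector field in geodesic normal coordinates about $x$, and collect the $c$-term and the ambient curvature correction into the factor $f(r)$, which tends to $1$ as $r\to 0$.

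The main subtlety I expect is the identification $T(\Sigma_k, \phi(\Sigma_k)) = [\Omega_k] - [\phi(\Omega_k)]$, which is what lets the fill-in integral be interpreted as a genuine signed volume difference and thereby unlocks the variational inequality. One must verify that $\phi(\Sigma_k)$ indeed lies in $\mathcal{D}(U)[\Sigma_k]$ and that no spurious $K_\Lambda$ contributions enter; both follow from the compact support of $\phi$ together with the uniqueness clause of Proposition \ref{prop:fill-in}. Once this identification is in place, the remainder is a direct adaptation of standard arguments for area minimizers with a bounded forcing term.
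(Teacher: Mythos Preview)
Your proposal is correct and follows essentially the same route as the paper: compactness from standard BV and varifold theorems, the variational inequality by inserting $\phi(\Sigma_k)$ as a competitor and passing to the limit, then differentiating along flows to get the first variation formula and deducing the $c$-bound and monotonicity. Your explicit identification $T(\Sigma_k,\phi(\Sigma_k)) = [\Omega_k] - [\phi(\Omega_k)]$ via the uniqueness clause of Proposition~\ref{prop:fill-in} and your null-boundary $U'$ device are slightly more detailed than the paper's appeal to transversality, but the substance is identical.
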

\begin{proof}
The subsequential convergence follows by the standard compactness theorems (see \cite[Theorem 12.26]{maggi} and \cite[Chapter 1, Theorem 4.4]{simon_old}).

Let $\phi$ be a diffeomorphism of $U$ equal to the identity outside a compact subset. The assumption of transverse intersection implies
\begin{enumerate}
    \item $\lim_k \cH^2(\Sigma_k \cap U) = \|V\|(U)$,
    \item $\lim_k \int_{\Omega_k} h\ d\cH^3 = \int_{\Omega} h\ d\cH^3$,
    \item $\lim_k \cH^2(\phi(\Sigma_k) \cap U) = \|\phi_{\#}V\|(U)$,
    \item $\lim_k \int_{\phi(\Omega_k)} h\ d\cH^3 = \int_{\phi(\Omega)}h\ d\cH^3$.
\end{enumerate}
Let $\delta > 0$. Then there exists $k$ sufficiently large so that
\begin{align*}
    \|V\|(U) - \int_{\Omega} h\ d\cH^3
    & \leq \cH^2(\Sigma_k \cap U) - \int_{\Omega_k} h\ d\cH^3 + 2\delta & (1) + (2)\\
    & \leq \cH^2(\phi(\Sigma_k) \cap U) - \int_{\phi(\Omega_k)} h\ d\cH^3 + 3\delta & (\ref{eqn:min_seq_compactness})\\
    & \leq \|\phi_{\#}V\|(U) - \int_{\phi(\Omega)} h\ d\cH^3 + 5\delta & (3) + (4).
\end{align*}
Since $\delta$ is arbitrary, (\ref{eqn:variational}) follows.

For a $C^1$ vector field $X$ with $\mathrm{spt}(X) \subset \subset U$, (\ref{eqn:variational}) and \cite[Proposition 17.8]{maggi} imply
\begin{align*}
    \delta V(X)
    & = \frac{d}{dt}\Big|_{t=0} \|(\phi_t)_{\#}V\|(U)\\
    & = \frac{d}{dt}\Big|_{t=0} \int_{\phi_t(\Omega)} h\ d\cH^3\\
    & = \int_{\partial^* \Omega} X\cdot h\nu_{\partial^* \Omega} \ d|D\mathbf{1}_{\Omega}|,
\end{align*}
where $\phi_t$ is the flow of $X$. Hence, (\ref{eqn:first_variation_formula}) holds.

(\ref{eqn:bounded_first_variation}) follows from (\ref{eqn:first_variation_formula}) and \cite[Proposition 4.30]{maggi}.

Finally, the monotonicity of (\ref{eqn:monotonicity}) follows from (\ref{eqn:bounded_first_variation}) by \cite[Chapter 8, Section 40]{simon_old}\footnote{We note that \cite{simon_old} only finds a monotonicity formula for varifolds in $\R^N$. We can isometrically embed $(M, g)$ in some $\R^N$ and use this monotonicity formula, in which case the function $f$ will depend on the norm of the second fundamental form of the embedding.}.
\end{proof}

\section{Resolution of Overlaps}\label{sec:resolution}

We prove the appropriate modification of \cite[Lemma 2 and Corollary 1]{AS} in our setting, to disentangle overlapping disks. The key difference is that we must keep track of volumes in addition to area.

\subsection{Lipschitz disks}
As in \cite{AS}, we define a specific class of Lipschitz disks. Fix an open set $U \subset M$.

\begin{definition}\label{def:Lip_disks}
Consider $n$ disjoint copies $\{F_i\}_{i=1}^n$ of $\D$ and define $\cY$ to be the collection of Lipschitz maps $\chi:\bigsqcup_{i=1}^n F_i\to U$ such that:
\begin{enumerate}
	\item there are finitely many (or zero) pairwise disjoint $C^2$ Jordan curves $\Gamma_1,\dots,\Gamma_m\subset U$ such that for each $k=1,\dots, m$, \[\chi^{-1}(\Gamma_k)=\gamma_k^1\cup\gamma_k^2,\] 
	where $\{\gamma_k^l: k=1,\dots,m;\ l=1,2\}$ is a collection of $2m$ pairwise disjoint $C^2$ Jordan curves in $\bigsqcup_{i=1}^n (F_i\setminus\partial F_i)$, and $\chi\vert_{\gamma_k^l}$ is a $C^2$ parametrization of $\Gamma_k$.
	\item the restriction of $\chi$ to $\left(\bigsqcup_{i=1}^n F_i\right)\setminus\bigcup_{k=1}^m(\gamma_k^1\cup\gamma_k^2)$ is a $C^2$ embedding into $U$.
	\item for each $k=1,\dots, m$, there exist disjoint tubular neighborhoods $W_k^1$, $W_k^2$ of $\gamma_k^1$, $\gamma_k^2$ respectively, such that $\chi(W_k^1)\cap\chi(W_k^2)=\Gamma_k$ and exactly one of the following holds: 
	\begin{enumerate}[label=(3\alph*)]
		\item  $\chi(W_k^1)$ lies on one side of $\chi(W_k^2)$,
		\item  $\chi(W_k^1)$ intersects $\chi(W_k^2)$ transversely along $\Gamma_k$.
	\end{enumerate}
\end{enumerate}
For future convenience, let us introduce the following notation. For each $k=1,\dots, m$ and $l=1,2$, let $W_k^{l,+}=(W_k^l\cap\intr(\gamma_k^l))\setminus\gamma_k^l$ and $W_k^{l,-}=W_k^l\setminus(W_k^{l,+}\cup\gamma_k^l)$, so that $W_k^l=W_k^{l,+}\cup\gamma_k^l\cup W_k^{l,-}$.

Finally, define $\cY_0$ to be the collection of Lipschitz maps $\chi\in\cY$ such that only (3a) occurs for all $k=1,\dots,m$. These are the ``disentangled'' disks, so our goal will be to change a map in $\cY$ into a map in $\cY_0$.
\end{definition}

\begin{remark}\label{rmk:submanifold}
As in \cite{AS}, notice that $\chi\left(\bigsqcup_{i=1}^n F_i\setminus\partial F_i\right)\setminus\bigcup_{i=1}^m\Gamma_i$ is a 2-dimensional open submanifold of $M$, each component of which has closure of the form $\chi(\overline{R})$, where $R$ is a component of $\left(\bigsqcup_{i=1}^n F_i\right)\setminus\bigcup_{k=1}^m(\gamma_k^1\cup\gamma_k^2)$.
\end{remark}

We will say a finite collection $\mathcal{S}$ of Lipschitz disks is in $\cY$ (or $\cY_0$), and write $\cS\in\cY$ (or $\cS\in\cY_0$), if it is the image of some map $\chi\in\cY$ (or $\cY_0$, respectively).

\subsection{Resolution procedure}
We develop a procedure to disentangle intersecting disks by swapping pieces without changing the image of the union of the disks.

\begin{lemma}[Resolution of Overlaps]\label{lem:resolution}
Let $U\subset M$ be a bounded connected open set with $C^1$ boundary. Let $\cS = \{D_i\}_{i=1}^n \in \cY$. Then there are Lipschitz disks $\tilde{\cS} = \{\tilde{D}_i\}_{i=1}^n \in \cY_0$ with $\tilde{D}_i \subset U$ and $\partial \tilde{D}_i = \partial D_i$ satisfying
\[ \bigcup_{i=1}^n D_i = \bigcup_{i=1}^n \tilde{D}_i. \]
\end{lemma}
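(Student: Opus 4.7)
The plan is to iteratively eliminate the transverse intersection curves (those $\Gamma_k$ of type (3b) in Definition \ref{def:Lip_disks}) via a local cut-and-paste swap, one at a time, converting each into a tangential intersection of type (3a), while keeping each $\tilde{D}_i$ topologically a disk, preserving $\partial \tilde{D}_i = \partial D_i$, and keeping the total image $\bigcup_i \tilde{D}_i = \bigcup_i D_i$ unchanged. I would set up the argument as an induction on the number $m_b$ of intersection curves of type (3b), with the base case $m_b = 0$ being trivial.

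For the inductive step, fix a type (3b) curve $\Gamma_k$ with preimages $\gamma_k^1 \subset F_{i_1}$ and $\gamma_k^2 \subset F_{i_2}$. To make the swap local and to avoid interfering with other crossings, I would first select $\Gamma_k$ to be \emph{innermost}, in the sense that one of the two Jordan domains bounded by $\gamma_k^1$ in $F_{i_1}$ (call it $A_1$) contains no other preimage arcs of type (3b). By the Jordan--Schoenflies theorem, $\gamma_k^2$ bounds a disk $A_2 \subset F_{i_2}$ as well. First, suppose $i_1 \neq i_2$. Let $\psi := (\chi\vert_{\gamma_k^2})^{-1} \circ \chi\vert_{\gamma_k^1} : \gamma_k^1 \to \gamma_k^2$ be the induced $C^2$ diffeomorphism, and extend it to a bi-Lipschitz homeomorphism $\Psi : A_1 \to A_2$ (using a collar near the boundary). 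Define
\[
\tilde{\chi}\vert_{F_{i_1}} := \begin{cases} \chi\vert_{F_{i_1} \setminus A_1} & \text{on } F_{i_1} \setminus A_1, \\ \chi\vert_{F_{i_2}} \circ \Psi & \text{on } A_1, \end{cases}
\qquad
\tilde{\chi}\vert_{F_{i_2}} := \begin{cases} \chi\vert_{F_{i_2} \setminus A_2} & \text{on } F_{i_2} \setminus A_2, \\ \chi\vert_{F_{i_1}} \circ \Psi^{-1} & \text{on } A_2, \end{cases}
\]
and leave $\chi$ unchanged on the remaining $F_i$. The continuity across $\gamma_k^1$ and $\gamma_k^2$ follows from $\chi\vert_{\gamma_k^1} = \chi\vert_{\gamma_k^2} \circ \psi$, Lipschitz regularity is preserved because $\Psi$ is bi-Lipschitz, and the boundary data is untouched because $\gamma_k^l \subset \intr(F_{i_l})$. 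Moreover the union of images is the same: the sets $\chi(A_1)$ and $\chi(A_2)$ are simply ``reassigned'' between the two source disks. The case $i_1 = i_2$ (a self-intersection) reduces to the same construction after choosing innermost $\gamma_k^1$ and swapping the bounded components it separates with $A_2$.

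The key geometric observation is that the swap converts the type (3b) crossing at $\Gamma_k$ into type (3a). Near $\Gamma_k$ the original configuration has four ``quadrants'' $W_k^{1,+}, W_k^{1,-}, W_k^{2,+}, W_k^{2,-}$ with $W_k^{1,\pm}$ forming one sheet through $\Gamma_k$ and $W_k^{2,\pm}$ the transverse sheet. By construction, the new disks pair $W_k^{1,+}$ with $W_k^{2,-}$ (coming from the reassignment across $A_1, A_2$) and $W_k^{2,+}$ with $W_k^{1,-}$, so each new sheet now lies entirely on one side of the other near $\Gamma_k$, yielding case (3a). Since $\Gamma_k$ was chosen innermost and the modification is supported in $\chi^{-1}(\chi(A_1) \cup \chi(A_2))$, the intersection type at every other $\Gamma_{k'}$ is unchanged; in particular $m_b$ strictly decreases. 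Iterating finitely many times yields $\tilde{\chi} \in \cY_0$.

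\emph{Main obstacle.} The delicate point is ensuring the inductive step is truly local, i.e.\ that swapping across $\Gamma_k$ does not alter the transverse-vs-tangential type at another $\Gamma_{k'}$, and that each $\tilde{D}_i$ remains a disk rather than, say, a sphere or a pair of disks. Both are handled by the innermost choice of $\Gamma_k$: since $A_1$ (and correspondingly $A_2$, after possibly further innermost refinement there) contains no other preimage arcs, the swap is a genuine interchange of two topological disks, so the domains $F_i$ are unchanged and the other crossings are preserved setwise in $M$. Verifying this carefully, together with the self-intersection case $i_1 = i_2$ where nested vs.\ disjoint configurations of $\gamma_k^1, \gamma_k^2$ inside $F_i$ require separate but analogous cut-and-paste prescriptions, is the main technical content.
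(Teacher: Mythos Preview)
Your approach is essentially the same as the paper's: induct on the number of transverse intersection curves and eliminate them one at a time by a cut-and-paste swap that converts (3b) to (3a) at the chosen curve. Two points are worth flagging. First, the innermost hypothesis is unnecessary, and your justification for why the other $\Gamma_{k'}$ are unaffected is slightly off: the paper picks an \emph{arbitrary} $q \in \cB(\cS)$ and writes the new map as $\tilde{\chi} = \chi \circ \Psi_q$ for a domain bijection $\Psi_q$ that is a $C^2$ diffeomorphism away from $\gamma_q^1 \cup \gamma_q^2$; then $\tilde{\gamma}_k^l := \Psi_q^{-1}(\gamma_k^l)$ and $\tilde{W}_k^l := \Psi_q^{-1}(W_k^l)$ satisfy $\tilde{\chi}(\tilde{W}_k^l) = \chi(W_k^l)$ for $k \neq q$, so the (3a)/(3b) type at every other curve is preserved automatically---no locality or innermost argument is needed, and in particular $A_2$ is allowed to contain other preimage arcs. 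Second, your extension $\Psi$ must be a $C^2$ diffeomorphism, not merely bi-Lipschitz, since Definition~\ref{def:Lip_disks}(2) requires $\tilde{\chi}$ restricted to the complement of the $\tilde{\gamma}_k^l$ to be a $C^2$ embedding; this is what the paper uses and is easily arranged. The paper's case split is by whether $\mathrm{int}\,\gamma_q^1$ and $\mathrm{int}\,\gamma_q^2$ are disjoint or nested (independently of whether $i_1 = i_2$), with the nested case handled by a $C^2$ self-diffeomorphism of the annulus $\overline{\mathrm{int}\,\gamma_q^1} \setminus \mathrm{int}\,\gamma_q^2$ swapping the boundary circles---this is the ``analogous prescription'' you allude to.
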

\begin{proof}
We use notation as in Definition \ref{def:Lip_disks}. 

For each intersection curve $\Gamma_k$, exactly one of (3a) and (3b) in Definition \ref{def:Lip_disks} holds. Let 
\[\cB(\cS)=\{1\leq k\leq m\mid \text{(3b) occurs}\}.\]

If $\cB(\cS)=\varnothing$, then $\cS\in\cY_0$ and we are done. Hence, it suffices by induction to furnish a procedure which replaces the collection $\mathcal{S}$ of disks with another collection $\tilde{\mathcal{S}}\in\cY$ such that:
\begin{itemize}
	\item $\bigcup_i D_i = \bigcup_{i} \tilde{D}_i$,
	\item $\partial \tilde{D}_i = \partial D_i$,
	\item $|\cB(\tilde{\cS})|<|\cB(\cS)|$.
\end{itemize}

Let $q \in \cB(\cS)$. There are (without loss of generality) two cases to consider.

\emph{Case 1}: $\text{int}\gamma_q^1\cap \text{int} \gamma_q^2 = \varnothing$. Fix a $C^2$ diffeomorphism
\[ \psi_q: \overline{\text{int}\gamma_q^1} \to \overline{\text{int}\gamma_q^2} \]
so that $(\chi \circ \psi_q)\vert_{\gamma_q^1} = \chi\vert_{\gamma_q^2}$.
Then we define a bijection $\Psi_q : \bigsqcup_i F_i \to \bigsqcup_i F_i$ by
\[ \Psi_q(\xi) \coloneqq \begin{cases}
\psi_q(\xi) & \xi \in \overline{\text{int}\gamma_q^1}\\
\psi_q^{-1}(\xi) & \xi \in \overline{\text{int}\gamma_q^2}\\
\xi & \text{otherwise}.
\end{cases} \]

\emph{Case 2}: $\gamma_q^2 \subset \text{int}(\gamma_q^1)$. Fix a $C^2$ diffeomorphism
\[ \psi_q: \overline{\text{int}\gamma_q^1} \setminus \text{int}\gamma_q^2 \to \overline{\text{int}\gamma_q^1} \setminus \text{int}\gamma_q^2 \]
so that $(\chi \circ \psi_q)\vert_{\gamma_q^l} = \chi\vert_{\gamma_q^{l+1}}$ (where the superscripts are taken mod 2 for convenience). Then we define a bijection $\Psi_q : \bigsqcup_i F_i \to \bigsqcup_i F_i$ by
\[ \Psi_q(\xi) \coloneqq \begin{cases}
\psi_q(\xi) & \xi \in \overline{\text{int}\gamma_q^1} \setminus \text{int}\gamma_q^2\\
\xi & \text{otherwise}.
\end{cases} \]

We define $\tilde{\chi} \coloneqq \chi \circ \Psi_q : \bigsqcup_i F_i \to M$. By the boundary assumption, in either case, $\tilde{\chi}$ is a Lipschitz map. Since $\Psi_q$ is a bijection, $\tilde{\chi}$ has the same image as $\chi$. Moreover, $\tilde{\chi}$ agrees with $\chi$ in a neighborhood of the boundaries of the $F_i$. Note that the restriction
\[ \Psi_q: \left(\bigsqcup_i F_i\right) \setminus (\gamma_q^1 \cup \gamma_q^2) \to \left(\bigsqcup_i F_i\right) \setminus (\gamma_q^1 \cup \gamma_q^2) \]
is a $C^2$ diffeomorphism. We check each component of Definition \ref{def:Lip_disks}:
\begin{enumerate}
    \item Since $\Psi_q$ is a bijection, we have $\tilde{\chi}^{-1}(\Gamma_k) = \tilde{\gamma}_k^1 \cup \tilde{\gamma}_k^2$, where $\tilde{\gamma}_k^l \coloneqq \Psi_q^{-1}(\gamma_k^l)$ are disjoint sets. Since $\Psi_q$ is a $C^2$ diffeomorphism away from $\gamma_q^1 \cup \gamma_q^2$, $\tilde{\gamma}_k^l$ is a $C^2$ Jordan curve for all $k \neq q$ and $\tilde{\chi}\vert_{\tilde{\gamma}_k^l}$ is a $C^2$ parametrization. Finally, it follows from the construction that $\tilde{\gamma}_q^l = \gamma_q^{l+1}$ and $\tilde{\chi}\vert_{\tilde{\gamma}_q^l}$ is a $C^2$ parametrization.
    \item Since $\Psi_q$ is a $C^2$ diffeomorphism away from $\tilde{\gamma}_q^1 \cup \tilde{\gamma}_q^2$, the restriction of $\tilde{\chi}$ to any component of $(\bigsqcup_i F_i)\setminus (\bigsqcup_k \tilde{\gamma}_k^1 \cup \tilde{\gamma}_k^2)$ is a $C^2$ embedding.
    \item For $k \neq q$, the desired tubular neighborhoods are $\tilde{W}_k^l = \Psi_q^{-1}(W_k^l)$. Since $\tilde{\chi}(\tilde{W}_k^l) = \chi(W_k^l)$, (3a) (resp. (3b)) holds at $k \neq q$ for $\tilde{\chi}$ if and only if (3a) (resp. (3b)) holds at $k$ for $\chi$. We now define $\tilde{W}_q^l$, which differ in the above cases.
    \begin{itemize}
        \item In Case 1, we have
        \[ \tilde{W}_q^l = \Psi_q^{-1}(W_q^{l,-} \cup \gamma_q^{l+1} \cup W_q^{l+1,+}). \]
        Then
        \[ \tilde{\chi}(\tilde{W}_q^l) = \chi(W_q^{l,-}) \cup \Gamma_q \cup \chi(W_q^{l+1,+}) \]
        \item In Case 2, we have
        \begin{align*}
            \tilde{W}_q^1 & = \Psi_q^{-1}(W_q^{1,-} \cup \gamma_q^2 \cup W_q^{2,-})\\
            \tilde{W}_q^2 & = \Psi_q^{-1}(W_q^{2,+} \cup \gamma_q^1 \cup W_q^{1,+}).
        \end{align*}
        Then
        \begin{align*}
            \tilde{\chi}(\tilde{W}_q^1) & = \chi(W_q^{1,-}) \cup \Gamma_q \cup \chi(W_q^{2,-})\\
            \tilde{\chi}(\tilde{W}_q^2) & = \chi(W_q^{2,+}) \cup \Gamma_q \cup \chi(W_q^{1,+}).
        \end{align*}
    \end{itemize}
    Since (3b) holds at $q$ for $\chi$, we see that (3a) holds at $q$ for $\tilde{\chi}$.
\end{enumerate}
Hence, $\tilde{\chi} \in \cY$. Moreover, it follows from (3) above that $|\cB(\tilde{\cS})| < |\cB(\cS)|$.
\end{proof}

\begin{remark}
The procedure described in the proof of Lemma \ref{lem:resolution} is not uniquely determined by the initial collection $\cS$. However, we will see that this is not an issue in our applications. 
\end{remark}

After applying the resolution procedure, we will make small perturbations to produce genuinely embedded surfaces. We collect the required statements in the following lemma.

\begin{lemma}\label{lem:y0}
Let $U \subset M$ be an open set with $\overline{U} \approx  \overline{\B}$. Let $\Sigma \in \mathcal{D}(U)$. Let $\{D_i\}_{i=1}^n$ be a set of smooth embedded disks with $D_i \subset U$, $\partial D_i \subset \partial U$, and 
\[ \Sigma \cap U = \bigcup_{i=1}^{n} D_i. \]
Let $\{\tilde{D}_i\} \in \cY_0$ satisfy $\tilde{D}_i \subset U$ and $\partial [\tilde{D}_i] = \partial [D_i]$. Then there is an open set $\tilde{\Omega} \subset U$ satisfying
\begin{equation}\label{eqn:y0_conclusion}
\partial [\tilde{\Omega}]\llcorner U = \sum_{i=1}^n [\tilde{D}_i].
\end{equation}
Moreover, for any $\eps > 0$ there is a $\Sigma' \in \mathcal{D}(U)[\Sigma]$ satisfying
\begin{equation}\label{eqn:y0_subset}
    \Sigma \triangle \Sigma' \subset U,
\end{equation}
\begin{equation}\label{eqn:y0_area}
    \cH^2(\Sigma' \cap U) \leq \sum_{i=1}^n \cH^2(\tilde{D}_i) + \eps,
\end{equation}
and
\begin{equation}\label{eqn:y0_vol}
    \mathbb{M}(T(\Sigma, \Sigma') - ([\Omega] - [\tilde{\Omega}])) \leq \eps.
\end{equation}
\end{lemma}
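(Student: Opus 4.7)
The approach is to approximate the Lipschitz family $\{\tilde D_i\}\in\cY_0$ by smooth pairwise disjoint embedded disks $\{D_i^\eta\}$, use the approximation to construct both $\tilde\Omega$ and $\Sigma'$ in tandem, and pass to the limit.

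Step 1 (smooth disentanglement). For each $\eta>0$, I will produce smooth embedded pairwise disjoint disks $D_i^\eta\subset U$ with $\partial [D_i^\eta]=\partial[D_i]$, transverse intersection with $\partial U$, and $\sum_i \cH^2(D_i^\eta)\le \sum_i \cH^2(\tilde D_i)+\eta$. Near each intersection curve $\Gamma_k$, property (3a) of Definition \ref{def:Lip_disks} supplies a unit normal direction along which one tubular neighborhood may be pushed off the other by a controlled amount using a cutoff supported in $W_k^1\cup W_k^2$. Performing this successively for all $k$, then mollifying away from $\partial U$, produces smooth disjoint sheets with boundaries unchanged (the supports of the pushoffs and mollifications lie inside $U$).

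Step 2 (filling and limit). Because the $D_i^\eta$ are smooth, disjoint, and have $\partial\bigl(\sum_i [D_i^\eta]\bigr)=\partial\bigl(\sum_i[D_i]\bigr)$, the current $\sum_i[D_i^\eta]-\sum_i[D_i]$ is a closed integral $2$-current in $\overline U\approx\overline{\B}$, which therefore bounds an integral $3$-chain (unique modulo multiples of $[\overline U]$). Combining this with the $\cY_0$ structure of $\{\tilde D_i\}$, which ensures the chamber winding number inherited from the $\{0,1\}$-valued $\mathbf{1}_\Omega$ does not grow under the perturbation, I obtain an open set $\Omega^\eta\subset U$ with $\partial [\Omega^\eta]\llcorner U=\sum_i [D_i^\eta]$. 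Taking $\eta\to 0$ and applying BV/current compactness, $\Omega^\eta\to\tilde\Omega$ in $L^1$ and $\sum_i [D_i^\eta]\to \sum_i[\tilde D_i]$ in the current topology, so the limit satisfies $\partial[\tilde\Omega]\llcorner U=\sum_i[\tilde D_i]$, which is (\ref{eqn:y0_conclusion}).

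Step 3 (competitor and fill-in). Given $\eps>0$, pick $\eta$ small enough that $\sum_i\cH^2(D_i^\eta)\le \sum_i\cH^2(\tilde D_i)+\eps$ and $\cH^3(\Omega^\eta\triangle\tilde\Omega)\le\eps$. Define $\Sigma'$ by letting $\Sigma'\setminus U=\Sigma\setminus U$ and $\Sigma'\cap U=\bigsqcup_i D_i^\eta$, with smooth gluing across $\partial U$ (possible since $\partial [D_i^\eta]=\partial [D_i]$ and the boundaries meet $\partial U$ transversely). Then $\Sigma'\in\mathcal{D}(U)[\Sigma]$, (\ref{eqn:y0_subset}) holds by construction, and (\ref{eqn:y0_area}) follows from the area bound in Step 1. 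For (\ref{eqn:y0_vol}), observe that $[\Omega]-[\Omega^\eta]$ is an integral $3$-current supported in $\overline U$ whose boundary is $\sum_i[D_i]-\sum_i[D_i^\eta]=[\Sigma]-[\Sigma']$ (since $\Sigma$ and $\Sigma'$ agree outside $U$); by the uniqueness part of Proposition \ref{prop:fill-in}, $T(\Sigma,\Sigma')=[\Omega]-[\Omega^\eta]$, hence
\[
\mathbb{M}\bigl(T(\Sigma,\Sigma')-([\Omega]-[\tilde\Omega])\bigr)=\mathbb{M}([\tilde\Omega]-[\Omega^\eta])=\cH^3(\tilde\Omega\triangle\Omega^\eta)\le\eps.
\]
The main obstacle is Step 1: constructing a simultaneous smooth disentanglement that preserves every boundary $\partial [D_i]$ and increases the total area by only $O(\eta)$. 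The $\cY_0$ hypothesis is essential here, since it provides the one-sided information needed to push off each tangential contact $\Gamma_k$ in a consistent direction; without it, transverse crossings could not be separated by a small perturbation.
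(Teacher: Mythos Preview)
Your argument is correct and follows essentially the same approach as the paper: perturb the $\cY_0$ configuration along each tangential contact curve $\Gamma_k$ (using the one-sided information from condition (3a)) to obtain genuinely disjoint smooth embedded disks, then take these as $\Sigma'\cap U$. The paper's proof is two sentences and leaves exactly these details to the reader.

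The one place you take a slightly longer route is the construction of $\tilde\Omega$. You build $\Omega^\eta$ for each perturbed family and pass to the $L^1$ limit. The paper instead observes that $\tilde\Omega$ exists directly: since $\partial[\tilde D_i]=\partial[D_i]$ and the $D_i$ already bound the open set $\Omega$, the boundary orientations of the $\tilde D_i$ alternate in the same way, so the $\cY_0$ disks (which do not cross) partition $U$ into chambers and the orientation data picks out $\tilde\Omega$ without any limiting procedure. Your detour through BV compactness is harmless but unnecessary.
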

\begin{proof}
The existence of $\tilde{\Omega}$ follows from the existence of $\Omega$ and the fact that the boundary orientations of the disks agree.

The remaining conclusions follow from making small perturbations along the intersection curves of the disks $\{\tilde{D}_i\}$.
\end{proof}

\subsection{Application 1: Collapsing to the boundary}
As a first application of the resolution procedure, we would like to ignore some components of a minimizing sequence in a ball by collapsing those components to the boundary of the ball.

\begin{lemma}\label{lem:resolution_iso_collapse}
Let $U \subset M$ be an open set with $\overline{U} \approx  \overline{\B}$. Let $\Sigma \in \mathcal{D}(U)$. Let $\mathcal{S}$ be a set of smooth embedded disks $D$ with $D \subset U$, $\partial D \subset \partial U$, and
\[ \Sigma \cap U = \bigcup_{D \in \mathcal{S}} D. \]
Let $\mathcal{S}_0 \subset \mathcal{S}$. For $D \in \mathcal{S}_0$, let $F_D \subset \partial U$ be the isoperimetric region for $\partial D$, and let $\Lambda_D$ be the region in $U$ bounded by $D \cup F_D$. For any $\eps > 0$, there is a $\Sigma' \in \mathcal{D}(U)[\Sigma]$ satisfying
\begin{equation}\label{eqn:resolution_iso_collapse_subset}
    \Sigma \triangle \Sigma' \subset U,
\end{equation}
\begin{equation}\label{eqn:resolution_iso_collapse_area}
    \cH^2(\Sigma' \cap U) \leq \sum_{D \in \mathcal{S}_0} \cH^2(F_D) + \sum_{D \in \mathcal{S} \setminus \mathcal{S}_0} \cH^2(D) + \eps,
\end{equation}
and
\begin{equation}\label{eqn:resolution_iso_collapse_vol}
    \mathbb{M}(T(\Sigma, \Sigma')) \leq \sum_{D \in \mathcal{S}_0} \cH^3(\Lambda_D) + \eps.
\end{equation}
\end{lemma}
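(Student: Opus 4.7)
The plan is to replace each disk $D \in \mathcal{S}_0$ by a Lipschitz disk $D'$ that closely follows the isoperimetric region $F_D \subset \partial U$ (pushed slightly into $U$), then apply Lemma \ref{lem:resolution} to disentangle the resulting collection, and finally pass to a smooth embedded representative via Lemma \ref{lem:y0}. The area and volume estimates then follow essentially by construction.

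For the first step, fix a one-sided collar neighborhood of $\partial U$ in $\overline{U}$ of the form $\partial U \times [0, \tau)$. Enumerate $\mathcal{S}_0 = \{D_1, \ldots, D_p\}$ and choose distinct small depths $0 < \tau_1 < \cdots < \tau_p < \tau$. Since $\partial F_{D_i} = \partial D_i$ and $F_{D_i}$ is a smoothly bounded region of $\partial U$, let $D_i'$ be a smooth disk obtained by pushing $F_{D_i}$ into the collar at depth $\tau_i$ while fixing its boundary. For all $\tau_i$ sufficiently small and after an arbitrarily small smooth perturbation to ensure generic position, we arrange that: (i) each $D_i'$ is a smooth embedded disk in $U$ with $\partial D_i' = \partial D_i$; (ii) $\cH^2(D_i') \leq \cH^2(F_{D_i}) + \eps/(4|\mathcal{S}|)$; (iii) the region $\Lambda_i' \subset U$ bounded by $D_i \cup D_i'$ satisfies $\Lambda_i' \subset \Lambda_{D_i}$, so $\cH^3(\Lambda_i') \leq \cH^3(\Lambda_{D_i})$; and (iv) the members of $\mathcal{S}' \coloneqq \{D_i'\}_{i=1}^p \cup (\mathcal{S} \setminus \mathcal{S}_0)$ meet pairwise along $C^2$ Jordan curves of the types allowed in Definition \ref{def:Lip_disks}, so $\mathcal{S}' \in \cY$.

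Apply Lemma \ref{lem:resolution} to $\mathcal{S}'$ to obtain $\tilde{\mathcal{S}} = \{\tilde{D}_i\} \in \cY_0$ with the same union and same boundary curves as $\mathcal{S}'$. Since the resolution procedure acts by a piecewise diffeomorphism that merely permutes subsets of the parametrizing disks across measure-zero intersection curves, total area is preserved: $\sum_i \cH^2(\tilde{D}_i) = \sum_{D'' \in \mathcal{S}'} \cH^2(D'')$. Now apply Lemma \ref{lem:y0} with error tolerance $\eps/2$ to obtain $\Sigma' \in \mathcal{D}(U)[\Sigma]$ satisfying \eqref{eqn:y0_subset}--\eqref{eqn:y0_vol}, where $\tilde{\Omega}$ is the region in $U$ bounded by $\bigcup \tilde{D}_i$. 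Then \eqref{eqn:y0_subset} yields \eqref{eqn:resolution_iso_collapse_subset}, and combining \eqref{eqn:y0_area} with (i)--(ii) from the first step yields \eqref{eqn:resolution_iso_collapse_area}. For \eqref{eqn:resolution_iso_collapse_vol}, observe that $\Omega \triangle \tilde{\Omega} \subset \bigcup_i \Lambda_i' \subset \bigcup_{D \in \mathcal{S}_0} \Lambda_D$, hence $\mathbb{M}([\Omega] - [\tilde{\Omega}]) = \cH^3(\Omega \triangle \tilde{\Omega}) \leq \sum_{D \in \mathcal{S}_0} \cH^3(\Lambda_D)$; the triangle inequality with \eqref{eqn:y0_vol} gives the result.

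The main technical obstacle is the construction in the first step: the collapsed disks $D_i'$ must be placed simultaneously close enough to the $F_{D_i}$ (for the area and volume estimates) and in sufficiently generic position relative to each other and to $\mathcal{S} \setminus \mathcal{S}_0$ (for Definition \ref{def:Lip_disks}) that Lemma \ref{lem:resolution} applies. The combinatorics when several isoperimetric regions $F_{D_i} \subset \partial U$ overlap, or when a disk of $\mathcal{S} \setminus \mathcal{S}_0$ enters the collar, requires choosing the depths $\tau_i$ and the perturbations generically; everything else is a routine bookkeeping of the estimates provided by the two previous lemmas.
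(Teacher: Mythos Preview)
Your overall strategy---perturb each $F_D$ to a nearby disk $D'_i$ inside $U$, apply Lemma~\ref{lem:resolution} to disentangle, then invoke Lemma~\ref{lem:y0}---is exactly the paper's approach, and the area bound~\eqref{eqn:resolution_iso_collapse_area} follows just as you say.

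The gap is in the volume bound. You assert without proof that $\Omega \triangle \tilde{\Omega} \subset \bigcup_i \Lambda_i'$, but this is precisely the nontrivial step. The resolution procedure of Lemma~\ref{lem:resolution} preserves the union $\bigcup_i \tilde D_i = \bigcup_{D''\in\cS'} D''$, but it does \emph{not} in general preserve the enclosed region as a current: swapping pieces across transverse intersection curves can flip which components of $U \setminus \bigcup D''$ lie inside $\tilde\Omega$. Indeed, the later computation \eqref{eqn:resolution_stacked_one_intermediate} in Lemma~\ref{lem:resolution_stacked_one} shows explicitly that $[\Omega]-[\tilde\Omega]$ picks up correction terms beyond the naive $\sum([\Omega_i]-[\Omega_i'])$, so one cannot simply read off the symmetric difference from the individual $\Lambda_i'$.

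The paper handles this by an intersection-counting argument: one takes $x\in(\Omega\triangle\tilde\Omega)$ away from a thin collar $B_\eta(\partial U)$, connects it by a generic path $\gamma$ to a basepoint $x_0\in\partial U$ lying off all the $F_D$ and $\partial D$, and observes that since both endpoints lie outside the collar containing the $F_D'$, the path meets each $F_D'$ an even number of times. The membership of $x$ in $\Omega\triangle\tilde\Omega$ then forces $\gamma$ to cross some original $D\in\cS_0$ an odd number of times, placing $x\in\Lambda_D$. The collar itself has volume at most $\eps/2$ by choice of $\eta$. You need to supply this (or an equivalent) parity argument; the containment you want does not come for free from the construction.
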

\begin{proof}
To apply Lemma \ref{lem:resolution}, we must first perturb the disks $F_D$. Let $\eta > 0$ so that $\cH^3(B_{\eta}(\partial U)) < \eps/2$. We define disks $F_D'$ for $D \in \cS_0$ so that the following hold:
\begin{itemize}
    \item $F_{D_1}' \cap F_{D_2}' = \varnothing$ for $D_1 \neq D_2 \in \cS_0$,
    \item $F_{D_1}$ has transverse (or empty) intersection with $D_2$ for any $D_1 \in \cS_0$ and $D_2 \in \cS \setminus \cS_0$,
    \item $F_D' \subset U$ and $\partial F_D' = \partial D$,
    \item $|\cH^2(F_D) - \cH^2(F_D')| < \eps/(2|\cS_0|)$,
    \item $F_D' \subset B_{\eta}(\partial U) \cap \Lambda_D$.
\end{itemize}
We can then apply Lemma \ref{lem:resolution} to the disks
\[ \cS' \coloneqq (\cS \setminus \cS_0) \cup \{F_D'\}_{D \in \cS_0}, \]
producing a collection $\tilde{\cS}$.

By Lemma \ref{lem:y0} and the choice of $\eta$, it then suffices show (up to a set of $\cH^3$ measure zero)
\[ (\Omega \triangle \tilde{\Omega}) \setminus B_{\eta}(\partial U) \subset \bigcup_{D \in \cS_0} \Lambda_D. \]

Let $x \in (\Omega \triangle \tilde{\Omega}) \setminus B_{\eta}(\partial U)$. Take a point
\[ x_0 \in \partial U \setminus \left(\bigcup_{D \in \cS_0} F_D \cup \bigcup_{D \in \cS} \partial D\right). \]
We also assume $x$ is not in any $D \in \cS$ or $\tilde{D} \in \tilde{\cS}$, which is a set of $\cH^3$ measure zero. Let $\gamma : (0, 1] \to U$ be a smooth path having transverse intersection with any $D \in \cS$ or $\tilde{D} \in \tilde{\cS}$, satisfying $\lim_{t \to 0} \gamma(t) = x_0$ and $\gamma(1) = x$. Since $x_0$ and $x$ are in the complement of $B_{\eta}(\partial U) \cap (\bigcup_{D \in \cS_0} \overline{\Lambda}_D)$, which contains the disks $F_D'$, we conclude that $\gamma$ intersects each $F_D'$ for $D \in \cS_0$ an even number of times. Since $x \in \Omega \triangle \tilde{\Omega}$ and $x_0 \notin \overline{\Omega \triangle \tilde{\Omega}}$, we conclude that $\gamma$ intersects some $D \in \cS_0$ an odd number of times. Since $x_0 \notin \overline{\Lambda}_D$, we have $x \in \Lambda_D$ for some $D \in \cS_0$.
\end{proof}

\subsection{Application 2: Isolating one stacked disk}\label{subsec:isolate_one}
Our goal in this application is to isolate individual disks in a stacked disk minimization problem to deduce some (albeit weaker) regularity for each sheet of the limit varifold.

We first establish some notation.

Let $U \approx  \B$ satisfy $\partial U = \overline{C} \cup \overline{D^+} \cup \overline{D^-}$, where $C$ is a smoothly embedded annulus and $D^+$ and $D^-$ are smoothly embedded disks. Let $\Sigma \in \mathcal{D}(U)$, in the sense that $\Sigma \cap \partial C = \varnothing$. Moreover, assume $\Sigma \cap U$ is a disjoint union of smooth embedded oriented disks $\{D_1, \hdots, D_n\}$, each of which has homotopically nontrivial boundary in $C$.

For $i \in \{1, \hdots, n\}$, let $B_i \subset U$ be the open subset satisfying
\[ D^- \subset \partial B_i \setminus D_i \subset \partial U. \]
If $B_i \subset B_j$ for $i \neq j$, we say $D_i < D_j$. Without loss of generality, we suppose
\[ D_1 > D_2 > \hdots > D_n. \]

Let $\Omega_i \subset U$ be the open subset satisfying
\[ \partial [\Omega_i]\llcorner U =  [D_i] \llcorner U. \]
If $B_i = \Omega_i$, we say $D_i$ is oriented up. Otherwise, we say $D_i$ is oriented down. By the ordering of the $D_i$ and the fact that $\Sigma \in \mathcal{D}(U)$, $D_i$ is oriented up if and only if $D_{i-1}$ and $D_{i+1}$ are oriented down (i.e.\ the orientation alternates with respect to the ordering).

\begin{lemma}\label{lem:resolution_stacked_one}
Fix some $1 \leq j \leq n$. Let $D_j' \in \mathcal{D}(U)[D_j]$ so that $D_j'$ has transverse intersections with each $D_i$ for $i \neq j$. For any $\eps > 0$, there is a $\Sigma' \in \mathcal{D}(U)[\Sigma]$ satisfying
\begin{equation}\label{eqn:resolution_stacked_one_subset}
    \Sigma \triangle \Sigma' \subset U,
\end{equation}
\begin{equation}\label{eqn:resolution_stacked_one_area}
    \cH^2(\Sigma' \cap U) \leq \cH^2(D_j') + \sum_{i\neq j} \cH^2(D_i) + \eps,
\end{equation}
and
\begin{equation}\label{eqn:resolution_stacked_one_vol1}
    \mathbb{M}(T(\Sigma, \Sigma') - [\Omega^+] + [\Omega^-]) \leq \eps,
\end{equation}
where $\Omega^+$ and $\Omega^-$ are disjoint sets contained in $\Omega_j \triangle \Omega_j'$.

Moreover, instead of (\ref{eqn:resolution_stacked_one_vol1}), we can construct $\Sigma'$ satisfying
\begin{equation}\label{eqn:resolution_stacked_one_vol2}
    \mathbb{M}(T(\Sigma, \Sigma') - ([\Omega_j] - [\Omega_j']) - [(\Omega_j' \setminus \Omega_j) \cap \Omega] +G) \leq \eps.
\end{equation}
where $G$ is a nonnegative integral current, in the sense that $\int F(G)f\ d\cH^3\geq 0$ for all nonnegative $f$.
\end{lemma}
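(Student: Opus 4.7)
The plan is to apply the resolution procedure (Lemma \ref{lem:resolution}) to the modified collection $\mathcal{S}' := \{D_1,\dots,D_{j-1},D_j',D_{j+1},\dots,D_n\}$. By the transverse-intersection hypothesis on $D_j'$ (after a negligible generic perturbation to enforce the technical conditions of Definition \ref{def:Lip_disks}), $\mathcal{S}'$ lies in $\mathcal{Y}$. Lemma \ref{lem:resolution} then produces a disentangled collection $\tilde{\mathcal{S}} = \{\tilde{D}_1,\dots,\tilde{D}_{j-1},\tilde{D}_j',\tilde{D}_{j+1},\dots,\tilde{D}_n\} \in \mathcal{Y}_0$ preserving the boundary of each sheet and the total union of images. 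Feeding $\tilde{\mathcal{S}}$ into Lemma \ref{lem:y0} produces an open $\tilde{\Omega}\subset U$ with $\partial[\tilde{\Omega}]\llcorner U = \sum_{i\ne j}[\tilde{D}_i] + [\tilde{D}_j']$ and a $\Sigma' \in \mathcal{D}(U)[\Sigma]$ approximating $\tilde{\mathcal{S}}$ to within $\eps/2$ in area and in the mass of the fill-in.

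The inclusion (\ref{eqn:resolution_stacked_one_subset}) is immediate since every modification in Lemmas \ref{lem:resolution} and \ref{lem:y0} is supported inside $U$. For (\ref{eqn:resolution_stacked_one_area}), I use that each elementary swap $\Psi_q$ in Lemma \ref{lem:resolution} merely permutes two regions of the parametrizing disks whose images under $\chi$ coincide, so the total area of the sheets is preserved: $\sum_{i\ne j}\cH^2(\tilde{D}_i) + \cH^2(\tilde{D}_j') = \sum_{i\ne j}\cH^2(D_i) + \cH^2(D_j')$. Combining with the $\eps/2$ slack from Lemma \ref{lem:y0} gives (\ref{eqn:resolution_stacked_one_area}).

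The main obstacle is the volume identities. The uniqueness part of Proposition \ref{prop:fill-in} implies $T(\Sigma, \Sigma') = [\Omega] - [\tilde{\Omega}]$ up to mass error $\eps/2$ from the perturbation in Lemma \ref{lem:y0}, so it suffices to analyze $\Omega \triangle \tilde{\Omega}$. For (\ref{eqn:resolution_stacked_one_vol1}), I observe that only the $j$-th sheet changed image (from $D_j$ to $D_j'$) and that disentanglement preserves the integer boundary current of each individual sheet. Hence for $x$ outside $\overline{\Omega_j \triangle \Omega_j'}$ and off all sheets, a generic path from a basepoint in $\partial U$ to $x$ accumulates the same signed intersection count against the old stack and against the disentangled stack, so $x \in \Omega \Leftrightarrow x \in \tilde{\Omega}$. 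This localizes $\Omega \triangle \tilde{\Omega} \subset \Omega_j \triangle \Omega_j'$, and the disjoint sets $\Omega^+ := \Omega \setminus \tilde{\Omega}$, $\Omega^- := \tilde{\Omega} \setminus \Omega$ yield (\ref{eqn:resolution_stacked_one_vol1}).

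The refined identity (\ref{eqn:resolution_stacked_one_vol2}) requires a more careful sign analysis, and will be the main technical step. The strategy is to rewrite $[\Omega]-[\tilde{\Omega}]$ as the ``naive'' replacement current $[\Omega_j]-[\Omega_j']$ plus correction terms induced by the disentanglement swaps of $D_j'$ with each $D_i$, $i\ne j$. On the region $\Omega_j' \setminus \Omega_j$, the naive replacement over-subtracts wherever this region already lies in $\Omega$, which is corrected by adding $[(\Omega_j'\setminus\Omega_j)\cap\Omega]$; conversely, discrepancies between the naive model and the actual $\tilde{\Omega}$ arise precisely where other sheets $D_i$ are trapped in $\Omega_j \triangle \Omega_j'$ and get rearranged by the swaps $\Psi_q$. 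The alternating orientation of the stack forces these rearrangements to always enlarge (never shrink) the $\Omega$-side of the balance, so collecting the excesses as an integer $3$-current $G$ yields a nonnegative current. Verifying this positivity inductively across the intersection curves $\Gamma_k$ of $D_j'$ with the rest of $\mathcal{S}'$, where each single swap of Lemma \ref{lem:resolution} is elementary and the sign convention is dictated by (3a) versus (3b), should give the identity up to the standard $\eps/2$ perturbation from Lemma \ref{lem:y0}.
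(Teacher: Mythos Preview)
Your treatment of \eqref{eqn:resolution_stacked_one_subset}, \eqref{eqn:resolution_stacked_one_area}, and \eqref{eqn:resolution_stacked_one_vol1} is essentially the paper's Step~1: apply Lemma~\ref{lem:resolution} to $\{D_j'\}\cup\{D_i\}_{i\ne j}$, then invoke Lemma~\ref{lem:y0}. The paper derives the sharper identity
\[
[\Omega]-[\tilde\Omega]=[\Omega_j]-[\Omega_j']+2[(\Omega_j'\setminus\Omega_j)\cap\Omega]-2[(\Omega_j\setminus\Omega_j')\setminus\Omega]
\]
by the same path-counting argument you sketch, and then takes $\Omega^\pm=(\Omega_j\triangle\Omega_j')\cap\Omega$, $(\Omega_j\triangle\Omega_j')\setminus\Omega$; your choice $\Omega^+=\Omega\setminus\tilde\Omega$, $\Omega^-=\tilde\Omega\setminus\Omega$ works too once you know $\Omega\triangle\tilde\Omega\subset\Omega_j\triangle\Omega_j'$.

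Your argument for \eqref{eqn:resolution_stacked_one_vol2} has a genuine gap. You attempt to obtain it from the \emph{same} $\Sigma'$ (i.e.\ the same $\tilde\Omega$) by a finer sign analysis, claiming the alternating orientations force the residual current $G$ to be nonnegative. But plugging the identity above into the form required by \eqref{eqn:resolution_stacked_one_vol2} gives
\[
-G \;=\; [\Omega]-[\tilde\Omega]-([\Omega_j]-[\Omega_j'])-[(\Omega_j'\setminus\Omega_j)\cap\Omega]
\;=\; [(\Omega_j'\setminus\Omega_j)\cap\Omega]-2[(\Omega_j\setminus\Omega_j')\setminus\Omega],
\]
which is \emph{not} $\le 0$ whenever $(\Omega_j'\setminus\Omega_j)\cap\Omega\ne\varnothing$. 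No amount of bookkeeping on the swaps $\Psi_q$ fixes this, because the resolution procedure preserves the image set and hence forces exactly this formula.

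What is missing is a \emph{second} modification of the disentangled disks. The paper's Step~2 is a ``collapse procedure'': for each connected component $R$ of $(\Omega_j'\setminus\Omega_j)\cap\Omega$, one identifies a region $R'\supset R$ bounded by two pieces of $\tilde\chi(\bigsqcup F_i)$ meeting along some $\Gamma_k$, shows $R'\cap\tilde\Omega=\varnothing$, and then collapses one of those pieces onto the other (replacing the larger-area cap by a copy of the smaller one). This produces a new $\tilde\Omega'$ with $[\tilde\Omega]-[\tilde\Omega']=-[(\Omega_j'\setminus\Omega_j)\cap\Omega]-G'$ for some nonnegative $G'$, without increasing area. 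Combining with Step~1 then yields \eqref{eqn:resolution_stacked_one_vol2} with $G=2[(\Omega_j\setminus\Omega_j')\setminus\Omega]+G'\ge 0$. The phrase ``instead of \eqref{eqn:resolution_stacked_one_vol1}'' in the statement signals precisely that a different $\Sigma'$ is being constructed.
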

\begin{proof}
\emph{Step 1}: We apply Lemma \ref{lem:resolution} to the disks $\cS' \coloneqq \{D_j'\} \cup \{D_i\}_{i\neq j}$, producing new disks $\tilde{\cS} = \{\tilde{D}_i\}_{i=1}^n$.

We claim
\begin{equation}\label{eqn:resolution_stacked_one_intermediate}
    [\Omega] - [\tilde{\Omega}] = [\Omega_j] - [\Omega_j'] + 2[(\Omega_j' \setminus \Omega_j) \cap \Omega] - 2[(\Omega_j \setminus \Omega_j') \setminus \Omega],
\end{equation}
which implies (\ref{eqn:resolution_stacked_one_vol1}) by Lemma \ref{lem:y0} and taking
\begin{align*}
    \Omega^+ := (\Omega_j \triangle \Omega_j') \cap \Omega,\ \
    \Omega^- := (\Omega_j \triangle \Omega_j') \setminus \Omega.
\end{align*}
We first observe
\begin{align*}
    [\Omega] - [\tilde{\Omega}]
    & = \sum_{i=1}^n [\Omega_i] - \sum_{i=1}^n [\tilde{\Omega}_i] = ([\Omega_j] - [\Omega_j']) + \left([\Omega_j'] + \sum_{i\neq j} [\Omega_i] - \sum_{i=1}^n [\tilde{\Omega}_i]\right).
\end{align*}
Let
\[
    \tilde{N} \coloneqq \sum_{i=1}^n \mathbf{1}_{\tilde{\Omega}_i}, \ \
    N' \coloneqq \mathbf{1}_{\Omega_j'} + \sum_{i\neq j} \mathbf{1}_{\Omega_i}.
\]
To show (\ref{eqn:resolution_stacked_one_intermediate}), it suffices to show for $\cH^3$-a.e. $x \in U$ that
\begin{equation}\label{eqn:resolution_stacked_one_cases}
    N'(x) - \tilde{N}(x) = \begin{cases}
    2 & x \in (\Omega_j' \setminus \Omega_j) \cap \Omega\\
    -2 & x \in (\Omega_j \setminus \Omega_j') \setminus \Omega\\
    0 & \text{otherwise}.
    \end{cases}
\end{equation}
Let $R$ be the component of $U \setminus \bigcup_i D_i$ containing $x$. Let $x_0 \in C \cap \overline{R}$. Let $\gamma : (0, 1] \to U$ be a path in $R$ so that
\begin{itemize}
    \item $\gamma$ avoids the intersection curves $\{\Gamma_k\}$,
    \item $\gamma$ has transverse intersection with $D_j'$,
    \item $\lim_{t \to 0} \gamma(t) = x_0$ and $\gamma(1) = x$.
\end{itemize}
We have $N'(x_0) = \tilde{N}(x_0)$. Along subsequent intersections of $\gamma$ with $D_j'$, each of $N'$ and $\tilde{N}$ will increase and decrease by 1 in alternating order.

If $\gamma$ has an even number of intersections with $D_j'$, then we see $N'(x) = \tilde{N}(x)$. Hence, we only need to consider the case of an odd number of intersections, which is equivalent to $x \in \Omega_j \triangle \Omega_j'$. In this case, $N'$ increases at the first intersection if and only if $x \in \Omega_j'$. Note that $\tilde{N}$ decreases at the first intersection if and only if $R \subset \Omega$. Hence, (\ref{eqn:resolution_stacked_one_cases}) follows.

\emph{Step 2}: We further modify the disks $\tilde{D}_i$ to $\tilde{D}_i'$ so that (up to a current with arbitrarily small mass)
\begin{equation}\label{eqn:step2}
    [\tilde{\Omega}] - [\tilde{\Omega}'] = -[(\Omega_j'\setminus \Omega_j) \cap \Omega] - G,
\end{equation}
where $G$ is a nonnegative integral current, which implies (\ref{eqn:resolution_stacked_one_vol2}) by (\ref{eqn:resolution_stacked_one_intermediate}) and Lemma \ref{lem:y0}.

Let $R$ be a component of $(\Omega_j' \setminus \Omega_j) \cap \Omega$. We collect the following facts.
\begin{enumerate}
    \item $R \cap \tilde{\Omega} = \varnothing$: a direct consequence of (\ref{eqn:resolution_stacked_one_intermediate}).
    \item $\partial R \cap \partial U = \varnothing$: follows from $\partial \Omega_j \cap \partial U = \partial \Omega_j' \cap \partial U$.
    \item $R$ is a component of $U \setminus \bigcup_i \tilde{D}_i$: Let $x,\ y \in R$. By definition, there is a path $\alpha$ from $x$ to $y$ contained in both $\Omega$ and in $\Omega_j'$, so $\alpha$ does not intersect any of the disks $\{D_i\}$ nor $D_j'$. Moreover, since $D_j \cap \overline{\Omega \setminus \Omega_j} = \varnothing$, we have $\partial R \subset \bigcup_i \tilde{D}_i$.
\end{enumerate}

Let $\tilde{\chi}: \bigcup_{i=1}^n F_i \to M$ be the Lipschitz map corresponding to the resolved disks $\tilde{\cS}$. Let $\{\sigma_l\}_l \subset \{\Gamma_k\}_k$ be the components of $\partial (\tilde{\chi}^{-1}(\partial R))$. Choose some $\sigma \in \{\sigma_l\}$ so that $\sigma \not\subset \text{int}(\sigma_l)$ for any $l$. By Definition \ref{def:Lip_disks}, there is a unique distinct curve $\sigma' \in \{\Gamma_k\}_k$ so that $\tilde{\chi}(\sigma) = \tilde{\chi}(\sigma')$. There are two cases.

\emph{Case 1}: $\text{int}\sigma \cap \text{int}\sigma' = \varnothing$. Since $d_1\coloneqq \tilde{\chi}(\overline{\text{int}\sigma})$ and $d_2\coloneqq \tilde{\chi}(\overline{\text{int}\sigma'})$ are Lipschitz disks with a common boundary and that do not cross through each other, there is a unique open set $R' \subset U$ with $\partial R' = d_1 \cup d_2$.

We claim that the disks $\{\tilde{D}_i\}$ do not intersect $R'$. Suppose otherwise for contradiction. Then for some $i$ there is an $x \in F_i \setminus (\overline{\text{int}\sigma \cup \text{int}\sigma'})$ satisfying $\tilde{\chi}(x) \in R'$. Let $\alpha$ be a path in $F_i \setminus (\overline{\text{int}\sigma \cup \text{int}\sigma'})$ from $x$ to $\partial F_i$. Then we have $\tilde{\chi}(\alpha(0)) \in R'$ and $\tilde{\chi}(\alpha(1)) \notin R'$. Since $\partial R' = d_1 \cup d_2$, we conclude that $\tilde{D}_i$ crosses through one of $d_1$ or $d_2$, which contradicts $\tilde{\chi} \in \cY_0$.

We claim that $R \subset R'$. We consider the local picture along $\tilde{\chi}(\sigma)$, where the 4 pieces $\tilde{\chi}(\text{int}(\sigma))$, $\tilde{\chi}(\text{ext}(\sigma))$, $\tilde{\chi}(\text{int}(\sigma'))$, and $\tilde{\chi}(\text{ext}(\sigma'))$ intersect with an X cross-section. We have
\begin{itemize}
    \item $\tilde{\chi}(\text{int}(\sigma))$ and  $\tilde{\chi}(\text{ext}(\sigma))$ are adjacent: follows from $\tilde{\chi} \in \cY_0$.
    \item $\tilde{\chi}(\text{int}(\sigma))$ and  $\tilde{\chi}(\text{int}(\sigma'))$ are adjacent: follows from the definition of $R'$ and the fact that $\{\tilde{D}_i\}$ do not intersect $R'$.
\end{itemize}
$R'$ contains the component of $U \setminus \bigcup_i \tilde{D}_i$ contained between $\tilde{\chi}(\text{int}(\sigma))$ and  $\tilde{\chi}(\text{int}(\sigma'))$ along this intersection. Since $R$ touches $\tilde{\chi}(\text{int}(\sigma))$ and does not touch $\tilde{\chi}(\text{ext}(\sigma))$, fact (3) above implies that $R$ is the component of $U \setminus \bigcup_i \tilde{D}_i$ contained between $\tilde{\chi}(\text{int}(\sigma))$ and  $\tilde{\chi}(\text{int}(\sigma'))$ along this intersection. Hence, $R \subset R'$.

We claim that $R' \cap \tilde{\Omega} = \varnothing$. Since $\tilde{\chi} \in \cY_0$, each component of $R'$ touches the same side of each of $d_1$ and $d_2$. By the definition of $\tilde{\Omega}$, we have either $R' \subset \tilde{\Omega}$ or $R' \cap \tilde{\Omega} = \varnothing$. By fact (1) above, we have $R \cap \tilde{\Omega} = \varnothing$, so $R' \cap \tilde{\Omega}^c \neq \varnothing$. Hence, $R' \cap \tilde{\Omega} = \varnothing$.

Collapse procedure: Relabel $\sigma$ and $\sigma'$ as $\sigma_1$ and $\sigma_2$ so that 
\[ \cH^2(\tilde{\chi}(\text{int}\sigma_1)) \geq \cH^2(\tilde{\chi}(\text{int}\sigma_2)). \]
Fix a $C^2$ diffeomorphism
\[ \psi : \overline{\text{int} \sigma_1} \to \overline{\text{int} \sigma_2} \]
so that $(\tilde{\chi} \circ \psi)\vert_{\sigma_1} = \tilde{\chi}\vert_{\sigma_2}$. Then we define the map
\[ \tilde{\chi}'(\xi) \coloneqq \begin{cases}
\tilde{\chi}(\psi(\xi)) & \xi \in \overline{\text{int}\sigma_1}\\
\tilde{\chi}(\xi) & \text{otherwise}.
\end{cases} \]

We make an arbitrarily small perturbation of the map $\tilde{\chi}'$ on $\text{int}\sigma_1$ so that $\tilde{\chi}' \in \cY_0$.

\emph{Case 2}: $\sigma' \subset \text{int}\sigma$. Since $A\coloneqq \tilde{\chi}(\overline{\text{int}\sigma \setminus \text{int}\sigma'})$ is a Lipschitz annulus with a common boundary and that does not cross through itself, there is a unique open set $R' \subset U$ with $\partial R' = A$.

We claim that the disks $\{\tilde{D}_i\}$ do not intersection $R'$. Suppose otherwise for contradiction. By the argument from Case 1, we need only suppose that $\tilde{\chi}(\text{int}\sigma') \cap R' \neq \varnothing$. Since $\tilde{\chi} \in \cY_0$, we must have $\tilde{\chi}(\text{int}\sigma') \subset R'$ in this case. Yet we observe that $\tilde{\chi}(\sigma')$ is a nontrivial element in $H_1(\overline{R'})$, a contradiction.

We claim that $R \subset R'$. We consider the local picture along $\tilde{\chi}(\sigma)$, where the 4 pieces $\tilde{\chi}(\text{int}(\sigma))$, $\tilde{\chi}(\text{ext}(\sigma))$, $\tilde{\chi}(\text{int}(\sigma'))$, and $\tilde{\chi}(\text{ext}(\sigma'))$ intersect with an X cross-section. We have
\begin{itemize}
    \item $\tilde{\chi}(\text{int}(\sigma))$ and  $\tilde{\chi}(\text{ext}(\sigma))$ are adjacent: follows from $\tilde{\chi} \in \cY_0$.
    \item $\tilde{\chi}(\text{int}(\sigma))$ and  $\tilde{\chi}(\text{ext}(\sigma'))$ are adjacent: follows from the definition of $R'$ and the fact that $\{\tilde{D}_i\}$ do not intersect $R'$.
\end{itemize}
$R'$ contains the component of $U \setminus \bigcup_i \tilde{D}_i$ contained between $\tilde{\chi}(\text{int}(\sigma))$ and  $\tilde{\chi}(\text{ext}(\sigma'))$ along this intersection. Since $R$ touches $\tilde{\chi}(\text{int}(\sigma))$ and does not touch $\tilde{\chi}(\text{ext}(\sigma))$, fact (3) above implies that $R$ is the component of $U \setminus \bigcup_i \tilde{D}_i$ contained between $\tilde{\chi}(\text{int}(\sigma))$ and  $\tilde{\chi}(\text{ext}(\sigma'))$ along this intersection. Hence, $R \subset R'$.

We claim that $R' \cap \tilde{\Omega} = \varnothing$. Since $\tilde{\chi} \in \cY_0$, each component of $R'$ touches the same side of $A$. By the definition of $\tilde{\Omega}$, we have either $R' \subset \tilde{\Omega}$ or $R' \cap \tilde{\Omega} = \varnothing$. By fact (1) above, we have $R \cap \tilde{\Omega} = \varnothing$, so $R' \cap \tilde{\Omega}^c \neq \varnothing$. Hence, $R' \cap \tilde{\Omega} = \varnothing$.

Collapse procedure: Fix a $C^2$ diffeomorphism
\[ \psi: \overline{\text{int}\sigma} \to \overline{\text{int}\sigma'} \]
so that $(\tilde{\chi} \circ \psi)\vert_{\sigma} = \tilde{\chi}\vert_{\sigma'}$. Then we define the map
\[ \tilde{\chi}'(\xi) \coloneqq \begin{cases}
\tilde{\chi}(\psi(\xi)) & \xi \in \overline{\text{int}\sigma}\\
\tilde{\chi}(\xi) & \text{otherwise}.
\end{cases} \]
We make an arbitrarily small perturbation of the map $\tilde{\chi}'$ in a neighborhood of $\sigma$ so that $\tilde{\chi}' \in \cY_0$.

\emph{Conclusion}. In either case, we have
\[ \cH^2(\text{im}\tilde{\chi}') \leq \cH^2(\text{im}\tilde{\chi}) + \eps \]
and
\[ \cH^3((\tilde{\Omega} \cup R')\setminus \tilde{\Omega}') \leq \eps. \]

Let $\tilde{R}$ be a different component of $(\Omega_j'\setminus \Omega_j) \cap \Omega$. Then we claim $\partial\tilde{R} \cap \overline{R}' = \varnothing$. Indeed, if not $\tilde{R}$ would be on the opposite side of a disk in $\tilde{\cS}$ from $R'$. Since $R' \cap \tilde{\Omega} = \varnothing$ and (by fact (1) above) $\tilde{R} \cap \tilde{\Omega} = \varnothing$, we reach a contradiction. Hence, we can iterate the above procedure and conclude by induction.
\end{proof}

\subsection{Application 3: Decomposing stacks}\label{subsec:decompose}
In the case of positive prescribing function, we will decompose stacked disk minimization problems into small stacked disk minimization problems each with 1 or 2 disks.

We first separate stacks consisting of 1 disk.

\begin{lemma}\label{lem:resolution_decompose_one}
Suppose $D_1$ is oriented up. Let $D_1' \in \mathcal{D}(U)[D_1]$ so that $D_1'$ has transverse intersections with each $D_i$. For any $\eps > 0$, there is a $\Sigma' \in \mathcal{D}(U)[\Sigma]$ satisfying
\begin{equation}\label{eqn:resolution_1-stack_subset}
    \Sigma \triangle \Sigma' \subset U,
\end{equation}
\begin{equation}\label{eqn:resolution_1-stack_area}
    \cH^2(\Sigma' \cap U) \leq \cH^2(D_1') + \sum_{i=2}^n \cH^2(D_i) + \eps,
\end{equation}
and
\begin{equation}\label{eqn:resolution_1-stack_vol}
    \mathbb{M}(T(\Sigma, \Sigma') - ([\Omega_1] - [\Omega_1']) + 2[\Omega_1 \setminus (\Omega \cup \Omega_1')]) \leq \eps.
\end{equation}
\end{lemma}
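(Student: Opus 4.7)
The plan is to follow Step 1 of the proof of Lemma~\ref{lem:resolution_stacked_one} specialized to $j=1$, and then exploit the hypothesis that $D_1$ is oriented up to show that the error terms produced by the resolution procedure already have the required form, thereby avoiding the absorption argument of Step 2 entirely.

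First, I would apply the resolution procedure of Lemma~\ref{lem:resolution} to the collection $\cS' := \{D_1'\} \cup \{D_i\}_{i=2}^n$, producing disentangled disks $\tilde{\cS} = \{\tilde{D}_i\}_{i=1}^n \in \cY_0$ with the same total image. Lemma~\ref{lem:y0} then furnishes an open set $\tilde{\Omega} \subset U$ with $\partial[\tilde{\Omega}]\llcorner U = \sum_i [\tilde{D}_i]$, together with a candidate $\Sigma' \in \mathcal{D}(U)[\Sigma]$ obtained by an arbitrarily small perturbation. The conclusions (\ref{eqn:resolution_1-stack_subset}) and (\ref{eqn:resolution_1-stack_area}) follow directly, and (\ref{eqn:resolution_1-stack_vol}) reduces via Lemma~\ref{lem:y0} to showing
\[ [\Omega] - [\tilde{\Omega}] = ([\Omega_1] - [\Omega_1']) - 2[\Omega_1 \setminus (\Omega \cup \Omega_1')]. \]

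The indicator-function bookkeeping of Step 1 of Lemma~\ref{lem:resolution_stacked_one} (which does not depend on the value of $j$) gives
\[ [\Omega] - [\tilde{\Omega}] = [\Omega_1] - [\Omega_1'] + 2[(\Omega_1' \setminus \Omega_1) \cap \Omega] - 2[(\Omega_1 \setminus \Omega_1') \setminus \Omega]. \]
The key new observation is that, since $D_1 > D_i$ for every $i \geq 2$ and $D_1$ is oriented up, we have $\Omega_1 = B_1$ and moreover $\Omega \subset \Omega_1$. The latter follows by comparing outward normals along $D_1$: the orientation of $D_1$ as a summand of $\partial[\Omega]$ coincides with its orientation as $\partial[\Omega_1]$ (both point toward $D^+$), so the slab just above $D_1$ in $U$ lies outside both $\Omega$ and $\Omega_1$; since $D_1$ is topmost, the component of $U \setminus \Sigma$ containing that slab extends up to $D^+$ and is disjoint from $\Omega$, whence $\Omega \subset B_1 = \Omega_1$. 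Consequently $(\Omega_1' \setminus \Omega_1) \cap \Omega = \varnothing$ and $(\Omega_1 \setminus \Omega_1') \setminus \Omega = \Omega_1 \setminus (\Omega \cup \Omega_1')$, and the displayed identity collapses to exactly what is required.

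I expect the most delicate step to be the careful verification of the inclusion $\Omega \subset \Omega_1$, where one has to keep track of the orientation conventions of \S\ref{sec:disk_notation} and use that $D_1$ is both the topmost disk and oriented upward. Once that inclusion is in place, the remainder of the argument is a direct specialization of machinery already developed in Lemmas~\ref{lem:resolution}, \ref{lem:y0}, and \ref{lem:resolution_stacked_one}, and the smoothing perturbation supplied by Lemma~\ref{lem:y0} absorbs all approximation errors into the prescribed $\eps$.
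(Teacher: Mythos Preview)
Your proposal is correct and follows essentially the same approach as the paper: apply Lemma~\ref{lem:resolution} to $\{D_1'\}\cup\{D_i\}_{i\geq 2}$, invoke the intermediate identity (\ref{eqn:resolution_stacked_one_intermediate}) from Step~1 of Lemma~\ref{lem:resolution_stacked_one}, and use that $D_1$ being oriented up forces $(\Omega_1'\setminus\Omega_1)\cap\Omega=\varnothing$. The paper simply asserts this vanishing ``because $D_1$ is oriented up,'' whereas you spell out the underlying reason via the inclusion $\Omega\subset\Omega_1=B_1$; both amount to the same observation, and the rewriting $(\Omega_1\setminus\Omega_1')\setminus\Omega=\Omega_1\setminus(\Omega\cup\Omega_1')$ is just set algebra.
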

\begin{proof}
We apply Lemma \ref{lem:resolution} to $\{D_1'\} \cup \{D_i\}_{i=2}^n$. The resulting disks satisfy (\ref{eqn:resolution_stacked_one_intermediate}) as in the first step of the proof of Lemma \ref{lem:resolution_stacked_one}. However, we have $[(\Omega_1' \setminus \Omega_1) \cap \Omega] = 0$ because $D_1$ is oriented up. Hence, the result follows from Lemma \ref{lem:y0}.
\end{proof}

Now we separate stacks consisting of 2 disks.

\begin{lemma}\label{lem:resolution_decompose_two}
Suppose $D_j$ is oriented down and $D_{j+1}$ is oriented up. Let $D_j' \in \mathcal{D}(U)[D_j]$ and $D_{j+1}' \in \mathcal{D}(U)[D_{j+1}]$ be mutually disjoint and have transverse intersections with each $D_i$. For any $\eps > 0$, there is a $\Sigma' \in \mathcal{D}(U)[\Sigma]$ satisfying
\begin{equation}\label{eqn:resolution_2-stacks_subset}
    \Sigma \triangle \Sigma' \subset U,
\end{equation}
\begin{equation}\label{eqn:resolution_2-stacks_area}
    \cH^2(\Sigma' \cap U) \leq \cH^2(D_j') + \cH^2(D_{j+1}') + \sum_{i\notin \{j,j+1\}} \cH^2(D_i) + \eps,
\end{equation}
and
\begin{equation}\label{eqn:resolution_2-stacks_vol}
    \mathbb{M}(T(\Sigma, \Sigma') - (([\Omega_j] + [\Omega_{j+1}]) - ([\Omega_j'] + [\Omega_{j+1}'])) + 2[\Omega_j \cup \Omega_{j+1} \setminus (\Omega \cup \Omega_j' \cup \Omega_{j+1}')]) \leq \eps.
\end{equation}
\end{lemma}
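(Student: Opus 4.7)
The plan is to mirror the proof of Lemma \ref{lem:resolution_decompose_one}, applying the resolution procedure to the pair $\{D_j', D_{j+1}'\}$ simultaneously and using the alternating orientations at $D_j$ and $D_{j+1}$ to eliminate the analog of a ``$+2$'' correction term. Concretely, I apply Lemma \ref{lem:resolution} to the collection $\{D_j', D_{j+1}'\} \cup \{D_i\}_{i \notin \{j, j+1\}}$; since $D_j'$ and $D_{j+1}'$ are mutually disjoint and each has transverse intersection with the remaining $D_i$, this yields disentangled Lipschitz disks $\{\tilde D_i\}_{i=1}^n \in \cY_0$ and (by Lemma \ref{lem:y0}) a filled open set $\tilde \Omega \subset U$ satisfying $\partial[\tilde\Omega] \llcorner U = \sum_i [\tilde D_i] \llcorner U$.

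The main computation is a two-disk analog of equation (\ref{eqn:resolution_stacked_one_intermediate}). Setting
\[
N := \sum_i \mathbf{1}_{\Omega_i}, \quad N' := \mathbf{1}_{\Omega_j'} + \mathbf{1}_{\Omega_{j+1}'} + \sum_{i \notin \{j, j+1\}} \mathbf{1}_{\Omega_i}, \quad \tilde N := \sum_i \mathbf{1}_{\tilde\Omega_i},
\]
the path-based pointwise case analysis of Step 1 in the proof of Lemma \ref{lem:resolution_stacked_one} generalizes directly: because $D_j'$ and $D_{j+1}'$ are disjoint, crossings of the two primed disks along a path can be treated independently, and iterating the one-disk computation produces the identity
\[
[\Omega] - [\tilde \Omega] = ([\Omega_j] + [\Omega_{j+1}]) - ([\Omega_j'] + [\Omega_{j+1}']) + 2[X_+] - 2\bigl[(\Omega_j \cup \Omega_{j+1}) \setminus (\Omega \cup \Omega_j' \cup \Omega_{j+1}')\bigr],
\]
where $X_+ := \bigl((\Omega_j' \cup \Omega_{j+1}') \setminus (\Omega_j \cup \Omega_{j+1})\bigr) \cap \Omega$.

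The key orientational input -- and the main obstacle -- is to verify that $X_+ = \varnothing$ up to an $\cH^3$-null set. The orientations of the $D_i$ together with the condition $\Sigma \in \mathcal{D}(U)$ determine $\Omega$ by a top-down traversal of the stack: at each $D_i$ we exit or re-enter $\Omega$ according as $D_i$ is oriented down or up. Since $D_j$ is oriented down and $D_{j+1}$ is oriented up and immediately follows $D_j$ in the ordering, we exit $\Omega$ at $D_j$ and do not re-enter until $D_{j+1}$, so the open slab of $U$ between $D_j$ and $D_{j+1}$ is disjoint from $\Omega$ up to measure zero. Since $\Omega_j \cup \Omega_{j+1}$ is precisely the complement of this slab in $U$, we obtain $\Omega \subset \Omega_j \cup \Omega_{j+1}$, which forces $X_+ = \varnothing$. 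This step uses both the adjacency of $D_j$ and $D_{j+1}$ in the stack and the specific alternating orientations, neither of which can be dropped.

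Finally, combining the displayed identity with $X_+ = \varnothing$ and applying Lemma \ref{lem:y0} to pass from $\tilde\Omega$ to an embedded competitor $\Sigma' \in \mathcal{D}(U)[\Sigma]$ differing from $\Sigma$ only inside $U$, I obtain the three conclusions (\ref{eqn:resolution_2-stacks_subset}), (\ref{eqn:resolution_2-stacks_area}), and (\ref{eqn:resolution_2-stacks_vol}) in direct analogy with the conclusion of Lemma \ref{lem:resolution_decompose_one}.
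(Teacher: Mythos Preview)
Your proposal is correct and follows essentially the same approach as the paper: apply Lemma \ref{lem:resolution} to $\{D_j', D_{j+1}'\} \cup \{D_i\}_{i \neq j, j+1}$, carry out the path-based case analysis for $N' - \tilde N$, use the orientation hypothesis to get $\Omega \subset \Omega_j \cup \Omega_{j+1}$, and finish via Lemma \ref{lem:y0}. The only organizational difference is that you first record the general identity with the $X_+$ term and then show $X_+ = \varnothing$, whereas the paper folds the observation $x \notin \Omega_j \cup \Omega_{j+1} \Rightarrow x \notin \Omega$ directly into the case analysis to obtain (\ref{eqn:resolution_decompose_two_cases}) without an $X_+$ term.
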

\begin{proof}
We apply Lemma \ref{lem:resolution} to the disks $\{D_j', D_{j+1}'\} \cup \{D_i\}_{i\neq j, j+1}$, producing new disks $\{\tilde{D}_i\}_{i=1}^n$.

We compute
\begin{align*}
    [\Omega] - [\tilde{\Omega}]
    & = \sum_{i=1}^n [\Omega_i] - \sum_{i=1}^n [\tilde{\Omega}_i]\\
    & = ([\Omega_j] + [\Omega_{j+1}]) - ([\Omega_j'] - [\Omega_{j+1}'])\\
    & \hspace{0.4cm} + \left([\Omega_j'] + [\Omega_{j+1}'] + \sum_{i \neq j, j+1} [\Omega_i] - \sum_{i=1}^n [\tilde{\Omega}_i]\right).
\end{align*}
Let
\[
    \tilde{N} \coloneqq \sum_{i=1}^n \mathbf{1}_{\tilde{\Omega}_i}, \ \
    N' \coloneqq \mathbf{1}_{\Omega_j'} + \mathbf{1}_{\Omega_{j+1}'} + \sum_{i\neq j, j+1} \mathbf{1}_{\Omega_i}.
\]
To show (\ref{eqn:resolution_2-stacks_vol}), it suffices to show for $x \in U \setminus \bigcup_{i=1}^n \tilde{D}_i$ that
\begin{equation}\label{eqn:resolution_decompose_two_cases}
    N'(x) - \tilde{N}(x) = \begin{cases}
    -2 & x \in (\Omega_j \cup \Omega_{j+1}) \setminus (\Omega \cup \Omega_j' \cup \Omega_{j+1}')\\
    0 & \text{otherwise}.
    \end{cases}
\end{equation}
Let $R$ be the component of $U \setminus \bigcup_i D_i$ containing $x$. Let $x_0 \in C \cap \overline{R}$. Let $\gamma : (0, 1] \to U$ be a path in $R$ so that
\begin{itemize}
    \item $\gamma$ avoids $\Gamma_k$,
    \item $\gamma$ has transverse intersection with $D_j'$ and $D_{j+1}'$,
    \item $\lim_{t \to 0} \gamma(t) = x_0$ and $\gamma(1) = x$.
\end{itemize}
We have $N'(x_0) = \tilde{N}(x_0)$. Along subsequent intersections of $\gamma$ with $D_j' \cup D_{j+1}'$, each of $N'$ and $\tilde{N}$ will increase and decrease by 1 in alternating order.

If $\gamma$ has an even number of intersections with $D_j' \cup D_{j+1}'$, then we see $N'(x) = \tilde{N}(x)$. Hence, we only need to consider the case of an odd number of intersections, which is equivalent to $x \in (\Omega_j \cup \Omega_{j+1}) \triangle (\Omega_j' \cup \Omega_{j+1}')$. In this case, $N'$ increases at the first intersection if and only if $x \in \Omega_j' \cup \Omega_{j+1}'$. Note that $\tilde{N}$ decreases at the first intersection if and only if $R \subset \Omega$. We also note that if $x \notin \Omega_j \cup \Omega_{j+1}$, then $x \notin \Omega$ (by the specification of the orientations of $D_j$ and $D_{j+1}$). Hence, (\ref{eqn:resolution_decompose_two_cases}) follows.

The result now follows from Lemma \ref{lem:y0}.
\end{proof}

\section{Replacement}
We adapt the replacement theorem \cite[Theorem 1]{AS} to the prescribed mean curvature setting. This lemma allows us to bring components of a minimizing surface outside a small ball into the ball.

To prove the replacement theorem, we first require an isoperimetric inequality.

Let $\delta$ and $c_1$ be the constants from Lemma \ref{lem:thin_isoperimetric_MSY}, and let 
\begin{equation}\label{eqn:eta_assumption}
    \eta \coloneqq \min\left\{\frac{\cH^3(M)}{2},\ \frac{1}{8c^3c_1^2},\ c_1\delta^3\right\}.
\end{equation}

\begin{proposition}\label{prop:iso+PMC_controls_area}
If $\Omega \subset M$ is a set of finite perimeter with $\cH^3(\Omega) \leq \eta$, then
\begin{equation}\label{eqn:iso}
    \cH^3(\Omega) \leq c_1\cH^2(\partial \Omega)^{3/2},
\end{equation}
and
\begin{equation}\label{eqn:PMC_controls_area}
    \cA^h(\Omega) \geq \cA^c(\Omega) \geq \frac{1}{2}\cH^2(\partial \Omega).
\end{equation}
\end{proposition}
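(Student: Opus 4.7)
My plan is to establish (\ref{eqn:iso}) first, by combining Lemma \ref{lem:thin_isoperimetric_MSY} with a smooth approximation argument, and then to deduce (\ref{eqn:PMC_controls_area}) as an algebraic consequence of (\ref{eqn:iso}) and the specific choice of $\eta$.

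The first inequality in (\ref{eqn:PMC_controls_area}) is immediate: since $h \leq c = \sup_M |h|$ pointwise, we have $\int_\Omega h\, d\cH^3 \leq c\,\cH^3(\Omega)$, which gives $\cA^h(\Omega) \geq \cH^2(\partial\Omega) - c\,\cH^3(\Omega) = \cA^c(\Omega)$.

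For (\ref{eqn:iso}), I would split into two cases based on the size of $\cH^2(\partial\Omega)$. If $\cH^2(\partial\Omega) \geq \delta^2$, then $c_1\cH^2(\partial\Omega)^{3/2} \geq c_1\delta^3 \geq \eta \geq \cH^3(\Omega)$ and we are done. Otherwise $\cH^2(\partial\Omega) < \delta^2$, and I approximate $\Omega$ by open sets $\Omega_k$ with smooth boundary satisfying $\mathbf{1}_{\Omega_k} \to \mathbf{1}_\Omega$ in $L^1$ and $\cH^2(\partial\Omega_k) \to \cH^2(\partial\Omega)$ (a standard density result for sets of finite perimeter). For $k$ sufficiently large, $\cH^2(\partial\Omega_k) < \delta^2$, so the hypothesis (\ref{eqn:isoperimetric_smallness_assumption}) of Lemma \ref{lem:thin_isoperimetric_MSY} holds trivially on every unit ball. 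The lemma produces a unique compact $K_k$ with $\partial K_k = \partial\Omega_k$, $\cH^3(K_k \cap B_1(x)) \leq \delta^3$ for all $x$, and $\cH^3(K_k) \leq c_1\cH^2(\partial\Omega_k)^{3/2}$. Because $K_k$ must coincide (up to a null set) with either $\Omega_k$ or $M\setminus\Omega_k$, and $\cH^3(K_k) < c_1\delta^3 \leq \eta \leq \cH^3(M)/2$, the set $K_k$ is always the smaller of the two sides. Hence $\min\{\cH^3(\Omega_k), \cH^3(M\setminus\Omega_k)\} \leq c_1\cH^2(\partial\Omega_k)^{3/2}$. Passing $k \to \infty$ and using $\cH^3(\Omega) \leq \eta \leq \cH^3(M)/2 \leq \cH^3(M\setminus\Omega)$, we obtain $\cH^3(\Omega) \leq c_1\cH^2(\partial\Omega)^{3/2}$.

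For the remaining inequality $\cA^c(\Omega) \geq \tfrac{1}{2}\cH^2(\partial\Omega)$ in (\ref{eqn:PMC_controls_area}), it suffices to prove $c\,\cH^3(\Omega) \leq \tfrac{1}{2}\cH^2(\partial\Omega)$. Rewriting (\ref{eqn:iso}) as $\cH^3(\Omega)^{2/3} \leq c_1^{2/3}\cH^2(\partial\Omega)$, I would use
\[ c\,\cH^3(\Omega) = c\,\cH^3(\Omega)^{1/3}\cdot \cH^3(\Omega)^{2/3} \leq c\,c_1^{2/3}\cH^3(\Omega)^{1/3}\cH^2(\partial\Omega), \]
and the constraint $\cH^3(\Omega) \leq \eta \leq 1/(8c^3c_1^2)$ forces $c\,c_1^{2/3}\cH^3(\Omega)^{1/3} \leq 1/2$, completing the proof. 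The main subtlety is the correct identification of $K_k$ with a side of $\partial\Omega_k$ in the approximation step of (\ref{eqn:iso}); the three terms defining $\eta$ play complementary roles here, with $c_1\delta^3$ handling the non-thin case, $\cH^3(M)/2$ ensuring $K_k$ is the smaller side, and $1/(8c^3c_1^2)$ yielding the coefficient $1/2$ in (\ref{eqn:PMC_controls_area}).
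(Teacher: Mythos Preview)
Your proof is correct and follows essentially the same approach as the paper: the same two-case split on $\cH^2(\partial\Omega)$ for (\ref{eqn:iso}), and the identical algebraic manipulation $c\,\cH^3(\Omega) \leq c\,c_1^{2/3}\cH^3(\Omega)^{1/3}\cH^2(\partial\Omega) \leq \tfrac12\cH^2(\partial\Omega)$ for (\ref{eqn:PMC_controls_area}). You are actually more careful than the paper in one respect: the paper simply asserts that Lemma~\ref{lem:thin_isoperimetric_MSY} ``directly implies'' (\ref{eqn:iso}), whereas you spell out the smooth approximation and the identification of $K_k$ with $\Omega_k$; your claim that $K_k$ coincides with either $\Omega_k$ or $M\setminus\Omega_k$ up to a null set is correct (the symmetric difference $K_k\triangle\Omega_k$ has empty reduced boundary, hence has measure $0$ or $\cH^3(M)$), and the volume constraint $\cH^3(\Omega_k)\to\cH^3(\Omega)\leq\eta\leq\cH^3(M)/2$ rules out the latter.
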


\begin{proof}
If $\cH^2(\partial \Omega) \geq \delta^2$, then \eqref{eqn:iso} holds trivially by the definition of $\eta$. On the other hand, if $\cH^2(\partial \Omega) < \delta^2$, Lemma \ref{lem:thin_isoperimetric_MSY} directly implies (\ref{eqn:iso}). Then (\ref{eqn:PMC_controls_area}) follows, as we have
\begin{align*}
    \cA^c(\Omega) = \cH^2(\partial \Omega) - c\cH^3(\Omega)
    &\geq \cH^2(\partial \Omega)(1 - cc_1^{2/3}\cH^3(\Omega)^{1/3}) \geq \frac{1}{2}\cH^2(\partial \Omega).
\end{align*}
The bound $\cA^h(\Omega) \geq \cA^c(\Omega)$ holds because $h \leq c$.
\end{proof}

We are now equipped to prove the replacement theorem.

\begin{lemma}[Replacement]\label{lem:replacement}
Let $\xi > 0$. Suppose $\cH^3(U) \leq \eta$. Let $\Sigma \in \mathcal{D}^*(U)$. Then there exists $\tilde{\Sigma} \in \mathcal{D}(U)[\Sigma]$ and an open subset $B \subset U$ with $\overline{B} \cap \tilde{\Sigma} = \varnothing$ such that
\begin{equation}\label{eqn:replace_subset}
    (\tilde{\Sigma} \cup \partial B) \cap U_{\xi} = \Sigma \cap U_{\xi},
\end{equation}
\begin{equation}\label{eqn:replace_inequality}
    \cH^2(\tilde{\Sigma}) + T_h(\Sigma, \tilde{\Sigma}) \leq \cH^2(\tilde{\Sigma}) + T_h(\Sigma, \tilde{\Sigma}) + \cH^2(\partial B) - c\cH^3(B) \leq \cH^2(\Sigma),
\end{equation}
and there is a set of disjoint oriented disks $\mathcal{S}$ with
\begin{equation}\label{eqn:replace_disks}
    [\tilde{\Sigma}] \llcorner U = \sum_{D \in \mathcal{S}} [D].
\end{equation}

Suppose additionally that
\begin{equation}\label{eqn:almost_min_prereplace}
    \cH^2(\Sigma) \leq \cH^2(\Sigma') + T_h(\Sigma, \Sigma') + \eps
\end{equation}
for all $\Sigma' \in \mathcal{D}(U)[\Sigma]$. Then
\begin{equation}\label{eqn:almost_min_postreplace}
    \cH^2(\tilde{\Sigma}) \leq \cH^2(\Sigma') + T_h(\tilde{\Sigma}, \Sigma') + \eps
\end{equation}
for all $\Sigma' \in \mathcal{D}(U)[\tilde{\Sigma}]$, and
\begin{equation}\label{eqn:replace_diff}
    \cH^2((\Sigma \triangle \tilde{\Sigma}) \cap U_{\xi}) \leq 2\eps.
\end{equation}

Let $P \subset \Sigma \cap U$ be a union of components of $\Sigma \cap U$. Let $\mathcal{S}_P \subset \mathcal{S}$ be the collection of disks $D \in \mathcal{S}$ with $D \cap U_{\xi} \subset P$, and let $\tilde{P} \coloneqq \bigcup_{D \in \mathcal{S}_P} D$. Then
\begin{equation}\label{eqn:replace_length}
    \cH^1\Big(\overline{\tilde{P}} \cap \partial U\Big) \leq \cH^1(\overline{P} \cap \partial U)
\end{equation}
and
\begin{equation}\label{eqn:replace_area}
    \cH^2(P) \leq \cH^2(\tilde{P}) + 2\eps.
\end{equation}
\end{lemma}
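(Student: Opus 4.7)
The plan is to adapt the Almgren-Simon replacement theorem \cite[Theorem 1]{AS} to the prescribed mean curvature setting, where the principal additional bookkeeping is to track the fill-in integral $T_h$ of Definition \ref{def:fill-in} across the isoperimetric collapses of Lemma \ref{lem:resolution_iso_collapse}, while controlling the volume contribution via Proposition \ref{prop:iso+PMC_controls_area}.

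First I would decompose $\Sigma \cap U$ into its connected components $\{D_i\}_{i=1}^n$, each a smoothly embedded disk with $\partial D_i \subset \partial U$. For each $i$, let $F_i \subset \partial U$ be the smaller of the two regions with $\partial F_i = \partial D_i$, and let $\Lambda_i$ be the unique compact region bounded by $D_i \cup F_i$ (whose existence and size estimate $\cH^3(\Lambda_i) \leq c_1\cH^2(D_i \cup F_i)^{3/2}$ are furnished by Lemma \ref{lem:thin_isoperimetric_MSY}, using the smallness built into $\mathcal{D}^*(U)$). Declare $D_i$ \emph{removable} if $\Lambda_i \subset \overline{U}$ and swapping $D_i$ for a slight interior perturbation $F_i'$ of $F_i$ strictly decreases $\cA^h$; collect these into $\mathcal{S}_0$. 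Applying Lemma \ref{lem:resolution_iso_collapse} with this $\mathcal{S}_0$ produces $\tilde{\Sigma} \in \mathcal{D}(U)[\Sigma]$ whose intersection with $U$ is a disjoint union of the retained $D_i$'s and the perturbed $F_i'$'s; the decomposition (\ref{eqn:replace_disks}) is then automatic from embeddedness. Define $B$ as the union, over $i \in \mathcal{S}_0$, of the interiors of the $\Lambda_i$, shrunk enough from $\partial U$ so that $\overline{B} \cap \tilde{\Sigma} = \varnothing$ while still maintaining $\partial B \cap U_\xi = \bigcup_{i \in \mathcal{S}_0}(D_i \cap U_\xi)$; this yields (\ref{eqn:replace_subset}), and is possible because $F_i'$ and its interior slab live in the collar $U \setminus U_\xi$ for an appropriate perturbation parameter.

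The left inequality in (\ref{eqn:replace_inequality}) is immediate from Proposition \ref{prop:iso+PMC_controls_area} applied componentwise to $B$, since $\cH^3(B) \leq \cH^3(U) \leq \eta$. The right inequality is obtained by combining the area bound (\ref{eqn:resolution_iso_collapse_area}) and the volume bound (\ref{eqn:resolution_iso_collapse_vol}) of Lemma \ref{lem:resolution_iso_collapse} with $|T_h(\Sigma, \tilde{\Sigma})| \leq c\,\mathbb{M}(T(\Sigma, \tilde{\Sigma}))$ and with the specific choice of $\mathcal{S}_0$: the perimeter savings extracted from discarding the removable $D_i$ offset the contributions from $\partial B$ and the volume terms, up to an $\eps$-small perturbation error absorbed by taking the $\eps$ in Lemma \ref{lem:resolution_iso_collapse} small enough. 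For the transfer of almost-minimality (\ref{eqn:almost_min_postreplace}), given $\Sigma' \in \mathcal{D}(U)[\tilde{\Sigma}]$ I would reattach the boundary pieces of $\partial B$ to $\Sigma'$ with the orientations inherited from $T(\Sigma, \tilde{\Sigma})$, producing $\Sigma'' \in \mathcal{D}(U)[\Sigma]$; the cocycle relation (\ref{eqn:fill-in_cyclic}) together with (\ref{eqn:almost_min_prereplace}) applied to $\Sigma''$ and with (\ref{eqn:replace_inequality}) then yields (\ref{eqn:almost_min_postreplace}). The $L^1$ bound (\ref{eqn:replace_diff}) is a pinching argument: inserting $\Sigma' = \tilde{\Sigma}$ into (\ref{eqn:almost_min_prereplace}) and $\Sigma' = \Sigma$ into (\ref{eqn:almost_min_postreplace}) gives matching upper and lower bounds on $\cH^2(\Sigma) - \cH^2(\tilde{\Sigma}) - T_h(\Sigma, \tilde{\Sigma})$, forcing the symmetric difference in $U_\xi$ to have area at most $2\eps$. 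Finally, (\ref{eqn:replace_length}) and (\ref{eqn:replace_area}) follow by restricting the entire construction to the components of $\Sigma \cap U$ contained in $P$: the isoperimetric collapse directly gives the length bound on $\partial \tilde{P}$, and the almost-minimality applied to the competitor that swaps only $\tilde{P}$ back for $P$ gives the area bound.

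The main obstacle I anticipate is the coordinated choice in Steps 2--3: selecting $\mathcal{S}_0$ so that each removable $D_i$ contributes a net $\cA^h$ saving that exactly matches the ``cost'' of $\partial B$, while simultaneously constructing $B$ so that $\overline{B} \cap \tilde{\Sigma} = \varnothing$ despite the interior perturbation $F_i'$ lying in $\tilde{\Sigma}$ close to $\partial \Lambda_i$. A related delicate point is orientation bookkeeping for the fill-in cocycle (Proposition \ref{prop:fill-in_properties}) in the almost-minimality transfer, since the signs in $T_h$ must line up with (\ref{eqn:replace_inequality}) so that the final estimate (\ref{eqn:almost_min_postreplace}) does not pick up an uncontrolled error from the added and subtracted fill-in volumes.
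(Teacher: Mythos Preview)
Your proposal misidentifies the objects being replaced. You decompose $\Sigma \cap U$ into components and assume each is a disk with boundary on $\partial U$; but a priori this is exactly what fails --- $\Sigma$ is a single disk meeting $\partial U$ transversally, so the components of $\Sigma \cap U$ are planar surfaces that may have many boundary circles. Producing the disk decomposition~(\ref{eqn:replace_disks}) is the \emph{conclusion} of the lemma, not an input, and your argument assumes it from the outset.

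The paper (following \cite[Theorem~1]{AS}) instead operates on the components $\Lambda$ of $\Sigma \setminus U$ not meeting $\partial\Sigma$: these are the ``caps'' sticking outside $U$. For an innermost such $\Lambda$ one invokes Lemma~\ref{lem:area_comparison} (not Lemma~\ref{lem:resolution_iso_collapse}) to get a region $K_\Lambda \subset M\setminus U$ with $\partial K_\Lambda = \Lambda \cup F$, $F \subset \partial U$, and the strict inequality $\cH^2(F) < \cH^2(\Lambda) - c\cH^3(K_\Lambda)$. One then replaces a suitable connected piece $\Lambda_1 \supset \Lambda$ of $\Sigma$ by $F_1 \subset \partial U$ via a Lipschitz reparametrization of the disk (two cases depending on whether $\partial\Sigma$ meets $F_1$), sets $V_\Lambda$ to be the region between $\Lambda_1$ and $F_1$, and takes $B := V_\Lambda \cap U$. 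The key computation is
\[
\cH^2(\tilde\Sigma) + c\,\cH^3(V_\Lambda) + \cH^2(\partial B) - c\,\cH^3(B) < \cH^2(\Sigma),
\]
which follows by splitting $V_\Lambda$ into its interior and exterior parts and applying~(\ref{eqn:area_comparison}) to the exterior piece. Smoothing, perturbation, and induction on the number of exterior components then give the full statement; since $T_h(\Sigma,\tilde\Sigma) \le c\,\mathbb{M}(T(\Sigma,\tilde\Sigma)) = c\,\cH^3(V_\Lambda)$, inequality~(\ref{eqn:replace_inequality}) follows. Your derivations of (\ref{eqn:almost_min_postreplace}) and (\ref{eqn:replace_diff}) are in the right spirit once the correct $\tilde\Sigma$ and $B$ are in hand, but note that (\ref{eqn:almost_min_postreplace}) follows more directly from $\mathcal{D}(U)[\tilde\Sigma] \subset \mathcal{D}(U)[\Sigma]$ together with (\ref{eqn:replace_inequality}) and the cocycle identity --- no ``reattaching'' is needed.
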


\begin{proof}
We use the same replacement procedure as in the proof of \cite[Theorem 1]{AS}. However, we have to keep track of relative volumes, so we include the details of the procedure before the smoothing, perturbation, and induction steps.

Let $\Lambda$ be any component of $\Sigma \setminus U$ such that $\Lambda \cap \partial \Sigma = \varnothing$. By Lemma \ref{lem:area_comparison}, there is a unique compact set $K_\Lambda \subset M \setminus U$ satisfying (\ref{eqn:area_comparison_smallness_conclusion}) and (\ref{eqn:area_comparison}. Given any component $\Lambda'$ of $\Sigma \setminus U$ with $\Lambda' \subset K_{\Lambda}$, we have $\Lambda' \cap \partial \Sigma = \varnothing$ and $K_{\Lambda'} \subset K_{\Lambda}$ (by the assumption on $U$ and $\Sigma$). Hence, we can select a component $\Lambda$ of $\Sigma \setminus U$ such that $\Lambda \cap \partial \Sigma = \varnothing$ and $\mathrm{int}(K_{\Lambda}) \cap \Sigma = \varnothing$.

Let $\chi$ be a diffeomorphism of $\D$ onto $\Sigma$, and consider the set $H\coloneqq \chi^{-1}(\Lambda)$. $H$ is a compact connected subset of $\D$ bounded by a finite collection of pairwise disjoint smooth Jordan curves $\gamma_1, \hdots, \gamma_n$. Suppose without loss of generality that $\gamma_1$ is the outermost of the curves. By the connectedness of $H$, we can write
\[ H = \overline{\mathrm{int} \gamma_1} \setminus \bigcup_{j=2}^n \mathrm{int} \gamma_j. \]

Recall that $F = \partial K_\Lambda \cap \partial U$. Define $F_1 \subset F$ to be the component of $F$ which contains the Jordan curve $\chi(\gamma_1)$, and $F_2 \coloneqq F \setminus F_1$. Define $I \coloneqq \cup \mathrm{int} \gamma_j$, where the union is over those $j$ such that $\chi(\gamma_j) \subset F_2$. Let $\Lambda_2 \coloneqq \chi(I)$. Then $\Lambda_1 \coloneqq \Lambda \cup \Lambda_2$ is diffeomorphic to the connected subset $H \cup I$ of $\D$, and $\partial \Lambda_1 = \partial F_1$. By \cite[Lemma 1]{AS}, we can construct a diffeomorphism $\alpha$ of $\Lambda_1$ onto $F_1$ such that $\alpha$ coincides with the identity on $\partial \Lambda_1 = \partial F_1$. We define a Lipschitz map $\hat{\chi} : \D \to M$ by
\[ \hat{\chi}(x) \coloneqq \begin{cases}
(\alpha \circ \chi)(x) & x \in H \cup I\\
\chi(x) & \text{otherwise}.
\end{cases} \]

We consider the following two cases:

\emph{Case 1: $\partial \Sigma \cap (F_1 \setminus \partial F_1) = \varnothing$}. Since $\mathrm{int}(K_{\Lambda}) \cap \Sigma = \varnothing$, $\hat{\chi}$ is injective, and we define
\[ \tilde{\Sigma} \coloneqq \hat{\chi}(\D) = (\Sigma \setminus \Lambda_1) \cup F_1. \]
Let $V_{\Lambda} \subset U \cup \bigcup_{\Lambda'} K_{\Lambda'}$ be the unique open set bounded by $\Lambda_1 \cup F_1$. Note that $\mathrm{int}(K_{\Lambda}) \subset V_{\Lambda}$, so $V_{\Lambda}$ is on the outside of $F_1$. Let $B \coloneqq V_{\Lambda} \cap U$, $V_{\Lambda}^o \coloneqq V_{\Lambda} \setminus \overline{U}$, $\Lambda_1^i \coloneqq \Lambda_1 \cap U$, $\Lambda_1^o = \Lambda_1 \setminus \overline{U}$, and $G \coloneqq V_\Lambda \cap \partial U$. Since $V_\Lambda$ is on the outside of $F_1$, we have $\partial B = \Lambda_1^i \cup G$. Our goal is to show
\begin{equation}\label{eqn:replace_case_1}
    \cH^2(\tilde{\Sigma}) + c\cH^3(V_\Lambda) + \cH^2(\partial B) - c\cH^3(B) < \cH^2(\Sigma).
\end{equation}
We have $\partial V_{\Lambda}^o = \Lambda_1^o \cup F_1 \cup G$, so (\ref{eqn:area_comparison}) implies
\[ \cH^2(F_1) + \cH^2(G) < \cH^2(\Lambda_1^o) - c\cH^3(V_\Lambda^o). \]
By the definition of $\tilde{\Sigma}$, we have
\[ \cH^2(\tilde{\Sigma}) = \cH^2(\Sigma) - \cH^2(\Lambda_1) + \cH^2(F_1). \]
Hence, we have
\begin{align*}
\cH^2(\tilde{\Sigma}) + c\cH^3(V_\Lambda) + \cH^2(\partial B) - c\cH^3(B)
& = \cH^2(\Sigma) - \cH^2(\Lambda_1^o) + \cH^2(F_1) + c\cH^3(V_\Lambda^o) + \cH^2(G)\\
& < \cH^2(\Sigma).
\end{align*}

\emph{Case 2: $\partial \Sigma \cap (F_1 \setminus \partial F_1) \neq \varnothing$}. Since $\mathrm{int}(K_{\Lambda}) \cap \Sigma = \varnothing$, we have $\partial \Sigma \subset F_1 \setminus \partial F_1$ in this case. Hence, $\hat{\chi}^{-1}(\partial \Sigma) = \partial \D \cup \gamma$, where $\gamma$ is a smooth Jordan curve in $\D \setminus \partial \D$. We define
\[ \tilde{\Sigma} \coloneqq \hat{\chi}(\mathrm{int} \gamma)\subset F_1. \]
Let $V_\Lambda \subset U \cup \bigcup_{\Lambda'} K_{\Lambda'}$ be the unique open set bounded by $\Sigma$ and $\tilde{\Sigma}$. Note that $\mathrm{int}(K_\Lambda) \subset V_\Lambda$, so $V_\Lambda$ is on the outside of $\tilde{\Sigma}$. Let $B \coloneqq V_\Lambda \cap U$, $V_\Lambda^o \coloneqq V_\Lambda \setminus \overline{U}$, $\Sigma^i \coloneqq \Sigma \cap U$, $\Sigma^o \coloneqq \Sigma \setminus \overline{U}$, and $G \coloneqq V_\Lambda \cap \partial U$. Since $V_\Lambda$ is on the outside of $\tilde{\Sigma}$, we have $\partial B = \Sigma^i \cup G$. Our goal is to show
\begin{equation}\label{eqn:replace_case_2}
    \cH^2(\tilde{\Sigma}) + c\cH^3(V_\Lambda) + \cH^2(\partial B) - c\cH^3(B) < \cH^2(\Sigma).
\end{equation}
We have $\partial V_\Lambda^o = \Sigma^o \cup \tilde{\Sigma} \cup G$, so (\ref{eqn:area_comparison}) implies
\[ \cH^2(\tilde{\Sigma}) + \cH^2(G) < \cH^2(\Sigma^o) - c\cH^3(V_\Lambda^o). \]
Hence, we have
\begin{align*}
    \cH^2(\tilde{\Sigma}) + c\cH^3(V_\Lambda)  + \cH^2(\partial B) - c\cH^3(B)
    & = \cH^2(\tilde{\Sigma}) + c\cH^3(V_\Lambda^o) + \cH^2(\Sigma^i) + \cH^2(G)\\
    & < \cH^2(\Sigma).
\end{align*}

The smoothing, slight perturbation, and induction steps now follow exactly as in the proof of \cite[Theorem 1]{AS} (with the additional observation that small perturbations make small changes to relative volumes). Taking $B$ to be the union of the perturbations of the sets $B$ from each iteration applied to the perturbed $\tilde{\Sigma}$ from the previous step, we obtain (\ref{eqn:replace_subset}) and (\ref{eqn:replace_disks}) immediately. Moreover, by (\ref{eqn:replace_case_1}) and (\ref{eqn:replace_case_2}), we have
\[ \cH^2(\tilde{\Sigma}) + c|T(\Sigma, \tilde{\Sigma})| + \cH^2(\partial B) - c\cH^3(B) \leq \cH^2(\Sigma). \]
Since
\[ T_h(\Sigma, \tilde{\Sigma}) \leq c|T(\Sigma, \tilde{\Sigma})|, \]
the second inequality in (\ref{eqn:replace_inequality}) holds. The first inequality in (\ref{eqn:replace_inequality}) follows from (\ref{eqn:PMC_controls_area}) and the fact that $B \subset U$.

(\ref{eqn:almost_min_postreplace}) follows from (\ref{eqn:replace_inequality}), (\ref{eqn:almost_min_prereplace}), Proposition \ref{prop:fill-in_properties}, and the fact that $\mathcal{D}(U)[\tilde{\Sigma}] \subset \mathcal{D}(U)[\Sigma]$.

To prove (\ref{eqn:replace_diff}), we note that
\[ \Sigma \triangle \tilde{\Sigma} \cap U_{\xi} = \partial B \cap U_{\xi}. \]
By (\ref{eqn:replace_inequality}) and (\ref{eqn:almost_min_prereplace}) with $\Sigma' = \tilde{\Sigma}$, we have
\[ \cH^2(\partial B) - c\cH^3(B) \leq \eps. \]
By (\ref{eqn:PMC_controls_area}), we have
\[ \cH^2(\partial B) \leq 2\eps, \]
which implies (\ref{eqn:replace_diff}).

(\ref{eqn:replace_length}) follows from the construction of $\tilde{\Sigma}$; namely, for each component of $\Sigma \cap U$, either (1) it is thrown away in $\partial B$, or (2) each component of its intersection with $\partial U$ is either (a) kept unchanged, or (b) pushed away from $\partial U$ in the perturbation step.

(\ref{eqn:replace_area}) follows from the construction of $\tilde{\Sigma}$ and (\ref{eqn:replace_diff}); namely, each component of $\Sigma \cap U$ is either (1) thrown away in $\partial B$, whose area is at most $2\eps$ by (\ref{eqn:replace_diff}), or (2) kept and given potentially larger area (by adding pieces of $\partial U$).
\end{proof}

\section{Filigree}
We adapt the filigree lemma \cite[Lemma 3]{AS} to the prescribed mean curvature setting. This lemma allows us to ignore small area components of a minimizing sequence in small balls. Let $U$ and $\theta$ be chosen (and $\beta$ and $\delta_1$ be thereby defined) as in \S\ref{sec:disk_notation}, and let $\eta$ be defined as in (\ref{eqn:eta_assumption}).

\begin{lemma}[Filigree]\label{lem:filigree}
Suppose $\cH^3(U(\theta)) \leq \eta$. Let $\Sigma \in \mathcal{D}^*(U(\theta))$ satisfy
\begin{equation}\label{eqn:filigree_area_assumption}
    \cH^2(\partial U(\theta)) + \cH^2(\Sigma \setminus U) \leq \delta_1^2/32
\end{equation}
and
\begin{equation}\label{eqn:almost_min_filigree}
    \cH^2(\Sigma) \leq \cH^2(\Sigma') + T_h(\Sigma, \Sigma') + \eps
\end{equation}
for all $\Sigma' \in \mathcal{D}(U)[\Sigma]$. Let $P$ be a union of some components of $\Sigma \cap U(\theta)$. If
\[ \cH^2(P) \leq \frac{\theta^2}{48\beta} \]
then
\[ \cH^2(P \cap U) \leq 6\eps. \]
\end{lemma}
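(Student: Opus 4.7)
The proof will parallel the ODE-integration argument at the end of the proof of Lemma \ref{lem:area_comparison}, but with almost-minimality playing the role of mean convexity. For a.e.\ $t \in (\theta/2, \theta)$ (guaranteed by Sard), I plan to construct a competitor $\Sigma_t^* \in \mathcal{D}(U)[\Sigma]$ by excising $P \cap U(t)$ and capping off the resulting boundary curves on $\partial U(t)$ by an isoperimetrically optimal region $F_t$. The natural construction is to first apply Lemma \ref{lem:replacement}, which preserves almost-minimality via \eqref{eqn:almost_min_postreplace} and turns $\tilde\Sigma \cap U$ into a disjoint union of disks, and then execute the capping via a variant of Lemma \ref{lem:resolution_iso_collapse} applied to the disks corresponding to $P$.

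\textbf{Key estimates.} Write $g(t) := \cH^2(P \cap U(t))$, which is monotone in $t$ with $g'(t) = \cH^1(P \cap \partial U(t))$ a.e.\ by the coarea formula. The isoperimetric inequality \eqref{eqn:area_comparison_isoperimetric_assumption} gives $\cH^2(F_t) \leq \beta(\cH^1(P \cap \partial U(t)))^2 = \beta (g'(t))^2$. Let $\Omega_t$ denote the fill-in region bounded by $(P \cap U(t)) \cup F_t$; since $\cH^3(\Omega_t) \leq \cH^3(U(\theta)) \leq \eta$, the inequality \eqref{eqn:PMC_controls_area} applies and yields
\[ c\,\cH^3(\Omega_t) \leq \tfrac{1}{2}\cH^2(\partial \Omega_t) = \tfrac{1}{2}\bigl(g(t) + \cH^2(F_t)\bigr). \]
The almost-minimality hypothesis \eqref{eqn:almost_min_filigree} applied to $\Sigma_t^*$ then reads
\[ g(t) - \cH^2(F_t) \leq c\,\cH^3(\Omega_t) + \eps \leq \tfrac{1}{2}\bigl(g(t) + \cH^2(F_t)\bigr) + \eps, \]
which rearranges to the differential inequality
\[ g(t) \leq 3\cH^2(F_t) + 2\eps \leq 3\beta (g'(t))^2 + 2\eps. \]

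\textbf{ODE argument and conclusion.} Setting $\phi(t) := \sqrt{\max(g(t) - 2\eps, 0)}$, the above inequality implies $\phi'(t) \geq 1/\sqrt{12\beta}$ wherever $g(t) > 2\eps$. If we had $g(\theta/2) > 2\eps$, then integrating from $\theta/2$ to $\theta$ would give $\phi(\theta) \geq \theta/(2\sqrt{12\beta})$, i.e.,
\[ \cH^2(P) = g(\theta) \geq 2\eps + \frac{\theta^2}{48\beta}, \]
contradicting the hypothesis $\cH^2(P) \leq \theta^2/(48\beta)$. Hence $g(\theta/2) \leq 2\eps$, and since $U \subset U(\theta/2)$, we conclude $\cH^2(P \cap U) \leq g(\theta/2) \leq 2\eps \leq 6\eps$.

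\textbf{Main obstacle.} The most delicate step is the rigorous construction of $\Sigma_t^*$ inside $\mathcal{D}(U)[\Sigma]$: the almost-minimality only permits modifications supported in $U$, while the cap $F_t$ naturally sits on $\partial U(t) \not\subset U$. The resolution is to run the construction at an inner scale $U_{\xi(t)}$ (with $\xi(t)$ chosen via coarea to make $\cH^1(P \cap \partial U_{\xi(t)})$ small), using Lemma \ref{lem:replacement} to produce disks inside $U$ and then capping via Lemma \ref{lem:resolution_iso_collapse}; the isoperimetric estimate at the inner slice follows from \eqref{eqn:area_comparison_isoperimetric_assumption} together with the smallness of $U(\theta) \setminus U$ forced by \eqref{eqn:filigree_area_assumption}. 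A secondary point is checking that after replacement the relevant components of $\tilde\Sigma \cap U_{\xi(t)}$ remain in correspondence with $P$, which is provided precisely by \eqref{eqn:replace_length} and \eqref{eqn:replace_area}.
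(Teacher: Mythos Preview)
Your proposal follows essentially the same strategy as the paper: apply Lemma~\ref{lem:replacement} at each slice $U(\sigma)$, use Lemma~\ref{lem:resolution_iso_collapse} to cap the disks coming from $P$ with isoperimetric regions, combine almost-minimality with the volume bound from \eqref{eqn:PMC_controls_area} to obtain a differential inequality of the form $g \leq 3\beta (g')^2 + C\eps$, and integrate. Two points deserve correction.

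First, your additive constant $2\eps$ is too optimistic. You lose an extra $\eps$ in the perturbation step of Lemma~\ref{lem:resolution_iso_collapse} (both in area and in the volume term), and another $2\eps$ in passing from $\tilde P$ back to $P\cap U(\sigma)$ via \eqref{eqn:replace_area}. The paper obtains
\[
\cH^2(P\cap U(t\theta)) \leq 3\beta\bigl(\cH^1(P\cap \partial U(t\theta))\bigr)^2 + 6\eps,
\]
and the final conclusion is $\cH^2(P\cap U)\leq 6\eps$; this is why the $6$ appears in the statement.

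Second, your resolution of the ``main obstacle'' is not the right one. Your ODE step integrates over slices $\partial U(t)$ with $t\in(\theta/2,\theta)$, all of which lie \emph{outside} $U$; replacing this by slices $\partial U_{\xi}$ with $\xi>0$ (inside $U$) cannot work, because you would then have no interval of length $\sim\theta$ over which to integrate, and the hypothesis $\cH^2(P)\leq \theta^2/(48\beta)$ controls $g$ only at the outer end $U(\theta)$, not at any inner slice. The paper instead applies Lemma~\ref{lem:replacement} directly in $U(\sigma)$ for $\sigma\in(0,\theta)$, and the competitor $\Sigma'$ produced by Lemma~\ref{lem:resolution_iso_collapse} lives in $\mathcal{D}(U(\sigma))[\tilde\Sigma]$; invoking \eqref{eqn:almost_min_postreplace} here really uses almost-minimality against competitors in $\mathcal{D}(U(\sigma))[\Sigma]\supset\mathcal{D}(U)[\Sigma]$. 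This stronger hypothesis is always available in the applications (Lemma~\ref{lem:rectifiability}, Theorem~\ref{thm:local_regularity}) because there $U(\theta)$ sits inside the ambient minimization domain, and one should read the hypothesis \eqref{eqn:almost_min_filigree} in that spirit.
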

\begin{proof}
For almost every $\sigma \in [0, \theta]$, we have $\Sigma \in \mathcal{D}^*(U(\sigma))$. Choose any such $\sigma = t\theta$ for $t \in [0, 1]$. We apply Lemma \ref{lem:replacement} to $\Sigma$ in $U(\sigma)$ with some small constant $\xi$, yielding $\tilde{\Sigma}$.

Let $\mathcal{S}$ be the set of disks composing $\tilde{\Sigma}$ in $U(\sigma)$. Let $\mathcal{S}_P \subset \mathcal{S}$ be the set of disks $D \in \mathcal{S}$ so that $D \cap U(\sigma)_{\xi} \subset P$. Let $\tilde{P} \coloneqq \bigcup_{D \in \mathcal{S}_P} D$. Take a disk $D \in \mathcal{S}_P$. Let $F_D$ denote the isoperimetric region in $\partial U(\sigma)$ for $\partial D$. Let $\Lambda_D$ denote the region in $U(\sigma)$ bounded by $D \cup F_D$.

Consider the competitor $\Sigma'$ for $\tilde{\Sigma}$ given by replacing each disk $D \in \mathcal{S}_P$ with $F_D$, and then resolving the overlaps and perturbing using Lemma \ref{lem:resolution_iso_collapse}. Then
\[ T_h(\tilde{\Sigma}, \Sigma') \leq c\sum_{D \in \mathcal{S}_P} \cH^3(\Lambda_D) + \eps/2. \]
By (\ref{eqn:iso}), we have
\[ \cH^3(\Lambda_D)^{2/3} \leq c_1^{2/3}(\cH^2(D) + \cH^3(F_D)). \]
Hence, we have
\begin{align*}
T_h(\tilde{\Sigma}, \Sigma')
& \leq cc_1^{2/3}(\cH^3(U(\sigma)))^{1/3}\sum_{D \in \mathcal{S}_P} (\cH^2(D) + \cH^2(F_D)) + \eps/2\\
& \leq \frac{1}{2}\sum_{D \in \mathcal{S}_P} (\cH^2(D) + \cH^2(F_D)) + \eps/2.
\end{align*}
By Lemma \ref{lem:replacement}(\ref{eqn:almost_min_postreplace}), we have
\begin{align*}
    \sum_{D \in \mathcal{S}} \cH^2(D)
    & = \cH^2(\tilde{\Sigma} \cap U(\sigma))\\
    & \leq \cH^2(\Sigma' \cap U(\sigma)) + T_h(\tilde{\Sigma}, \Sigma') + \eps\\
    & \leq \sum_{D \in \mathcal{S} \setminus \mathcal{S}_P} \cH^2(D) + \sum_{D \in \mathcal{S}_P} \cH^2(F_D) + \frac{1}{2}\sum_{D \in \mathcal{S}_P} (\cH^2(D) + \cH^2(F_D)) + 2\eps,
\end{align*}
so
\[ \cH^2(\tilde{P}) = \sum_{D \in \mathcal{S}_P} \cH^2(D) \leq 3\sum_{D \in \mathcal{S}_P} \cH^2(F_D) + 4\eps. \]
By the isoperimetric inequality on $\partial U(\sigma)$, we have
\[ \cH^2(\tilde{P}) \leq 3\beta(\cH^1(\partial \tilde{P}))^2 + 4\eps. \]
By Lemma \ref{lem:replacement}(\ref{eqn:replace_length}), we have
\[ \cH^1(\partial \tilde{P}) \leq \cH^1(P \cap \partial U(\sigma)). \]
By Lemma \ref{lem:replacement}(\ref{eqn:replace_area}), we have
\[ \cH^2(P \cap U(\sigma)) \leq \cH^2(\tilde{P}) + 2\eps. \]
Hence, we have
\[
\cH^2(P \cap U(t\theta)) \leq 3\beta(\cH^1(P \cap \partial U(t\theta)))^2 + 6\eps.
\]

Let $f(t) \coloneqq \cH^2(P \cap U(t\theta)) - 6\eps$. Then by the coarea formula we have
\begin{equation}\label{eqn:filigree}
f \leq 3\beta\theta^{-2}(f')^2.
\end{equation}
Suppose without loss of generality that $f(1) > 0$. Let
\[ t_0 \coloneqq \inf\{t \in [0, 1] \mid f(t) > 0\}. \]
Integrating (\ref{eqn:filigree}) gives
\[ (1-t_0)\frac{\theta}{2\sqrt{3\beta}} \leq \sqrt{f(1)} - \sqrt{f(t_0)} \leq \sqrt{\cH^2(P)}. \]
Then our area assumption gives
\[ t_0 \geq 1 - \frac{2\sqrt{3\beta}}{\theta}\sqrt{\cH^2(P)} \geq \frac{1}{2}. \]
Hence, $\cH^2(P \cap U) \leq \cH^2(P \cap U(\theta/2)) \leq 6\eps$.
\end{proof}

\section{Interior Regularity}
In this section we prove the interior regularity for $\cA^h$ minimizing sequences of disks, following the strategy of \cite{AS}.

We first apply Lemma \ref{lem:filigree} to deduce rectifiability of the limit varifold.

\begin{lemma}[Rectifiability]\label{lem:rectifiability}
Let $\{\Sigma_k\}_{k\in\N} \subset \mathcal{D}^*(U)$ satisfy $\cH^2(\Sigma_k) \leq \delta_2^2/64$ and
\begin{equation}\label{eqn:min_seq_rectifiability}
    \cH^2(\Sigma_k) \leq \cH^2(\Sigma_k') + T_h(\Sigma_k, \Sigma_k') + \eps_k
\end{equation}
for all $\Sigma_k' \in \mathcal{D}(U)[\Sigma_k]$ with $\eps_k \to 0$. Let $V$ be the subsequential varifold limit of $\Sigma_k$ in $U$ provided by Theorem \ref{thm:compactness}. Then
\[ \Theta^2(\|V\|, x) \geq \frac{1}{1000\beta} > 0 \]
for all $x \in \mathrm{spt}\|V\|$. Moreover, $V$ is rectifiable.
\end{lemma}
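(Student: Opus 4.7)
The plan is to use Lemma \ref{lem:filigree} to extract a uniform positive lower bound on $\|V\|(B_r(x))/(\pi r^2)$ at every point of $\mathrm{spt}\|V\|$, and then to combine this with the monotonicity formula \eqref{eqn:monotonicity} and the $c$-bounded first variation \eqref{eqn:bounded_first_variation} to deduce rectifiability via Allard's rectifiability theorem.

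I would first fix $x \in \mathrm{spt}\|V\|$ and a small radius $r > 0$, chosen so that $B_r(x) \subset U$, $\cH^3(B_r(x)) \leq \eta$, and $\cH^2(\partial B_{r/2}(x)) < \delta_1^2/64$. Combined with $\cH^2(\Sigma_k) \leq \delta_2^2/64 \leq \delta_1^2/64$ and $\delta_2 \leq \delta_1$, this makes hypothesis \eqref{eqn:filigree_area_assumption} of the filigree lemma hold with $B_{r/2}(x)$ playing the role of the filigree $U$ and $\theta = r/2$. By Sard's theorem I can restrict to a full-measure set of radii so that $\partial B_{r/2}(x)$ and $\partial B_r(x)$ are transverse to every $\Sigma_k$, giving $\Sigma_k \in \mathcal{D}^*(B_{r/2}(x))$, and also $\|V\|(\partial B_{r/2}(x)) = \|V\|(\partial B_r(x)) = 0$. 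The almost-minimizing property \eqref{eqn:almost_min_filigree} is inherited from \eqref{eqn:min_seq_rectifiability} because shrinking the ambient domain only shrinks the class of competitors.

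Next, setting $P_k := \Sigma_k \cap B_r(x)$, I claim $\liminf_k \cH^2(P_k) \geq r^2/(192\beta)$. If not, then along a subsequence $\cH^2(P_k) \leq r^2/(192\beta) = (r/2)^2/(48\beta)$, and Lemma \ref{lem:filigree} applied with $U = B_{r/2}(x)$, $\theta = r/2$, and $P = P_k$ yields $\cH^2(\Sigma_k \cap B_{r/2}(x)) \leq 6\eps_k \to 0$. Passing to the varifold limit gives $\|V\|(B_{r/2}(x)) = 0$, which contradicts $x \in \mathrm{spt}\|V\|$. Therefore, by varifold convergence at the chosen radii, $\|V\|(B_r(x)) \geq r^2/(192\beta)$ for a dense set of arbitrarily small admissible $r$, and monotonicity \eqref{eqn:monotonicity} together with $f(r) \to 1$ upgrades this to
\[ \Theta^2(\|V\|, x) \;\geq\; \frac{1}{192\pi\beta} \;>\; \frac{1}{1000\beta}. \]

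Finally, since $V$ has $c$-bounded first variation by \eqref{eqn:bounded_first_variation} and the uniform positive lower density just established, Allard's rectifiability theorem (see \cite[\S42]{simon_old}) implies $V$ is rectifiable. I expect the only real work here to be the bookkeeping verification that $\Sigma_k \in \mathcal{D}^*(B_{r/2}(x))$ with hypothesis \eqref{eqn:filigree_area_assumption} on the concentric balls, which is a routine Sard and smallness-of-$\cH^2(\Sigma_k)$ argument; the heart of the lemma is the single application of the filigree estimate.
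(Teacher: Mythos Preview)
Your proposal is correct and follows essentially the same argument as the paper: apply the filigree lemma at small balls centered at $x$ to rule out $\|V\|(B_{r/2}(x)) = 0$, deduce the lower density bound via monotonicity, and conclude rectifiability by Allard's theorem. The only slips are bookkeeping typos (the filigree lemma needs $\Sigma_k \in \mathcal{D}^*(B_r(x))$ rather than $\mathcal{D}^*(B_{r/2}(x))$, and the area smallness condition is on $\partial B_r(x)$, not $\partial B_{r/2}(x)$), which are easily corrected and do not affect the argument.
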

\begin{proof}
Rectifiability follows from the positive density lower bound by \cite[Theorem 5.5]{allard}.

Let $x \in \mathrm{spt}\|V\|$. Choose $r_0 \leq 1/2$ small enough so that $\cH^3(B_{r_0}(x)) \leq \eta$ and $\cH^2(\partial B_r(x)) \leq \delta_2^2/64$ for all $r \leq r_0$. Let $r_i \to 0$ with $r_i \leq r_0$, $B_{r_i}(x) \subset U$, and $\Sigma_k \in \mathcal{D}^*(B_{r_i}(x))$ for all $k$ and $i$, which is guaranteed by Sard's theorem. By Lemma \ref{lem:filigree} with $\theta = r_i/2$, if
\[ \liminf_k \cH^2(\Sigma_k \cap B_{r_i}(x)) \leq \frac{r_i^2}{192\beta}, \]
then
\[ \|V\|(B_{r_i/2}(x)) \leq \liminf_k \cH^2(\Sigma_k \cap B_{r_i/2}(x)) \leq \liminf_k 6\eps_k = 0, \]
which contradicts $x \in \mathrm{spt}\|V\|$. Hence, by the assumption of transversality, we have
\[ \|V\|(B_{r_i}(x)) = \lim_k \cH^2(\Sigma_k \cap B_{r_i}(x)) \geq \frac{r_i^2}{192\beta}. \]
By the monotonicity of (\ref{eqn:monotonicity}) for $V$, we have
\[ \Theta^2(\|V\|, x) \geq \frac{1}{1000\beta}, \]
as desired.
\end{proof}

We now have all the tools to use the stategy of \cite{AS} to deduce interior regularity.

\begin{theorem}\label{thm:local_regularity}
Let $\{\Sigma_k\}_{k\in\N} \subset \mathcal{D}^*(U)$ satisfy $\sup_k \cH^2(\Sigma_k \cap U) \leq \delta_2^2/64$ and
\begin{equation}\label{eqn:min_seq_regularity}
    \cH^2(\Sigma_k) \leq \cH^2(\Sigma_k') + T_h(\Sigma_k, \Sigma_k') + \eps_k
\end{equation}
for all $\Sigma_k' \in \mathcal{D}(U)[\Sigma_k]$ with $\eps_k \to 0$. Let $V$ and $\Omega$ respectively be the subsequential varifold limit of $\Sigma_k \cap U$ and the subsequential Caccioppoli set limit of $\Omega_k$ provided by Theorem \ref{thm:compactness}. Let $\alpha\in (0,1)$.

$V$ is an integer rectifiable varifold with $c$-bounded first variation. For every $x \in \mathrm{spt}\|V\|$, we have $\Theta^2(\|V\|, x) = n_x \in \N$. If $h(x) \neq 0$, then $n_x \in \{1\} \cup 2\N$. For every $x \in \mathrm{spt}\|V\|$, there is a neighborhood $W_x$ of $x$ so that the following hold.

\begin{itemize}
    \item $V \llcorner G(W_x,2) = \sum_{l=1}^{n_x}\mathbf{v}(N_l, 1)$, where $N_l \subset W_x$ is a $C^{1,\alpha}$ surface with $c$-bounded first variation. Moreover, each $N_l$ is on one side of $N_{l'}$ intersecting tangentially at $x$ for any $l,\ l' \in \{1, \hdots, n_x\}$.
    \item If $n_x = 1$, then
    \begin{itemize}
        \item $\|V\| \llcorner W_x = |D\mathbf{1}_{\Omega}| \llcorner W_x$,
        \item $V \llcorner G(W_x,2) = \mathbf{v}(N, 1)$, where $N \subset W_x$ is a smooth stable surface with prescribed mean curvature $h$ with respect to the set $\Omega \cap W_x$.
    \end{itemize}
    \item If $h(x) \neq 0$ and $n_y = 2n$ for all $y \in \mathrm{spt}\|V\| \cap W_x'$ for an open set $W_x' \subset W_x$ containing $x$, then $V \llcorner G(W'_x,2) = \mathbf{v}(N, 2n)$, where $N\subset W_x'$ is a smooth stable minimal surface. Moreover, $\Omega \cap W_x' = W_x' \setminus N$ if $h(x) > 0$, and $\Omega \cap W_x' = \varnothing$ if $h(x) < 0$.
    \item If $h(x) \neq 0$, $n_x = 2n$, and there is a sequence $x_j \to x$ with $x_j \in \mathrm{spt}\|V\|$ and $n_{x_j} \neq 2n$, then $n = 1$ and the surfaces $N_1,\ N_2 \subset W_x$ additionally satisfy that the generalized mean curvature of $N_1$ points towards $N_2$ and vice versa.
\end{itemize}
\end{theorem}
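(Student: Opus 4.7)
The plan is to combine the compactness/rectifiability results of Section 5 and Lemma 9.1 with the disentanglement lemmas of Section 7, the filigree Lemma 8.1, and Allard's regularity to deduce all four bullet points, following the skeleton of \cite{AS} at each point $x\in\mathrm{spt}\|V\|$.

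\textbf{Reduction to stacked disks.} By Theorem \ref{thm:compactness} and Lemma \ref{lem:rectifiability}, $V$ has $c$-bounded first variation and is integer rectifiable with $\Theta^2(\|V\|,x)\ge(1000\beta)^{-1}$ everywhere on $\mathrm{spt}\|V\|$; integrality of the density at every point will follow at the end from the $C^{1,\alpha}$ sheet structure. Fix $x\in\mathrm{spt}\|V\|$. Using the monotonicity formula \eqref{eqn:monotonicity} we choose a small radius $r>0$ on which $\|V\|(B_r(x))/(\pi r^2)$ is close to $\Theta^2(\|V\|,x)=:n_x$. After a change of coordinates I can assume the (unique) tangent varifold is $n_x$ copies of a plane. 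Using Sard and the filigree Lemma \ref{lem:filigree} applied to each $\Sigma_k$ in a thin cylinder $U=B_\rho^{\R^2}(0)\times(-\theta,\theta)$ centered on the tangent plane, every component of $\Sigma_k\cap U$ with area below the filigree threshold is discarded without changing the varifold limit; the remaining components have boundary in the side annulus $C=\partial B_\rho\times(-\theta,\theta)$ that is homotopically nontrivial, so we are in the stacked disks setup of \S\ref{subsec:isolate_one}.

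\textbf{Weak $C^{1,\alpha}$ structure of each sheet.} For each $j\in\{1,\dots,n_x\}$ apply the one-disk isolation Lemma \ref{lem:resolution_stacked_one} to test competitors that only change $D_j$ (the $j$-th disk of the stack). The mass bound \eqref{eqn:resolution_stacked_one_vol2} shows that the limit $N_j$ of the $D_j$'s is minimizing for $\cA^{\tilde h_j}$ where $\tilde h_j$ is a measurable function bounded by $c$ (a signed combination of $h$ with $\mathbf{1}_{\Omega}$ and nearby sheets). Standard PMC theory (morphing Morgan's argument through Allard's theorem applied to varifolds of bounded mean curvature) then yields that $N_j$ is a $C^{1,\alpha}$ surface with $c$-bounded first variation, and the maximum principle for $C^{1,\alpha}$ graphs gives the one-sided ordering of the sheets tangent at $x$. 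This proves the first bullet and, via upper semicontinuity of the density, integrality of $n_x$.

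\textbf{The three dichotomies.} If $n_x=1$, there is only one sheet and the convergence $\mathbf{v}(\Sigma_k)\rightharpoonup V$ together with $D\mathbf{1}_{\Omega_k}\wto D\mathbf{1}_\Omega$ force $\|V\|=|D\mathbf 1_\Omega|$ in $W_x$; the first variation formula \eqref{eqn:first_variation_formula} then identifies $N$ as a PMC surface with prescribing function $h$ and the standard Schoen--Simon/Morgan regularity upgrades this to smooth and stable. If $h(x)\ne 0$ and every nearby point has multiplicity $2n$, I use the two-disk decomposition Lemma \ref{lem:resolution_decompose_two}: pairing up sheets of opposite orientation yields $2$-stacks whose limits coincide, so the extra volume term in \eqref{eqn:resolution_2-stacks_vol} forces the paired sheets to actually agree pointwise, making $V$ stationary (minimal) in $W_x'$. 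Smooth stability follows. The sign of $h(x)$ then fixes $\Omega\cap W_x'$ because any missing (resp.\ added) volume contradicts \eqref{eqn:variational}. Finally, in the transition case, the density jump forces a sheet to ``end'' on another; Lemma \ref{lem:resolution_decompose_one} applied to the 1-stack part shows $n=1$ is the only possibility, and the PMC equation on each sheet together with the tangential touching at a limit of transition points gives that the generalized mean curvature vectors must point into the doubled region, i.e.\ toward the other sheet.

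\textbf{Main obstacles.} The hard part is the combination in step two: isolating a single sheet produces a minimization problem only for a measurable prescribing function $\tilde h_j$ depending on the (unknown) limit $\Omega$, which blocks smoothness and forces the sub-optimal $C^{1,\alpha}$ conclusion here. The $C^{1,1}$ improvement (deferred to the membrane-problem part of the paper, not in this theorem's conclusion) is what ultimately upgrades it. The second subtlety is in the multiplicity $2n$ case: the volume terms in Lemma \ref{lem:resolution_decompose_two} only cancel when $h$ has a sign at $x$, which is precisely why the hypothesis $h(x)\ne 0$ appears in the second and fourth bullets.
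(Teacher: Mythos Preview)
Your skeleton captures the right high-level strategy, but several load-bearing steps are either missing or misattributed.

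\textbf{Flatness of tangent cones is assumed, not proved.} You write ``I can assume the (unique) tangent varifold is $n_x$ copies of a plane''. At an arbitrary $x\in\mathrm{spt}\|V\|$ this is exactly what must be established; rectifiability only gives planar tangent cones $\|V\|$-a.e. The paper handles this by a separate blow-up argument (Lemma~\ref{lem:flatness}): rescaling kills the volume term so the blown-up disks become an \emph{area}-minimizing sequence in the restricted class $\mathcal{D}(b_r)[\tilde s_k]$, and then the \cite{AS} tangent-cone machinery applies. Without this step your reduction to stacked disks is circular.

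\textbf{The Replacement Lemma is never invoked.} Before you can speak of ``components of $\Sigma_k\cap U$'' as disks in a cylinder, you need Lemma~\ref{lem:replacement} to push all excursions of $\Sigma_k$ back into the ball and decompose $\tilde\Sigma_k\cap B_{\rho_1}$ into genuine embedded disks with boundary on $\partial B_{\rho_1}$. Filigree alone does not do this; it only discards small-area unions of components once you already have the disk decomposition. The paper's Steps~1--3 of the Integer Density argument (discarding cap-boundary disks, nullhomotopic disks, and obtaining the two-sided area bounds \eqref{eqn:area_lower_bd}--\eqref{eqn:area_upper_bd}) all rely on first having applied Replacement.

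\textbf{The parity argument is missing.} You derive the even-multiplicity minimal case by claiming the volume term in Lemma~\ref{lem:resolution_decompose_two} ``forces the paired sheets to actually agree pointwise''. That is not the mechanism. The paper first proves the parity statements \eqref{eqn:even_parity}--\eqref{eqn:odd_parity}: even density implies $x\notin\partial^*\Omega$, odd implies $x\in\overline{\partial^*\Omega}$. When $n_y\equiv 2n$ on $W_x'$, parity gives $\partial^*\Omega\cap W_x'=\varnothing$, hence $\Omega\cap W_x'$ is all or nothing; plugging this into the first-variation identity \eqref{eqn:first_variation_formula} makes $V$ stationary there, and Allard plus the sheet ordering finishes. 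The 2-stack lemma is used to get mean convexity of each sheet, not to force coincidence.

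\textbf{The transition case uses the wrong tool.} Ruling out $n\ge 2$ in the last bullet is not done via Lemma~\ref{lem:resolution_decompose_one}; it comes from the strong maximum principle (Lemma~\ref{lem:maximum_principle}) applied to the inner sheets $N_{j,2}$ and $N_{j,2n-1}$, whose generalized mean curvatures point away from each other by the 2-stack mean-convexity computation. The same maximum-principle step also rules out odd $n_x\ge 3$ when $h(x)\neq 0$, which is how the paper obtains $n_x\in\{1\}\cup 2\N$; your proposal does not address this restriction at all.
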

\begin{proof}
By Lemma \ref{lem:rectifiability}, $V$ is rectifiable and has a uniform positive density lower bound in its support. By Theorem \ref{thm:compactness}(\ref{eqn:bounded_first_variation}), $V$ has $c$-bounded first variation.

\emph{Integer Density}. We show that the density of $V$ is an integer at every $x \in \mathrm{spt}\|V\|$. Since this result concerns the infinitesimal behavior of $V$ near $x$, we push objects to $T_xM$ using the exponential map $\phi_x$. In $T_xM$, we use the standard Euclidean metric in the geodesic normal coordinates, which is the $C^{\infty}$ limit of the pullback metric under a blow up rescaling.

First, let $x \in \mathrm{spt}\|V\|$ be such that there is a tangent cone $C \in \mathrm{VarTan}(V,x)$ with $\mathrm{spt}\|C\| \subset P_C$, where $P_C$ is a 2-plane in $T_xM$ (by rectifiability, this holds $\|V\|$-almost everywhere).

Fix any $\xi > 0$.

Without loss of generality, assume $P_C \supset \mathrm{spt}\|C\|$ is the $x_1x_2$-plane in $T_xM$.

Let $\mu_t$ denote the transformation of $T_xM$ given by $\mu_t(y) = ty$. By the definition of $C$, there is a sequence $\{r_j\} \to \infty$ so that
\begin{equation}\label{eqn:var_tan}
    (\mu_{r_j}\circ \phi_x^{-1})_{\#}V \rightharpoonup C
\end{equation}
as $j \to \infty$. We always assume $r_j \geq 1$.

By the lower density bound from Lemma \ref{lem:rectifiability} and the monotonicity of (\ref{eqn:monotonicity}), there is a sequence $\{\sigma_j\} \to 0$ so that
\begin{equation}\label{eqn:flat}
    (B_{1}^{\R^2}(0) \times (-2, 2)) \cap \mathrm{spt}\|(\mu_{r_j}\circ \phi_x^{-1})_{\#}V\| \subset B_{1}^{\R^2}(0) \times (-\sigma_j/2, \sigma_j/2).
\end{equation}

We establish some notation. To distinguish objects in $T_xM$ from objects in $M$, we use lower case letters here. Let
\begin{itemize}
    \item $v^j \coloneqq (\mu_{r_j} \circ \phi_x^{-1})_{\#}V$,
    \item $s_{k,j} \coloneqq (\mu_{r_j} \circ \phi_x^{-1})(\Sigma_k)$,
    \item $b_{\rho} \coloneqq B_{\rho}^{T_xM}(0)$,
    \item $c_{\rho, \sigma} \coloneqq \partial b_{\rho} \cap \{x_3 \in [-\sigma, \sigma]\}$,
    \item $d_{\rho, \sigma}^+ \coloneqq \partial b_{\rho} \cap \{x_3 > \sigma\}$,
    \item $d_{\rho, \sigma}^- \coloneqq \partial b_{\rho} \cap \{x_3 < -\sigma\}$.
\end{itemize}

By (\ref{eqn:flat}), we have
\begin{equation}\label{eqn:noarea}
    \limsup_k \cH^2(s_{k,j} \cap (B_1^{\R^2}(0) \times ([-1,1] \setminus (-\sigma_j/2, \sigma_j/2)))) = 0.
\end{equation}

By (\ref{eqn:noarea}) and the coarea formula, for almost every $\rho < 1$ we have
\begin{equation}\label{eqn:nolength}
\limsup_k \cH^1(s_{k,j} \cap (d_{\rho, \sigma_j/2}^+ \cup d_{\rho,\sigma_j/2}^-)) = 0
\end{equation}
for every $j$. Fix some $\rho_1 \in (3/4, 1)$ satisfying (\ref{eqn:nolength}) so that $s_{k,j}$ has transverse intersection with $\partial b_{\rho_1}$ for every $k$ and $j$, which is guaranteed by Sard's theorem.

Fix some $\zeta > 0$. For $k\geq k_1(j, \zeta, \rho_1)$, we have
\begin{equation}\label{eqn:length_in_caps}
    \cH^1(s_{k,j} \cap (d_{\rho_1, \sigma_j/2}^+ \cup d_{\rho_1, \sigma_j/2}^-)) < \zeta.
\end{equation}
For $\zeta \leq \zeta_1(M)$, the coarea formula gives
\begin{equation}\label{eqn:int_in_edge}
    s_{k,j} \cap (\partial d_{\rho_1, \sigma_{k,j}}^+ \cup \partial d_{\rho_1, \sigma_{k,j}}^-) = \varnothing,
\end{equation}
for a sequence $\sigma_{k,j} \in (\sigma_j/2, \sigma_j)$.

We apply Lemma \ref{lem:replacement} to $s_{k,j} \in \mathcal{D}^*(b_{\rho_1})$ with constant $\xi$. We obtain the replacement $\tilde{s}_{k,j} \in \mathcal{D}(b_{\rho_1})[s_{k,j}]$ and a collection of disks $\mathfrak{S}_{k,j}$ composing $\tilde{s}_{k,j}$ in $b_{\rho_1}$.

Let $\xi' \in [\xi, 2\xi]$ so that
\begin{equation}\label{eqn:transverse}
    \|v^j\|(\partial b_{\rho_1 - \xi'}) = 0
\end{equation}
for all $j$.

By Lemma \ref{lem:replacement}(\ref{eqn:replace_subset}) and (\ref{eqn:replace_diff}) (also using (\ref{eqn:transverse})), we have
\begin{equation}\label{eqn:regularity_replace}
v^j \llcorner G(b_{\rho_1-\xi'},2)
= \lim_k \sum_{d \in \mathfrak{S}_{k,j}} \mathbf{v}(d \cap b_{\rho_1-\xi'}).
\end{equation}

Step 1: We show that we can throw away disks $d \in \mathfrak{S}_{k,j}$ with boundary in the caps $d_{\rho_1, \sigma_{k,j}}^{\pm}$. Let $\mathfrak{C}_{k,j}$ be the set of all such disks in $\mathfrak{S}_{k,j}$. For every $d \in \mathfrak{C}_{k,j}$, let $f_d$ be the isoperimetric region for $\partial d$ in $d_{\rho_1, \sigma_{k,j}}^+ \cup d_{\rho_1, \sigma_{k,j}}^-$. By (\ref{eqn:length_in_caps}) and Lemma \ref{lem:replacement}(\ref{eqn:replace_length}), we have
\[ \sum_{d \in \mathfrak{C}_{k,j}} \cH^1(\partial d) \leq \zeta \]
for $k \geq k_1(j, \zeta, \rho_1)$. By the isoperimetric inequality in the round sphere, we have
\[ \cH^2(f_d) \leq \beta_1 \cH^1(\partial d)^2 \]
for all $d \in \mathfrak{C}_{k,j}$. Hence, we have
\[ \sum_{d \in \mathfrak{C}_{k,j}} \cH^2(f_d) \leq \beta_1 \sum_{d \in \mathfrak{C}_{k,j}} \cH^1(\partial d)^2 \leq \beta_1 \left(\sum_{d \in \mathfrak{C}_{k,j}} \cH^1(\partial d)\right)^2 \leq \beta_1 \zeta^2 \]
for $k \geq k_1(j, \zeta, \rho_1)$. Consider the competitor given by replacing each $d \in \mathfrak{C}_{k,j}$ by $f_d$ (and then resolving the overlaps without increasing area by more than $\eps_k$ by \cite[Corollary 1]{AS}). By (\ref{eqn:min_seq_regularity}), Lemma \ref{lem:replacement}(\ref{eqn:almost_min_postreplace}), and the fact that $|d\phi_x|\raisebox{-.2em}{$\big\vert_{B_{\rho/r_j}^M(x)}$} \leq 1 + o_j(1)$, we have
\begin{multline*}
    r_j^{-2}\sum_{d\in \mathfrak{S}_{k,j}} \cH^2(d) \leq r_j^{-2}\sum_{d \in \mathfrak{S}_{k,j} \setminus \mathfrak{C}_{k,j}} \cH^2(d) + r_j^{-2}\sum_{d \in \mathfrak{C}_{k,j}} \cH^2(f_d) + 2cr_j^{-3}\cH^3(b_{\rho_1}) + 2\eps_k + o(r_j^{-2}).
\end{multline*}  
Rewriting, we have
\begin{equation}\label{eqn:disks_in_caps}
    \sum_{d\in \mathfrak{C}_{k,j}} \cH^2(d) \leq \beta_1\zeta^2 + \frac{2c\cH^3(b_1)}{r_j} + 2r_j^2\eps_k + o_j(1)
\end{equation}
for $k \geq k_1(j, \zeta, \rho_1)$. Taking $j \geq j_2(M, \xi)$ and $\zeta \leq \zeta_2(M, \xi)$, and then taking $k \geq k_2(j, \zeta, \rho_1, \xi)$, we can apply Lemma \ref{lem:filigree} to conclude that
\[ \lim_k \sum_{d \in \mathfrak{C}_{k,j}} \mathbf{v}(d \cap b_{\rho_1 - \xi}) = 0 \]
for all $j\geq j_2(M, \xi)$.

Step 2: We show that we can throw away the disks with nullhomotopic boundary in $c_{\rho_1, \sigma_{k,j}}$. Let $\mathfrak{N}_{k,j}$ be the set of all such disks in $\mathfrak{S}_{k,j}$. By the monotonicity of (\ref{eqn:monotonicity}), there is a uniform upper bound for the mass of $v^j$ in $b_{1}$ for all $j$ depending on $V$ and $c$.

Let $\mathfrak{N}_{k,j}^{-} \subset \mathfrak{N}_{k,j}$ be the set of disks $d \in \mathfrak{N}_{k,j}$ with $\cH^2(d) \leq \frac{\xi^2}{48\beta_1}$. By the uniform mass upper bound, there is an $m = m(V, c, \xi) \in \N$ such that
\[ \sum_{d \in \mathfrak{N}_{k,j}^{-}} \cH^2(d) \leq m\frac{\xi^2}{48\beta_1} \]
for all $j$. By a standard counting argument, we can partition $\mathfrak{N}_{k,j}^{-}$ into $2m$ disjoint sets $\{\mathfrak{N}^{-}_{k,j,i}\}_{i=1}^{2m}$ (some potentially empty) such that
\[ \sum_{d \in \mathfrak{N}^{-}_{k,j,i}} \cH^2(d) \leq \frac{\xi^2}{48\beta_1}. \]
By Lemma \ref{lem:filigree}, we have
\[ \lim_{k} \sum_{d \in \mathfrak{N}_{k,j}^{-}} \mathbf{v}(d \cap b_{\rho_1-\xi}) = \sum_{i=1}^{2m} \lim_{k} \sum_{d \in \mathfrak{N}_{k,j,i}^{-}} \mathbf{v}(d \cap b_{\rho_1-\xi}) = 0. \]

Let $\mathfrak{N}^{+}_{k,j}$ denote the set of disk $d \in \mathfrak{N}_{k,j}$ with $\cH^2(d) > \frac{\xi^2}{48\beta_1}$. By the uniform mass upper bound, we have $\#\mathfrak{N}_{k,j}^{+} \leq m$. Let $f_d$ denote the isoperimetric region in $c_{\rho_1, \sigma_{k,j}}$ for $\partial d$. Consider the competitor given by replacing $d$ with $f_d$ for all $d \in \mathfrak{N}_{k,j}^{+}$ (and then resolving overlaps without increasing area by more than $\eps_k$  by \cite[Corollary 1]{AS}). As in step 1, (\ref{eqn:min_seq_regularity}) and Lemma \ref{lem:replacement}(\ref{eqn:almost_min_postreplace}) imply
\begin{multline*}
    r_j^{-2}\sum_{d\in \mathfrak{S}_{k,j}} \cH^2(d) \leq r_j^{-2}\sum_{d \in \mathfrak{S}_{k,j} \setminus \mathfrak{N}_{k,j}^{+}} \cH^2(d) + r_j^{-2}\sum_{d \in \mathfrak{N}_{k,j}^{+}} \cH^2(f_d) + 2cr_j^{-3}\cH^3(b_{\rho_1}) + 2\eps_k + o(r_j^{-2}).
\end{multline*} 
Rewriting, we have
\begin{equation}\label{eqn:disks_in_cyl}
    \sum_{d\in \mathfrak{N}_{k,j}^{+}} \cH^2(d)
    \leq m\cH^2(c_{\rho_1, \sigma_j}) + \frac{2c\cH^3(b_1)}{r_j} + 2r_j^2\eps_k + o_j(1).
\end{equation}
Assuming $j \geq j_3(c,\rho_1, M, V, \xi)$ and then taking $k \geq k_3(j, \rho_1, M, \xi)$, we have $\# \mathfrak{N}_{k,j}^{+} = 0$.

Step 3: We obtain lower and upper bounds for the area of the disks whose boundary is homotopically nontrivial in $c_{\rho_1, \sigma_{k,j}}$. Let $\mathfrak{L}_{k,j}$ denote the set of such disks in $\mathfrak{S}_{k,j}$.

Let $d \in \mathfrak{L}_{k,j}$. Since $\partial d$ is homotopically nontrivial in $c_{\rho_1, \sigma_{k,j}}$, (\ref{eqn:noarea}) implies
\begin{equation}\label{eqn:area_lower_bd}
\cH^2(d \cap b_\rho) \geq \pi (\rho - \xi)^2
\end{equation}
for $\rho \in [\rho_1-2\xi, \rho_1]$, $j \geq j_4(\rho_1, M, V, \xi)$, and $k \geq k_4(j, \rho_1, M, \xi)$.

By the uniform mass bound on the limit, there is an $m = m(c,V)$ so that $\#\mathfrak{L}_{k,j} \leq m$ for all $j \geq j_4(c,\rho_1,M, V,\xi)$ and $k \geq k_4'(j, c,\rho_1, M, V,\xi)$.

For each $d \in \mathfrak{L}_{k,j}$, let $m_d$ be the disk with $\partial m_d = \partial d$ so that $m_d \setminus b_{\rho_1} \subset c_{\rho_1, \sigma_{k,j}}$ and $m_d \cap b_{\rho_1}$ is the flat disk $d_{k,j}^* \subset b_{\rho_1}$ with boundary $\partial d_{\rho_1, \sigma_{k,j}}^+$. Note that
\[ \cH^2(m_d) \leq \cH^2(c_{\rho_1, \sigma_{k,j}}) + \cH^2(d_{k,j}^*). \]
Consider the competitor given by replacing each $d \in \mathfrak{L}_{k,j}$ with $m_d$ (and then resolving overlaps using \cite[Corollary 1]{AS}). As in steps 1 and 2, (\ref{eqn:min_seq_regularity}) implies
\begin{multline*}
    r_j^{-2}\sum_{d\in \mathfrak{S}_{k,j}} \cH^2(d) \leq r_j^{-2}\sum_{d \in \mathfrak{S}_{k,j} \setminus \mathfrak{L}_{k,j}} \cH^2(d) + r_j^{-2}\sum_{d \in \mathfrak{L}_{k,j}} \cH^2(m_d) + 2cr_j^{-3}\cH^3(b_{\rho_1}) + 2\eps_k + o(r_j^{-2}).
\end{multline*}  
Rewriting, we have
\begin{align*}
\sum_{d\in \mathfrak{L}_{k,j}} \cH^2(d)
& \leq (\#\mathfrak{L}_{k,j})(\cH^2(d_{k,j}^*) + \cH^2(c_{\rho_1,\sigma_j})) + \frac{2c\cH^3(b_1)}{r_j} + 2r_j^2\eps_k + o_j(1).
\end{align*}
Assuming $j \geq j_5(c, \rho_1, M, V, \xi)$ and then taking $k \geq k_5(j, c, \rho_1, M, V, \xi)$, we have for every $d \in \mathfrak{L}_{k,j}$ (by (\ref{eqn:area_lower_bd}))
\begin{equation}\label{eqn:area_upper_bd}
    \cH^2(d) \leq \pi (\rho_1+\xi)^2.
\end{equation}

Step 4: We compute the density of $V$ at $x$. By (\ref{eqn:transverse}) and the above work, we have
\[ \|v^j\|(b_{\rho_1-\xi'}) = \lim_k \sum_{d \in \mathfrak{L}_{k,j}}\cH^2(d \cap b_{\rho_1 - \xi'}). \]
By (\ref{eqn:area_lower_bd}) and (\ref{eqn:area_upper_bd}), we have
\[ (\limsup_k \#\mathfrak{L}_{k,j}) \pi(\rho_1 - \xi)^2 \leq \|v^j\|(b_{\rho_1-\xi',j}) \leq (\liminf_k \#\mathfrak{L}_{k,j}) \pi(\rho_1 + \xi)^2. \]
Assuming $\xi$ is sufficiently small (depending on universal quantities), we have $\lim_k \#\mathfrak{L}_{k,j}$ exists, so we have $\#\mathfrak{L}_{k,j} = n_j \in \N$ for all $j \geq j_5(c, \rho_1, M, V, \xi)$ and $k \geq k_6(j, c, \rho_1,M,V,\xi)$. By the monotonicity of (\ref{eqn:monotonicity}), $n_j$ is decreasing for $j \geq j_6(c, \rho_1, M, V, \xi)$. By Lemma \ref{lem:rectifiability}, we have $n_j \geq 1$. Hence, we have $n_j \equiv n \geq 1$ for all $j \geq j_6'(c, \rho_1, M, V, \xi)$. Therefore, we have
\[ n - o_\xi(1) \leq \Theta^2(\|V\|, x) \leq n + o_\xi(1). \]
Since $\xi$ is arbitrary, we conclude that $\Theta^2(\|V\|,x) = n \in \N$, i.e. $V$ has integer density at all points $x$ in its support such that some tangent cone $C\in\mathrm{VarTan}(V,x)$ is supported in a 2-plane $P_C\subset T_xM$.

\emph{Flatness of Tangent Cones}. We now show that for all $x\in\mathrm{spt}\|V\|$, every tangent cone $C\in \mathrm{VarTan}(V,x)$ is supported in some 2-plane $P_C\subset T_xM$. 
\begin{lemma}[Flatness]\label{lem:flatness}
Let $U, \Sigma_k, V$ be as in the statement of Theorem \ref{thm:local_regularity}. If $x \in \mathrm{spt}\|V\|$ and $C \in \mathrm{VarTan}(V, x)$, then $\mathrm{spt}\|C\| \subset P_C$ for some 2-plane $P_C$.
\end{lemma}
\begin{proof}
Let $x \in \mathrm{spt}\|V\|$, and let $C \in \mathrm{VarTan}(V, x)$. There is a subsequence (not relabeled) and radii $r_k \to \infty$ so that $v^k := (\mu_{r_k}\circ \phi_x^{-1})_{\#}V \rightharpoonup C$. By selecting a further subsequence (not relabeled), the surfaces  $s_k := (\mu_{r_k}\circ \phi_x^{-1})(\Sigma_k)$ satisfy
\[ \mathbf{v}(s_k) \rightharpoonup C. \]
We can choose the subsequence so that $r_k^2\eps_k \to 0$.

By Sard's theorem, we can find a radius $r > 0$ so that $b_r := b_r^{T_xM}(0)$ satisfies $s_k \in \mathcal{D}^*(b_r)$ for all $k$. Note that by (\ref{eqn:min_seq_regularity}) (and the same argument as step 1 above), we have
\[ \cH^2(s_k) \leq \cH^2(s_k') + r_k^{-1}T_h(s_k, s_k') + r_k^2\eps_k + o_k(1) \]
for all $s_k' \in \mathcal{D}(b_r)[s_k]$.

We apply Lemma \ref{lem:replacement} to $s_k$ in $b_{r}$ with constant $\theta < r/2$, producing new disks $\tilde{s}_k$ so that
\[ \cH^2(\tilde{s}_k) \leq \cH^2(s_k') + r_k^{-1}T_h(\tilde{s}_k, s_k') + r_k^2\eps_k + o_k(1) \]
for all $s_k' \in \mathcal{D}(b_r)[\tilde{s}_k]$. By (\ref{eqn:replace_diff}), we have
\[ \lim_{k \to \infty} \mathbf{v}(\tilde{s}_k \cap b_{r/2}) = C \llcorner G(b_{r/2}, 2). \]

Since $\tilde{s}_k \triangle s_k' \subset b_r$ for all $s_k' \in \mathcal{D}(b_r)[\tilde{s}_k]$ by Lemma \ref{lem:replacement}, we have $T_h(\tilde{s}_k, s_k') \leq c\cH^3(b_r) < \infty$. Hence,
\[ \eps_k' \coloneqq r_k^{-1}T_h(s_k, s_k') + r_k^2\eps_k + o_k(1) \]
satisfies $\eps_k' \to 0$, and
\begin{equation}\label{eqn:min_seq_area_flatness}
    \cH^2(\tilde{s}_k) \leq \cH^2(s_k') + \eps_k'
\end{equation}
for all $s_k' \in \mathcal{D}(b_r)[\tilde{s}_k]$.

Since we have shown the disks $\tilde{s}_k$ are a minimizing sequence for area with respect to competitors $s_k'$ in the class $\mathcal{D}(b_r)[\tilde{s}_k]$, we can now conclude by following the proof strategy of \cite{AS}. 

Indeed, we observe that \cite[Theorem 3 and Corollary 2]{AS} hold for the area minimization problem restricted to $\mathcal{D}(b_r)[\tilde{s}_k]$. Case I and Case II in the proof of \cite[Theorem 3]{AS} follow directly from the argument in \cite{AS}.
As for Case III, note that the relevant step in the proof only uses disks obtained from the replacement procedure (which lie in our restricted set of competitors) and comparison to isoperimetric regions (also in the restricted set of competitors). The rest of the proofs are unchanged.
\end{proof}

Lemma \ref{lem:flatness}, together with the previous step of the proof, shows that $V$ has positive integer density at every point of its support.

\emph{Decomposition of $V$}. To study the local regularity of $V$, we return to work in $M$. We adapt the set up from the proof of integer density. Let $x \in \mathrm{spt}\|V\|$, $\xi > 0$, $r_j \to \infty$, $\sigma_j \to 0$, $\zeta > 0$, $\sigma_{k,j} \in (\sigma_j/2, \sigma_j)$ as above. We define
\begin{itemize}
    \item $B_{\rho, j} := B_{\rho/r_j}^M(x)$,
    \item $C_{\rho, \sigma, j} := (\mu_{r_j} \circ \phi_x^{-1})^{-1}(c_{\rho, \sigma}) \subset B_{\rho, j}$,
    \item $D_{\rho, \sigma, j}^{\pm} := (\mu_{r_j} \circ \phi_x^{-1})^{-1}(d_{\rho, \sigma}^{\pm}) \subset B_{\rho, j}$.
\end{itemize}
We apply Lemma \ref{lem:replacement} to $\Sigma_k \in \mathcal{D}^*(B_{\rho_1, j})$ with constant $\xi$, producing new disks $S_{k,j}$. By (\ref{eqn:replace_inequality}), we have
\begin{equation}\label{eqn:ah_min_interior_regularity}
    \cH^2(S_{k,j}) \leq \cH^2(S_{k,j}') + T_h(S_{k,j}, S_{k,j}') + \eps_k
\end{equation}
for all $S_{k,j}' \in \mathcal{D}(B_{\rho_1, j})[S_{k,j}]$. Let $\mathcal{S}_{k,j}$ be the set of disks composing $S_{k,j}$ in $B_{\rho_1,j}$. Let $\mathcal{L}_{k,j} \subset \mathcal{S}_{k,j}$ be the disks with homotopically nontrivial boundary in $C_{\rho_1, \sigma_{k,j}, j}$. We translate the estimates from the proof of integer density to $M$.

Henceforth we assume $j \geq j_6'(c, \rho_1, M,V, \xi)$ and $k \geq k_6(j, c, \rho_1, M, V, \xi)$ as above. Namely, we have $\#\mathcal{L}_{k,j} = n$ for all such $j$ and $k$.

Label the disks $D \in \mathcal{L}_{k,j}$ as $\{D_{k,j,1}, \hdots, D_{k,j,n}\}$. There is a subsequence (not relabeled, after shuffling the 1 through $n$ labels) so that $\mathbf{v}(D_{k,j,i})$ converges in the varifold sense to a limit $V_{j,i}$ for $i = 1, \hdots, n$ as $k \to \infty$, satisfying
\begin{equation}\label{eqn:mass_bds}
    \|V_{j,i}\|(B_{\rho_1-2\xi,j}) \geq \pi (\rho_1 - 4\xi)^2r_j^{-2} \ \ \text{and}\ \
    \|V_{j,i}\|(B_{\rho_1, j}) \leq \pi(\rho_1 + 2\xi)^2r_j^{-2},
\end{equation}
\begin{equation}\label{eqn:spt}
    \mathrm{spt}\|V_{j,i}\| \subset (\mu_{r_j} \circ \phi_x^{-1})^{-1}(B_{\rho_1}^{T_xM}(0) \cap \{-\sigma_j \leq x_3 \leq \sigma_j\}),
\end{equation}
and
\begin{equation}\label{eqn:limit_sum_of_disks}
    V \llcorner G(B_{\rho_1-2\xi, j}, 2) = \sum_{i=1}^n V_{j,i} \llcorner G(B_{\rho_1-2\xi, j}, 2).
\end{equation}

\emph{Reduction to Stacked Disk Minimization}.
We now reduce the problem to a stacked disk minimization problem.

Let $\mathcal{E}_{k,j} \coloneqq \mathcal{S}_{k,j} \setminus \mathcal{L}_{k,j}$. For $D \in \mathcal{E}_{k,j}$ let $F_D \subset \partial B_{\rho_1,j}$ be the isoperimetric region for $\partial D$. Let $\Lambda_D \subset B_{\rho_1,j}$ be the open set bounded by $D \cup F_D$.

Steps 1 and 2 from the proof of integer density above imply
\begin{equation}\label{eqn:extra_small_area}
    \limsup_k \sum_{D \in \mathcal{E}_{k,j}} \cH^2(D \cap B_{1/2, j}) = 0.
\end{equation}
In particular, (\ref{eqn:disks_in_caps}) and (\ref{eqn:disks_in_cyl}) imply
\begin{equation}\label{eqn:extra_small_area1}
    \cH^2(D) + \cH^2(F_D) \leq A(\xi, \zeta, j,k)
\end{equation}
with $A(\xi, \zeta, j, k) \to 0$ as $(\xi, \zeta) \to 0$ and $j, k \to \infty$. By (\ref{eqn:iso}), we have
\[ \cH^3(\Lambda_D) \leq c_1A(\xi,\zeta,j,k)^{3/2}. \]
Taking $\xi$ and $\zeta$ small and $j$ and $k$ large, the coarea formula implies
\begin{equation}\label{eqn:extra_apply_iso}
    \limsup_k \cH^2(\Lambda_D \cap \partial B_{\rho,j}) \leq \frac{1}{4}\cH^2(\partial B_{1/4,j}) \leq \frac{1}{4}\cH^2(\partial B_{\rho,j})
\end{equation}
for $\rho$ in a set of positive measure in $(1/4,1/2)$. By the coarea formula and (\ref{eqn:extra_small_area}), we have
\begin{equation}\label{eqn:extra_small_length}
    \limsup_k \sum_{D \in \mathcal{E}_{k,j}} \cH^1(D \cap \partial B_{\rho,j}) = 0
\end{equation}
for almost every $\rho \in (1/4, 1/2)$. Take $\rho \in (1/4, 1/2)$ satisfying (\ref{eqn:extra_apply_iso}) and (\ref{eqn:extra_small_length}). By the coarea formula and (\ref{eqn:extra_small_length}), we have
\begin{equation}\label{eqn:extra_corner}
    \limsup_k \sum_{D \in \mathcal{E}_{k,j}} \cH^0(D \cap \partial C_{\rho, \sigma,j}) = 0
\end{equation}
for $\sigma$ in a set of positive measure in $(\sigma_0/4, \sigma_0/2)$.

For $D \in \mathcal{E}_{k,j}$, let $B_D$ be the unique component of $D \setminus B_{\rho,j}$ containing $\partial D$. Note that $B_D$ is a punctured disk with boundary $\partial D \cup \bigcup_l \alpha_l$, where $\alpha_l$ are smooth Jordan curves in $\partial B_{\rho, j}$. Let $N_D$ be a smooth disk in $\overline{B}_{\rho_1, j} \setminus \overline{B}_{\rho, j}$ obtained by gluing $F_{\alpha_l}$ (the isoperimetric region in $\partial B_{\rho, j}$ for $\alpha_l$) to $B_D$ and then smoothing. This procedure is well-defined as long as $F_{\alpha_l}$ is glued and perturbed before $F_{\alpha_{l'}}$ if $F_{\alpha_l} \subset F_{\alpha_{l'}}$. Moreover, the resulting surface is a disk by construction.

We construct a competitor $\mathbf{S}_{k,j} \in \mathcal{D}(B_{\rho_1,j})[S_{k,j}]$ by replacing each $D \in \mathcal{E}_{k,j}$ by $N_D$. Since the surfaces $B_D$ are disjoint, we can choose the smoothing so that the surfaces $N_D$ are also disjoint, so $\mathbf{S}_{k,j}$ is well-defined.

We claim that we can replace $S_{k,j}$ by $\mathbf{S}_{k,j}$ in (\ref{eqn:ah_min_interior_regularity}) after replacing $\eps_k$ by some $\eps_{k,j}$ satisfying $\lim_k \eps_{k,j} = 0$. Note that $N_D$ is obtained from $D$ by (a) applying Lemma \ref{lem:area_comparison} to each component of $D \setminus B_{\rho,j}$ except $B_D$, (b) deleting everything in $\overline{B}_{\rho, j}$, and (c) turning $B_D$ into $N_D$ as described above. By Lemma \ref{lem:area_comparison} and (\ref{eqn:extra_small_area1}), the $\cA^h$ functional does not increase after (a). Since we only delete portions of the surface in (b), the area does not increase from (b). By (\ref{eqn:iso}), (\ref{eqn:prelim_isoperimetric}), and (\ref{eqn:extra_apply_iso}), we have (for $k$ large)
\begin{align*}
    \cH^3(\Lambda_D \cap B_{\rho,j})
    & \leq c_1(\cH^2(D \cap B_{\rho,j}) + \cH^2(\Lambda_D \cap \partial B_{\rho,j}))^{3/2}\\
    & \leq c_1(\cH^2(D \cap B_{\rho,j}) + \beta_0\cH^1(D \cap \partial B_{\rho,j})^2)^{3/2}
\end{align*}
Hence, we have
\begin{align}\label{eqn:vol_decay}
    \sum_{D \in \mathcal{E}_{k,j}} & \cH^3(\Lambda_D \cap B_{\rho,j})
    \leq c_1 \sum_{D \in \mathcal{E}_{k,j}}(\cH^2(D \cap B_{\rho,j}) + \beta_0\cH^1(D \cap \partial B_{\rho,j})^2)^{3/2}\notag\\
    & \leq c_1 \left(\sum_{D \in \mathcal{E}_{k,j}}(\cH^2(D \cap B_{\rho,j}) + \beta_0\cH^1(D \cap \partial B_{\rho,j})^2)\right)^{3/2}\\
    & \leq c_1\left(\sum_{D \in \mathcal{E}_{k,j}}\cH^2(D \cap B_{\rho,j}) + \beta_0\left(\sum_{D \in \mathcal{E}_{k,j}}\cH^1(D \cap \partial B_{\rho,j})\right)^2\right)^{3/2}\notag
\end{align}
and the right hand side goes to zero by (\ref{eqn:extra_small_area}) and (\ref{eqn:extra_small_length}). Therefore the volume change from (b) vanishes as $k \to \infty$. Finally, by (\ref{eqn:extra_small_length}) and (\ref{eqn:prelim_isoperimetric}), we have $\lim_k \sum_{\mathcal{E}_{k,j}} \cH^2(N_D \triangle B_D) = 0$, so the area added in (c) vanishes as $k \to \infty$. Hence, we have
\begin{equation}\label{eqn:ah_min_new_competitor}
    \cH^2(\mathbf{S}_{k,j}) \leq \cH^2(S_{k,j}') + T_h(\mathbf{S}_{k,j}, S_{k,j}') + \eps_{k,j}
\end{equation}
for all $S_{k,j}' \in \mathcal{D}(B_{\rho_1, j})[\mathbf{S}_{k,j}]$, where $\lim_k \eps_{k,j} = 0$.

Let $D_{k,j,i}' \in \mathcal{D}(B_{\rho,j})[D_{k,j,i}]$ be mutually disjoint for $i = 1,\hdots, n$. Let $\Omega_{k,j,i}$ be the open set in $B_{\rho_1,j}$ so that
\[ \partial [\Omega_{k,j,i}] \llcorner B_{\rho_1,j} = [D_{k,j,i}] \llcorner B_{\rho_1,j}. \]
Let $\Omega_{k,j,i}'$ be the analogous open set for $D_{k,j,i}'$. We claim that
\begin{equation}\label{eqn:stacked_disks}
    \sum_i \left(\cH^2(D_{k,j,i}) - \int_{\Omega_{k,j,i}}h\ d\cH^3\right)\leq \sum_i \left(\cH^2(D_{k,j,i}') - \int_{\Omega_{k,j,i}'}h\ d\cH^3\right) + \eps_{k,j}''
\end{equation}
for some $\lim_k \eps_{k,j}'' \to 0$.

Consider the competitor $S_{k,j}' \in \mathcal{D}(B_{\rho_1,j})[\mathbf{S}_{k,j}]$ given by replacing each $D_{k,j,i}$ by $D_{k,j,i}'$. This competitor is well-defined (i.e. there are no self-intersections) because $(\mathbf{S}_{k,j} \setminus \bigcup_i D_{k,j,i}) \cap B_{\rho, j} = \varnothing$ by definition, and $D_{k,j,i}' \setminus \overline{B}_{\rho, j}$ is a subset of $D_{k,j,i}$. Hence, (\ref{eqn:stacked_disks}) follows directly from (\ref{eqn:ah_min_new_competitor}).

We conclude with some notation. Let $B_{k,j,i}$ be the open set in $B_{\rho_1,j}$ so that
\[ \partial B_{k,j,i} \setminus D_{k,j,i} \subset \partial B_{\rho_1,j}\ \ \text{and}\ \  D_{\rho_1, \sigma_{k,j}, j}^- \subset \partial B_{k,j,i}. \]
If $B_{k,j,i} \subset B_{k,j,i'}$ for some $i \neq i'$, then we say $D_{k,j,i} < D_{k,j,i'}$. If $B_{k,j,i} = \Omega_{k,j,i}$ (as defined above), then we say $D_{k,j,i}$ is oriented up; otherwise, it is oriented down. By selecting a subsequence and relabeling, we can assume that
\[ D_{k,j,1} > D_{k,j,2} > \hdots > D_{k,j,n}, \]
and that $D_{k,j,i}$ is oriented up if and only if $D_{k',j,i}$ is oriented up for all $k'\neq k$. By the assumption that there is an open set $\Lambda_k$ with $\partial \Lambda_k \cap U = \Sigma_k \cap U$, we conclude that $D_{k,j,i}$ is oriented up if and only if $D_{k,j,i-1}$ and $D_{k,j,i+1}$ are oriented down (i.e.\ the orientation alternates with respect to the order of the disks).

\emph{Parity for Stacked Disks}. Let $\Omega_k$ and $\Omega$ be defined as in Theorem \ref{thm:compactness} with respect to the set $U$. Let $x \in \mathrm{spt}\|V\|$. We claim that
\begin{equation}\label{eqn:even_parity}
    \Theta^2(\|V\|, x) \equiv 0 \mod 2 \implies x \notin \partial^*\Omega,
\end{equation}
and
\begin{equation}\label{eqn:odd_parity}
    \Theta^2(\|V\|, x) \equiv 1 \mod 2 \implies x \in \overline{\partial^*\Omega}.
\end{equation}

Let $W$ be any neighborhood of $x$. Taking $j$ large enough, we have $B_{\rho_1, j} \subset W$. Let $\Omega_{k,j,i}$ be defined as above for the stacked disks $D_{k,j,i}$. Let $\Omega_{k,j}$ be the open subset of $B_{\rho_1,j}$ satisfying
\[ \partial[\Omega_{k,j}]\llcorner B_{\rho_1, j} = \sum_{i=1}^n [D_{k,j,i}] \llcorner B_{\rho_1, j}. \]
Let $\Omega^{j,i}$ and $\Omega^j$ be the limits of $\Omega_{k,j,i}$ and $\Omega_{k,j}$ in $k$ respectively, in the sense of Theorem \ref{thm:compactness}.

We first show that $\cH^3((\Omega^j \triangle \Omega)) \cap B_{\rho, j}) = 0$. Indeed, we have
\begin{align*}
    \cH^3((\Omega^j \triangle \Omega) \cap B_{\rho, j})
    & = \lim_k \cH^3((\Omega_{k,j} \triangle \Omega_k) \cap B_{\rho, j})\\
    & \leq \lim_k\cH^3(\beta_k) + \sum_{D \in \mathcal{E}_{k,j}} \cH^3(\Lambda_D \cap B_{\rho,j}) = 0,
\end{align*}
by Lemma \ref{lem:replacement}(\ref{eqn:replace_diff}) and (\ref{eqn:vol_decay}) (where $\beta_k$ are the bubbles called ``$B$'' in Lemma \ref{lem:replacement}). Hence, we have
\[ \partial^*\Omega \cap B_{\rho, j} = \partial^*\Omega^j \cap B_{\rho,j}, \]
so it suffices to work with $\Omega^j$.

Let $\omega^j := (\mu_{r_j} \circ \phi_x^{-1})(\Omega^j)$. Let $v_{j,i} := (\mu_{r_j} \circ \phi_x^{-1})_{\#}V_{j,i}$. By (\ref{eqn:spt}), we have
\[ \mathrm{spt}\left\|\sum_i v_{j,i}\right\| \cap b_{\rho} \subset b_{\rho} \cap \{-\sigma_j \leq x_3 \leq \sigma_j\}. \]
Let
\[ b_{\rho,j}^+ \coloneqq b_{\rho} \cap \{x_3 > \sigma_j\}\ \ \text{and}\ \  b_{\rho,j}^- \coloneqq b_{\rho} \cap \{x_3 < -\sigma_j\}. \]
Since $\overline{\partial^*\omega^j} \cap b_{\rho} \subset \mathrm{spt}\|\sum_iv_{j,i}\| \cap b_{\rho}$, we have either $b_{\rho,j}^+ \subset \omega^j$ or $b_{\rho,j}^+ \cap \omega^j = \varnothing$ (and analogously for $b_{\rho,j}^-$). We see that $D_{k,j,1}$ is oriented down if and only if $b_{\rho,j}^+ \subset \omega^j$. Similarly, $D_{k,j,n}$ is oriented up if and only if $b_{\rho,j}^- \subset \omega^j$. Note that $b_{\rho, j}^{\pm}$ contain a definite portion of $b_{2}^{\pm}$ (the upper and lower half balls of radius 2 in $T_xM$).

If $n \equiv 1 \mod 2$, then $\omega^j$ converges in $L^1$ as $j \to \infty$ (without loss of generality) to the upper half space $H^+ \subset T_xM$. Hence, every sufficiently small neighborhood $W$ of $x$ satisfies
\[ 0 < \cH^3(\Omega \cap W) < \cH^3(W), \]
so we have $x \in \overline{\partial^*\Omega}$.

If $n \equiv 0 \mod 2$, then $\omega^j$ converges in $L^1$ as $j \to \infty$ to $T_xM$ or $\varnothing$, so $x \notin \partial^*\Omega$ (i.e.\ we showed $x$ is not in the essential boundary of $\Omega$, which contains the reduced boundary (see \cite[Theorem 16.2]{maggi}).

\emph{$C^{1,\alpha}$ Regularity}. By Lemma \ref{lem:resolution_stacked_one}(\ref{eqn:resolution_stacked_one_vol1}) and (\ref{eqn:stacked_disks}), we have
\begin{equation}\label{eqn:weak_minimizing}
    \cH^2(D_{k,j,i}) \leq \cH^2(D_{k,j,i}') + c|T(D_{k,j,i}, D_{k,j,i'})| + \eps_{k,j}
\end{equation}
for any $D_{k,j,i}' \in \mathcal{D}(B_{\rho, j})[D_{k,j,i}]$.

We observe that everything we have proved thus far only requires that the function $h$ is measurable and satisfies $|h| \leq c$. We note that (\ref{eqn:weak_minimizing}) implies that $D_{k,j,i}$ is a minimizing sequence for the problem with prescribing function $-c + 2c\mathbf{1}_{\Omega_{j,i}}$. Hence, all of the above work applies to $V_{j,i}$. In particular, $V_{j,i}$ has $c$-bounded first variation and positive integer density in its support. By (\ref{eqn:mass_bds}), there is a neighborhood $W$ of $x$ so that $\Theta^2(V_{j,i}, y) = 1$ for all $y \in W\cap \mathrm{spt}\|V_{j,i}\|$.

By \cite[\S8]{allard}, we have $V_{j,i} \llcorner G(W,2) = \mathbf{v}(N_{j,i}, 1)$ for a $C^{1.\alpha}$ surface $N_{j,i} \subset W$. By (\ref{eqn:odd_parity}), we have $N_{j,i} \cap W = \partial^*\Omega^{j,i} \cap W$. Hence, $N_{j,i}$ is on one side of $N_{j,i'}$ for any $i,\ i'$, and each of these surfaces intersects tangentially at $x$. We conclude by (\ref{eqn:limit_sum_of_disks}).

\emph{Decomposition into 1-Stacks and 2-Stacks}. Take $x$ so that $h(x) \neq 0$. Suppose without loss of generality (by swapping orientations and taking $j$ sufficiently large) that $h\vert_{B_{\rho_1, j}} > 0$. We now decompose the stacked disk minimization problem (\ref{eqn:stacked_disks}) into smaller units, which we call 1-stacks and 2-stacks.

We first separate the 1-stacks. If $D_{k,j,1}$ is oriented up, then we show that
\begin{equation}\label{eqn:1-stacks}
    \cH^2(D_{k,j,1}) - \int_{\Omega_{k,j,1}}h\ d\cH^3  \leq \cH^2(D_{k,j,1}') - \int_{\Omega_{k,j,1}'}h\ d\cH^3 + \eps_{k,j}
\end{equation}
for all $D_{k,j,1}' \in \mathcal{D}(B_{\rho,j})[D_{k,j,1}]$, where $\Omega_{k,j,1}'$ is defined as above. The analogous statement holds if $D_{k,j,n}$ is oriented down by vertical reflection. The result follows immediately from Lemma \ref{lem:resolution_decompose_one}.

Now we separate the 2-stacks. If $D_{k,j,i'}$ is oriented down for some $1 \leq i' < n$, then we show that
\begin{equation}\label{eqn:2-stacks}
    \sum_{i=i'}^{i'+1} \left(\cH^2(D_{k,j,i}) -\int_{\Omega_{k,j,i}}h\ d\cH^3 \right) \leq \sum_{i=i'}^{i'+1} \left(\cH^2(D_{k,j,i}') - \int_{\Omega_{k,j,i}'}h\ d\cH^3\right) + \eps_k'
\end{equation}
for all mutually disjoint $D_{k,j,i}' \in \mathcal{D}(B_{\rho, j})[D_{k,j,i}]$ for $i = i',\ i'+1$, where $\Omega_{k,j,i}'$ is defined as above. The result follows immediately from Lemma \ref{lem:resolution_decompose_two}.

\emph{Regularity of 1-Stacks}. Suppose (\ref{eqn:1-stacks}) holds\footnote{We note that (\ref{eqn:1-stacks}) holds at any point in $V$ of density 1, as well as in the case $D_{k,j,1}$ is oriented up.}. Recall that we showed $\Theta^2(V_{j,1}, y) = 1$ for all $y \in W \cap \mathrm{spt}\|V_{j,1}\|$ and
\[ \mathrm{spt}\|V_{j,1}\| \cap W = \partial^*\Omega^{j,1} \cap W. \]
Hence, we have
\[ \|V_{j,1}\| \llcorner W = |D\mathbf{1}_{\Omega^{j,1}}| \llcorner W. \]
Inserting this equality into Theorem \ref{thm:compactness}(\ref{eqn:first_variation_formula}), we conclude that
\[ H_{V_{j,1}}\vert_{W} = h\nu_{\partial^*\Omega^{j,1}}\vert_{W}, \]
which is a $C^{1,\alpha}$ vector field. Applying the regularity theorem of Allard \cite[\S8]{allard} iteratively, we conclude that $N_{j,1}$ is a smooth surface with prescribed mean curvature $h$ in $W$ with respect to the set $\Omega^{j,1}$. Stability follows from smooth embeddedness and (\ref{eqn:variational}).

\emph{Regularity of 2-Stacks}. 
Suppose $D_{k,j,i'}$ is oriented down for $1 \leq i' < n$, so (\ref{eqn:2-stacks}) holds. For ease of notation, let $i' = 1$.

We show the mean convexity of $V_{j,1}$. Let $X$ be a smooth vector field with compact support in $B_{\rho, j}$, and let $\Phi_t$ be the flow of $X$. Without loss of generality, we suppose that
\[ \frac{d}{dt}\Big|_{t = 0} \|(\Phi_t)_{\#}V_1\|(B_{\rho, j}) < 0, \]
(i.e.\ either the derivative is zero, or we can swap $X \mapsto -X$).
By Lemma \ref{lem:resolution_stacked_one}(\ref{eqn:resolution_stacked_one_vol2}) and (\ref{eqn:2-stacks}), we have (using $h \geq 0$)
\[
    \cH^2(D_{k,j,1} \cap B_{\rho, j}) \leq \cH^2(\Phi_t(D_{k,j,1})\cap B_{\rho, j}) + c\cH^3(\Omega_{k,j,1} \setminus \Phi_t(\Omega_{k,j,1})) + \eps_k''.
\]
Following the proof of Theorem \ref{thm:compactness}, we have
\begin{equation}\label{eqn:2-stack_bdd_first_variation1}
    \|V_{j,1}\|(B_{\rho, j}) \leq \|(\Phi_t)_{\#}V_{j,1}\|(B_{\rho,j}) + c\cH^3(\Omega^{j,1} \setminus \Phi_t(\Omega^{j,1})).
\end{equation}
Hence, we have
\begin{equation}\label{eqn:2-stack_bdd_first_variation2}
    0 > \frac{d}{dt}\Big|_{t=0} \|(\Phi_t)_{\#}V_{j,1}\|(B_{\rho, j}) \geq -c\frac{d}{dt}\Big|_{t=0^+} \cH^3(\Omega^{j,1} \setminus \Phi_t(\Omega^{j,1})).
\end{equation}
By \cite[Proposition 17.8]{maggi}, we have
\[ \frac{d}{dt}\Big|_{t=0^+} \cH^3(\Omega^{j,1} \setminus \Phi_t(\Omega^{j,1})) \leq \int_{\partial^*\Omega_1} (X \cdot \nu_{\partial^*\Omega^{j,1}})_-\ d\cH^2. \]
Hence, we observe that the first variation cannot be negative if $X \cdot \nu_{\partial^*\Omega^{j,1}} \geq 0$, which proves the desired weak mean convexity. By vertical reflection, the same holds for $V_{j,2}$.

Suppose $\mathrm{spt}\|V_{j,1}\| \cap W' = \mathrm{spt}\|V_{j,2}\| \cap W'$ for some open set $W' \subset W$. As shown above, $\Theta^2(V_{j,1} + V_{j,2}, x) = 2$ for all $x \in \mathrm{spt}\|V_{j,1} + V_{j,2}\| \cap W'$. By (\ref{eqn:even_parity}), we have $\partial^*(\Omega^{j,1} \cup \Omega^{j,2}) \cap W' = \varnothing$, so in fact $\overline{\partial^*(\Omega^{j,1} \cup \Omega^{j,2})} \cap W' = \varnothing$. Since we already showed $\mathrm{spt}\|V_{j,i}\| \cap W' = \partial^*\Omega^{j,i} \cap W'$, we conclude that $\cH^3((\Omega^{j,1} \cup \Omega^{j,2}) \cap W') = \cH^3(W')$. By Theorem \ref{thm:compactness}(\ref{eqn:variational}), $V_{j,1} + V_{j,2}$ is stationary in $W'$. Then by Allard's regularity theorem \cite[\S8]{allard},
\[ (V_{j,1} + V_{j,2}) \llcorner G(W',2) \cap \mathbf{v}(N, 2), \]
where $N \subset W'$ is a smooth minimal surface. Hence, $N_{j,1} \cap W' = N_{j,2} \cap W' = N$. Stability follows from smooth embeddedness and Theorem \ref{thm:compactness}(\ref{eqn:variational}).

\emph{Maximum Principle}. 
First, suppose $\Theta^2(\|V\|, x) = 2n + 1$. Then (without loss of generality) $D_{k,j,1}$ is oriented up. By the regularity of 1-stacks, $V_{j,1}$ is the varifold associated to a smooth surface $N_{j,1}$ with mean curvature $h > 0$ pointing up. Suppose for contradiction that $n \geq 1$. Then $V_{j,2}$ is the varifold associated to a $C^{1,\alpha}$ surface $N_{j,2}$ with generalized mean curvature pointing down. Moreover, $N_{j,2}$ is below $N_{j,1}$ and intersects $N_{j,1}$ tangentially at $x$, which contradicts the strong maximum principle (see Lemma \ref{lem:maximum_principle}). Hence, if $D_{k,j,1}$ is oriented up, then $n = 1$ and $V$ is the varifold of a smooth surface of prescribed mean curvature $h$ pointing up in a neighborhood of $x$.

Second, suppose $\Theta^2(\|V\|, x) = 2n$ with $n \geq 2$. By the above paragraph, $V$ consists of $n$ 2-stacks intersecting tangentially and on one side at $x$. Then $N_{j, 2}$ and $N_{j, 2n - 1}$ are $C^{1,\alpha}$ surfaces on one side of each other, intersecting tangentially at $x$, and with generalized mean curvature pointing away from each other. By the strong maximum principle (see Lemma \ref{lem:maximum_principle}) and the regularity of 2-stacks, there is a neighborhood $W''$ of $x$ and a minimal surface $N \subset W''$ satisfying $N_{j, i} \cap W'' = N$ for all $i$. Hence, $\Theta^2(\|V\|, y) = 2n$ for all $y \in W'' \cap \mathrm{spt}\|V\|$.
\end{proof}

\section{Improvement from Free Boundary Problems}\label{sec:freeboundary}
In this section, we use some results from the theory of free boundary problems--in particular, the recent result of \cite{WZ_multiple}--to obtain the optimal regularity of the limit surfaces in Theorem \ref{thm:local_regularity}.

For the remainder of this section, let $V$ be as in Theorem \ref{thm:local_regularity}, and $x\in\mathrm{spt}\|V\|$. Also, let $W_x$ be the open neighborhood of $x$ provided by Theorem \ref{thm:local_regularity}, and let $N_1, N_2, \dots, N_{n_x}$ be the corresponding $C^{1,\alpha}$ surfaces meeting tangentially at $x$. 

The main result of this section is the following.

\begin{proposition}\label{prop:C1,1_regularity}
The surfaces $N_1, \dots, N_{n_x}$ are $C^{1,1}$ in the interior of $W_x$.
\end{proposition}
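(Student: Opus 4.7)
The plan is to reduce the local statement to the $n$-membrane problem for the mean curvature quasilinear operator and then invoke the $C^{1,1}$ estimate of \cite{WZ_multiple} directly. Since the surfaces $N_1, \dots, N_{n_x}$ are $C^{1,\alpha}$, intersect tangentially at $x$, and each lies on one side of the others, I can pick geodesic normal coordinates centered at $x$ in which the common tangent plane at $x$ is $\{x_3 = 0\}$, and then shrink $W_x$ so that every $N_l$ is the graph of a $C^{1,\alpha}$ function $u_l : B_r^{\R^2}(0) \to \R$ with $\|\nabla u_l\|_\infty \ll 1$. After reordering, the one-sidedness property gives
\begin{equation*}
u_1(x') \geq u_2(x') \geq \dots \geq u_{n_x}(x') \quad \text{for all } x' \in B_r^{\R^2}(0),
\end{equation*}
with all $u_l$ agreeing and having common gradient at the origin.

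Next I need to derive the variational inequality satisfied by $(u_1, \dots, u_{n_x})$. Using the minimizing/almost-minimizing property together with Theorem \ref{thm:compactness} and the decomposition of $V$ into the sheets $N_l$ established in Theorem \ref{thm:local_regularity}, I will show that the tuple $(u_1, \dots, u_{n_x})$ minimizes a functional of the form
\begin{equation*}
\mathcal{F}(u_1, \dots, u_{n_x}) = \sum_{l=1}^{n_x} \int \sqrt{1 + |\nabla u_l|^2_{g}}\, dx' - \sum_{l=1}^{n_x} \int_{\{z < u_l\}} \tilde h_l(x', z)\, dx' dz,
\end{equation*}
among ordered competitors $v_1 \geq \dots \geq v_{n_x}$ with the same boundary data, where the $\tilde h_l$ are bounded measurable prescribing functions determined by $h$, the ambient metric, and the chamber structure of $V$ (alternating orientations as in \S\ref{subsec:isolate_one}). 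The key input for this step is the isolation-of-one-sheet procedure (Lemma \ref{lem:resolution_stacked_one}), which produced the weak minimization inequality (\ref{eqn:weak_minimizing}) and showed that each sheet $N_l$ individually almost-minimizes an $\mathcal{A}^{\tilde h_l}$ functional with $|\tilde h_l| \leq c$; the ordering constraint is preserved because competitors with crossings can be disentangled by Lemma \ref{lem:resolution} without increasing the functional on the joint configuration.

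Having set up the variational problem, I now recognize it as an $n$-membrane obstacle problem for the (nonparametric) mean curvature operator $\mathcal{M}u = \mathrm{div}\bigl(\nabla u / \sqrt{1 + |\nabla u|^2}\bigr)$ perturbed by smooth lower-order terms coming from the metric $g$. Concretely, the Euler--Lagrange conditions read $\mathcal{M}u_l = \tilde H_l$ on $\{u_l > u_{l+1}\} \cap \{u_{l-1} > u_l\}$ with $\tilde H_l$ bounded, together with the standard variational inequalities on coincidence sets. This is precisely the class of systems covered by \cite{WZ_multiple}, where the authors establish interior $C^{1,1}$ regularity of solutions to the $n$-membrane problem for quasilinear uniformly elliptic operators of this type under $L^\infty$ bounds on the right-hand side, which is the optimal regularity (one cannot expect $C^2$ since at contact points two prescribing functions can disagree, producing a jump in the second fundamental form).

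The main obstacle I expect is the second step: carefully verifying that the joint tuple $(u_1, \dots, u_{n_x})$ is an honest minimizer of $\mathcal{F}$ in the ordered class. The disentanglement lemmas give almost-minimality for each single sheet against competitors that do not cross the others, but to feed the problem into \cite{WZ_multiple} I need the joint obstacle formulation with consistent prescribing functions on each chamber. This requires tracking the orientations of consecutive sheets and matching them to the signs appearing in $\tilde h_l$, and then passing to the limit $\eps_k \to 0$ in a way that preserves the ordering $v_1 \geq \dots \geq v_{n_x}$ of competitors; the resolution procedure of Lemma \ref{lem:resolution} together with an arbitrarily small perturbation argument should produce ordered competitors from arbitrary ones without changing the value of $\mathcal{F}$ in the limit. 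Once this is in place, invoking \cite{WZ_multiple} gives $u_l \in C^{1,1}_{\mathrm{loc}}$, which translates back to $C^{1,1}$ regularity of $N_l$ in the interior of $W_x$.
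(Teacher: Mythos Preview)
Your overall plan---write the sheets as ordered graphs and feed the configuration into \cite{WZ_multiple}---is exactly what the paper does, but you are setting yourself a harder intermediate goal than necessary. You aim to show that the tuple $(u_1,\dots,u_{n_x})$ is a \emph{minimizer} of an explicit $n$-membrane functional $\mathcal{F}$ among ordered competitors, and you correctly flag the main obstacle: turning the single-sheet almost-minimality from Lemma~\ref{lem:resolution_stacked_one} and (\ref{eqn:weak_minimizing}) into a clean joint minimization with consistent prescribing functions $\tilde h_l$ is delicate, precisely because disentanglement does not control volumes well (this is the phenomenon discussed in the introduction around Figure~\ref{fig:stack}), and your $\tilde h_l$ are solution-dependent measurable functions rather than smooth data.

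The paper bypasses this obstacle entirely. Rather than proving any form of minimality among ordered graphical competitors, it simply observes that the pair $(V,\Omega)$ is $\cA^h$-\emph{stationary} in $W_x$ in the sense of \cite{WZ_multiple}; this is immediate from the variational property (\ref{eqn:variational}) already established in Theorem~\ref{thm:compactness} and carried through Theorem~\ref{thm:local_regularity}. The theorem \cite[Theorem~1.4]{WZ_multiple} is stated for exactly this stationarity hypothesis, so no $n$-membrane minimization reformulation is needed. One further technical point: following \cite[Remark~1.5]{WZ_multiple}, the paper writes the $N_l$ as normal graphs over an auxiliary \emph{minimal} surface $\Gamma$ through $x$ rather than over the tangent plane in geodesic normal coordinates; this normalization is what puts the problem precisely into the framework of \cite{WZ_multiple}. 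With those two adjustments---stationarity instead of minimality, and graphs over $\Gamma$---the proof is a direct black-box application, and the step you identify as the main obstacle never arises.
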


\begin{proof}
We shall follow the setup in \cite[\S 1.2]{WZ_multiple}.

By Theorem \ref{thm:local_regularity}, $V \llcorner G(W_x,2) = \sum_{l=1}^{n_x}\mathbf{v}(N_l,1)$, where $N_i$ are distinct $C^{1,\alpha}$ hypersurfaces with $\|h\|_{L^\infty(W_x)}$-bounded generalized mean curvature, such that each $N_l$ is on one side of $N_{l'}$ intersecting tangentially at $x$. Let then $n\coloneqq n_x$, and let $P\subset T_xM$ be the common tangent plane of $N_1, N_2, \dots, N_n$ at $x$. 

By shrinking $W_x$ if necessary, we can assume the surfaces $N_i$ can be represented as weakly ordered graphs over the ball $B^P_r(0)\subset P$ for some small enough $r>0$. By Allard's regularity theorem (see the proof of Theorem \ref{thm:local_regularity}), these graphs satisfy a uniform $C^{1,\alpha}$ bound.

By \cite[Remark 1.5]{WZ_multiple}, after shrinking $W_x$ again if necessary, there is an embedded minimal surface $\Gamma\subset W_x$ through $x$ with boundary in $\partial W_x$, so that we can rewrite $N_1, N_2, \dots, N_n$ as weakly ordered normal graphs over $\cB_{r'}(x)\subset\Gamma$ with uniformly bounded $C^{1,\alpha}$ norm, where $\cB_{r'}(x)$ denotes the geodesic ball in $\Gamma$ with center $x$ and radius $r'$. In particular, after choosing a unit normal field on $\Gamma$, which trivializes the normal bundle $N\Gamma$, there are $C^{1,\alpha}$ functions $u_i:\cB_{r'}(x)\to \R$ such that 
\[u_1\leq u_2\leq\dots\leq u_n,\]
and 
\[N_i=\mathrm{graph}\,u_i\]
for $i=1, \dots, n$.

Notice that, when restricted to $W_x$, the varifold $V$ is induced by the union of graphs of the functions $u_i$, and the Caccioppoli set $\Omega$ has boundary given by $\sum_{i=1}^n(-1)^{i-1}[\mathrm{graph}\, u_i]$ in the sense of currents. By the proof of Theorem \ref{thm:local_regularity}, the pair $(V, \Omega)$ is $\cA^h$-stationary in $W_x$ in the sense of \cite{WZ_multiple}.

Therefore, the first conclusion in \cite[Theorem 1.4]{WZ_multiple} applies, and this ends the proof of the proposition.
\end{proof}

As mentioned in the Introduction, for the curvature estimates in \S \ref{sec:curvature_estimates} we are going to need a $C^{1,1}$ estimate for the functions $u_1, \dots, u_n$. This also follows from \cite[Theorem 1.4]{WZ_multiple}, after a simple rescaling.
Indeed, after rescaling the metric in $W_x$ by a constant factor, we can assume the rescaled graph functions $\tilde{u}_i$ are defined on the ball $\tilde{\cB}_{2}(x)$ of radius 2, where for the rest of this section we shall use a tilde to denote quantities and balls with respect to the rescaled metric. Similarly, the prescribing function must be rescaled appropriately, so that 
the corresponding pair $(\tilde{V}, \tilde{\Omega})$ is $\cA^{\tilde{h}}$-stationary.
By these rescalings, we can assume that  
\[\|\tilde{u}_i\|_{C^{1,\alpha}(\tilde{\cB}_2(x))} < 1 \]
for all $i=1, \dots, n$, and 
\[\|\tilde{h}\|_{C^1(\tilde{\cB}_1(x)\times (-1,1))} < 1,\]
where $\tilde{\cB}_1(x)\times (-1,1))$ denotes the width 1 tubular neighborhood of $\tilde{\cB}_1(x)$ in Fermi coordinates with respect to $\Gamma$ in the rescaled metric.
Again, by Theorem \ref{thm:local_regularity} each graph has $\|\tilde{h}\|_{L^\infty(\tilde{\cB}_1(x)\times (-1,1))}$-bounded first variation. Therefore, the second conclusion of \cite[Theorem 1.4]{WZ_multiple} applies, and we obtain the following $C^{1,1}_{\mathrm{loc}}$ estimate:

\begin{corollary}\label{cor:estimate}
\begin{equation}
\sum_{i=1}^n\|\tilde{u}_i\|_{C^{1,1}(\tilde{\cB}_{1/2}(x))}\leq C\left(\sum_{i=1}^n\|\tilde{u}_i\|_{C^{1,\alpha}(\tilde{\cB}_1(x)}+\|\tilde{h}\|_{C^1(\tilde{\cB}_1(x)\times (-1,1))}\right)
\end{equation} 
for some constant $C \geq 1$ depending only on $n, \alpha$, and the metric $g$ in $W_x$.
\end{corollary}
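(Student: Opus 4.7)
The plan is to derive the estimate as a direct consequence of the second conclusion of \cite[Theorem 1.4]{WZ_multiple}, once the hypotheses have been verified in the rescaled coordinates set up immediately before the corollary statement. The work of carefully arranging the graphs and the metric so that \cite{WZ_multiple} can be applied has, in fact, essentially been done in the discussion preceding the corollary: the surfaces $N_1,\dots,N_n$ are written as weakly ordered normal graphs over a minimal background surface $\Gamma$, and the rescaling ensures the smallness hypotheses on the $C^{1,\alpha}$ norms of the $\tilde u_i$ and the $C^1$ norm of $\tilde h$ required by \cite{WZ_multiple}.

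The main step is to translate the $\cA^{\tilde h}$-stationarity of the pair $(\tilde V,\tilde\Omega)$, which is furnished by Theorem \ref{thm:local_regularity} (and in particular the proof of $C^{1,\alpha}$ regularity via minimization over stacked disks), into the PDE-level variational inequality of the $n$-membrane problem of \cite{WZ_multiple}. Concretely, I would verify that the functional $\sum_i \cH^2(\mathrm{graph}\,\tilde u_i) - \int h\,F(\partial^*\Omega)$, when expressed in Fermi coordinates over $\Gamma$ and regarded as an Euler--Lagrange system for the weakly ordered tuple $(\tilde u_1,\dots,\tilde u_n)$, becomes precisely (a quasilinear perturbation of) the area-type $n$-membrane operator considered in \cite{WZ_multiple}. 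The ordering $\tilde u_1\le\tilde u_2\le\dots\le\tilde u_n$ is exactly the obstacle constraint in the membrane formulation, and the forcing term coming from $\tilde h$ is $C^1$, so the regularity theorem applies.

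With these hypotheses in place, the second conclusion of \cite[Theorem 1.4]{WZ_multiple} delivers the quantitative $C^{1,1}$ estimate on the half-ball $\tilde{\cB}_{1/2}(x)$ in terms of the $C^{1,\alpha}$ norms on $\tilde{\cB}_1(x)$ and the $C^1$ norm of the forcing, with a constant $C$ that depends only on $n$, $\alpha$, and the background metric in $W_x$ (through the coefficients of the quasilinear operator, which are controlled by the geometry of $\Gamma$ and the ambient metric $g$). Summing over $i=1,\dots,n$ yields exactly the stated bound, completing the argument.

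The main obstacle in this outline is not an analytic difficulty but rather the bookkeeping required to match the framework: one must check that the variational setup dictated by Theorem \ref{thm:local_regularity}, in which the $\tilde u_i$ arise as limits of minimizing sequences of stacked disks and inherit only a one-sided variational inequality in the direction consistent with the alternating orientations, is exactly the class of solutions to which \cite[Theorem 1.4]{WZ_multiple} applies. Once this translation is done, the estimate is a black-box consequence of their theorem, and no further work is needed.
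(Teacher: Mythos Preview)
Your proposal is correct and essentially identical to the paper's approach: the paper likewise obtains the estimate as an immediate consequence of the second conclusion of \cite[Theorem 1.4]{WZ_multiple}, the setup (ordered normal graphs over a minimal $\Gamma$, rescaling to achieve the smallness hypotheses, and verification that $(\tilde V,\tilde\Omega)$ is $\cA^{\tilde h}$-stationary in the sense of \cite{WZ_multiple}) having been carried out in the proof of Proposition~\ref{prop:C1,1_regularity} and the discussion immediately preceding the corollary. The only nuance is that the translation to the $n$-membrane framework you flag as the ``main obstacle'' is already absorbed into the phrase ``$\cA^h$-stationary in the sense of \cite{WZ_multiple}'' established earlier, so no additional PDE-level verification is needed beyond invoking that reference.
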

\noindent Naturally, this implies a $C_\mathrm{loc}^{1,1}$-estimate for the original functions $u_i$.

If $h(x)\neq 0$, we can prove a more precise regularity result. 
By Theorem \ref{thm:local_regularity}, if $h(x)\neq 0$, then $n_x\in\{1\}\cup 2\N$ and
\begin{itemize}
    \item if $n_x=1$, then $V \llcorner G(W_x,2) = \mathbf{v}(N, 1)$, where $N \subset W_x$ is a smooth stable surface with prescribed mean curvature $h$;
    \item if $n_y = 2n$ for all $y \in \mathrm{spt}\|V\| \cap W_x'$ for an open set $W_x' \subset W_x$ containing $x$, then $V \llcorner G(W'_x,2) = \mathbf{v}(N, 2n)$, where $N\subset W_x'$ is a smooth stable minimal surface.
\end{itemize}
Hence, in both cases, the support of $V$ in a neighbourhood of $x$ coincides with a smooth surface.

We now deal with the case where $n_x = 2n$, and there is a sequence $x_j \to x$ with $x_j \in \mathrm{spt}\|V\|$ and $n_{x_j} \neq 2n$. By Theorem \ref{thm:local_regularity}, then $n=1$ and the surfaces $N_1,N_2\subset W_x$ intersect tangentially at $x$. Let $T$ denote the touching set $T=N_1\cap N_2\subset W_x$ and let $\Gamma$ be its boundary $\partial T\cap W_x$ in $W_x$.

\begin{proposition}
The boundary $\Gamma$ of the touching set $T$ is locally contained in a $C^{1,\alpha}$ curve and satisfies $\cH^1(\Gamma)<+\infty$
\end{proposition}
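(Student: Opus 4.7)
The strategy is to reformulate the problem locally as a two-membrane obstacle problem and invoke the classical free-boundary regularity theory.

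First I would work in the graph representation from the proof of Proposition \ref{prop:C1,1_regularity}: after shrinking $W_x$, the surfaces $N_1$ and $N_2$ are graphs of $C^{1,1}$ functions $u_1 \leq u_2$ over a geodesic ball in a background minimal surface $\Gamma_0$ through $x$. The touching set $T$ corresponds to the coincidence set $\{u_1 = u_2\}$, and $\Gamma$ to its topological boundary within the parameter ball. Setting $w \coloneqq u_2 - u_1 \geq 0$, the set $\Gamma$ is exactly the image in $W_x$ of $\partial\{w > 0\}$, so the whole statement reduces to a free-boundary regularity result for the single scalar function $w$.

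Next I would derive the PDE satisfied by $w$. On the non-coincidence set $\{w > 0\}$ each $u_i$ solves the graphical PMC equation, with normals oriented so that the mean curvature of $N_1$ points toward $N_2$ and vice versa, as in the last bullet of Theorem \ref{thm:local_regularity}. Subtracting the two PMC equations and linearizing around the $C^{1,1}$ graphs gives
\[
\mathcal{L} w = f \quad \text{on } \{w > 0\},
\]
where $\mathcal{L}$ is a linear uniformly elliptic operator with $C^{0,\alpha}$ coefficients depending on $\nabla u_1$, $\nabla u_2$ and the metric, and $f$ is a continuous function with $f(x) = 2h(x) \neq 0$. Combined with $w \geq 0$ in the whole parameter ball and the $\cA^h$-stationarity of $(V, \Omega)$, this shows that $w$ is a $C^{1,1}$ solution of a scalar obstacle problem with $C^{0,\alpha}$ coefficients and a non-degenerate right-hand side in a neighborhood of $x$.

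At this point I would invoke the classical free-boundary regularity theory for obstacle problems (see \cite{VC74}, \cite{SilvestreTM}, \cite{SavinYu1}) to conclude that $\partial\{w > 0\}$ is locally contained in a $C^{1,\alpha}$ hypersurface. Since the parameter domain is two-dimensional, this hypersurface is a $C^{1,\alpha}$ curve, which is the first conclusion. The bound $\cH^1(\Gamma) < +\infty$ then follows by shrinking $W_x$ so that its closure is covered by finitely many such coordinate neighborhoods and using that a $C^{1,\alpha}$ curve has finite $\cH^1$ measure on any compact subset.

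The main obstacle will be verifying the hypotheses of the free-boundary regularity theorem in our setting. Non-degeneracy is immediate from $h(x) \neq 0$ and continuity; uniform ellipticity of $\mathcal{L}$ follows from the Lipschitz bound on $\nabla u_i$; and the regularity of the coefficients of $\mathcal{L}$ uses $u_1, u_2 \in C^{1,1}$. The delicate step is rigorously extracting a scalar obstacle problem for $w$ from the two-membrane system for $(u_1, u_2)$ while carefully tracking signs, so that the sign of $f$ near $x$ is genuinely controlled by $h(x)$; this is a routine but somewhat tedious computation with the graphical PMC equation, exploiting the orientation convention that the two mean curvatures point toward each other.
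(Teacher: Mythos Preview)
Your proposal is correct and follows essentially the same approach as the paper: reduce to the scalar obstacle problem for the difference $w = u_2 - u_1$ and invoke the classical Caffarelli free-boundary regularity theory, which the paper does by citing Section~4 of \cite{SilvestreTM} and \cite{Caf77}, \cite{Caf80}, \cite{Caf98}. Your write-up is more detailed about the linearization and the verification of hypotheses, but the argument is the same.
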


\begin{proof}
This follows directly from the results in Section 4 of \cite{SilvestreTM}. Indeed, the difference $\tilde{u}_1-\tilde{u}_2$ is a solution to an obstacle problem to which the  results of Caffarelli (\cite{Caf77}, \cite{Caf80} and \cite{Caf98}) apply. 
\end{proof}

\part{Minimization over isotopy classes}\label{part:isotopy}
In this part we follow the arguments of \cite{MSY}, \cite{Colding_DeLellis}, and \cite{DeLellis_Pell}, upgrading from the regularity of minimizers of disks to the regularity of minimizers over isotopy classes.

\section{Minimization Problem}
Let $(M, g)$ be a smooth $3$-dimensional Riemannian manifold. Let $h : M \to \R$ be a smooth real valued function, and let $c \coloneqq \sup_M |h|$.

As before, by rescaling $M$, we can assume that the conditions of \S\ref{sec:local_control} hold with $\rho_0 = 1$.

Let $U \subset M$ be an open set with $C^1$ boundary.

Let $\mathcal{O}(U)$ be the set of open sets $\Omega \subset M$ with smooth boundary so that $\partial \Omega$ has transverse (or empty) intersection with $U$.

In this part, we consider the following question.

\begin{question}\label{question:main_isotopy}
Consider a sequence $\{\Omega_k\}_{k \in \N} \subset \mathcal{O}(U)$ and $\Sigma_k := \partial \Omega_k \cap U$ satisfying $\sup_k \cH^2(\Sigma_k) < \infty$, $\sup_k \mathrm{genus}(\Sigma_k) < \infty$, and
\begin{equation}\label{eqn:min_seq_setup_isotopy}
    \cA^h(\Omega_k) \leq \cA^h(\phi(1, \Omega_k)) + \eps_k
\end{equation}
for all $\phi \in \mathcal{I}(U)$ with $\eps_k \to 0$. Does (a subsequence of) $\Sigma_k := \partial \Omega_j \cap U$ converge to a regular limit with prescribed mean curvature $h$?
\end{question}

We first observe that all of the conclusions of Theorem \ref{thm:compactness} hold, where $\mathbf{v}(\Sigma_k) \rightharpoonup V$ and $\Omega_k \cap U \to \Omega$, by following the same proof.

\section{Gamma-Reduction}
The key insight of \cite{MSY} is the introduction of a procedure, called $\gamma$-reduction, to cut away small necks that either create genus or separate large components. This procedure enables the reduction of the full isotopy minimization problem to the disk minimization problem, solved in Part \ref{part:disks} above.

\subsection{Definition of gamma-reduction}
Let $\delta > 0$ be the constant from Lemma \ref{lem:thin_isoperimetric_MSY}, and let $0 < \gamma < \delta^2/9$. Let $U \subset M$ be an open set, and let $\Sigma \subset U$ be a smooth embedded oriented surface with $\partial \Sigma \subset \partial U$. Let $\mathcal{C}(\Sigma, U)$ be the set of smooth embedded oriented surfaces $\Sigma' \subset U$ with $\partial [\Sigma'] = \partial [\Sigma]$.

\begin{definition}\label{def:gamma_reduction}
For $\Sigma_1,\ \Sigma_2 \in \mathcal{C}(\Sigma, U)$, we write
\[ \Sigma_2 \overset{(\gamma, U)}{\ll} \Sigma_1 \]
and say $\Sigma_2$ is a \emph{$\gamma$-reduction} of $\Sigma_1$ if the following hold:
\begin{enumerate}
    \item $\Sigma_2$ is obtained from $\Sigma_1$ by surgery in $U$; namely,
    \begin{itemize}
        \item $\overline{\Sigma_1 \setminus \Sigma_2} =: A$ is diffeomorphic to the standard closed annulus,
        \item $\overline{\Sigma_2 \setminus \Sigma_1} =: D_1 \cup D_2$ is diffeomorphic to the disjoint union of two standard closed disks,
        \item there is an open set $Y \subset U$ homeomorphic to the standard 3-ball with $\partial Y = A \cup D_1 \cup D_2$ and $Y \cap (\Sigma_1 \cup \Sigma_2) = \varnothing$.
    \end{itemize}
    \item $\cH^2(A) + \cH^2(D_1) + \cH^2(D_2) < 2\gamma$.
    \item If $\Gamma$ is the connected component of $\Sigma_1$ containing $A$, then for each component of $\Gamma \setminus A$ we have either
    \begin{itemize}
        \item it is a disk of area at least $\delta^2/2$,
        \item it is not simply connected.
    \end{itemize}
\end{enumerate}
\end{definition}

\begin{definition}\label{def:strong_gamma_reduction}
For $\Sigma_1,\ \Sigma_2 \in \mathcal{C}(\Sigma, U)$, we write
\[ \Sigma_2 \overset{(\gamma, U)}{<} \Sigma_1 \]
and say $\Sigma_2$ is a \emph{strong $\gamma$-reduction} of $\Sigma_1$, if there is an isotopy $\psi \in \mathcal{I}(U)$ satisfying
\begin{enumerate}
    \item $\Sigma_2 \overset{(\gamma, U)}{\ll} \psi(1, \Sigma_1)$,
    \item $\cH^2(\psi(1, \Sigma_1) \triangle \Sigma_1) < \gamma$.
\end{enumerate}
\end{definition}

We say a surface is \emph{$(\gamma, U)$-irreducible} (resp.\ \emph{strongly $(\gamma, U)$-irreducible}) if it admits no $(\gamma, U)$ reduction (resp.\ strong $(\gamma, U)$-reduction).

\subsection{Structure of gamma-irreducible surfaces}
We adapt \cite[Theorem 2]{MSY} to our setting, allowing us to decompose $\gamma$-irreducible surfaces in small balls into a disjoint union of disks.

\begin{theorem}\label{thm:gamma_reduction}
Suppose $\cH^3(U) \leq \eta/8$. Let $U' \subset\subset U$ be an open set so that $\overline{U}'$ is diffeomorphic to $\overline{\B}$. Suppose $\Omega \subset M$ satisfies
\begin{itemize}
    \item
        \[ \cA^h(\Omega) - \inf_{\phi \in \mathcal{I}(U)}\cA^h(\phi(1,\Omega)) =: E(\Omega) \leq 3\gamma/16. \]
    \item $\Sigma \coloneqq \partial \Omega \cap U$ is strongly $(\gamma, U)$-irreducible,
    \item $\Sigma$ has transverse intersection with $\partial U'$.
\end{itemize}
Let $\Gamma_1, \hdots, \Gamma_q$ be the components of $\Sigma \cap \partial U'$, and let $F_j$ denote the isoperimetric region in $\partial U'$ with boundary $\Gamma_j$. Suppose further that
\[ \sum_{j=1}^q \cH^2(F_j) \leq \gamma/8.  \]
Let $\Sigma_0$ denote the union of all closed components $\Lambda$ of $\Sigma$ contained an an open set $K_{\Lambda} \subset U$ diffeomorphic to $\B$ so that $\partial K_{\Lambda} \cap \Sigma = \varnothing$.

Then $\cH^2(\Sigma_0) \leq 2E(\Omega)$, and there are disjoint closed disks $D_1, \hdots$, $D_p$ satisfying
\begin{enumerate}
    \item $D_j \subset \Sigma \setminus \Sigma_0$,
    \item $\partial D_j \subset \partial U'$,
    \item
        \[ \left(\bigcup_{j=1}^p D_j\right) \cap U' = (\Sigma \setminus \Sigma_0) \cap U', \]
    \item there is an isotopy $\phi \in \mathcal{I}(U)$ such that $\phi(t,x) = x$ for all $(t, x) \in [0,1] \times W$, where $W$ is a neighborhood of $(\Sigma \setminus \Sigma_0) \setminus \bigcup_{j=1}^p(D_j \setminus \partial D_j)$, so that
        \[ \bigcup_{j=1}^p (\phi(1, D_j) \setminus \partial D_j) \subset U' \]
    \item
        \[ \sum_{j=1}^p (\cH^2(D_j) - c\cH^3(\Omega^j)) \leq \sum_{j=1}^q \cH^2(F_j) + E(\Omega), \]
        where $\Omega^j \subset U$ is the region bounded by $D_j \cup F_{\partial D_j}$, which exists by the existence of the isotopy in (4).
\end{enumerate}
\end{theorem}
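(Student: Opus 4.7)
The plan is to adapt \cite[Theorem 2]{MSY} to the prescribed mean curvature setting, with the main modification being that the extra volume terms arising from $\cA^h$ are absorbed into area terms via the estimate $\cA^c(\Omega') \geq \tfrac12 \cH^2(\partial\Omega')$ from Proposition \ref{prop:iso+PMC_controls_area}.

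First I would establish $\cH^2(\Sigma_0) \leq 2E(\Omega)$. For each component $\Lambda$ of $\Sigma_0$, fix the region $K_\Lambda \subset U$ diffeomorphic to $\B$ with $\partial K_\Lambda \cap \Sigma = \varnothing$; by taking the outermost choice for nested components these $K_\Lambda$ can be arranged pairwise disjoint. A smooth isotopy in $\mathcal{I}(U)$ supported in a neighborhood of $\overline{K_\Lambda}$ that collapses $K_\Lambda$ alters $\cA^h(\Omega)$ by $-\cH^2(\Lambda) \pm \int_{K_\Lambda} h$, the sign depending on the orientation of $\Lambda$. Performing all such collapses simultaneously and applying (\ref{eqn:PMC_controls_area}) to each $K_\Lambda$ (the hypothesis $\cH^3(K_\Lambda) \leq \cH^3(U) \leq \eta$ is what allows us to invoke Proposition \ref{prop:iso+PMC_controls_area}) yields $E(\Omega) \geq \sum_\Lambda \cA^c(K_\Lambda) \geq \tfrac12 \cH^2(\Sigma_0)$.

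Next I would show that $(\Sigma \setminus \Sigma_0) \cap U'$ is a disjoint union of disks, following the scheme of \cite{MSY}. For each component $\Gamma$ of $\Sigma \setminus \Sigma_0$ meeting $U'$, cap off each boundary circle $\gamma_i = \Gamma \cap \partial U'$ by the isoperimetric disk $F_{\gamma_i} \subset \partial U'$; the total cap area is at most $\gamma/8$. If the resulting closed surface had positive genus or contained a separating essential sphere not already lying in $\Sigma_0$, there would be a neck of area below $2\gamma$ whose excision and capping produces a $(\gamma, U)$-reduction in the sense of Definition \ref{def:gamma_reduction}. A smoothing isotopy of area change less than $\gamma$ (used to slide the caps off $\partial U'$ and to separate caps whose images in $M$ may have been nested) then upgrades this to a strong $(\gamma, U)$-reduction (Definition \ref{def:strong_gamma_reduction}), contradicting the hypothesis. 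The main obstacle is verifying condition (3) of Definition \ref{def:gamma_reduction}, namely that no residual component of $\Gamma \setminus A$ is a disk of area below $\delta^2/2$; this is exactly where the removal of $\Sigma_0$ enters, since such a small contractible component would bound a ball in $U$ and hence belong to $\Sigma_0$.

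The isotopy $\phi$ for conclusion (4) is the flow of a compactly supported vector field on $U$ that is inward-pointing along $\partial U'$ and cut off away from the portion of $\Sigma \setminus \Sigma_0$ already strictly inside $U'$. Finally, for (5), I would construct a competitor $\Omega^* = \phi^*(1, \Omega)$ with $\phi^* \in \mathcal{I}(U)$ that replaces each $D_j$ by a slight outward push of the cap $F_{\partial D_j}$ into $U \setminus \overline{U'}$, resolving overlaps of the caps via (a variant of) Lemma \ref{lem:resolution_iso_collapse}. Up to an error $o(1)$ from the perturbation and resolution, this yields
\[ \cA^h(\Omega^*) \leq \cA^h(\Omega) - \sum_{j=1}^p \cH^2(D_j) + \sum_{j=1}^q \cH^2(F_j) + c \sum_{j=1}^p \cH^3(\Omega^j) + o(1), \]
and combining with $\cA^h(\Omega^*) \geq \cA^h(\Omega) - E(\Omega)$ and sending $o(1) \to 0$ gives (5).
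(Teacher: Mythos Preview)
Your bound on $\cH^2(\Sigma_0)$ is fine and matches the paper's Remark~\ref{rem:bubbles}. The rest of the outline has a genuine gap.

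The assertion that $(\Sigma\setminus\Sigma_0)\cap U'$ is a disjoint union of disks is not what the theorem claims and is false in general: take $\Sigma$ a $2$-sphere meeting $\partial U'$ transversally in three small circles so that $\Sigma\cap U'$ is a pair of pants and the three exterior caps in $\Sigma\setminus U'$ are small disks. This configuration satisfies all hypotheses (strong $(\gamma,U)$-irreducibility holds because every thin annulus around a $\Gamma_j$ has a small-disk side), yet $\Sigma\cap U'$ is connected and not simply connected. The theorem instead asserts the existence of disks $D_j\subset\Sigma$ with $\partial D_j\subset\partial U'$ that may extend \emph{outside} $U'$; only $\bigl(\bigcup D_j\bigr)\cap U'$ is required to equal $(\Sigma\setminus\Sigma_0)\cap U'$. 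Your ``cap off and look for a neck'' argument neither targets this statement nor justifies why a small neck should exist.

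The paper's proof, following \cite{MSY}, is by induction on $q$. One selects the innermost $\Gamma_q$ (so $F_q\cap\Gamma_j=\varnothing$ for $j\neq q$) and uses strong $\tilde\gamma$-irreducibility together with $\cH^2(F_q)<\tilde\gamma$ to find a disk $D\subset\Sigma$ with $\partial D=\Gamma_q$ and $\cH^2(D)<\delta^2/2$. One then replaces $D$ by $F_q$ and smooths to obtain $\hat\Omega_*$ with $q-1$ intersection curves. The main technical content---absent from your outline---is the inductive bookkeeping: one must verify that $\hat\Omega_*$ remains strongly $\gamma^*$-irreducible for a suitably degraded $\gamma^*$ and that $E(\hat\Omega_*)$ stays controlled; here the volume term enters via $c\cH^3(K)\leq\tfrac14(\cH^2(F_q)+\cH^2(D))$ from Proposition~\ref{prop:iso+PMC_controls_area} (using $\cH^3(U)\leq\eta/8$). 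Conclusion~(5) is obtained by unwinding the induction, with two cases at each step depending on whether $F_q$ lies in one of the inductively produced disks. Your direct competitor for (5) would instead require a single isotopy in $\mathcal{I}(U)$ taking all $D_j$ to perturbations of the $F_{\partial D_j}$ simultaneously, and Lemma~\ref{lem:resolution_iso_collapse} does not supply this: it produces competitors in $\mathcal{D}(U)[\Sigma]$, not isotopies of $\Omega$.
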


\begin{remark}\label{rem:bubbles}
Let $K_1, \hdots, K_s \subset U$ be open 3-balls with $\Sigma_0 \subset \bigcup_{j=1}^s K_j$ and $\partial K_j \cap \Sigma = \varnothing$.

For each $\eps > 0$, there is an open set $\bigcup_{j=1}^s K_j \subset W \subset U$ and an isotopy $\phi\in \mathcal{I}(U)$ satisfying
\begin{itemize}
    \item $W \cap (\Sigma \setminus \Sigma_0) = \varnothing$,
    \item $\phi(t, x) = x$ for all $(x,t) \in (M \setminus W) \times [0, 1]$,
    \item $\phi(t, W) \subset W$,
    \item $\cH^2(\phi(1, \Sigma_0)) < \eps$,
    \item the union of the regions bounded by each component of $\phi(1, \Sigma_0)$ (given by Lemma \ref{lem:thin_isoperimetric_MSY}) has volume bounded by $\eps$,
    \item and each component of $\phi(1, \Sigma_0)$ has diameter less than $\eps$.
\end{itemize}
Let $\tilde{\Omega}$ be the region given by deleting $\Sigma_0$ in $\Sigma$. Each component that we collapse decreases $\cA^h$ by Proposition \ref{prop:iso+PMC_controls_area}, and we have
\[ \cH^2(\Sigma_0) \leq 2E(\Omega) \]
and
\[ \cA^h(\tilde{\Omega}) - \inf_{\phi \in \mathcal{I}(U)} \cA^h(\phi(1,\tilde{\Omega})) \leq E(\Omega). \]
Hence, the general case of Theorem \ref{thm:gamma_reduction} follows from the special case where $\Sigma_0 = \varnothing$, which we henceforth assume in the proof below.
\end{remark}

\begin{remark}\label{rem:ac_to_vol}
Since $\Omega^j \subset U$ and $\cH^3(U) \leq \eta$, we have by Proposition \ref{prop:iso+PMC_controls_area} that
\[ \cH^3(\Omega^j) \leq c_1^{2/3}\eta^{1/3}(\cH^2(D_j) + \cH^2(F_{\partial D_j})). \]
By our choice of $\eta$, we have
\[ c\cH^3(\Omega^j) \leq \frac{1}{2}(\cH^2(D_j) + \cH^2(F_{\partial D_j})). \]
Then conclusion (5) of Theorem \ref{thm:gamma_reduction} implies
\begin{equation}\label{eqn:area_gamma_red}
    \sum_{j=1}^p \cH^2(D_j) \leq 3\sum_{j=1}^q \cH^2(F_j) + 2E(\Omega).
\end{equation}
\end{remark}

\begin{proof}[Proof of Theorem \ref{thm:gamma_reduction}]
The proof follows exactly as in \cite[Theorem 2]{MSY}, with the additional step of keeping track of volumes.

We induct on $q$, where the case $q = 0$ is trivial.

Assume $\Omega$ satisfies
\begin{itemize}
    \item $\Sigma_0 = \varnothing$,
    \item $\sum_{j=1}^q \cH^2(F_j) \leq \gamma/8$,
    \item $E(\Omega) \leq 3\gamma/8 - \frac{3}{2}\sum_{j=1}^q \cH^2(F_j)$,
    \item $\Sigma$ is strongly $\tilde{\gamma}$ irreducible for $\tilde{\gamma} = \gamma/4 + 4\sum_{j=1}^q \cH^2(F_j) + \frac{4}{3}E(\Omega)$.
\end{itemize}

Assume as an inductive hypothesis that the conclusions of the theorem are true for $\hat{\Omega} = \phi(1,\Omega)$ for some $\phi \in \mathcal{I}(U)$ if it satisfies the above assumptions with $q-1$ in space of $q$ and $\hat{\Omega}$ in place of $\Omega$.

Without loss of generality (by relabeling), we have $F_q \cap \Gamma_j = \varnothing$ for all $j \neq q$. Since $\Sigma$ is strongly $\tilde{\gamma}$-irreducible and $\cH^2(F_q) < \tilde{\gamma}$, there is a disk $D \subset \Sigma$ so that $\partial D = \Gamma_q$ and $\cH^2(D) < \delta^2/2$. Then $D \cup F_q$ is homeomorphic to $S^2$ and $\cH^2(D \cup F_q) <  \delta^2/2 + \delta^2/2 = \delta^2$, so Lemma \ref{lem:thin_isoperimetric_MSY} implies that $D \cup F_q = \partial K$ for $K \subset U$ an open 3-ball. Let $\Lambda$ be the component of $\Sigma$ containing $D$. Since $\Sigma_0 = \varnothing$ by assumption, we have $\Lambda \setminus D \not\subset K$. Hence, we have $(\Sigma \setminus D) \cap K = \varnothing$ (because $\Sigma \setminus D$ does not intersect $F_q$).

Let $\Sigma_* = (\Sigma \setminus D) \cup F_q$ and $\Omega_*$ the corresponding region. As in \cite[Theorem 2]{MSY}, for every $\eps > 0$, there is a continuous isotopy $\alpha$ taking $\Sigma_*$ to a smooth surface $\hat{\Sigma}_*$ and $\Omega_*$ to $\hat{\Omega}_*$ satisfying
\begin{itemize}
    \item $\hat{\Omega}_* = \phi(1, \Omega)$ for some $\phi \in \mathcal{I}(U)$,
    \item $\hat{\Sigma}_* \cap \partial U' = \bigcup_{j=1}^{q-1} \Gamma_j$,
    \item $\cH^2(\hat{\Sigma}_* \triangle \Sigma) < \cH^2(D) + \cH^2(F_q) + \eps$,
    \item $\cH^3(\hat{\Omega}_* \triangle \Omega) < \cH^3(K) + \eps/(2c)$,
    \item $\cH^2(\hat{\Sigma}_*) < \cH^2(\Sigma) + \cH^2(F_q) - \cH^2(D) + \eps/2$,
    \item $\cH^2(\hat{\Sigma}_*) \geq \cH^2(\Sigma_*)$.
\end{itemize}
Using $\cH^3(U) \leq \eta/8$, Proposition \ref{prop:iso+PMC_controls_area} implies $c\cH^3(K) \leq \frac{1}{4}(\cH^2(F_q) + \cH^2(D))$. We assume $\eps \leq \cH^2(F_q)/4$. Observe that since $\Omega$ and $\hat{\Omega}_*$ are in the same isotopy class, we have
\begin{align*}
    E(\hat{\Omega}_*) & < E(\Omega) + c\cH^3(K) + \cH^2(F_q) - \cH^2(D) + \eps\\
    & \leq E(\Omega) + \frac{3}{2}\cH^2(F_q) - \frac{3}{4}\cH^2(D).
\end{align*}
By \cite[Remark 3.11]{MSY}, we have that $\hat{\Sigma}_*$ is strongly $\gamma^*$-irreducible, where
\begin{align*}
    \gamma^* & = \gamma/4 + 4\sum_{j=1}^q \cH^2(F_j) + \frac{4}{3}E(\Omega) - \cH^2(D) - 2\cH^2(F_q)\\
    & = \gamma/4 + 4\sum_{j=1}^{q-1} \cH^2(F_j) + \frac{4}{3}E(\Omega) + 2\cH^2(F_q) - \cH^2(D)\\
    & \geq \gamma/4 + 4\sum_{j=1}^{q-1} \cH^2(F_j) + \frac{4}{3}E(\hat{\Omega}_*).
\end{align*}
Furthermore, we have
\begin{align*}
    E(\hat{\Omega}_*) & \leq E(\Omega) + \frac{3}{2}\cH^2(F_q)\\
    & \leq 3\gamma/8 - \frac{3}{2}\sum_{j=1}^q \cH^2(F_j) + \frac{3}{2}\cH^2(F_q)\\
    & = 3\gamma/8 - \frac{3}{2}\sum_{j=1}^{q-1} \cH^2(F_j).
\end{align*}
Hence, $\hat{\Omega}_*$ satisfies the inductive hypothesis. Then there are pairwise disjoint disks $\tilde{\Delta}_1, \hdots, \tilde{\Delta}_p$ satisfying
\begin{itemize}
    \item $\tilde{\Delta}_j \subset \hat{\Sigma}_*$,
    \item $\partial \tilde{\Delta}_j \subset \partial U'$,
    \item
        \[ \left(\bigcup_{j=1}^p \tilde{\Delta}_j\right) \cap U' = \hat{\Sigma}_* \cap U', \]
    \item there is an isotopy $\tilde{\psi}\in \mathcal{I}(U)$ such that $\tilde{\psi}(t, x) = x$ for all $(t, x) \in [0,1] \times \tilde{W}$, where $\tilde{W}$ is a neighborhood of $\hat{\Sigma}_* \setminus \bigcup_{j=1}^p(\tilde{\Delta}_j \setminus \partial \tilde{\Delta}_j)$, so that
        \[ \bigcup_{j=1}^p (\tilde{\psi}(1, \tilde{\Delta}_j) \setminus \partial \tilde{\Delta}_j) \subset U', \]
    \item
        \[ \sum_{j=1}^p (\cH^2(\tilde{\Delta}_j) - c\cH^3(\tilde{\Omega}^j)) \leq \sum_{j=1}^{q-1} \cH^2(F_j) + E(\hat{\Omega}_*). \]
\end{itemize}
Letting $\Delta_j \coloneqq (\alpha^{-1})(1, \tilde{\Delta}_j)$ (i.e.\ the time-reversal of the smoothing isotopy $\alpha$ for $\Sigma_*$), and taking $\psi \coloneqq \tilde{\psi} * \alpha$ (i.e.\ the concatenation of isotopies), we have that
\begin{itemize}
    \item $\Delta_j \subset \Sigma_*$,
    \item $\partial \Delta_j \subset \partial U'$,
    \item
        \[ \left(\bigcup_{j=1}^p \Delta_j\right) \cap U' = \Sigma_* \cap U', \]
    \item there is an isotopy $\psi\in \mathcal{I}(U)$ such that $\psi(t, x) = x$ for all $(t, x) \in [0,1] \times W'$, where $W'$ is a neighborhood of $\Sigma_* \setminus \bigcup_{j=1}^p(\Delta_j \setminus \partial \Delta_j)$, so that
        \[ \bigcup_{j=1}^p (\psi(1, \Delta_j) \setminus \partial \Delta_j) \subset U', \]
    \item
        \[ \sum_{j=1}^p (\cH^2(\Delta_j) - c\cH^3(\tilde{\Omega}^j)) \leq \sum_{j=1}^{q-1} \cH^2(F_j) + E(\hat{\Omega}_*). \]
\end{itemize}

Let $\beta$ be a continuous isotopy so that $\beta(t, K) \subset K$, $\beta\vert_{\{t\} \times \Sigma \setminus D} = \mathbf{1}_{\Sigma \setminus D}$, and $\beta(1, D) = F_q$. Consider the following two cases:

\emph{Case 1}: $F_q \subset \bigcup_{j=1}^p \Delta_j$. Select the disks $D_1, \hdots, D_p$ by taking $D_{j_0} = (\Delta_{j_0} \setminus F_q) \cup D$ for the unique $j_0$ so that $F_q \subset \Delta_{j_0}$, and $D_j = \Delta_j$ for $j \neq j_0$. Define an isotopy by $\tilde{\phi} = \psi * \beta$. By smoothing $\tilde{\phi}$, we obtain the desired isotopy $\phi$. We compute
\begin{align*}
    \sum_{j=1}^p (\cH^2(D_j) - c\cH^3(\Omega^j))
    & \leq \sum_{j=1}^p (\cH^2(\Delta_j) - c\cH^3(\tilde{\Omega}^j))
    + \cH^2(D) - \cH^2(F_q) - c\cH^3(K) + \eps\\
    & \leq \sum_{j=1}^{q-1} \cH^2(F_j) + E(\hat{\Omega}_*)
    + \cH^2(D) - \cH^2(F_q) - c\cH^3(K) + \eps\\
    & \leq \sum_{j=1}^{q-1} \cH^2(F_j) + E(\Omega) + 2\eps\\
    & \leq \sum_{j=1}^{q} \cH^2(F_j) + E(\Omega) + 2\eps.
\end{align*}
Since $\eps > 0$ is arbitrary, the desired inequality holds.

\emph{Case 2}: $F_q \not\subset \bigcup_{j=1}^p \Delta_j$. Define the disks $D_1, \hdots, D_{p+1}$ by $D_j = \Delta_j$ for $j < p+1$ and $D_{p+1} = D$. Define an isotopy $\tilde{\phi} = \hat{\beta} * \psi * \beta$, where $\hat{\beta} \in \mathcal{I}(U)$ satisfies $\hat{\beta}(t, x) = x$ for all $(x, t) \in (\Sigma \setminus D) \times [0,1]$ and $\hat{\beta}(1, F_q)$ is a disk $\tilde{D} \subset U'$ with $\partial \tilde{D} = \partial D$, $\tilde{D} \cap \partial U' = \partial \tilde{D}$, and $\tilde{D} \cap \psi(t, \Sigma_*) = \Gamma_q$ for all $t \in [0, 1]$. We claim that in this case there is a neighborhood $W$ of $\partial D = \Gamma_q$ so that $W \cap D \subset U'$. Otherwise we would have $W$ with $W \supset \partial D$ and $W \cap (\Sigma \setminus D) \subset U'$, which would imply $F_q \subset \bigcup_{j=1}^p \Delta_j$, a contradiction. By smoothing $\tilde{\phi}$, we obtain the required isotopy. We compute
\begin{align*}
    \sum_{j=1}^{p+1} (\cH^2(D_j) - c\cH^3(\Omega^j))
    & \leq \sum_{j=1}^p (\cH^2(\Delta_j) - c\cH^3(\tilde{\Omega}^j)) + \cH^2(D) - c\cH^3(K) + \eps\\
    & \leq \sum_{j=1}^{q-1} \cH^2(F_j) + E(\hat{\Omega}_*) + \cH^2(D) - c\cH^3(K) + \eps\\
    & \leq \sum_{j=1}^{q-1} \cH^2(F_j) + E(\Omega) + \cH^2(F_q) + 2\eps\\
    & = \sum_{j=1}^{q} \cH^2(F_j) + E(\Omega) + 2\eps.
\end{align*}
Since $\eps > 0$ is arbitrary, the desired inequality holds.
\end{proof}

\subsection{Gamma-reduction of minimizing sequence}\label{subsec:gamma_min}

We now modify the initial minimizing sequence $\{\Sigma_k\}$ to a new sequence $\{\tilde{\Sigma}_k\}$ with $\lim \mathbf{v}(\tilde{\Sigma}_k) = \lim \mathbf{v}(\Sigma_k) = V$ so that each $\tilde{\Sigma}_k$ is strongly $(\gamma,U)$-irreducible for some $\gamma$.

For $q \in \N$ and $\gamma \in (0, \delta^2/9)$, let $k_q(\gamma)$ be the largest integer such that there exist $\Sigma_q^{(j)} \in \mathcal{C}(\Sigma_q, U)$ for $j = 1, \hdots, k_q(\gamma)$ with
\[ \Sigma_q^{(k_q(\gamma))} \overset{(\gamma,U)}{<} \hdots \overset{(\gamma,U)}{<} \Sigma_q^{(2)} \overset{(\gamma,U)}{<} \Sigma_q^{(1)} = \Sigma_q. \]

We claim that $k_q(\gamma)$ is bounded independent of $q$ and $\gamma$. Indeed, there is an upper bound on the topology of $\Sigma_q$ and $\cH^2(\Sigma_q)$ independent of $q$ and $W$ by assumption. Then the uniform bound on $k_q(\gamma)$ follows from \cite[(3.9)]{MSY}.

We also note that $k_q(\gamma) \leq k_q(\gamma')$ for $\gamma < \gamma'$, which follows by definition.

Let $l(\gamma)$ be the nonnegative integer defined by
\[ l(\gamma) \coloneqq \limsup_{q\to\infty} k_q(\gamma). \]
$l$ is an nondecreasing integer-valued function of $\gamma$, so there is $\gamma_0 \in (0, \delta^2/9)$ so that
\[ l(\gamma) = l(\gamma_0) \ \text{for\ all}\ \gamma \in (0, \gamma_0]. \]
For each $n \geq \gamma_0^{-1}$, there is a $q_n \geq n$ so that $k_{q_n}(1/n) = l(1/n) = l(\gamma_0)$. We set
\[ \tilde{\Sigma}_n \coloneqq \Sigma_{q_n}^{(k_{q_n}(1/n))}. \]
Hence, $\tilde{\Sigma}_n$ is strongly $(\gamma_0, U)$-irreducible for all sufficiently large $n$.

By \cite[(3.10)]{MSY}, we have
\[ \cH^2(\tilde{\Sigma}_n \triangle \Sigma_{q_n}) \leq C/n \]
for some $C$ independent of $n$. Hence, $\lim \mathbf{v}(\tilde{\Sigma}_n) = \lim \mathbf{v}(\Sigma_k)$. 

Let $\tilde{\Omega}_n \subset M$ be the open set corresponding to $\tilde{\Sigma}_n$ (which is obtained from $\Omega_{q_n}$ in the obvious way at each $(\gamma, U)$-reduction). Moreover, since $\Sigma_{q_n}$ can be recovered from $\tilde{\Sigma}_n$ be deleting disks and adding thin tubes, we have
\[ \cA^h(\tilde{\Omega}_n) \leq \inf_{\phi \in \mathcal{I}(U)} \cA^h(\phi(1,\tilde{\Omega}_n)) + \eps_n' \]
where $\eps_n' \to 0$ as $n \to \infty$.

\section{Interior Regularity}\label{sec:MSYintreg}
We prove the interior regularity part of Theorem \ref{thm:main_isotopy_regularity}. To clarify, the ``sufficiently small'' assumption on $U$ is $\cH^3(U) \leq \eta/8$, where $\eta$ is defined in (\ref{eqn:eta_assumption}).

\begin{proof}[Proof of Theorem \ref{thm:main_isotopy_regularity} Interior Regularity]
By \S\ref{subsec:gamma_min}, it suffices to consider a minimizing sequence $\{\Sigma_k\}$ so that $\Sigma_k$ is strongly $(\gamma, U)$-irreducible. By Remark \ref{rem:bubbles}, we can suppose there are no closed components of $\Sigma_k$ in $U$ contained in an open 3-ball whose boundary does not intersect $\Sigma_k$.

Let $x_0 \in \text{spt}\|V\|$. Let $\rho_0(x_0) \coloneqq d(x_0, \partial U)$. By the monotonicity of (\ref{eqn:monotonicity}), there is a constant $c_2$ so that
\begin{equation}\label{eqn:monotonicity_MSY}
    \sigma^{-2}\|V\|(B_\sigma(x_0)) \leq c_2\rho^{-2}\|V\|(B_\rho(x_0))
\end{equation}
for any $\sigma < \rho \leq \rho_0(x_0)$.

By the coarea formula,
\[ \int_{\rho-\sigma}^\rho \cH^1(\Sigma_k \cap \partial B_s(x_0))\ ds \leq \cH^2(\Sigma_k \cap (\overline{B}_\rho(x_0) \setminus B_{\rho-\sigma}(x_0))) \]
for almost every $\rho \in (0, \rho_0(x_0))$, $\sigma \in (0,\rho)$. Taking $\sigma = \rho/2$, (\ref{eqn:monotonicity_MSY}) implies
\[ \rho^{-1}\int_{\rho/2}^\rho \cH^1(\Sigma_k \cap \partial B_s(x_0))\ ds \leq C\rho \]
for all sufficiently large $k$, where $C$ depends on $\mu$, $\rho_0(x_0)$, and $\|V\|(U)$. Then there is a sequence $\rho_k \in (3\rho/4, \rho)$ so that $\Sigma_k$ has transverse intersection with $\partial B_{\rho_k}(x_0)$ and
\[ \cH^1(\Sigma_k \cap \partial B_{\rho_k}(x_0)) \leq C\rho \leq C\eta_1\rho_0(x_0) \]
for all sufficiently large $k$, provided $\rho \leq \eta_1\rho_0(x_0)$ for some $\eta_1 \in (0,1)$ to be determined. For $\eta_1$ sufficiently small and $k$ sufficiently large, we can apply Theorem \ref{thm:gamma_reduction} to $\Sigma_k$ with $U' = B_{\rho_k}(x_0)$. Hence, there are disks $D_k^{(1)}, \hdots, D_k^{(q_k)} \subset \Sigma_k$ and isotopies $\phi^{(k)} \in \mathcal{I}(U)$ so that for $k$ sufficiently large, we have
\begin{itemize}
    \item $\partial D_k^{(l)} \subset \partial B_{\rho_k}(x_0)$,
    \item $\Sigma_k \cap B_{\rho_k}(x_0) = (\bigcup_{l=1}^{q_k} D_k^{(l)}) \cap B_{\rho_k}(x_0)$,
    \item $\phi^{(k)}(1, D_k^{(l)}) \setminus \partial D_k^{(l)} \subset B_{\rho_k}(x_0)$,
    \item $\sum_{l=1}^{q_k} \cH^2(D_k^{(l)}) \leq \tilde{C}\eta_1^2\rho_0(x_0)^2$ (see (\ref{eqn:area_gamma_red})).
\end{itemize}

For $\eta_1$ sufficiently small and $k$ sufficiently large, we can apply Lemma \ref{lem:replacement} to produce disks $\tilde{D}_k^{(l)}$ with $\partial \tilde{D}_k^{(l)} = \partial D_k^{(l)}$, $\tilde{D}_k^{(l)} \setminus \partial D_k^{(l)} \subset B_{\rho_k}(x_0)$, and
\[ \cH^2(\tilde{D}_k^{(l)}) \leq \cH^2(D_k^{(l)}) + T_h(\tilde{D}_k^{(l)}, D_k^{(l)}). \]

By taking a subsequence, we can assume that $\rho_k \to \rho_{\infty}$. Take an arbitrary $\alpha > 0$ small. We can construct oriented smooth disks $\tilde{\Sigma}_k \in \mathcal{D}^*(B_{\rho_{\infty} - \alpha}(x_0))$ so that (for all $k$ sufficiently large)
\begin{itemize}
    \item $\tilde{\Sigma}_k \cap B_{\rho_{\infty} - \alpha}(x_0) = \bigcup_l \tilde{D}_k^{(l)} \cap B_{\rho_{\infty} - \alpha}(x_0)$,
    \item $\partial \tilde{\Sigma}_k \cap \overline{B}_{\rho_{\infty}-\alpha}(x_0) = \varnothing$,
    \item $\tilde{\Sigma}_k \setminus B_{\rho_{\infty} - \alpha}(x_0)$ is connected.
\end{itemize}
Indeed, we can inductively add bridges outside $B_{\rho_{\infty}-\alpha}(x_0)$ joining the disks $\tilde{D}_k^{(l)}$.
Then $\tilde{\Sigma}_k$ satisfies the hypotheses of Theorem \ref{thm:local_regularity} with the set $B_{\rho_{\infty}-\alpha}(x_0)$. Hence, the interior regularity of Theorem \ref{thm:main_isotopy_regularity} follows.
\end{proof}

\section{Boundary Regularity}
We prove the boundary regularity part of Theorem \ref{thm:main_isotopy_regularity}. We follow the proof of \cite{DeLellis_Pell}.

\subsection{Wedge property}
\begin{definition}\label{def:meets_transversally}
Let $U = B_\rho(x) \subset M$ for $\rho \ll 1$ and let $\sigma \subset \partial U$ be a disjoint union of finitely many smooth Jordan curves. An open set $A \subset U$ \emph{meets $\partial U$ in $\sigma$ transversally} if there is a positive angle $\theta_0$ so that
\begin{itemize}
    \item $\partial A \cap \partial U \subset \sigma$,
    \item for every $p \in \partial A \cap \partial U$, if we choose coordinates so that $T_p \partial U = \{z = 0\}$ and $T_p\sigma = \{y = z = 0\}$, then every point $q = (q_1, q_2, q_3) \in A$ satisfies $\frac{q_3}{q_2} \geq \tan(1/2 - \theta_0)$.
\end{itemize}
\end{definition}

\begin{lemma}[Wedge Property]\label{lem:wedge}
There is a $r_0 < 1$ so that the following holds for $\rho < r_0$. Let $V$ and $\Sigma := \partial \Omega_0 \cap U$ as in Theorem \ref{thm:main_isotopy_regularity}. Then there exists an open set $A \subset U$ which meets $\partial U$ in $\partial \Sigma$ transversally and satisfies $\mathrm{spt}\|V\| \subset \overline{A}$.
\end{lemma}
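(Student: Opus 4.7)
The plan is to establish the wedge property locally near each point of $\partial \Sigma$ via a barrier/calibration argument, and then glue these local wedges into a global $A$. The transverse intersection of $\Sigma = \partial \Omega_0 \cap \partial U$ supplies the initial wedge direction at every boundary point, and the assumption $\rho < r_0$ ensures that $\partial U$, together with nearby perturbations, provides foliations with mean curvature strictly exceeding $c = \sup_M|h|$, which is what allows us to trade boundary volume terms for area using the $\cA^h$ calibration computation from \S\ref{sec:examples}.

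First, fix $p \in \partial \Sigma$. The transverse intersection of $\Sigma$ and $\partial U$ at $p$ gives a definite angle $\theta(p) > 0$ between $T_p\Sigma$ and $T_p \partial U$, and compactness of $\partial \Sigma$ yields a uniform lower bound $\theta_0 > 0$. Choose normal coordinates $(x_1, x_2, x_3)$ at $p$ as in Definition \ref{def:meets_transversally}, so that $\Sigma$ is locally a $C^1$ graph $x_3 = f(x_1, x_2)$ with $f(x_1, 0) = 0$ and $\partial_{x_2} f(x_1, 0)$ bounded away from $0$. Define a local wedge $A_p := \{q \in B_{r_p}(p) \cap U : q_3 \geq \tan(\pi/2 - \theta_0) \cdot q_2\}$, chosen so that $\Sigma$ is tangentially inside $\overline{A}_p$ near $p$ and $\partial A_p$ meets $\partial U$ in $\partial \Sigma \cap B_{r_p}(p)$ transversally in the sense of Definition \ref{def:meets_transversally}.

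Second, in the complementary region $R_p := (B_{r_p}(p) \cap U) \setminus \overline{A}_p$, I construct a smooth foliation $\{\Gamma_s\}_{s \in (0, s_0)}$ whose leaves are transverse to $\partial A_p$, have boundary in $\partial U$, and carry a mean curvature vector of magnitude strictly greater than $c$ pointing into $\overline{A}_p$. One obtains such a foliation by flowing $\partial A_p$ by a small multiple of its unit normal inside $U$, using the same monotonicity inequality $\Delta d_U \geq c$ (see (\ref{eqn:prelim_mean_convex})) that underlies Lemma \ref{lem:area_comparison}; reducing $r_0$ further guarantees the strict mean convexity we need. The associated calibration $2$-form $\omega$ satisfies $|\omega|\leq 1$ with equality along leaves and $d\omega = H \, d\mathrm{vol}_M$ where $H > c$ pointwise, exactly as in \S\ref{sec:example_main}. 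Suppose for contradiction that $\mathrm{spt}\|V\| \cap R_p \neq \varnothing$. Then by the monotonicity formula (\ref{eqn:monotonicity}) and a positive lower density bound (obtained via the argument of Lemma \ref{lem:rectifiability} applied after the $\gamma$-reduction of \S\ref{subsec:gamma_min}), the surfaces $\Sigma_k$ accumulate a definite amount of area inside a ball $B \subset \subset R_p$ for all sufficiently large $k$. Flowing $\Sigma_k \cap B$ along the foliation $\{\Gamma_s\}$ with an appropriate cutoff produces an isotopy $\phi \in \mathcal{I}(U)$ supported in $R_p$; applying Stokes' theorem to $\omega$ on the swept-out region (as in the calibration inequality in \S\ref{sec:example_main}) shows
\[ \cA^h(\phi(1,\Omega_k)) \leq \cA^h(\Omega_k) - \kappa \]
for a constant $\kappa > 0$ independent of $k$, which contradicts (\ref{eqn:main_isotopy_problem}).

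Finally, cover $\partial\Sigma$ by finitely many balls $B_{r_{p_i}/2}(p_i)$ via compactness, take the corresponding local wedges $A_{p_i}$, and define $A$ as their union together with an interior enlargement away from $\partial\Sigma$ (since the wedge property only constrains $A$ near $\partial A \cap \partial U$, there is no loss in letting $A$ contain all of $U$ outside a neighborhood of $\partial\Sigma$). The resulting $A$ meets $\partial U$ in $\partial\Sigma$ transversally. The main obstacle I anticipate is the quantitative construction of the strictly mean-convex foliation $\{\Gamma_s\}$ in a neighborhood of each $p$ uniformly in $p \in \partial\Sigma$: one needs $r_0$ small enough that the leaves have mean curvature bounded below by $c + \eta$ for some fixed $\eta > 0$ throughout $R_p$, and one must extend the flow to a smooth compactly-supported isotopy in $\mathcal{I}(U)$ while keeping the cut-off error strictly smaller than the calibration gain $\kappa$. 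This is the same mechanism behind Lemma \ref{lem:area_comparison}, but implemented here at points on $\partial U$ rather than on interior level sets of $d_U$.
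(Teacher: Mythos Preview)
Your approach is quite different from the paper's and contains a real gap in exactly the place you anticipate: the foliation $\{\Gamma_s\}$ with mean curvature strictly greater than $c$ cannot be built as you describe.

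The inequality $\Delta d_U \geq c$ from (\ref{eqn:prelim_mean_convex}) concerns the distance function \emph{from} $U$; its level sets lie in $U(\theta) \setminus \overline{U}$, not inside $U$, so it does not furnish any foliation of $R_p \subset U$. More fundamentally, the leaves you propose---parallel translates of the wedge face $\partial A_p$---are nearly flat half-planes in normal coordinates, with mean curvature $O(\rho)$ coming only from the metric perturbation, not bounded below by a fixed $c > 0$. Shrinking $r_0$ makes the metric closer to Euclidean and drives this mean curvature toward $0$, the wrong direction. So the calibration step cannot produce a uniform gain $\kappa > 0$, and the contradiction with (\ref{eqn:main_isotopy_problem}) fails.

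The paper's proof avoids this by working directly with the limit varifold $V$, which already has $c$-bounded first variation by Theorem~\ref{thm:compactness}. After rescaling to the unit ball via $f_{x,\rho}$, the first variation bound becomes $c\rho \leq 1$. The wedge $A$ is taken globally as the convex hull of $\beta = f_{x,\rho}(\partial\Sigma)$, which by Proposition~\ref{prop:convex_hull} meets $\partial B_1(0)$ in $\beta$ transversally. For each supporting half-space of this convex hull, one slides a family of radius-$2$ spheres $\partial B_2(C_t)$ (mean curvature $1$) in from outside and applies White's strong maximum principle \cite[Theorem~7]{WhiteMax} to conclude that $\mathrm{spt}\|(f_{x,\rho})_\# V\|$ lies on the correct side. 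No isotopy of $\Sigma_k$ and no foliation of $R_p$ is needed; the curved sphere barriers do all the work, and they are available precisely because the rescaling trades the fixed bound $c$ for the small bound $c\rho$.
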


We use the following result, see \cite[Lemma 5.3]{DeLellis_Pell}.
\begin{proposition}\label{prop:convex_hull}
If $\beta \subset \partial B_1(0) \subset \R^3$ is a disjoint union of finitely many $C^2$ Jordan curves, then the convex hull of $\beta$ meets $\partial B_1(0)$ in $\beta$ transversally.
\end{proposition}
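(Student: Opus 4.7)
The plan is to split the argument into the set identification $A\cap\partial B_1(0)=\beta$ and the uniform wedge estimate, treating the two as essentially independent.

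For the identification, I would invoke Carath\'eodory's theorem to write any $x\in A$ as a convex combination $x=\sum_{i=1}^{4}\lambda_i p_i$ with $p_i\in\beta$, and apply the elementary identity $|x|^2=1-\tfrac12\sum_{i,j}\lambda_i\lambda_j|p_i-p_j|^2$, which is immediate from $|p_i|=1$. Strict convexity of $\partial B_1(0)$ then forces $|x|=1$ to occur only when all $p_i$ with $\lambda_i>0$ coincide, so $A\cap\partial B_1(0)\subset\beta$; the reverse inclusion is trivial.

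For the transversality, I plan the following pointwise construction at each $p\in\beta$. Parametrize the component of $\beta$ through $p$ by arclength $\gamma$ with $\gamma(0)=p$; the identity $|\gamma|^2\equiv 1$ gives $\gamma''(0)=-p+\kappa_g(p)\,n_g(p)$ for $n_g(p)\in T_p\partial B_1(0)\cap T_p\beta^\perp$ and signed geodesic curvature $\kappa_g(p)$, and I orient $n_g(p)$ so that $\kappa_g(p)\geq 0$. For $\alpha\in(0,\pi/2)$ let $\nu_\alpha=\cos\alpha\cdot p+\sin\alpha\cdot n_g(p)$ and set $H_{p,\alpha}=\{x:\langle x,\nu_\alpha\rangle\leq\cos\alpha\}$; the bounding hyperplane passes through $p$, contains $T_p\beta$, and makes angle $\alpha$ with the sphere's tangent plane $T_p\partial B_1(0)$. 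Using $\langle x,p\rangle=1-|x-p|^2/2$ on the unit sphere, $\beta\subset H_{p,\alpha}$ reduces (in the only nontrivial range $\langle x-p,n_g(p)\rangle>0$) to $\tan\alpha\leq|x-p|^2/\bigl(2\langle x-p,n_g(p)\rangle\bigr)$. A second-order Taylor expansion of $\gamma$ shows the right-hand side tends to $1/\kappa_g(p)$ as $x\to p$ along $\beta$ (and to $+\infty$ if $\kappa_g(p)=0$), while Cauchy--Schwarz gives the bound $|x-p|/2$ for $x$ at macroscopic distance from $p$. This provides a supporting halfspace $H_{p,\alpha(p)}$ with $\alpha(p)>0$ distinct from the tangent plane to the sphere, and the wedge at $p$ required by Definition~\ref{def:meets_transversally} is cut out by $T_p\partial B_1(0)$ and $\partial H_{p,\alpha(p)}$ (together with its reflection through $T_p\beta$).

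The main obstacle will be promoting this to a \emph{uniform} wedge angle $\theta_0=\inf_{p\in\beta}\alpha(p)>0$. Compactness of $\beta$ and the $C^2$ hypothesis give uniform upper bounds for $|\kappa_g|$ and a uniform positive lower bound on the pairwise distance between the components of $\beta$, so the estimate is clear on the closed set $\{\kappa_g(p)\geq c_0>0\}$ and for points $x$ at distance $\geq r_0$ from $p$. The delicate case is when $\kappa_g(p)$ is allowed to vanish. I would handle it by a compactness--contradiction argument: a sequence $p_n\in\beta$ along which $\alpha(p_n)\to 0$ would, after extracting, produce a limiting direction $v\in TC_{p_0}A$ tangent to $\partial B_1(0)$ yet not belonging to $T_{p_0}\beta$. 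Applying the same second-order expansion to an arbitrary convex combination $\sum_i\lambda_i\gamma(s_i)$ approximating such a $v$, and using the concentration-of-mass computation from the Identification step to rule out non-vanishing contributions from points of $\beta$ far from $p_0$, one sees that every sphere-tangent direction in $TC_{p_0}A$ must in fact lie in $T_{p_0}\beta$, contradicting the extraction. This dichotomy will yield the required uniform $\theta_0$.
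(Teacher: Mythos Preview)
The paper does not prove this proposition; it simply cites \cite[Lemma 5.3]{DeLellis_Pell}. So there is no ``paper's own proof'' to compare against, and your proposal must be judged on its own.

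Your identification step is clean and correct. The pointwise construction of a supporting halfspace $H_{p,\alpha}$ through $p$ containing $T_p\beta$ and making angle $\alpha>0$ with $T_p\partial B_1(0)$ is also right, and the reflected halfspace is handled by the same inequality with $n_g$ replaced by $-n_g$ (the constraint on that side is in fact \emph{easier} near $p$, since $\langle\gamma(s)-p,-n_g\rangle\leq 0$ to second order).

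Where the plan goes slightly astray is in the uniformity discussion. You flag ``$\kappa_g(p)$ allowed to vanish'' as the delicate case, but it is the opposite: when $\kappa_g(p)$ is small the ratio $|x-p|^2/\bigl(2|\langle x-p,n_g(p)\rangle|\bigr)$ is \emph{large} near $p$, not small. The uniform $C^2$ bound gives, for $x=\gamma(s)$ on the component through $p$,
\[
|\gamma(s)-p|^2\geq \tfrac12 s^2,\qquad |\langle\gamma(s)-p,n_g(p)\rangle|\leq \tfrac12\|\kappa_g\|_\infty s^2+Cs^3,
\]
with $C$ depending only on the $C^2$ norm of $\beta$. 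Hence for $|s|\leq r_0$ (with $r_0$ chosen from the $C^2$ data and the minimum distance between components) the ratio is at least $\bigl(\|\kappa_g\|_\infty+2Cr_0\bigr)^{-1}$, uniformly in $p$; for $|x-p|\geq r_0$ your Cauchy--Schwarz observation gives the lower bound $r_0/2$. This already yields a uniform $\theta_0>0$, and no case split on the size of $\kappa_g(p)$ is needed.

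Consequently the tangent-cone compactness--contradiction you sketch at the end is unnecessary, and as written it is also under-specified: from $y_n\to p_0$ alone one does not get that the Carath\'eodory points $q_i^n$ concentrate at $p_0$, so the ``concentration-of-mass'' step would need more work. I would drop that paragraph and replace it by the direct uniform estimate above.
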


\begin{proof}[Proof of Lemma \ref{lem:wedge}]
Consider the rescaled exponential coordinates induced by the chart $f_{x,\rho} : \overline{B}_{\rho}(x) \to \overline{B}_1(0)$ given by $f_{x,\rho}(z) = (\exp_x^{-1}(z))/\rho$. We apply Proposition \ref{prop:convex_hull} and let $B$ be the convex hull of $\beta = f(\partial \Sigma)$.

Fix $\xi \in \beta$ and let $\pi_1$ and $\pi_2$ be the halfplanes delimiting the wedge of $B$. We assume without loss of generality that $\pi_1 = \{x_3 \leq a\}$ for some $a \in (0, 1)$.

Let $C_t = (0, 0, -t)$, and let $S_t = B_2(C_t)$. Assuming $\rho$ is sufficiently small, (\ref{eqn:bounded_first_variation}) implies $\|\delta (f_{\#}V)\| \leq \|f_{\#}V\|$. By \cite[Theorem 7]{WhiteMax}, $\text{spt}\|V\|$ is contained is $\overline{S}_{t_0}$, where $t_0$ is such that $\partial S_{t_0} \cap \partial B_1(0) \subset \pi_1$. The same argument applies to $\pi_2$ and $\xi$ is arbitrary, so the conclusion follows.
\end{proof}

\subsection{Tangent cones at the boundary}
Let $x \in \partial U$. As in the previous section, consider the chart $f_{x,\rho} : B_{\rho}(x) \to B_1(0)$ given by $f_{x,\rho}(z) = (\exp_x^{-1}(z))/\rho$. Fix coordinates in $\R^3$ so that $f_{x,\rho}(U \cap B_{\rho}(x))$ converges to the half ball $B_1^+(0) = B_1(0) \cap \{x_1 > 0\}$ as $\rho \to 0$.

We first follow the ideas of \cite[Lemma 6.4]{DeLellis_Ramic}.

\begin{lemma}\label{lem:boundary_monotonicity_prelim}
$\|V\|(\partial \Sigma) = 0$.
\end{lemma}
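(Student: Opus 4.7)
The plan is to establish a uniform upper bound on the $2$-density of $\|V\|$ at points of $\partial \Sigma$, and then conclude via a covering argument based on $\cH^1(\partial \Sigma) < +\infty$.

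First I would establish a uniform bound $\Theta^{*2}(\|V\|,x) \leq C$ for all $x \in \partial \Sigma$, with $C$ depending only on $(M,g)$, $c$, $\cH^1(\partial \Sigma)$, and $\|V\|(U)$. The key point is that every isotopy in $\mathcal{I}(U)$ fixes $\partial U$, so each $\Sigma_k$ has the same boundary $\partial \Sigma = \partial \Omega_0 \cap \partial U$, which is smooth and meets $\partial U$ transversally. Combined with $c$-bounded first variation in $U$ from Theorem \ref{thm:compactness}, this yields the global first-variation estimate
\[
|\delta \mathbf{v}(\Sigma_k)(X)| \leq c\, \cH^2(\Sigma_k \cap \mathrm{spt}\,X) + \|X\|_{\infty}\, \cH^1(\partial \Sigma \cap \mathrm{spt}\,X)
\]
for every compactly supported $C^1$ vector field $X$ in a neighborhood of $\overline{U}$. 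Passing to the limit $k \to \infty$ gives the analogous inequality for $V$. Since $\partial \Sigma$ is a smooth curve, $\cH^1(\partial \Sigma \cap B_r(x)) \leq C_0 r$ for $x \in \partial \Sigma$ and $r$ small. The standard Allard-type boundary monotonicity formula (adapted to bounded, rather than zero, first variation and to the Riemannian setting by the usual embedding of $(M,g)$ into some $\R^N$) then yields
\[
\frac{d}{dr}\!\left( e^{Cr} \frac{\|V\|(B_r(x))}{r^2} + C_0 e^{Cr}\, r^{-1} \right) \geq 0
\]
for $x \in \partial \Sigma$ and $r \in (0, r_0)$, which integrates to the uniform density bound $\Theta^{*2}(\|V\|, x) \leq C$.

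Second, given $\varepsilon > 0$, I would cover $\partial \Sigma$ by finitely many balls $B_{r_i}(x_i)$ with $x_i \in \partial \Sigma$ and $r_i < \varepsilon$. Since $\partial \Sigma$ is a smooth compact $1$-submanifold with $\cH^1(\partial \Sigma) < \infty$, a Vitali-type covering argument produces such a cover with $\sum_i r_i \leq C_1 \cH^1(\partial \Sigma)$. Using the density bound from step one,
\[
\|V\|(\partial \Sigma) \leq \sum_i \|V\|(B_{r_i}(x_i)) \leq C \sum_i r_i^2 \leq C \varepsilon \sum_i r_i \leq C' \varepsilon.
\]
Taking $\varepsilon \to 0$ concludes.

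The main obstacle is the boundary monotonicity formula: the interior monotonicity in Theorem \ref{thm:compactness} requires $B_r(x) \subset U$, and so does not directly apply at boundary points $x \in \partial \Sigma \subset \partial U$. The extension to a boundary version requires carefully incorporating the contribution of the fixed boundary curve $\partial \Sigma$, and this is where the smoothness of $\partial \Sigma$ and the transversality assumption enter in an essential way to control the lower-order term $\cH^1(\partial \Sigma \cap B_r(x)) = O(r)$.
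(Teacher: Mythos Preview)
There is a genuine gap in your first step. You claim the first-variation bound
\[
|\delta \mathbf{v}(\Sigma_k)(X)| \leq c\, \cH^2(\Sigma_k \cap \mathrm{spt}\,X) + \|X\|_{\infty}\, \cH^1(\partial \Sigma \cap \mathrm{spt}\,X),
\]
but this is false for the individual $\Sigma_k$: each $\Sigma_k$ is an arbitrary smooth surface with boundary $\partial\Sigma$, and its mean curvature is not bounded by $c$. The $c$-bounded first variation from Theorem~\ref{thm:compactness} is established only for the \emph{limit} $V$, and only for vector fields compactly supported in $U$; it does not transfer back to the approximating surfaces.

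Nor can the analogous bound be established directly for $V$. Using the Wedge Property and Theorem~\ref{thm:compactness}, one does get $c$-bounded first variation for $V$ in $\overline{U}\setminus\partial\Sigma$. But if $\|V\|(\partial\Sigma)>0$, then the piece $V^s := V\llcorner G_2(\partial\Sigma)$ contributes $\delta V^s(X) = \int \mathrm{div}_\pi X\, dV^s$ to the first variation, and this is controlled only by $\|\nabla X\|_\infty\,\|V\|(\partial\Sigma)$, not by $\|X\|_\infty\,\cH^1(\partial\Sigma)$. So you cannot write $\|\delta V\| \leq c\|V\| + C\,\cH^1\llcorner\partial\Sigma$, and the boundary monotonicity computation does not close. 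Allard's boundary monotonicity formula itself requires $\|V\|(\partial\Sigma)=0$ as a hypothesis, which is exactly the claim to be proved; invoking it here is circular.

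The paper's proof sidesteps this obstruction entirely. Rather than seeking a first-variation bound near $\partial\Sigma$, it first shows via a cutoff argument that $\delta V^s(X)=0$ for every $X$ \emph{vanishing on} $\partial\Sigma$ (the bad cutoff gradient is absorbed because $|X|=O(\delta)$ near $\partial\Sigma$). It then tests against a single carefully chosen such vector field: letting $X(p)$ be the initial velocity of the geodesic from $p$ to its nearest point on $\partial\Sigma$, one has $X=0$ on $\partial\Sigma$ and $\mathrm{div}_\pi X \leq -1$ along $\partial\Sigma$ for \emph{every} $2$-plane $\pi$, yielding $0 = \delta V^s(X) \leq -\|V\|(\partial\Sigma)$. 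No monotonicity at boundary points is needed.
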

\begin{proof}
Let $V^r = V \llcorner G_2(\overline{U} \setminus \partial \Sigma)$ and $V^s = V - V^r$.

Let $X$ be a compactly supported vector field on $\overline{U}$ that vanishes on $\partial \Sigma$. Let $\phi_{\delta} \in C^{\infty}_c(\overline{U}\setminus \partial \Sigma)$ satisfy
\begin{itemize}
    \item $\phi_{\delta} = 1$ outside $B_{2\delta}(\partial \Sigma)$,
    \item $|\nabla \phi_{\delta}| \leq C \delta^{-1}$.
\end{itemize}
Since $X$ vanishes on $\partial \Sigma$, we have $|X| \leq C\delta$ in $B_{2\delta}(\partial \Sigma)$. Hence, $|\nabla (\phi_{\delta}X)| \leq C$. Then by dominated convergence and the fact that $V$ has bounded first variation in $\overline{U} \setminus \partial \Sigma$, we have
\begin{align*}
    \delta V^r(X)
    & = \int_{\overline{U}} \mathrm{div}_\pi X(p)\ dV^r(p, \pi)\\
    & = \lim_{\delta \to 0} \int_{\overline{U}} \mathrm{div}_\pi (\phi_\delta X)(p)\ dV^r(p, \pi)\\
    & = \lim_{\delta \to 0} \int_{\overline{U}} \mathrm{div}_\pi (\phi_\delta X)(p)\ dV(p, \pi)\\
    & = \lim_{\delta \to 0} \int_{\overline{U}} H_V \cdot (\phi_\delta X)(p)\ dV(p, \pi)\\
    & = \lim_{\delta \to 0} \int_{\overline{U}} H_V \cdot X(p)\ dV(p, \pi) = \delta V(X).
\end{align*}
Hence, $\delta V^s = 0$.

Let $W$ be a small tubular neighborhood of $\partial \Sigma$. For each $p \in W \setminus \partial \Sigma$, let $q$ be the unique nearest point in $\partial \Sigma$. Let $X(p)$ be the tangent vector at $p$ to the minimizing geodesic joining $p$ to $q$ parametrized on $[0,1]$. Set $X(p) = 0$ for $p \in \partial \Sigma$. If $e_1, e_2, e_3$ is a smooth orthonormal frame over $\partial \Sigma$ with $e_1$ tangent to $\partial \Sigma$, we have $\nabla_{e_1}X = 0$, $\nabla_{e_2}X = -e_2$, and $\nabla_{e_3}X = -e_3$ along $\partial \Sigma$. For any 2-plane $\pi$, we have
\[ \mathrm{div}_\pi X(p) = \sum_{i=1}^2 g(\nabla_{f_i}X, f_i) \]
for $\{f_i\}$ an orthonormal basis for $\pi$. Hence, $\mathrm{div}_\pi X(p) \leq -1$. Then
\begin{align*}
    0 = \delta V^s(X)
    & = \int_{\overline{U}} \mathrm{div}_\pi X(p)\ dV^s(p, \pi) \leq -\|V^s\|(\overline{U}) = - \|V\|(\partial \Sigma),
\end{align*}
so $\|V\|(\partial \Sigma) = 0$.
\end{proof}

By Lemma \ref{lem:boundary_monotonicity_prelim}, we can apply Allard's boundary monotonicity formula \cite[3.4(2)]{Allard_boundary}. Then the varifolds $V_{x,\rho} \coloneqq (f_{x,\rho})_{\#}V$ have uniformly bounded mass with respect to $\rho$. Hence, if we take a sequence $\rho_j \to \infty$, there is a subsequence (not relabeled) so that $V_{x,\rho_j} \to W$. We note by \cite[Theorem 42.7]{simon_old} that $W$ is stationary in $B_1^+(0)$.

By Lemma \ref{lem:wedge}, there is a positive angle $\theta_0$ so that (after changing coordinates) $\|W\|$ is supported in $\{|x_2| \leq x_1\tan(\theta_0)\}$. Then $\text{spt}\|W\| \cap \{x_1 = 0\} = \{x_1 = x_2 = 0\} =: l$.

By the same argument as Lemma \ref{lem:boundary_monotonicity_prelim}, we have $\|W\|(l) = 0$. Hence, by Allard's boundary monotonicity formula \cite[3.4(2)]{Allard_boundary}, we have
\begin{equation}\label{eqn:boundary_monotonicity}
    \|W\|(B_\rho(0)) = \pi\theta(\|V\|, x)\rho^2
\end{equation}

By Allard's reflection principle \cite[3.2]{Allard_boundary}, we have $W' \coloneqq W + r_{\#}W$ is stationary. By (\ref{eqn:boundary_monotonicity}) and \cite[\S5.1, Corollary 2]{allard}, $W'$ is a cone, which implies $W$ is a cone.

By Allard's boundary regularity theorem (see \cite[\S4]{Allard_boundary}), it suffices to show that any such $W$ is a half-disk of the form
\[ P_\theta \coloneqq \{y_2 = y_1\tan(\theta)\} \cap B_1^+(0) \]
for some $\theta \in (-\pi/2, \pi/2)$.

By the classification of one dimensional stationary integral varifolds from \cite{Allard_Almgren}, we have
\[ W = \sum_{i=1}^N k_iP_{\theta_i} \]
for $\theta_i \in [-\theta_0, \theta_0]$ and $k_i \in \N$. It therefore remains to show $N = 1$ and $k_1 = 1$.

\subsection{Reduction to the area minimization case} Consider a subsequence so that
\[ S_n \coloneqq f_{x_0, \rho_n}(\Sigma_{k_n} \cap B_{\rho_n}(x_0)) \]
converges to $W$. We further suppose that $\rho_n^{-2}\eps_{k_n} \to 0$. For any isotopy $\phi$ supported in $B_1^+(0)$, let $\tilde{\phi}$ be the isotopy of $B_{\rho_n}(x_0) \cap U$ given by conjugating with the chart $f_{x_0,\rho_n}$. We have
\begin{align*}
    & \cH^2(\Sigma_{k_n} \cap B_{\rho_n}(x_0))
    - \int_{\Omega_{k_n} \cap B_{\rho_n}(x_0) \cap U} h\ d\cH^3\\
    & \hspace{2cm} \leq \cH^2(\tilde{\phi}(1, \Sigma_{k_n} \cap B_{\rho_n}(x_0))) - \int_{\tilde{\phi}(1, \Omega_{k_n} \cap B_{\rho_n}(x_0) \cap U)} h\ d\cH^3 + \eps_{k_n}.
\end{align*}
Since
\begin{align*}
    & \cH^2(\Sigma_{k_n} \cap B_{\rho_n}(x_0))
    = \rho_n^2 \cH^2(S_n) + o(\rho_n^2),\\
    & \cH^2(\tilde{\phi}(1, \Sigma_{k_n} \cap B_{\rho_n}(x_0)))
    = \rho_n^2\cH^2(\phi(1, S_n)) + o(\rho_n^2),\\
    & \cH^3(B_{\rho_n}(x_0) \cap U)
    = \rho_n^3\cH^3(B_1^+(0)) + o(\rho_n^3) = o(\rho_n^2),
\end{align*}
we have
\[ \cH^2(S_n) \leq \cH^2(\phi(1, S_n)) + \eps_n', \]
where $\eps_n' \to 0$. Hence, $S_n$ is $\eps_n'$-area minimizing in its isotopy class, so the rest of the proof follows exactly as in \cite[\S8]{DeLellis_Pell}. Therefore, the boundary regularity of Theorem \ref{thm:main_isotopy_regularity} follows.

\part{Estimates and compactness for minimizers}\label{part:compactness}
We develop a compactness theory to prove an analogue of the curvature estimates for stable surfaces that works in our lower regularity setting.

\section{$C^{1,1}$ Compactness}

Let $U \subset M$ be a precompact open set with $C^1$ boundary. Recall from \S\ref{sec:local_control} that the map $\phi_x : \overline{B}_{\rho_0}^{T_xM}(0) \to \overline{B}_{\rho_0}^M(x)$ is the diffeomorphism defined by the exponential map for $x \in M$.

Let $\Sigma^2$ be a compact orientable surface (possibly with boundary) and let $f : \Sigma \to M^3$ be 2-sided $C^{1,1}$ immersion such that $f(\Sigma)\subset U$ and $f(\partial\Sigma)\subset\partial U$. Let $\nu:\Sigma\to TM$ denote a (Lipschitz) choice of unit normal vector field for the immersion. We endow $\Sigma$ with the pullback metric $f^*g$ and for $p\in\Sigma$, we denote by $\cB^\Sigma_r(p)$ the image $f(B^\Sigma_r(p))\subset M$ of the ball of radius $r$ centered at $p$ in $\Sigma$.

Note that, since $f$ is a $C^{1,1}$ immersion, for any $p\in\Sigma$ there is $0<\rho(p)\leq\rho_0$ such that if $\Sigma_p$ is the component of $f^{-1}(B^M_{\rho(p)}(f(p)))$ which contains $p$, then 
\begin{itemize}
    \item $\Sigma_p\cap\partial\Sigma=\varnothing$,
    \item $f$ is injective on $\Sigma_p$,
    \item and $\phi^{-1}_{f(p)}(f(\Sigma_p))$ can be represented as the graph of a $C^{1,1}$ function $u_p$ over an open set $B_p\subset df_p(T_{f(p)}\Sigma)\subset T_{f(p)}M$ containing 0.
\end{itemize}

Then, for any $p \in \Sigma$ we can define the \emph{pointwise Lipschitz constant} $\Lip_p\nu$ of $\nu$ to be the  following quantity
\begin{equation}
     \Lip_p\nu\coloneqq\limsup_{B_p\ni \xi\to 0}\frac{|\nu_p(0)-\nu_p(\xi)|}{|\xi|}
\end{equation}
where 
\begin{itemize}
    \item $\nu_p:B_p\to T_{f(p)}M$ is the unit normal to the graph of $u_p$ in $(T_{f(p)}M, \phi_{f(p)}^*g)$,
    \item gradients and distances are taken with respect to the pullback metric on $\R^3$.
\end{itemize}

\begin{remark}\label{rmk:extend_Lip}
It is easy to see that for every $p\in\Sigma$ there is $0<r(p)\leq\rho(p)$ such that for any $\xi,\eta\in \phi_x^{-1}(\cB^\Sigma_{r(p)}(p))$ we have $$|\nu_p(\xi)-\nu_p(\eta)|\leq 2\Lip_p\nu\,|\xi-\eta|.$$ To simplify the notation, we redefine $\rho(p)$ to be the same as $r(p)$.
\end{remark}

We start by proving a simple technical fact, which plays the same role in the proof of compactness as \cite[Lemma 2.4]{CM}.

\begin{lemma}\label{lem:small_curv_graph}
Let $\Sigma^2$ be a compact orientable surface (possibly with boundary) and let $f : \Sigma \to M^3$ be 2-sided $C^{1,1}$ immersion such that $f(\Sigma)\subset U$ and $f(\partial\Sigma)\subset\partial U$. Let $\nu:\Sigma\to TM$ be a (Lipschitz) choice of unit normal vector field for the immersion such that $\sup_{p\in\Sigma}\Lip_p\nu\leq C_0$. Then, for any $K\subset\subset U$ there is $r_K=r_K(\rho_0, \dist_M(K,\partial U), C_0)>0$ such that if $x=f(p)\in f(\Sigma)\cap K$ and $r \leq r_K$, then
\begin{enumerate}
    \item $\cB^\Sigma_{2r}(p)$ can be written as the graph of a $C^{1,1}$ function $u$ over an open neighborhood of $0$ in $df_p(T_p\Sigma) \subset T_xM$, such that $|\nabla u|\leq 1$ and $\Lip\nabla u\leq 2\sqrt{2} C_0$,
    \item the connected component of $f^{-1}(B^M_{r}(x))$ containing $p$ is contained in $B^\Sigma_{2r}(p)$.
\end{enumerate}
\end{lemma}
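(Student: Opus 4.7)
The plan is to work in exponential normal coordinates $\phi_x \colon \overline{B}_{\rho_0}^{T_xM}(0) \to \overline{B}_{\rho_0}^M(x)$ at $x = f(p)$, choosing an orthonormal frame on $T_xM$ so as to identify $T_xM \cong \R^3$ with $df_p(T_p\Sigma) = \R^2 \times \{0\}$. In this chart the pullback metric $\phi_x^* g$ coincides with the Euclidean metric at the origin and, by \eqref{eqn:prelim_dg_small}, satisfies $|(\phi_x^* g)_{ij}(\xi) - \delta_{ij}| \leq C\mu |\xi|^2$ on $\overline{B}_{\rho_0}^{T_xM}(0)$, so for $|\xi| \leq r \leq \rho_0$ every length, gradient, and angle measured with $\phi_x^* g$ agrees with its Euclidean counterpart up to a factor $1 + O(\mu r^2)$. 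I would set
\[
r_K := \min\left\{\tfrac{\rho_0}{8},\ \tfrac{\dist_M(K,\partial U)}{4},\ \tfrac{c_0}{C_0}\right\}
\]
for a small absolute constant $c_0$ (e.g.\ $c_0 = 10^{-2}$), to be fixed below, and verify (1) and (2) separately.

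For (1), I would run a continuation argument along unit-speed intrinsic geodesics $\gamma \colon [0, s_0] \to \Sigma$ with $\gamma(0) = p$, $s_0 \leq 2r$. By Remark \ref{rmk:extend_Lip} the immersion is locally $C^{1,1}$-graphical over $df_{\gamma(s)}(T_{\gamma(s)}\Sigma)$ near each $\gamma(s)$ with Lipschitz-gradient bound $2C_0$, while the uniform hypothesis $\sup_q \Lip_q \nu \leq C_0$ together with the metric distortion gives $|\nu(\gamma(s)) - \nu(\gamma(0))| \leq (1 + O(\mu r^2)) C_0 s$ when $\nu$ is transported into the chart. The choice of $c_0$ then forces the angle between $df_{p'}(T_{p'}\Sigma)$ and $\R^2\times\{0\}$ to stay strictly below $\pi/4$ for all $p' \in B^\Sigma_{2r}(p)$; orientability of $\Sigma$ fixes the sign of the vertical component $\nu^3$, and in particular $\nu^3 \geq 1/\sqrt{2}$. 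A monodromy argument along intrinsic geodesics from $p$, combining this tangent-plane control with the local graphicality, shows the vertical projection is injective on $B^\Sigma_{2r}(p)$ and yields a single $C^{1,1}$ function $u$ on an open $\Omega \subset \R^2$ with $\cB^\Sigma_{2r}(p) = \{(\xi, u(\xi)) : \xi \in \Omega\}$. The bound $|\nabla u| \leq 1$ is equivalent to the tangent-plane angle bound; for the Lipschitz estimate, the identity $\nabla u = -(\nu^1, \nu^2)/\nu^3$ and $\nu^3 \geq 1/\sqrt{2}$ give $|\nabla u(\xi_1) - \nabla u(\xi_2)| \leq 2|\nu(\xi_1) - \nu(\xi_2)|$ in the Euclidean sense, which combined with the $1 + O(\mu r^2)$ distortion yields $\Lip \nabla u \leq 2\sqrt{2}\, C_0$ after shrinking $r_K$ if necessary.

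For (2), I would argue by contradiction. Suppose there is $q$ in the component of $f^{-1}(B_r^M(x))$ containing $p$ with $q \notin B^\Sigma_{2r}(p)$. Choose a continuous path $\alpha \colon [0,1] \to \Sigma$ from $p$ to $q$ with $f(\alpha) \subset B_r^M(x)$ and let $t^* := \inf\{t : d_\Sigma(p,\alpha(t)) \geq 2r\}$; then $d_\Sigma(p, \alpha(t^*)) = 2r$ and $f(\alpha(t^*)) \in \overline{B_r^M(x)}$. Applying (1) at scale slightly larger than $r$ (still inside $r_K$), parametrise the intrinsic geodesic from $p$ to $\alpha(t^*)$ by $\sigma(s) = (\xi(s), u(\xi(s)))$ on $[0, 2r]$. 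From
\[
1 = |\sigma'(s)|^2_{\phi_x^* g} \leq (1 + O(\mu r^2))(1 + |\nabla u|^2)|\xi'(s)|^2 \leq 2(1 + O(\mu r^2))|\xi'(s)|^2
\]
I obtain $|\xi'(s)| \geq (1 - O(\mu r^2))/\sqrt{2}$, hence $|\xi(2r)| \geq r\sqrt{2}(1 - O(\mu r^2)) > r$ for $r \leq r_K$ sufficiently small. This forces $d_M(x, f(\alpha(t^*))) \geq (1 - O(\mu r^2))|\xi(2r)| > r$, contradicting $f(\alpha(t^*)) \in \overline{B_r^M(x)}$.

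The technical heart is the global graph representation in step (1): local $C^{1,1}$ graphicality near each point of the immersion is automatic, but one must leverage the uniform Lipschitz control on $\nu$ to rule out that the tangent plane tilts past $\pi/4$ from $\R^2\times\{0\}$ anywhere in $\cB^\Sigma_{2r}(p)$ and that the vertical projection becomes non-injective. The monodromy/continuation step must carefully track three comparable but distinct metrics—Euclidean on $T_xM$, the pullback $\phi_x^* g$, and the intrinsic metric on $\Sigma$—each contributing $1 + O(\mu r^2)$ corrections that have to be absorbed by shrinking $r_K$ to obtain the sharp constant $2\sqrt{2}$ in the final Lipschitz estimate.
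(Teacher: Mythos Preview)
Your proposal is correct and follows essentially the same approach as the paper: both arguments work in exponential coordinates at $x=f(p)$, propagate the pointwise Lipschitz bound on $\nu$ along intrinsic geodesics in $\Sigma$ to keep the tangent plane within $\pi/4$ of $df_p(T_p\Sigma)$, deduce global graphicality with $|\nabla u|\le 1$, and then extract the $2\sqrt{2}\,C_0$ Lipschitz bound on $\nabla u$ from the normal control; part (2) is handled in both by comparing intrinsic and extrinsic distance via the gradient bound. The only cosmetic differences are that the paper makes the propagation step explicit via a finite chain of local charts (picking $0=t_0<\dots<t_{N+1}=1$ along the geodesic and telescoping, incurring a harmless factor $8$), whereas you phrase it as a continuation/monodromy argument, and the paper defers the details of (2) to \cite[Lemma~2.4]{CM} while you spell out the contradiction.
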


\begin{proof}
Since $K\subset\subset U$, then $\delta_K\coloneqq\mathrm{dist}_M(K,\partial U)>0$. Hence $\dist_{f(\Sigma)}(f(\Sigma)\cap K, f(\partial\Sigma))\geq \delta_K>0$. Let $r\leq\min\{\rho_0/2, \delta_K/2\}>0$ and let $p\in\Sigma$ be such that $x=f(p)\in f(\Sigma)\cap K$. Then $\cB^\Sigma_r(p)\subset B^M_r(x)\subset\subset U$ stays a fixed positive distance away from $\partial U$. 

Now, let $y=f(q)\in \cB_r^{\Sigma}(p)$ and let $\tilde{\gamma}:[0,1]\to\Sigma$ be a curve from $p$ to $q$ in $\Sigma$ such that $\gamma = f\circ\tilde{\gamma}$ is a minimizing constant speed geodesic from $x$ to $y$ in $f(\Sigma)$.  

By compactness, we can pick $0=t_0<t_1<\dots<t_N<t_{N+1}=1$ so that the segment $\overline{x_jx_{j+1}}=\gamma([t_j,t_{j+1}])\subset \cB^\Sigma_r(p)$ (which is a minimizing geodesic from $x_j=f(p_j)$ to $x_{j+1}=f(p_{j+1})$) lies in $f(\Sigma_{p_j})$ for $j=0,\dots, N$. Hence, by \eqref{eqn:prelim_dexp_small} and Remark \ref{rmk:extend_Lip},
\begin{align*}
    |d\phi_x^{-1}\nu(p_j)-d\phi_x^{-1}\nu(p_{j+1})| &\leq 4|d\phi_{x_j}^{-1}\nu(p_j)-d\phi_{x_j}^{-1}\nu(p_{j+1})|\\
    &\leq 8\,(\Lip_{p_j}\nu)\,d_\Sigma(x,y)|t_j-t_{j+1}|,
\end{align*}
and, using the uniform pointwise Lipschitz bound,
\begin{align*}
    |d\phi_x^{-1}\nu(p)-d\phi_x^{-1}\nu(q)| &\leq \sum_{j=0}^N|d\phi_x^{-1}\nu(p_j)-d\phi_x^{-1}\nu(p_{j+1})|\\
    &\leq 8C_0\, d_{f(\Sigma)}(x,y) \sum_{j=0}^N|t_j-t_{j+1}|\\
    &= 8C_0\, d_{f(\Sigma)}(x,y)\\
    &\leq 8C_0 r.
\end{align*}

In particular, this proves that for any $y=f(q)\in \cB^\Sigma_r(p)$, we have $|d\phi_x^{-1}\nu(p)-d\phi_x^{-1}\nu(q)|\leq 8C_0r$. By taking $r\leq 2r_K\coloneqq\min\{\rho_0/2, \delta_K/2, \frac{1}{16C_0}\}$, we have that $$|d\phi_x^{-1}\nu(p)-d\phi_x^{-1}\nu(q)|\leq \frac{1}{2}$$ for all $y=f(q)\in \cB^\Sigma_r(p)$, which implies that the whole of $\cB^\Sigma_r(p)$ can be written as the graph of a $C^{1,1}$ function $u$ over an open subset of $df_p(T_p\Sigma)\subset T_xM$.

Then it is immediate to see (as in \cite[Lemma 2.4]{CM}) that $u$ satisfies $|\nabla u|\leq 1$ and $\Lip\nabla u\leq 2\sqrt{2} C_0$.

The second statement follows from the first as in \cite[Lemma 2.4]{CM}.
\end{proof}

The next lemma is essentially a geometric Arzelà-Ascoli theorem, and it follows indeed from the usual Arzelà-Ascoli compactness theorem. The argument is well-known and varitions of it appear in several difference sources and contexts (see for example \cite[Theorem 11.8]{ExtFlows}). We shall now outline it for the sake of completeness.

\begin{lemma}\label{lem:Geo_AA}
Let $\{\Sigma_i\}_{i \in \N}$ be a sequence of compact orientable surfaces and let $f_i : \Sigma_i \to U$ be a sequence of 2-sided $C^{1,1}$ immersions with unit normal vector field $\nu_i$ in a precompact open set $U \subset M$ with $f(\partial\Sigma_i)\subset\partial U$. If the immersions have locally bounded area, i.e. for every $K\subset\subset U$ there is a finite $c_K$ such that  $\sup_i\cH^2(f_i(\Sigma_i)\cap K)\leq c_K$, and
\[ C_0 := \sup_i \sup_{p \in \Sigma_i}\Lip_p\nu_i < \infty, \]
then there is a subsequence (not relabeled) so that $f_i$ converges in $C^{1,\alpha}_{\text{loc}}$ to a $C^{1,1}$ immersion $f : \Sigma \to U$ with $\sup_{p \in \Sigma} \mathrm{Lip}_p \nu \leq C_0$.
\end{lemma}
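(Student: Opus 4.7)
\textbf{Proof proposal for Lemma \ref{lem:Geo_AA}.} The plan is to build $\Sigma$ and $f$ via a diagonal Arzel\`a--Ascoli argument based on the uniform graphical representation provided by Lemma \ref{lem:small_curv_graph}. Fix a compact exhaustion $K_1 \subset\subset K_2 \subset\subset \cdots \subset\subset U$ and, for each $n$, let $r_n := r_{K_n}$ be the radius from Lemma \ref{lem:small_curv_graph}. Then for every $i$ and every $p \in \Sigma_i$ with $f_i(p) \in K_n$, the intrinsic ball $\cB^{\Sigma_i}_{2r_n}(p)$ is a $C^{1,1}$ graph over its tangent plane with $|\nabla u| \leq 1$ and $\Lip(\nabla u) \leq 2\sqrt{2}\, C_0$. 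Each such graph has area at least a definite constant $c(r_n) > 0$, so combining with the uniform area bound $\cH^2(f_i(\Sigma_i) \cap K_n) \leq c_{K_n}$ yields a uniform-in-$i$ upper bound on the number of connected components (``sheets'') of $f_i^{-1}(B_{r_n/2}^M(x))$ meeting $B_{r_n/4}^M(x)$ for each $x \in K_n$.

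Cover $K_n$ by finitely many balls $B_{r_n/4}^M(x_\alpha)$. After extracting a subsequence along which the number of sheets in each $B_{r_n/2}^M(x_\alpha)$ stabilizes to some $N_\alpha \in \N$, choose a basepoint $p_{i,\alpha,k} \in \Sigma_i$ in each sheet with $f_i(p_{i,\alpha,k}) \in \overline{B}_{r_n/4}^M(x_\alpha)$; extracting further, $f_i(p_{i,\alpha,k}) \to x_{\alpha,k}$ and the tangent planes $df_i(T_{p_{i,\alpha,k}} \Sigma_i)$ converge to a plane $P_{\alpha,k} \subset T_{x_{\alpha,k}} M$. For $i$ large, each sheet may be expressed as a graph over $P_{\alpha,k}$ in normal coordinates at $x_{\alpha,k}$, with the same uniform $C^{1,1}$ bounds. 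Arzel\`a--Ascoli then delivers a further subsequence along which these graph functions converge in $C^{1,\alpha}$ to a $C^{1,1}$ graph $u_{\alpha,k}$. A diagonal argument across $n$ produces a single subsequence along which this convergence holds on every compact subset of $U$.

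To assemble the limit, let $\Sigma$ be the quotient of the disjoint union of the domains of all limit graphs $u_{\alpha,k}$ by the equivalence relation that identifies two points whenever, for all sufficiently large $i$, they correspond (via the graph representations) to the same sheet of $\Sigma_i$ in an overlapping chart. The $C^{1,\alpha}$-closeness of neighboring pieces and the uniform control on sheet diameters from Lemma \ref{lem:small_curv_graph} guarantee that this correspondence is eventually well-defined and consistent. The resulting $\Sigma$ is a $C^{1,1}$ 2-manifold with boundary in $f^{-1}(\partial U)$, the local graphs assemble into a $C^{1,1}$ immersion $f : \Sigma \to U$, and $f_i \to f$ in $C^{1,\alpha}_{\mathrm{loc}}$ in the natural sense. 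The estimate $\sup_{p \in \Sigma} \Lip_p \nu \leq C_0$ passes to the limit: on each graph chart the $\nabla u_i$ are uniformly Lipschitz and $\nabla u_i \to \nabla u$ uniformly, so $\nabla^2 u$ exists a.e.\ with bounds inherited from those for the $\nabla^2 u_i$; the pointwise bound on $\Lip_p \nu$ at a.e.\ $p$ then follows from the $\Lip_{p_i}\nu_i \leq C_0$ by the standard passage to the limit in the difference quotients, and extends to every $p$ by the upper-semicontinuous character of $\Lip_p\nu$ for Lipschitz $\nu$.

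The principal obstacle is the gluing/identification step: one must verify that the sheet-correspondences between overlapping charts are compatible in a way that yields a well-defined $C^{1,1}$ manifold structure on $\Sigma$ together with a well-defined immersion $f$. This is essentially a bookkeeping exercise, tractable because the uniform graphical bounds force the sheets passing through a fixed small ball to organize in a bounded, eventually-stabilizing combinatorial pattern that is preserved under the passage to the limit. A secondary technical point is checking that the Lipschitz bound on $\nu$ is recovered without loss of constants; this can be done by working directly with the $C^{1,\alpha}$-convergence of $\nabla u_i$ together with the relation between $\Lip_p\nu$ and $|\nabla^2 u(p)|$ made quantitative in the proof of Lemma \ref{lem:small_curv_graph}.
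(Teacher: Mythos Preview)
Your proposal is correct and follows essentially the same approach as the paper's proof, which is explicitly given only as a sketch: both arguments use Lemma \ref{lem:small_curv_graph} to obtain uniform local graphical representations with $C^{1,1}$ bounds, bound the number of sheets via the area lower bound, extract convergent subsequences of basepoints and tangent planes, apply Arzel\`a--Ascoli to the graph functions, and glue via a diagonal argument over a compact exhaustion. If anything, your treatment of the gluing step and the passage of the Lipschitz bound to the limit is more explicit than the paper's.
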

\begin{proof}[Sketch of proof.]

Let $K\subset\subset U$. We can cover the compact set $\overline{K}$ with finitely many balls $\{B_k=B^M_{r_K/2}(z_k)\}_{k=1}^N$. Suppose $p_i \in \Sigma_i$ satisfies $f_i(p_i) = x_i \in B_k$. By Lemma \ref{lem:small_curv_graph}, the component of $f_i^{-1}(B^M_{r_K/2}(x_i))$ is contained in $B_{r_K}^{\Sigma_i}(p_i)$ and its image under $f_i$ is the graph of a $C^{1,1}$ function satisfying explicit $C^{1,1}$-bounds depending only on $C_0$ (not on $i$). This image is the graph of a function $u_i$ such that $|\nabla u_i|\leq 1$---in particular, it has area bounded from below by some uniform constant $\kappa r_K^2$. By the uniform upper bounds on the area of $f_i(\Sigma_i)$ in compact sets, we conclude that $f_i^{-1}(B_k)$ is contained in at most $N_0$ balls of radius $r_K$ in $\Sigma_i$, where $N_0$ is a constant depending only on $K$, the area bounds, and $C_0$ (not on $i$ or $k$).


For any $k\in\{1, \dots, N\}$, we can select a set of at most $N_0$ points $\{p_l^{i,k}\}_l \subset \Sigma_i$ so that $f_i^{-1}(B_k)$ is covered by $\{B^{\Sigma_i}_{r_K}(p_l^{i,k})\}_l$. Up to a subsequence we may assume as $i\to \infty$ that $f(p_l^{i,k}) \to x_l^k$ and $df_{p_l^{i,k}}(T_{p_l^{i,k}}\Sigma_i) \to P_l^k$, a 2-dimensional subspace of $T_{x_l}M$. Note that the corresponding graphical functions satisfy uniform $C^{1,1}$ bounds, so by Arzelà-Ascoli, up to a subsequence, they converge (in $C^{1,\alpha}$ for any $0<\alpha<1$) to some $C^{1,1}$ function $u_l^k$ defined on $B^{T_{x_l^k}M}_{r_K}(0)\cap P_l^k$ with the same $C^{1,1}$ bound.


We then repeat this argument for all $k=1, \dots, N$ and take a diagonal subsequence. It is clear that the graphs we obtained match up on the overlaps between balls (because the surfaces $f_i(\Sigma_i)$ do), so we have produced our immersed limit surface restricted to $K\subset\subset U$. 

Lastly, we take an exhaustion of $U$ by compact sets, repeat this procedure inductively and check that the limit surfaces coincide on the overlaps. 
\end{proof}

\section{Curvature Estimates}\label{sec:curvature_estimates}
We now use these compactness ideas to prove the curvature estimates.

For convenience, we denote by $\mathcal{P}$ the set of pairs $(V, \Omega)$ where $V$ is a varifold and $\Omega \subset M$ is a set of finite perimeter with the property that there is a sequence $\Omega_j \subset M$ with smooth boundary satisfying
\[ \mathbf{1}_{\Omega_j} \xrightarrow{L^1} \mathbf{1}_{\Omega},\ \ D\mathbf{1}_{\Omega_{j}} \wto D\mathbf{1}_{\Omega},\ \ \mathbf{v}(\partial \Omega_{j}) \rightharpoonup V. \]
We let $\mathcal{P}_c$ denote the pairs $(V, \Omega) \in \mathcal{P}$ so that $V$ has $c$-bounded first variation.

For $(V, \Omega) \in \mathcal{P}$, we define
\[ \cA^h(V, \Omega) := \|V\|(M) - \int_{\Omega} h\ d\cH^3. \]

\begin{definition}
A pair $(V, \Omega) \in \mathcal{P}$ is a \emph{weakly stable $h$-surface in $U$} if
\begin{itemize}
    \item $(V, \Omega)$ has the regularity of Theorem \ref{thm:main_isotopy_regularity} in $U$,
    \item for any $\phi \in \mathcal{I}(U)$, we have
    \begin{align*}
        & \left.\frac{d}{dt}\right\vert_{t = 0}  \cA^h(\phi(t,\cdot)_{\#}V, \phi(t, \Omega)) = 0,\\
        & \left.\frac{d^2}{dt^2}\right\vert_{t = 0} \cA^h(\phi(t,\cdot)_{\#}V, \phi(t, \Omega)) \geq 0.
    \end{align*}
\end{itemize}
\end{definition}

We check that weakly stable $h$-surfaces satisfy the usual stability inequality.

\begin{proposition}\label{prop:second_variation}
If $(V, \Omega)$ is a weakly stable $h$-surface in $U$ with corresponding immersion $f : \Sigma \to U$, then for any $\psi \in C_c^{\infty}(\Sigma)$ which is the pullback by $f$ of a smooth function on $U$, we have
\begin{equation}\label{eqn:second_variation}
    \int_{\Sigma} |\nabla^{\Sigma}\psi|^2 - (|A^{\Sigma}|^2+\mathrm{Ric}_M(\nu,\nu))\psi^2\ d\cH^2
    \geq \int_{\partial^*\Omega} (H^{\Sigma}h + \partial_{\nu}h - |H^{\Sigma}|^2)\psi^2\ d\cH^2.
\end{equation}
\end{proposition}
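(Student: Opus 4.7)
The strategy is to test the second variation condition in the definition of weakly stable $h$-surface against a carefully chosen vector field on $U$, then rearrange algebraically. Given $\tilde\psi \in C_c^\infty(U)$ with $\psi = f^*\tilde\psi$, my goal is to build a Lipschitz vector field $X = \tilde\psi\, \tilde\nu$ on $U$ where $\tilde\nu$ is a Lipschitz extension of the unit normal $\nu$ to a neighborhood of $\mathrm{spt}\|V\|$. By Proposition \ref{prop:C1,1_regularity} and Theorem \ref{thm:main_isotopy_regularity}, each point of $\mathrm{spt}\|V\|$ has a neighborhood in which the support of $V$ is a finite union of $C^{1,1}$ sheets that are weakly ordered normal graphs over a common smooth reference surface $\Gamma$; using the smooth unit normal to $\Gamma$ (smoothly extended off $\Gamma$) and patching via a partition of unity, I obtain the desired Lipschitz extension $\tilde\nu$. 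The hypothesis that $\psi$ is the pullback of a function on $U$ ensures that $\psi$ takes a common value on all sheets through each image point, so the vector field $X = \tilde\psi\,\tilde\nu$ is unambiguously defined.

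Next, I apply the weak stability inequality to the flow $\phi_t$ of $X$, approximating $X$ by smooth compactly supported vector fields if needed (the bounds $|A|, |H| \in L^\infty$ from the $C^{1,1}$ regularity allow passage to the limit). Writing $F(t) = \|(\phi_t)_\# V\|(U) - \int_{\phi_t(\Omega)} h\,d\cH^3$, weak stability gives $F'(0) = 0$ and $F''(0) \geq 0$. The first-order identity, combined with the structural regularity of Theorem \ref{thm:main_isotopy_regularity}, encodes the PMC relation $H = h$ on the multiplicity-one part $\partial^*\Omega$ and $H = 0$ on the even multiplicity sheets.

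To extract (\ref{eqn:second_variation}), I expand $F''(0)$ using the classical second variation formulas for area and for the weighted volume, applied to the $C^{1,1}$ immersion $f$. The area contribution produces $\int_\Sigma[|\nabla\psi|^2 - (|A|^2 + \mathrm{Ric}(\nu,\nu))\psi^2]\,d\cH^2$ plus an extra term $\int_\Sigma H^2\psi^2\,d\cH^2$ coming from the normal component of $\nabla_X X$, together with further ``acceleration'' terms involving $\mathrm{div}(\nabla_X X)$. The volume contribution produces $\int_{\partial^*\Omega}[\partial_\nu h + hH]\psi^2\,d\cH^2$ plus the same acceleration terms with opposite sign (this matching is a consequence of $F'(0) = 0$, so that the linear-in-$\nabla_X X$ terms are killed by the first-order condition). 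After this cancellation, the inequality $F''(0) \geq 0$ rearranges exactly into (\ref{eqn:second_variation}).

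The main obstacle is rigorously justifying the second variation formula in the $C^{1,1}$ category, especially where two sheets touch tangentially. I plan to handle this by localizing to a single $C^{1,1}$ graph function $u \in W^{2,\infty}$ and writing both variations explicitly via the area formula on graphs (noting that $A$ exists $\cH^2$-a.e.\ by Rademacher's theorem and lies in $L^\infty$); alternatively, one can mollify $u$ in graph coordinates to obtain smooth approximations converging in $C^{1,\alpha}$ with uniform $C^{1,1}$ bounds, apply the classical formula, and pass to the limit using $L^\infty$ control on $A$ and $H$. A secondary technical point is a coherent choice of sign for $\tilde\nu$ across patches, which is handled by the hypothesis that $f$ is two-sided. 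The contributions from different sheets of the immersion $f$ combine additively, matching the $\cH^2$-integral over $\Sigma$ with its appropriate multiplicity, which is consistent with the form of (\ref{eqn:second_variation}).
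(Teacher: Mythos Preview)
Your approach coincides with the paper's: invoke the standard second variation computation for area and weighted enclosed volume (the paper simply cites \cite[Proposition~2.5]{BdCE}) and note that the $C^{1,1}$ regularity of $f$ makes $|A^\Sigma|$ and $H^\Sigma$ measurable and bounded, so every integrand is well-defined and the formula carries over from the smooth case. One minor correction to your bookkeeping: the term $\int_\Sigma |H^\Sigma|^2\psi^2$ in the area second variation arises from $(\mathrm{div}_\Sigma X)^2=(H^\Sigma\psi)^2$, not from the normal component of $\nabla_X X$; since $\int_\Sigma |H^\Sigma|^2\psi^2 \geq \int_{\partial^*\Omega}|H^\Sigma|^2\psi^2$, what you obtain after the acceleration cancellation is at least as strong as \eqref{eqn:second_variation}.
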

\begin{proof}
This formula follows from the standard computation of the second variation of area and potential energy (see \cite[Proposition 2.5]{BdCE}), combined with the observation that all the integrands are well defined measurable and integrable functions since the immersion is $C^{1,1}$.
\end{proof}

The next result is the $C^{1,1}$ analogue of the usual curvature estimates for stable minimal (or prescribed mean curvature) surfaces (see e.g. \cite[Theorem 3.6]{ZZPMC}).

\begin{theorem}\label{thm:main_estimate}
Let $U \subset M$ be an open set and $c > 0$. Let $(V, \Omega) \in \mathcal{P}_c$ be a weakly stable $h$-surface in $U$ for a function with $\|h\|_{C^1} \leq c$ and $\|V\|(U) \leq m_0$. Let $f : \Sigma \to U$ be the $C^{1,1}$ immersion inducing the varifold $V$ in $U$. Then there is a constant $C = C(U, c, m_0) > 0$ so that
\[ \sup_{p \in \Sigma}\ \min\{1,d_\Sigma(p, f^{-1}(\partial U))\} \ \Lip_p\nu \leq C. \]
\end{theorem}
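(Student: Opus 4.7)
The plan is to argue by contradiction via a point-picking and blow-up scheme modeled on the classical curvature estimates for stable minimal surfaces, with the essential new ingredient being the $C^{1,1}$ estimate of \cite{WZ_multiple} (our Corollary~\ref{cor:estimate}), which replaces the $C^{2}$ Schauder bootstrap that is unavailable in the $C^{1,1}$ setting. Assume for contradiction that the estimate fails: then there exist weakly stable $h_j$-surfaces $(V_j, \Omega_j) \in \mathcal{P}_c$ with $\|h_j\|_{C^1} \leq c$, $\|V_j\|(U) \leq m_0$, and points $p_j \in \Sigma_j$ with
\[ \min\{1, d_{\Sigma_j}(p_j, f_j^{-1}(\partial U))\}\,\Lip_{p_j}\nu_j \to \infty. \]
A White-type point-picking argument allows me to replace $p_j$ (not relabeled) so that, setting $L_j \coloneqq \Lip_{p_j}\nu_j$, one has $L_j \to \infty$, the intrinsic ball $B^{\Sigma_j}_{R_j/L_j}(p_j)$ is disjoint from $f_j^{-1}(\partial U)$ for some $R_j \to \infty$, and $\Lip_q \nu_j \leq 2 L_j$ for every $q$ in this ball.

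I next perform a single blow-up near $f_j(p_j)$ by rescaling the ambient metric by $L_j^{2}$. Under this rescaling, distances are multiplied by $L_j$, the rescaled pair is a weakly stable $\tilde h_j$-surface for $\tilde h_j = L_j^{-1} h_j \circ \exp_{f_j(p_j)}(\,\cdot\,/L_j)$ with $\|\tilde h_j\|_{C^1} \leq 2c/L_j \to 0$, and the rescaled ambient converges smoothly to flat $\R^3$ on any fixed compact set. The rescaled immersions $\tilde f_j$ are defined on intrinsic balls of radius $R_j \to \infty$, satisfy $\Lip_{p_j}\tilde\nu_j = 1$, and enjoy the uniform bound $\sup\Lip\tilde\nu_j \leq 2$. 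The mass bound $\|V_j\|(U) \leq m_0$ together with the monotonicity formula for varifolds with bounded first variation gives a locally uniform area bound. Lemma~\ref{lem:Geo_AA} then yields, along a subsequence, a $C^{1,\alpha}_{\mathrm{loc}}$ limit $f_\infty : \Sigma_\infty \to \R^3$, a complete $C^{1,1}$ 2-sided immersion with $\sup \Lip \nu_\infty \leq 2$ inducing a varifold $V_\infty$ stationary in $\R^3$.

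The limit must be a plane. By elliptic regularity for the minimal surface equation applied to local graph representations, $f_\infty$ is smooth. Passing the stability inequality (\ref{eqn:second_variation}) to the limit uses weak-$*$ convergence of $\nabla^2 u_j$ (afforded by the uniform $C^{1,1}$ bound) together with weak lower semi-continuity of the quadratic integrand $|A|^2$; since the Ricci term vanishes on the Euclidean limit and every term on the right-hand side of (\ref{eqn:second_variation}) is controlled by $\|\tilde h_j\|_{C^1} \to 0$, the limit $f_\infty$ is a complete smooth 2-sided stable minimal immersion in $\R^3$. By the Fischer--Colbrie--Schoen theorem, $f_\infty(\Sigma_\infty)$ is a plane through $f_\infty(p_\infty)$.

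Finally, I invoke Corollary~\ref{cor:estimate} to derive the contradiction. In normal coordinates adapted to the limit tangent plane at $p_\infty$, the pre-limit varifolds are locally represented by weakly ordered $C^{1,1}$ graphs $u_1^{(j)}, \dots, u_n^{(j)}$ on the unit ball $B_1$ (one for each stacked sheet meeting the rescaled ambient ball, with $n$ constant along a subsequence). Because each sheet converges in $C^{1,\alpha}$ to the common limit plane $\{u = 0\}$, one has $\|u_i^{(j)}\|_{C^{1,\alpha}(B_1)} \to 0$ for every $i$, and combining with $\|\tilde h_j\|_{C^1} \to 0$, Corollary~\ref{cor:estimate} gives
\[ \sum_{i=1}^n \|u_i^{(j)}\|_{C^{1,1}(B_{1/2})} \leq C\Big(\sum_{i=1}^n \|u_i^{(j)}\|_{C^{1,\alpha}(B_1)} + \|\tilde h_j\|_{C^1}\Big) \longrightarrow 0. \]
In particular, for the index $k$ such that $p_j$ lies on the $k$-th sheet, $\Lip\nabla u_k^{(j)} \to 0$ uniformly on $B_{1/2}$; since $\tilde\nu_j = (-\nabla u_k^{(j)}, 1)/\sqrt{1 + |\nabla u_k^{(j)}|^2}$ depends on $\nabla u_k^{(j)}$ via a map with uniformly bounded derivatives in the regime $|\nabla u_k^{(j)}| \to 0$, this forces $\Lip_{p_j}\tilde\nu_j \to 0$, contradicting the normalization $\Lip_{p_j}\tilde\nu_j = 1$. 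The main obstacle is precisely the need for this quantitative $C^{1,1}$ control: without Corollary~\ref{cor:estimate}, the mere $C^{1,\alpha}$ convergence to a plane would not rule out a concentrated pointwise bump in $\Lip\nu$ at $p_j$ persisting in the limit.
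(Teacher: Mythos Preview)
Your proposal is correct and follows essentially the same approach as the paper: both argue by contradiction via a point-picking blow-up, use Lemma~\ref{lem:Geo_AA} for $C^{1,\alpha}_{\mathrm{loc}}$ subconvergence, pass the stability inequality to the limit via lower semicontinuity of $\int|A|^2$ (the paper cites \cite{Langer} for this), apply the stable Bernstein theorem to conclude the limit is flat, and then invoke Corollary~\ref{cor:estimate} to upgrade to $C^{1,1}_{\mathrm{loc}}$ convergence and obtain the contradiction with $\Lip_{p_j}\tilde\nu_j = 1$. The only cosmetic differences are the precise point-picking normalization ($\Lip \leq 2$ versus $\leq 4$) and that the paper handles smoothness of the limit via Allard regularity plus the strong maximum principle for ordered sheets, whereas you invoke elliptic regularity for the minimal surface equation directly.
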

\begin{proof}
We argue by contradiction, using a standard point-picking argument (cf. \cite[Lecture 3]{WhiteNotes}).

Suppose there is a sequence of such immersions $f_i:\Sigma_i\to M$ so that 
\[\sup_{p \in \Sigma_i}\ \min\{1,d_{\Sigma_i}(p, f_i^{-1}(\partial U))\} \ \Lip_p\nu_i=R_i\to \infty.\]
Let us denote the local Lipschitz constant $\Lip_p\nu_i$ by $l_i(p)$.

Now pick $p_i\in\Sigma_i$ so that 
\begin{equation}\label{eqn:point_pick}
    \min\{1, d_{\Sigma_i}(p_i, f_i^{-1}(\partial U))\} \ l_i(p_i)\geq\frac{1}{2}R_i.
\end{equation}
We shall rescale the metric on $U$ by $l_i(p_i)\to \infty$ and obtain a new metric $\tilde{g}_i=l_i(p_i)g$, so that, with respect to the new metric, $\tilde{l}_i(p_i)=1$. 
By \eqref{eqn:point_pick}, the rescalings $(U,\tilde{g}_i)$ converge locally smoothly to $(\R^3, g_{\mathrm{Euc}})$, and, under the rescaling, $$d_{\tilde\Sigma_i}(p_i, f_i^{-1}(\partial U))\geq\frac{1}{2}R_i.$$
Assume, without loss of generality that $f_i(p_i)\to 0\in\R^3$. For $p,p_i\in\Sigma_i$, if (under the rescaling) $d_{\tilde{\Sigma}_i}(p, p_i)\leq \frac{1}{4}R_i$, then, since the quantity $d_{\Sigma_i}(\cdot, f_i^{-1}(\partial U))\ l_i(\cdot)$ is scale invariant, $\tilde{l}_i(p)\leq \frac{R_i}{R_i/2-R_i/4}= 4$.
Thus,
\[\sup_{p\in B^{\tilde{\Sigma}_i}_{R_i/4}(p_i)}\tilde{l}_i(p)\leq 4.\]

In any compact set, the mass bound $m_0$ and the mononicity formula give a uniform mass bound for the rescaled immersions. Recalling that $R_i\to\infty$, we conclude that, by Lemma \ref{lem:Geo_AA}, (up to a subsequence) the rescaled immersions converge in $C^{1, \alpha}_\text{loc}$ to a complete, 2-sided $C^{1,1}$ immersion $f:\Sigma\to \R^3$ with Lipschitz unit normal vector field $\nu$ such that $\sup_\Sigma\,\Lip_p\nu\leq 4$.

\emph{Claim 1}: $f$ is a $C^{1,1}$ immersion with locally ordered sheets. Indeed, the $C^{1,\alpha}_{\text{loc}}$ graphical convergence preserves the local ordering of sheets of the immersions $f_i$.

\emph{Claim 2}: The image of $f$ is a smooth embedded minimal surface. Since each sheet of the rescaled immersion in a small ball has $c/R_i$ bounded first variation, the lower semicontinuity of the first variation (see \cite[Theorem 42.7]{simon_old}) implies that each sheet of $f$ in a small ball is stationary. By Allard's regularity theorem \cite{allard}, $f$ is a smooth minimal immersion. Since the sheets are ordered locally, the strong maximum principle for minimal surfaces implies that each component of the image of $f$ is a smoothly embedded minimal surface.

\emph{Claim 3}: $f$ is a stable minimal immersion. By claim 2, $f$ is a covering of a smooth minimal embedding, so we can assume without loss of generality that $f$ is an embedding. We consider the limit of (\ref{eqn:second_variation}), where we fix a function $\psi$ on $\Sigma$ with compact support and transplant it to $\Sigma_i$ using the graphical identification. Since the rescalings satisfy $|H^{\Sigma_i}| \leq c/R_i$ and $|\nabla h_i| \leq c/R_i$, the right hand side of (\ref{eqn:second_variation}) vanishes in the limit. Since the convergence is $C^{1,\alpha}_{\text{loc}}$, every term on the right hand side converges to the corresponding term for $\Sigma$ except the $|A^{\Sigma_i}|^2$ term. Since the ambient space converges locally smoothly to Euclidean $\R^3$, \cite[Theorem 4.3]{Langer} implies that the term $\int_{\Sigma_i} |A^{\Sigma_i}|^2u^2 \ d\cH^2$ is lower semicontinuous (we have local weak convergence in $W^{2,2}$ by the uniform $C^{1,1}$ bounds). Hence, $\Sigma$ satisfies the stability inequality in $\R^3$.

By the stable Bernstein theorem in $\R^3$ due to \cite{FCS}, \cite{dCP}, and \cite{Pog}, $\Sigma$ is flat. By Corollary \ref{cor:estimate}, $\Sigma_i$ converges in $C^{1,1}_{\text{loc}}$ to the flat limit. However, we chose points so that $\text{Lip}_{p_i}\nu_i \geq 1$ where $p_i \to 0$, which yields a contradiction.
\end{proof}

As usual, this estimate directly implies $C^{1,\alpha}_{\text{loc}}$ pre-compactness for weakly stable $h$-surfaces (cf. \cite[Theorem 3.6]{ZZPMC}).

\part{Application to existence in the 3-sphere}\label{part:min-max}

\section{Min-Max Setup}\label{sec:min-max_setup}
We set up an appropriate setting to carry out the min-max procedure of \cite{Colding_DeLellis} for the prescribed mean curvature functional.

Let $v > 0$, $\kappa > 0$, and $c > 0$ satisfy
\begin{equation}\label{eqn:constraints}
    \frac{8\pi}{(1 + \kappa)^2 + c^2/4} > 4\pi + vc.
\end{equation}
Consider a smooth metric $g$ on $S^3$ so that
\begin{itemize}
    \item $R_g \geq 6$,
    \item $\Ric(g) > 0$,
    \item $\mathrm{Vol}(S^3, g) \leq v$,
    \item $(S^3, g)$ isometrically embeds in $\R^N$ with $|A_{S^3}| \leq 1 + \kappa$.
\end{itemize}
Note that the round metric satisfies these conditions with $v = 2\pi^2$, $\kappa = 0$, and $c = 0.547$. Moreover, there is an explicit $C^2$ graphical neighborhood of the standard embedding $\mathbf{S}^3 \subset \R^4 \subset \R^N$ and an explicit constant $c > 0$ for which these conditions are satisfied (by rescaling). Let $h : S^3 \to \R$ be a smooth function with $|h| \leq c$. Moreover, we assume that $h$ satisfies the conditions (\dag) or (\ddag) from \cite{ZZPMC}. By \cite[Proposition 3.8 and Corollary 3.18]{ZZPMC}, the set of functions satisfying these conditions is open and dense, all positive functions satisfy (\ddag), and any such function $h$ has the unique continuation property for surfaces of prescribed mean curvature $h$.

Let $g_0$ be the round metric on $S^3$. Let $x_0 \in S^3$. Let $\{\Omega_t\}_{t \in [0, 1]}$ be given by
\[ \Omega_t := B_{\pi t}^{g_0}(x_0). \]
Note that we use the round metric in particular only to define the standard sweepout; everything else uses the general metric $g$. Let $\mathcal{M}$ be the set of smooth maps $\psi : [0, 1] \times [0, 1] \times S^3 \to S^3$ so that
\begin{itemize}
    \item $\psi(s, t, \cdot)$ is a diffeomorphism of $S^3$,
    \item $\psi(0, t, \cdot) = \mathrm{id}$.
\end{itemize}
We define the min-max width $\omega_1(g,h)$ by
\[ \omega_1(g,h) := \inf_{\psi \in \mathcal{M}} \sup_{t\in[0, 1]} \cA^h(\psi(1, t, \Omega_t)). \]

We say $\{\Omega_t^j = \psi_j(1, t, \Omega_t)\}_{j\in\N}$ for $\psi_j \in \mathcal{M}$ is a \emph{minimizing sequence} if
\[ \lim_{j \to \infty}\sup_t \cA^h(\Omega_t^j) = \omega_1(g, h). \]
Given a minimizing sequence, we say $\{\Omega_{t_j}^j\}$ is a \emph{min-max sequence} if
\[ \lim_{j \to \infty} \cA^h(\Omega_{t_j}^j) = \omega_1(g, h). \]

By the standard compactness theory (see \cite[Theorem 12.26]{maggi} and \cite[Chapter 1, Theorem 4.4]{simon_old}), for any min-max sequence $\{\Omega_{t_j}^j\}$, there is a set $\Omega \subset S^3$ and a varifold $V$ satisfying
\[ \mathbf{1}_{\Omega_{t_j}^j} \xrightarrow{L^1} \mathbf{1}_{\Omega},\ \ D\mathbf{1}_{\Omega_{t_j}^j} \wto D\mathbf{1}_{\Omega},\ \ \mathbf{v}(\partial \Omega_{t_j}^j) \rightharpoonup V, \]
and
\begin{equation}\label{eqn:achieves_width}
\|V\|(S^3) - \int_{\Omega} h\ d\cH^3= \omega_1(g,h).
\end{equation}

We show the following theorem.

\begin{theorem}\label{thm:min-max}
Suppose $(S^3, g)$ and $h$ satisfy the constraints (\ref{eqn:constraints}) and $\int_{S^3} h\ d\cH^3_g \geq 0$. Then there is an open set $\Omega \subset S^3$ satisfying
\begin{itemize}
    \item $\cA^h(\Omega) = \omega_1(g, h)$,
    \item $\partial \Omega$ is a disjoint union of smooth embedded spheres with prescribed mean curvature $h$ with respect to $\Omega$.
\end{itemize}
\end{theorem}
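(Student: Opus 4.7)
The plan is to implement a Simon-Smith-style min-max for $\cA^h$ using the regularity of Theorem \ref{thm:main_isotopy_regularity}. First, following the pull-tight procedure of \cite{Colding_DeLellis} adapted to $\cA^h$ (the adaptation for the volume term is by now standard, cf.\ \cite{ZZPMC}), we extract from any minimizing sequence in $\mathcal{M}$ a min-max sequence $\{\Omega^j_{t_j}\}$ whose varifold limit $(V,\Omega)$ realizes the width $\omega_1(g,h)$ and is $\cA^h$-stationary in the sense of Theorem \ref{thm:compactness}. A further modification makes $V$ \emph{almost minimizing in annuli} with respect to $\cA^h$ over isotopies: for every $p\in S^3$ there is an annulus around $p$ in which $\Omega^j_{t_j}$ is an $\cA^h$-minimizing sequence over isotopy classes. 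Strict positivity $\omega_1(g,h)>0$ follows from positivity of Ricci curvature together with a systolic comparison.

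Second, inside each such almost-minimizing annulus Theorem \ref{thm:main_isotopy_regularity} applies, and varying the center $p$ and patching yields the global local structure of $V$: at every $x\in\mathrm{spt}\|V\|$ there is $n_x\in\N$ with $\Theta^2(\|V\|,x)=n_x$ and $\mathrm{spt}\|V\|$ locally decomposes into $n_x$ tangentially touching $C^{1,1}$ stable sheets with the parity constraints of Theorem \ref{thm:main_isotopy_regularity}. The unique continuation hypothesis on $h$ from \cite{ZZPMC} rules out pathological transitions once we know that $V$ has constant integer density.

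Third, we use the quantitative constraint (\ref{eqn:constraints}) to force $n_x=1$ at every point of the support, which by Theorem \ref{thm:main_isotopy_regularity} then upgrades $V$ to a smooth embedded stable surface of prescribed mean curvature $h$ bounding $\Omega$. For the mass upper bound, comparing with the identity sweepout and using $|A_{S^3}|\leq 1+\kappa$ gives
\[
  \|V\|(S^3) \;=\; \omega_1(g,h) + \int_{\Omega} h \;\leq\; 4\pi + vc.
\]
For the lower bound, via the isometric embedding $(S^3,g)\subset\R^N$ we view $V$ as a 2-varifold in $\R^N$; its generalized mean curvature $H_{V,\R^N}$ is the sum of $H_{V,g}$ (bounded by $c$) and the trace on $TV$ of the second fundamental form of $S^3$ in $\R^N$ (bounded by $2(1+\kappa)$), so $\|H_{V,\R^N}\|_\infty^2 \leq c^2+4(1+\kappa)^2$. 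A varifold version of the Willmore monotonicity of \cite{Simon_Willmore} and \cite{Topping1998} then gives, at any $x$ of density $n_x$,
\[
  \|V\|(S^3) \;\geq\; \frac{8\pi\, n_x}{(1+\kappa)^2+c^2/4}.
\]
If some $n_x\geq 2$, the combination of the two bounds contradicts (\ref{eqn:constraints}); hence $n_x\equiv 1$.

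Finally, for the topology: since every level set of the standard sweepout is a topological 2-sphere, the Simon-Smith genus bound as refined in \cite{DeLellis_Pell} forces each component of $\partial\Omega$ to have genus zero, and we conclude it is a disjoint union of smoothly embedded spheres of prescribed mean curvature $h$. The main technical obstacle will be combining the almost-minimizing-in-annuli property with the potentially multi-valued $C^{1,1}$ regularity of Theorem \ref{thm:main_isotopy_regularity}: because unique continuation can fail at multiplicity-two minimal loci, iterated replacements in the style of \cite{Colding_DeLellis} do not directly patch across such loci, and the density-one reduction via the modified Willmore bound is precisely what lets us bypass this issue. A secondary obstacle is the careful calibration of constants in the mass upper bound that accommodates the non-round metric $g$.
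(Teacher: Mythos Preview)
Your overall strategy matches the paper's, and the crucial density-one reduction via the Willmore-type inequality is exactly the key step. However, the logical order in your step 2 is off, and this is not merely cosmetic.

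Theorem \ref{thm:main_isotopy_regularity} is a regularity theorem for \emph{minimizing} sequences over isotopies (that is, sequences satisfying \eqref{eqn:main_isotopy_problem}); an almost-minimizing sequence in an annulus does not satisfy this, so you cannot conclude directly that $V$ itself has the $C^{1,1}$ multi-sheet structure. The paper does not attempt this. Instead, one constructs \emph{replacements}: in each admissible annulus one solves a constrained isotopy minimization problem (squeezing lemma as in \cite[\S7]{Colding_DeLellis}), and to the resulting minimizing sequences Theorem \ref{thm:main_isotopy_regularity} does apply. These a priori are only weak replacements (Definition \ref{def:weak_replacement}). Because each replacement $(V',\Omega')$ satisfies $\cA^h(V',\Omega')\leq\omega_1(g,h)$ and has $c$-bounded first variation with integer density, Corollary \ref{cor:density_one} forces $\Theta^2(\|V'\|,\cdot)\equiv 1$ on the replacement, not on $V$. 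Only then does Theorem \ref{thm:main_isotopy_regularity} promote each replacement to a smooth stable $h$-PMC surface, i.e.\ a strong replacement in the sense of Definition \ref{def:strong_replacement}. With strong replacements in hand, unique continuation for smooth $h$-PMC surfaces allows the iterated-replacement patching of \cite[Proposition 6.3]{Colding_DeLellis} (cf.\ \cite[\S6--7]{ZZPMC}) to go through and deduce smoothness of $V$. Your final paragraph shows you are aware of the replacement mechanism, but your step 2 bypasses it, and as written the argument does not close.

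Two smaller points. First, the bound $\omega_1(g,0)\leq 4\pi$ is not obtained from $|A_{S^3}|\leq 1+\kappa$; it comes from the Marques--Neves min-max rigidity \cite[Theorem 1.1]{MNrigidity}, which uses the hypotheses $\Ric(g)>0$ and $R_g\geq 6$. Second, nontriviality $\omega_1(g,h)>0$ follows more simply from the isoperimetric inequality, and the assumption $\int_{S^3}h\,d\cH^3_g\geq 0$ is what rules out the trivial solution $\Omega=S^3$.
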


Since reversing orientations corresponds to reversing the sign of $h$, Theorems \ref{thm:PMC_in_sphere} and \ref{thm:CMC_in_sphere} follow from Theorem \ref{thm:min-max}.

\section{Density Estimates in the 3-Sphere}
We exhibit an upper and lower bound for the mass of a min-max varifold for the $\cA^h$ functional with a point of density at least two. We show that these two bounds are contradictory in the case of our chosen setup.

We first show an upper bound, using the upper bound for $c$, the upper bound for the volume of $(S^3, g)$, and the lower bound for the scalar curvature of $g$.

\begin{proposition}\label{prop:min_max_area_upper_bound}
Suppose $(S^3, g)$ satisfies the hypotheses of \S\ref{sec:min-max_setup}. Suppose $(V, \Omega) \in \mathcal{P}$ satisfies $\|V\|(S^3) - \int_{\Omega} h \ d\cH^3 \leq \omega_1(g,h)$. Then
\[ \|V\|(S^3) \leq 4\pi + v c. \]
\end{proposition}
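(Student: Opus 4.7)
The plan is to compare with the min-max width of the area functional, exploiting the hypothesis to absorb the potential energy term into a volume bound. The hypothesis says $\|V\|(S^3) \leq \omega_1(g,h) + \int_\Omega h\, d\cH^3_g$, so it suffices to show that $\omega_1(g,h) + \int_\Omega h\, d\cH^3_g \leq 4\pi + vc$.

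The key step is a variational comparison: for any $\psi \in \mathcal{M}$, writing $U_t \coloneqq \psi(1,t,\Omega_t)$, by definition of $\omega_1(g,h)$ we have
\[
    \omega_1(g,h) \leq \sup_{t\in[0,1]} \cA^h(U_t) = \sup_{t\in[0,1]}\left[\cH^2_g(\partial U_t) - \int_{U_t} h\, d\cH^3_g\right].
\]
The essential observation is that the $\int_\Omega h$ term on the left pairs nicely with the $-\int_{U_t} h$ term on the right: using $\int_\Omega h - \int_{U_t} h = \int_{\Omega\setminus U_t} h - \int_{U_t\setminus\Omega} h$ and $|h|\le c$,
\[
    \omega_1(g,h) + \int_\Omega h\, d\cH^3_g \leq \sup_t \Bigl[\cH^2_g(\partial U_t) + c\,\mathrm{Vol}_g(\Omega\triangle U_t)\Bigr] \leq \sup_t \cH^2_g(\partial U_t) + cv,
\]
since $\Omega\triangle U_t\subset S^3$ and $\mathrm{Vol}_g(S^3)\leq v$.

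It then remains to exhibit some $\psi \in \mathcal{M}$ for which $\sup_t \cH^2_g(\partial U_t) \leq 4\pi$, which is precisely the statement that the min-max width for the area functional satisfies $\omega_1(g,0)\leq 4\pi$. For the round metric $g_0$, this is immediate by taking $\psi=\mathrm{id}$: the standard sweepout $\Omega_t = B^{g_0}_{\pi t}(x_0)$ has $\cH^2_{g_0}(\partial\Omega_t)=4\pi\sin^2(\pi t) \leq 4\pi$. For a general metric $g$ satisfying $R_g\geq 6$ and $\mathrm{Ric}(g)>0$, one invokes a comparison theorem: since $\mathrm{Ric}>0$ implies $(S^3,g)$ is compact of bounded diameter and the scalar curvature bound $R_g\geq 6$ matches the round case, one can construct an admissible sweepout by level sets of a suitable Morse function (e.g.\ distance to a point, suitably reparametrized) whose slice areas do not exceed the corresponding round values. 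Combining all estimates yields $\|V\|(S^3)\leq 4\pi+vc$.

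The main obstacle is verifying $\omega_1(g,0)\leq 4\pi$ under the given curvature hypotheses. This is nontrivial because the Bishop--Gromov volume comparison requires a pointwise Ricci lower bound $\mathrm{Ric}\geq 2g$, which is strictly stronger than $R_g\geq 6$. To circumvent this, I would either cite rigidity results in the spirit of Marques--Neves for min-max spheres in positively curved 3-manifolds, or construct the sweepout directly: start from a small geodesic ball in $g$, evolve outward by mean curvature flow (or by a one-parameter family of mean-convex foliations), and use the scalar curvature bound to control the area along the flow. The rest of the argument is purely variational and requires no regularity beyond the measurability of $h$ and the existence of the varifold limit.
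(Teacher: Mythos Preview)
Your variational comparison is correct and is essentially the same as the paper's proof: the paper splits $h=h_+-h_-$ into its positive and negative parts and bounds $\int_\Omega h\le\int_{S^3}h_+$ and $-\int_{U_t}h\le\int_{S^3}h_-$ separately, whereas you pair the two terms directly via $\int_\Omega h-\int_{U_t}h\le c\,\mathrm{Vol}_g(\Omega\triangle U_t)\le cv$. Either way one arrives at $\|V\|(S^3)\le\omega_1(g,0)+cv$.

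The genuine gap is precisely where you flag it: the inequality $\omega_1(g,0)\le 4\pi$ under the hypotheses $R_g\ge 6$ and $\Ric(g)>0$. The paper does not attempt to prove this from scratch; it simply invokes \cite[Theorem~1.1]{MNrigidity} (Marques--Neves, \emph{Rigidity of min-max minimal spheres in three-manifolds}), which establishes exactly this width bound and characterizes equality. Your proposed alternatives would not work: Bishop--Gromov needs $\Ric\ge 2g$ rather than $R_g\ge 6$, as you note, and a mean-curvature-flow sweepout would not come with an a priori area bound of $4\pi$ from $R_g\ge 6$ alone. The ``rigidity results in the spirit of Marques--Neves'' that you mention \emph{are} the correct reference; once you cite that theorem, your proof is complete and matches the paper's.
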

\begin{proof}
Let $h_+ := \min\{0, h\}$ and $h_- := \min\{0, -h\}$. We compute
\begin{align*}
    \|V\|(S^3) - \int_{S^3} h_+\ d\cH^3
    & \leq \|V\|(S^3) - \int_{\Omega} h\ d\cH^3\\
    & \leq \omega_1(g, h)\\
    & \leq \omega_1(g, 0) + \int_{S^3} h_-\ d\cH^3\\
    & \leq 4\pi + \int_{S^3} h_-\ d\cH^3,
\end{align*}
where the inequality $\omega_1(g, 0) \leq 4\pi$ follows from the assumptions $\Ric(g) > 0$ and $R_g \geq 6$ by \cite[Theorem 1.1]{MNrigidity}. Since $|h| \leq c$ and $\cH^3(S^3,g) \leq v$, the conclusion follows.
\end{proof}

To prove the lower bound, we recall a monotonicity formula for the Willmore energy (modified for our setting).

\begin{lemma}[{\cite[(1.4)]{Simon_Willmore} and \cite[\S6]{Topping1998}}]\label{lem:willmore_monotone}
If $V$ is an integer rectifiable 2-varifold in $\R^N$ with bounded first variation and finite mass, then
\[ \pi\Theta^2(V,x_0) \leq \frac{1}{16}\int_{\R^4} |H_V|^2\ d\|V\| \]
for any $x_0 \in \R^N$.
\end{lemma}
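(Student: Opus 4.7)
Plan: The inequality is a Li–Yau / Simon multiplicity bound for varifolds in $\mathbb{R}^N$. The plan is to apply a Simon-style weighted monotonicity formula for 2-rectifiable varifolds with locally bounded first variation, use Brakke's perpendicularity theorem $H_V \perp T_xV$ to rewrite the mean-curvature contribution, and close the estimate via a single AM–GM inequality whose constants balance exactly against the tilt-excess term produced by the monotonicity identity.

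Concretely, I would first establish, for any $x_0 \in \mathbb{R}^N$ and $0 < \sigma < \rho < \infty$, the estimate
\[
\frac{\|V\|(B_\sigma(x_0))}{\pi\sigma^2} \;\le\; \frac{\|V\|(B_\rho(x_0))}{\pi\rho^2} + \frac{1}{16\pi}\int_{B_\rho(x_0)}|H_V|^2\,d\|V\|.
\]
To derive this, one tests the first variation against the radial vector field $X(x) = \varphi(|x-x_0|/r)(x-x_0)$ for a nonnegative nonincreasing cutoff $\varphi$, and integrates in $r$ to obtain the classical identity
\[
\frac{\|V\|(B_\rho(x_0))}{\pi\rho^2} - \frac{\|V\|(B_\sigma(x_0))}{\pi\sigma^2}
= \frac{1}{\pi}\!\int_{B_\rho\setminus B_\sigma}\!\frac{|(x-x_0)^\perp|^2}{|x-x_0|^4}\,d\|V\|
\; - \;\frac{1}{\pi}\!\int_{B_\rho\setminus B_\sigma}\!\frac{H_V\cdot(x-x_0)}{2|x-x_0|^2}\,d\|V\| + \text{(bdry)}.
\]
Brakke's perpendicularity theorem gives $H_V \cdot (x-x_0) = H_V \cdot (x-x_0)^\perp$, hence the mean-curvature term is bounded in absolute value by $|H_V|\,|(x-x_0)^\perp|/(2|x-x_0|^2)$. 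The elementary inequality $(a/4 - b)^2 \ge 0$ with $a = |H_V|$ and $b = |(x-x_0)^\perp|/|x-x_0|^2$ yields
\[
\frac{|H_V|\,|(x-x_0)^\perp|}{2|x-x_0|^2} \;\le\; \frac{|H_V|^2}{16} + \frac{|(x-x_0)^\perp|^2}{|x-x_0|^4},
\]
and the tilt-excess term here cancels exactly against the (nonnegative) tilt-excess term in the monotonicity identity, leaving only the $|H_V|^2/16$ contribution and producing the displayed inequality.

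With the weighted monotonicity in hand, I would let $\sigma \downarrow 0$, so the left-hand side tends to $\Theta^2(\|V\|, x_0)$, and then let $\rho \uparrow \infty$: by the finite-mass hypothesis $\|V\|(\mathbb{R}^N) < \infty$ one has $\|V\|(B_\rho)/\rho^2 \to 0$, and the $|H_V|^2$-integral converges monotonically to its value on all of $\mathbb{R}^N$, yielding the desired $\pi\Theta^2(\|V\|, x_0) \le \tfrac{1}{16}\int |H_V|^2\,d\|V\|$.

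The main obstacle is a careful derivation of the monotonicity identity with the sharp constant $\tfrac{1}{16}$: sharpness depends on the AM–GM splitting matching exactly the coefficient on $|(x-x_0)^\perp|^2/|x-x_0|^4$ from the monotonicity identity; any suboptimal splitting would give a strictly weaker constant. A secondary technical point is to verify that Brakke's perpendicularity theorem applies here, which is standard for integer rectifiable varifolds with locally bounded first variation; the boundary terms in the monotonicity identity at the radii $\sigma$ and $\rho$ also have to be handled either by a cutoff approximation or by using that they vanish in the $\sigma \to 0$, $\rho \to \infty$ limits.
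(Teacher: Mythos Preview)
Your proposal is correct and follows the same Simon--Topping approach as the paper. The paper's proof is in fact little more than a citation to \cite{Topping1998}, noting that Topping's argument carries over to varifolds; the only modification concerns precisely the ``boundary term'' you flag at the end. Where Topping uses $C^1$ regularity to show that the term
\[
\int_{B_\sigma}(\sigma^{-2}-\rho^{-2})\,(x-x_0)\cdot H_V\,d\|V\|
\]
vanishes as $\sigma\to 0$, the paper instead uses the $L^\infty$ bound on $|H_V|$ (this is what ``bounded first variation'' means here) together with the monotonicity formula: the integrand is bounded by $c\sigma$ on $B_\sigma$, so the whole term is at most $c\sigma\cdot\|V\|(B_\sigma)/\sigma^2=O(\sigma)$. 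Your alternative suggestion of handling this via a cutoff approximation would also work, but the paper's crude bound is quicker given the hypotheses in force.
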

\begin{proof}
We note that the proof given in the first part of the proof of Lemma 1 from \cite{Topping1998} applies to varifolds (where our convention for mean curvature differs from Topping's by a factor of 2). The only point where the $C^1$ assumption is used is in showing that the rightmost term in \cite[(21)]{Topping1998} vanishes as $\sigma \to 0$. Instead, we use the more trivial bound:
\begin{align*}
\int_{B_{\sigma}} (\sigma^{-2} - \rho^{-2}) X \cdot H_V\ d\|V\|
& \leq ((\sigma^{-2} - \rho^{-2}) \sigma c)\|V\|(B_{\sigma})\\
& \leq c\sigma \frac{\|V\|(B_{\sigma})}{\sigma^2} = O(\sigma)
\end{align*}
as $\sigma \to 0$ by the monotonicity formula and assumption of bounded support and finite mass.
\end{proof}

We apply this monotonicity formula to obtain the mass lower bound.

\begin{proposition}\label{prop:min_max_area_lower_bound}
Suppose $(S^3, g)$ satisfies the hypotheses of \S\ref{sec:min-max_setup}. If $V$ is a varifold in $(S^3,g)$ with $c$-bounded first variation and $\Theta^2(V,x) \geq 2$ for some $x \in S^3$, then
\[ \|V\|(S^3) \geq \frac{8\pi}{(1+\kappa)^2+c^2/4}. \]
\end{proposition}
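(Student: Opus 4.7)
The plan is to isometrically embed $(S^3,g)$ into some $\R^N$, push forward $V$ to a varifold $\tilde V$ in $\R^N$, and apply the Willmore-type monotonicity formula of Lemma \ref{lem:willmore_monotone} at the density-$\geq 2$ point. The payoff is that the ambient mean curvature of $\tilde V$ in $\R^N$ decomposes orthogonally into the intrinsic mean curvature of $V$ in $S^3$ and the second fundamental form of $S^3 \hookrightarrow \R^N$, so the two bounds $|H_V|\leq c$ and $|A_{S^3}|\leq 1+\kappa$ combine additively in squares, producing precisely the denominator $(1+\kappa)^2 + c^2/4$.

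First, using the given isometric embedding $\iota : (S^3,g)\to \R^N$, let $\tilde V := \iota_\# V$; note that $\tilde V$ is an integer rectifiable $2$-varifold in $\R^N$ with bounded support and $\|\tilde V\|(\R^N) = \|V\|(S^3)$, and that $\Theta^2(\tilde V,\iota(x)) = \Theta^2(V,x) \geq 2$. For a compactly supported $C^1$ vector field $X$ on $\R^N$, decompose $X|_{S^3} = X^T + X^\perp$ along $TS^3 \oplus NS^3$. A standard computation for a $2$-plane $\pi \subset T_p S^3$ (here $e_1,e_2$ is an orthonormal basis of $\pi$) yields
\begin{equation*}
    \mathrm{div}_\pi^{\R^N} X \;=\; \mathrm{div}_\pi^{S^3}X^T \;-\; \bigl\langle X^\perp,\ A_{S^3}(e_1,e_1)+A_{S^3}(e_2,e_2)\bigr\rangle,
\end{equation*}
where the cross terms vanish because $A_{S^3}(\cdot,\cdot)\perp TS^3$. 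Integrating against $V$ and invoking the convention $\delta V(Y) = \int Y\cdot H_V\, d\|V\|$ of the paper, we obtain that $\tilde V$ has bounded first variation with generalized mean curvature (as a $V$-measurable $\R^N$-valued function)
\begin{equation*}
    H_{\tilde V}(p,\pi) \;=\; H_V(p,\pi) \;-\; \mathrm{tr}_\pi A_{S^3}(p).
\end{equation*}

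Crucially, $H_V \in TS^3$ while $\mathrm{tr}_\pi A_{S^3} \in NS^3$, so the two summands are orthogonal and
\begin{equation*}
    |H_{\tilde V}(p,\pi)|^2 \;=\; |H_V(p,\pi)|^2 + |\mathrm{tr}_\pi A_{S^3}(p)|^2.
\end{equation*}
The $c$-bounded first variation hypothesis, combined with the Radon-Nikodym theorem applied to $\|\delta V\|\leq c\|V\|$, gives $|H_V|\leq c$ a.e.\ with respect to $\|V\|$. The assumption $|A_{S^3}|\leq 1+\kappa$ on the operator norm implies $|\mathrm{tr}_\pi A_{S^3}|\leq 2(1+\kappa)$ for every $2$-plane $\pi \subset TS^3$. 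Hence $|H_{\tilde V}|^2 \leq c^2 + 4(1+\kappa)^2$ $\|\tilde V\|$-a.e.

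Finally, applying Lemma \ref{lem:willmore_monotone} to $\tilde V$ at the point $\iota(x)\in\R^N$ gives
\begin{equation*}
    2\pi \;\leq\; \pi\,\Theta^2(\tilde V,\iota(x)) \;\leq\; \frac{1}{16}\int_{\R^N} |H_{\tilde V}|^2\, d\|\tilde V\| \;\leq\; \frac{c^2+4(1+\kappa)^2}{16}\,\|V\|(S^3),
\end{equation*}
which rearranges to $\|V\|(S^3) \geq 32\pi/(c^2+4(1+\kappa)^2) = 8\pi/((1+\kappa)^2 + c^2/4)$, as required. The only subtle point is ensuring $\tilde V$ qualifies for Lemma \ref{lem:willmore_monotone}: rectifiability of $V$ (needed for $\Theta^2$ and for Willmore) follows from Allard's rectifiability theorem under the bounded first variation together with the context in which the proposition is applied (min-max limits), while bounded first variation of $\tilde V$ is exactly what the calculation above produces.
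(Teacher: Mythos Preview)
Your proof is correct and follows essentially the same approach as the paper: embed $(S^3,g)$ isometrically in $\R^N$, bound the ambient mean curvature via the orthogonal decomposition $|H_{\tilde V}|^2 \le c^2 + 4(1+\kappa)^2$, and apply Lemma~\ref{lem:willmore_monotone} at the density-$\ge 2$ point. You actually supply more detail than the paper on why the tangential and normal contributions to $H_{\tilde V}$ are orthogonal and how the bound $|\mathrm{tr}_\pi A_{S^3}|\le 2(1+\kappa)$ arises, which is helpful; your closing caveat about integer rectifiability (needed to invoke Lemma~\ref{lem:willmore_monotone} as stated) is also a fair observation, since the proposition as written does not assume it and the paper's own proof is silent on this point.
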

\begin{proof}
Using the isometric embedding of $(S^3,g)$ into $\R^N$, we view $V$ as a 2-varifold in $\R^N$. By the bound on the second fundamental form of $S^3 \subset \R^N$, we have
\[ |H_V^{\R^N}|^2 \leq 4(1+\kappa)^2 + c^2. \]

By Lemma \ref{lem:willmore_monotone} and the assumption of a point of density at least 2, we have
\begin{align*}
    2\pi \leq \frac{1}{16}\int_{\R^N} |H_V^{\R^N}|^2\ d\|V\| \leq \frac{4(1+\kappa)^2 + c^2}{16}\|V\|(S^3),
\end{align*}
which yields the desired inequality.
\end{proof}

Combining the mass upper and lower bounds, we deduce the desired conclusion about the density of special varifolds appearing in the min-max construction.

\begin{corollary}\label{cor:density_one}
Suppose $(S^3,g)$ satisfies the hypotheses of \S\ref{sec:min-max_setup}. Suppose $(V, \Omega) \in \mathcal{P}$ satisfies $\|V\|(S^3) - \int_{\Omega} h\ d\cH^3 = \omega_1(g, h)$. Further suppose that the varifold $V$ has integer density everywhere and $c$-bounded first variation in $(S^3,g)$. Then $\Theta^2(V, x) = 1$ for all $x \in \mathrm{spt}\|V\|$.
\end{corollary}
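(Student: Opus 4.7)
The plan is to combine the mass upper bound of Proposition \ref{prop:min_max_area_upper_bound} with the mass lower bound of Proposition \ref{prop:min_max_area_lower_bound}, using the structural inequality \eqref{eqn:constraints} to preclude any point of density at least two.

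First, since $(V,\Omega)\in\mathcal{P}$ achieves the min-max width, Proposition \ref{prop:min_max_area_upper_bound} gives the mass upper bound
\[ \|V\|(S^3) \leq 4\pi + vc. \]
Next, I would argue by contradiction: suppose there exists some $x_0\in\mathrm{spt}\|V\|$ with $\Theta^2(\|V\|,x_0)\geq 2$. Since $V$ has $c$-bounded first variation in $(S^3,g)$, Proposition \ref{prop:min_max_area_lower_bound} then forces
\[ \|V\|(S^3) \geq \frac{8\pi}{(1+\kappa)^2 + c^2/4}. \]
Combining these two inequalities would yield
\[ \frac{8\pi}{(1+\kappa)^2 + c^2/4} \leq 4\pi + vc, \]
which contradicts the standing assumption \eqref{eqn:constraints} on the parameters $v$, $\kappa$, $c$. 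Therefore every $x\in\mathrm{spt}\|V\|$ satisfies $\Theta^2(\|V\|,x) < 2$.

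To finish, I need a pointwise lower bound $\Theta^2(\|V\|,x)\geq 1$ on the support. This follows from the standard monotonicity formula for varifolds with bounded first variation: the function in \eqref{eqn:monotonicity}, adapted via the isometric embedding of $(S^3,g)$ into $\R^N$ as in the proof of Proposition \ref{prop:min_max_area_lower_bound}, is monotone, so the density exists and is positive at every point of the support. Since we are told $V$ has integer density everywhere, positivity upgrades to $\Theta^2(\|V\|,x)\geq 1$. Combined with the strict upper bound $\Theta^2(\|V\|,x)<2$ from the previous paragraph, this forces $\Theta^2(\|V\|,x)=1$ on all of $\mathrm{spt}\|V\|$, which is the claim.

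There is essentially no obstacle here — the work was done in Propositions \ref{prop:min_max_area_upper_bound} and \ref{prop:min_max_area_lower_bound} and in the choice of constants \eqref{eqn:constraints}, which was set up precisely so the two mass bounds are incompatible whenever a density-two point is present. The only mild subtlety is verifying $\Theta^2\geq 1$ on the support, which is immediate from $c$-bounded first variation plus the integrality hypothesis.
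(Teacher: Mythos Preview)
Your proof is correct and follows essentially the same approach as the paper: argue by contradiction, combine Propositions \ref{prop:min_max_area_upper_bound} and \ref{prop:min_max_area_lower_bound} to obtain $\frac{8\pi}{(1+\kappa)^2+c^2/4}\leq 4\pi+vc$, and note this contradicts \eqref{eqn:constraints}. Your additional remark that $\Theta^2\geq 1$ on the support follows from positivity plus the integer-density hypothesis is a harmless elaboration that the paper leaves implicit.
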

\begin{proof}
Suppose otherwise for contradiction. Then by Proposition \ref{prop:min_max_area_lower_bound} and Proposition \ref{prop:min_max_area_upper_bound}, we must have
\[ 4\pi + vc \geq \frac{8\pi}{(1+\kappa)^2 + c^2/4}. \]
This inequality contradicts (\ref{eqn:constraints}).
\end{proof}

\section{Min-Max Proof}

We show that each step in \cite{Colding_DeLellis} and \cite{DeLellis_Pell} can be modified to work in our setting.

\subsection{Nontrivial width}
The fact that $\omega_1(g,h) > 0$ follows immediately from the isoperimetric inequality. By the assumption $\int_{S^3} h\ d\cH^3 \geq 0$, $\omega_1(g,h)$ cannot be achieved by a trivial solution (meaning a min-max sequence whose limit satisfies $V = 0$ and $\Omega \in \{S^3, \varnothing\}$).

\subsection{Pull-tight}
For completeness, we recall the ideas of \cite[\S4]{ZZCMC}, \cite[\S5]{ZZPMC} in the setting of the pull-tight argument of \cite{Colding_DeLellis} (specifically, we follow \S2 of the errata for \cite{Colding_DeLellis}).

Let $\mathbf{F}$ denote the varifold distance on $\mathcal{V}_2(S^3, g)$, the space of 2-varifolds in $(S^3, g)$.

Take $L = 2(\omega_1(g, h)+c\mathrm{Vol}_g(S^3))\footnote{Recall that $|h| \leq c$.}$, and let $A^L = \{V \in \mathcal{V}_2(S^3,g) \mid \|V\|(S^3) \leq 2L\}$, which is compact in $\mathbf{F}$.

Let $A^c_{\infty} = \{V \in A^L \mid |\delta V(X)| \leq c \int_{S^3}|X|d\mu_V \ \text{for all smooth vectors fields}\ X\}$. By \cite[Lemma 4.1]{ZZCMC}, $A^c_{\infty}$ is a compact subset of $A^L$ in the $\mathbf{F}$-topology.

For $k \in \N$, define $A_k = \{V \in A^L \mid 2^{-k-2} \leq \mathbf{F}(V, A^c_{\infty}) \leq 2^{-k+1}\}$.

For each $V \in A_k$ (by the definition of $A^c_{\infty}$), there is a smooth vector field $X_V$ satisfying
\[ \delta V(X_V) + c\int_{S^3} |X_V|d\mu_V < 0 \ \ \text{and} \ \ \|X_V\|_{C^k} \leq \frac{1}{k} \]
(the second condition can be guaranteed by scaling).

For each $V \in A_k$, there is an $\mathbf{F}$-metric ball $B_{\rho_V}^{\mathbf{F}}(V)$ so that
\[ \delta W(X_V) + c\int_{S^3} |X_V|d\mu_W \leq \frac{1}{2}\left(\delta V(X_V) + c\int_{S^3} |X_V|d\mu_V\right) < 0 \]
for all $W \in B_{\rho_V}(V)$.

By compactness, cover $A_k$ by finitely many such metric balls $B_{\rho_i}^{\mathbf{F}}(V_i)$. Let $\{\psi_i\}$ be a continuous partition of unity subordinate to the covering $\{B_{\rho_i}^{\mathbf{F}}(V_i)\}$, and define
\[ V \in A_k \mapsto H_V^k = \sum_i \psi_i(V)X_{V_i}. \]

Now choose $\Psi_k$ a compactly supported continuous function on $A_k$ with $0 \leq \Psi_k \leq 1$ and $\Psi_k \equiv 1$ on $\{V \in A^L \mid 2^{-k-1} \leq \mathbf{F}(V, A^c_{\infty}) \leq 2^{-k}\} \subset A_k$. On $A^L \setminus A^c_{\infty}$, define
\[ H_V = \frac{\sum_k\Psi_k(V)H_V^k}{\sum_k \Psi_k(V)}. \]
We can extend the map $V \mapsto H_V$ to $A^c_{\infty}$ with $H_V = 0$ for $V \in A^c_{\infty}$. Since $\|H_V\|_{C^k} \leq \frac{1}{k}$ if $\mathbf{F}(V, A^c_{\infty}) \leq 2^{-k+1}$ by construction (by the choice of normalization of $X_V$), the extension is a continuous map to the $C^{\infty}$ topology on the space of smooth vector fields. Moreover, $H_V$ satisfies
\[ \delta V(H_V) + c\int_{S^3} |H_V|d\mu_V < 0 \]
for all $V \in A^L \setminus A^c_{\infty}$.

The rest of the proof now follows \cite[Errata Proposition 2.1]{Colding_DeLellis} exactly (normalizing the speed of the isotopies generated by the vector fields $H_V$ and smoothing the new sweepout), with the additional observation that the first variation of $\cA^h$ satisfies
\[ \delta \cA^h_{\Omega}(X) = \delta|\partial \Omega|(X) - \int_{S^3} h\langle X, \nu_{\partial \Omega}\rangle d\mu_{|\partial \Omega|} \leq \delta |\partial \Omega|(X) + c \int_{S^3}|X| d\mu_{|\partial \Omega|} \]
(for a sweepout $\{\Omega_t\}$, the isotopies generated by the vector fields $H_{|\partial \Omega_t|}$ will be applied to the sets $\Omega_t$).

Hence, there is a minimizing sequence $\{\Omega_t^j\}$ with the property that every min-max sequence $\{\Omega_{t_j}^j\}$ satisfies $\mathbf{v}(\partial \Omega_{t_j}^j) \rightharpoonup V$ for $V \in A^c_{\infty}$.

\subsection{Almost-minimizing}
We say $\Omega\subset S^3$ is $\eps$-almost minimizing in $U \subset S^3$ if there is no isotopy $\phi \in \mathcal{I}(U)$ satisfying
\[ \cA^h(\phi(t, \Omega)) \leq \cA^h(\Omega) + \eps/8, \]
\[ \cA^h(\phi(1, \Omega)) \leq \cA^h(\Omega) - \eps. \]
We say a sequence $\{\Omega^j\}$ is almost minimizing in $U$ if $\Omega^j$ is $\eps_j$ almost minimizing in $U$ with $\eps_j \to 0$.

With this definition, we see that the argument of \cite[\S5]{Colding_DeLellis} is general and does not depend on the specific choice of functional. Hence, there is a function $r: S^3 \to \R^+$ and a min-max sequence $\{\Omega^j\}$ satisfying the conclusions of (2) so that $\{\Omega^j\}$ is almost minimizing in any annulus centered at any $x \in S^3$ with outer radius at most $r(x)$.

To allow application of our regularity theory, we assume (without loss of generality) that $\cH^3(B_{r(x)}(x)) \leq \eta/8$ for all $x \in M$.

\subsection{Regularity of weak replacements}
\begin{definition}\label{def:weak_replacement}
Let $(V, \Omega) \in \mathcal{P}_c$. A pair $(V', \Omega') \in \mathcal{P}_c$ is a \emph{weak replacement for $(V, \Omega)$ in $U \subset S^3$} if
\begin{itemize}
    \item $V'\llcorner G(M\setminus \overline{U},2) = V \llcorner G(M\setminus \overline{U}, 2)$ and $\Omega \setminus \overline{U} = \Omega' \setminus \overline{U}$,
    \item $|\|V\|(S^3) - \|V'\|(S^3)| \leq c\cH^3(U)$,
    \item $\|V'\|(S^3) - \int_{\Omega'}h\ d\cH^3 \leq \|V\|(S^3) - \int_{\Omega}h\ d\cH^3$,
    \item $(V', \Omega')$ is in the $C^{1,\alpha}_{\text{loc}}$ graphical closure of the set of weakly stable $h$-surfaces in $U$.
\end{itemize}
\end{definition}

\begin{lemma}\label{lem:regularity_weak_replacement}
If there is a function $r : S^3 \to \R$ so that $(V, \Omega) \in \mathcal{P}_c$ has a weak replacement in any annulus centered at any $x \in S^3$ with outer radius at most $r(x)$, then $V$ is integer rectifiable and any tangent cone to $V$ at any $x \in \mathrm{spt}\|V\|$ is an integer multiple of a plane.
\end{lemma}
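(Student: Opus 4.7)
The strategy is to follow the now-standard regularity scheme for replacement varifolds as in Pitts \cite{pitts} and \cite[\S 6]{Colding_DeLellis}, substituting the $C^{1,1}$ compactness of weakly stable $h$-surfaces (Theorem \ref{thm:main_estimate}) for the classical smooth curvature estimates. Since $V$ has $c$-bounded first variation, the monotonicity formula from Theorem \ref{thm:compactness} applies; so to invoke Allard's rectifiability theorem it suffices to establish a uniform positive density lower bound on $\mathrm{spt}\|V\|$. Fix $x\in\mathrm{spt}\|V\|$ and choose any annulus $A_{s,r}(x)$ with $r<r(x)$. A weak replacement $(V',\Omega')$ in $A_{s,r}(x)$ is, in the interior of the annulus, a $C^{1,\alpha}_{\mathrm{loc}}$ limit of weakly stable $h$-surfaces; by Theorem \ref{thm:main_estimate} combined with Lemma \ref{lem:Geo_AA}, such limits are induced by $C^{1,1}$ immersions with integer multiplicity. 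In particular $V'\llcorner A_{s,r}(x)$ has integer density everywhere on its support and inherits the monotonicity formula. Matching $V$ and $V'$ outside $\overline{A_{s,r}(x)}$ and using the mass comparison in Definition \ref{def:weak_replacement} propagates a uniform lower density bound $\Theta^2(\|V\|,x)\geq\theta_0>0$ to $V$, after which Allard's rectifiability theorem \cite[Theorem 5.5]{allard} gives that $V$ is rectifiable.

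To upgrade to integer rectifiability, I will analyze tangent cones via an iterated replacement argument. Let $x\in\mathrm{spt}\|V\|$ and let $C\in\mathrm{VarTan}(V,x)$ arise as a varifold limit $(\mu_{r_k}\circ\phi_x^{-1})_{\#}V\rightharpoonup C$ along some $r_k\to\infty$. Fix $0<s<r<1$ and, for each $k$ large, take a weak replacement $(V_k',\Omega_k')$ of $V$ in the annulus $A_{s/r_k,r/r_k}(x)$. Under the rescaling, the rescaled replacements $V_k'$ are weakly stable with respect to the rescaled prescribing function $h_k=r_k^{-1}h$, so $\|h_k\|_{C^1}\to 0$, and the ambient metric converges smoothly to the Euclidean one on $T_xM$. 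Theorem \ref{thm:main_estimate} gives $C^{1,1}$ estimates for each such replacement away from the boundary of the annulus, uniform in $k$, and the area bounds from monotonicity give local mass bounds. Passing to the limit via Lemma \ref{lem:Geo_AA}, the rescaled $V_k'$ subconverge in $C^{1,\alpha}_{\mathrm{loc}}$ to a smooth stable minimal surface $W$ in the Euclidean annulus $A_{s,r}(0)\subset T_xM\cong\R^3$, matching $C$ outside. Since this replacement construction is available at every scale, iterating across a nested sequence of annuli with radii chosen so that the resulting regular pieces glue (using unique continuation for smooth stable minimal surfaces in $\R^3$) shows that $C$ is regular away from the origin and coincides there with a smooth stable minimal surface with integer multiplicity. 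Because $C$ is also a cone, this forces the support of $C$ away from $0$ to be a union of stable minimal cones; by the Bernstein theorem of \cite{FCS}, \cite{dCP} and \cite{Pog}, each such cone is a plane through $0$. The cone structure plus the integer multiplicity of the regular part thus give $C=\sum_i n_i\mathbf{v}(P_i,1)$ with $n_i\in\N$, and the strong maximum principle combined with monotonicity of the density ratio forces the planes $P_i$ to coincide at points where several are present, so $C=n\mathbf{v}(P,1)$ for some integer $n$ and plane $P$. In particular $\Theta^2(\|V\|,x)=n\in\N$ at every $x\in\mathrm{spt}\|V\|$, which together with rectifiability yields integer rectifiability.

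The principal technical obstacle is ensuring that iterated replacements glue. In the smooth setting this is routine because stable minimal surfaces enjoy unique continuation and smooth graphical convergence, but in our framework the replacements produced by the min-max machinery are only \emph{$C^{1,\alpha}$ graphical limits} of weakly stable $h$-surfaces, and the limiting surfaces inherit only $C^{1,1}$ regularity. The key point that makes the iteration work nonetheless is that at the blown-up scale $r_k\to\infty$ the rescaled prescribing function satisfies $\|h_k\|_{C^1}\to 0$, so the stable limits are genuinely minimal (not merely $h$-PMC), smooth by Allard, and hence subject to classical unique continuation. All other ingredients (mass comparison, monotonicity, lower density bound, Bernstein) are already in place.
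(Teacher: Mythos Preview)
Your density lower bound argument is essentially the same as the paper's and is fine.

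Your tangent cone argument, however, overcomplicates matters and contains a genuine gap. You propose to take replacements in a nested family of annuli and then glue the resulting regular pieces via unique continuation. The problem is that the replacement $W$ you produce in an annulus $A_{s,r}(0)$ is only known to agree with $C$ \emph{outside} that annulus; inside $A_{s,r}(0)$ you have no reason to believe $W=C$. So when you take a second replacement $W'$ in an overlapping annulus $A_{s',r'}(0)$, you know $W'=C$ outside $A_{s',r'}(0)$, but on the overlap $A_{s,r}\cap A_{s',r'}$ you cannot compare $W$ and $W'$ directly, and unique continuation does not help because neither is known to equal $C$ there. Moreover, the hypothesis of the lemma gives only single (not iterated) weak replacements, so you cannot replace $W$ itself in a second annulus.

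The paper avoids iteration entirely with a one-step argument. Take a single replacement $(V_n',\Omega_n')$ in $A(x,\rho_n/4,3\rho_n/4)$, rescale, and pass to a subsequential limit $C'$. Then $C'=C$ on $B_{1/4}(0)$ and on $B_1(0)\setminus \overline{B_{3/4}(0)}$, and the mass comparison in Definition~\ref{def:weak_replacement} gives $\|C'\|(B_\rho)=\|C\|(B_\rho)$ for all $\rho\in(0,1/4)\cup(3/4,1)$. Since $C$ is a cone, its density ratio is constant; hence the density ratio of the stationary varifold $C'$ agrees at two radii, and monotonicity forces $C'$ to be a cone as well. Two cones agreeing on $B_{1/4}(0)$ are equal, so $C'=C$ on all of $B_1(0)$. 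Now the $C^{1,\alpha}_{\mathrm{loc}}$ convergence of the rescaled replacements (from Theorem~\ref{thm:main_estimate}) shows $C=C'$ is a smooth embedded minimal surface with integer multiplicity in the annulus $A(0,1/4,3/4)$, hence a smooth minimal cone, hence an integer multiple of a plane. No Bernstein theorem, no iteration, and no gluing are needed.
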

\begin{proof}
We observe that the same proof as \cite[Lemma 6.4]{Colding_DeLellis} works in our setting.

\emph{Density lower bound}. The monotonicity formula implies there is a constant $c_2$ so that
\[ \frac{\|V\|(B_{\sigma}(x))}{\pi \sigma^2} \leq \frac{c_2\|V\|(B_{\rho}(x))}{\pi \rho^2} \]
for any varifold $V$ with $c$-bounded first variation, $x \in S^3$, and $\sigma < \rho \ll 1$. Let $r < r(x)$ sufficiently small. Let $(V', \Omega')$ be a replacement for $(V, \Omega)$ in $A(x, r, 2r)$. By the maximum principle of \cite{WhiteMax}, $V'$ is not identically 0 on $A(x, r, 2r)$. Since $V' \llcorner G(A(x, r, 2r), 2)$ is the varifold of a $C^{1,1}$ immersion, there is a $y \in A(x, r, 2r)$ with $\Theta^2(V', y) \geq 1$. Then we have
\[ \frac{\|V\|(B_{4r}(x))}{16\pi r^2} \geq \frac{\|V'\|(B_{4r}(x))}{16\pi r^2} - Cr \geq \frac{\|V'\|(B_{2r}(y))}{16\pi r^2} - Cr \geq \frac{c_2}{4} - Cr. \]
Hence, we have a uniform positive lower bound for the density by taking $r$ sufficiently small. By \cite[Theorem 5.5]{allard}, $V$ is rectifiable.

\emph{Tangent cones}. Let $x \in \mathrm{spt}\|V\|$. Let $C$ be a tangent cone to $V$ at $x$, and let $\rho_n \to 0$ satisfy
\[ (f_{x, \rho_n})_{\#}V \rightharpoonup C. \]
Let $(V_n', \Omega_n')$ be a replacement for $(V, \Omega)$ in $A(x, \rho_n/4, 3\rho_n/4)$. Let $W_n' = (f_{x,\rho_n})_{\#}V_n'$. Up to a subsequence (not relabeled), we have $W_n' \rightharpoonup C'$, where $C'$ is a stationary varifold. By the definition of the replacement, we have
\[ C' \llcorner G(B_1(0) \setminus A(0, 1/4, 3/4),2) = C \llcorner G(B_1(0) \setminus A(0, 1/4, 3/4), 2), \]
\begin{equation}\label{eqn:mass_equal}
\|C'\|(B_{\rho}(0)) = \|C\|(B_{\rho}(0)) \ \ \forall\ \ \rho \in (0,1/4) \cup (3/4, 1).
\end{equation}
Since $C$ is a cone, (\ref{eqn:mass_equal}) implies
\[ \frac{\|C'\|(B_{\sigma}(0))}{\pi \sigma^2} = \frac{\|C'\|(B_{\rho}(0))}{\pi \rho^2} \]
for all $\rho \in (0,1/4) \cup (3/4, 1)$. By stationarity and monotonoicity, we conclude that $C'$ is a cone. Since $C$ and $C'$ agree on $B_{1/4}(0)$, we have $C' = C$. By Theorem \ref{thm:main_estimate}, the convergence in $A(0, 1/4, 3/4)$ is $C^{1,\alpha}_{\text{loc}}$ graphical. By the same argument as in the proof of Claims 1 and 2 in Theorem \ref{thm:main_estimate} (i.e. the mean curvature of each sheet vanishes in the limit, and the ordering of sheets is preserved so we can apply the strong maximum principle), $C'$ is the varifold of a smooth embedded minimal surface with integer multiplicity on the annulus $A(0, 1/4, 3/4)$. Then $C'$ is (an integer multiple of) a smooth embedded minimal cone in $\R^3$, and hence a plane with integer multiplicity.
\end{proof}

\subsection{Regularity of replacements}
\begin{definition}\label{def:strong_replacement}
Let $(V, \Omega) \in \mathcal{P}_c$. A pair $(V', \Omega') \in \mathcal{P}_c$ is a \emph{replacement for $(V, \Omega)$ in $U \subset S^3$} if
\begin{itemize}
    \item $V'\llcorner G(M\setminus \overline{U},2) = V \llcorner G(M\setminus \overline{U}, 2)$,
    \item $|\|V\|(S^3) - \|V'\|(S^3)| \leq c\cH^3(U)$,
    \item $\|V'\|(S^3) - \int_{\Omega'}h\ d\cH^3 \leq \|V\|(S^3) - \int_{\Omega}h\ d\cH^3$,
    \item $V' \llcorner G(U, 2)$ is an embedded smooth stable surface with prescribed mean curvature $h$ with respect to $\Omega'$.
\end{itemize}
\end{definition}
\begin{remark}
Observe that any replacement is also a weak replacement.
\end{remark}

\begin{definition}
The pair $(V, \Omega) \in \mathcal{P}_c$ has the \emph{good replacement property} if
\begin{enumerate}
    \item there is a positive function $r : U \to \R^+$ such that there is a replacement $(V', \Omega')$ for $(V, \Omega)$ in any annulus centered at $x$ with outer radius at most $r(x)$ for all $x \in U$,
    \item there is a positive function $r' : U \to \R^+$ such that there is a replacement $(V'', \Omega'')$ for the replacement $(V', \Omega')$ from (1) in any annulus centered at $x$ with outer radius at most $r(x)$ and any annulus centered at $y$ with outer radius at most $r'(y)$ for any $y \in U$,
    \item there is a positive function $r'': U \to \R^+$ such that there is a replacement $(V''', \Omega''')$ for the replacement $(V'', \Omega'')$ from (2) in any annulus centered at $z$ with outer radius at most $r''(z)$ for any $z \in U$.
\end{enumerate}
\end{definition}

\begin{lemma}\label{lem:regularity_strong_replacement}
If $(V, \Omega) \in \mathcal{P}_c$ has the good replacement property, then $V$ is an embedded smooth surface with prescribed mean curvature $h$ with respect to $\Omega$.
\end{lemma}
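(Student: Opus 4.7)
My plan is to adapt the argument of Colding--De Lellis (\cite{Colding_DeLellis}, see also \cite{DeLellis_Pell}) to the prescribed mean curvature setting, leveraging the three nested rounds of replacement in exactly the way they are used in the area case. Fix $x \in \mathrm{spt}\|V\|$; the goal is to show that $V$ coincides near $x$ with the multiplicity-one varifold of a smooth embedded stable $h$-PMC surface with respect to $\Omega$. Since every replacement is in particular a weak replacement, Lemma \ref{lem:regularity_weak_replacement} applies: $V$ is rectifiable and every tangent cone at $x$ is an integer multiple of a plane.

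The first step is to regularize $V$ in an annulus. For small $s \leq r(x)$, let $(V^{(1)},\Omega^{(1)})$ be a replacement for $(V,\Omega)$ in $A(x, s/2, s)$; by definition $V^{(1)}\llcorner G(A(x,s/2,s),2)$ is a smooth embedded stable $h$-PMC surface with respect to $\Omega^{(1)}$. Next, to propagate regularity into (most of) the punctured ball $B_{s/2}(x)\setminus\{x\}$, pick any $y$ with $y\neq x$ and a second replacement $(V^{(2)},\Omega^{(2)})$ of $(V^{(1)},\Omega^{(1)})$ in an annulus $A(y,\rho/2,\rho)$ with $\rho < \min\{r'(y), |x-y|/2\}$, chosen so that the intersection $A(y,\rho/2,\rho)\cap A(x,s/2,s)$ has nonempty interior. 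Outside the $y$-annulus $V^{(2)}$ equals $V^{(1)}$, so on this nonempty open overlap the two smooth $h$-PMC surfaces agree as varifolds. Unique continuation for $h$-PMC surfaces, which holds under the (\dag)/(\ddag) assumption on $h$ by \cite[Proposition 3.8 and Corollary 3.18]{ZZPMC}, then forces the smooth pieces to coincide on the entire connected overlap, yielding a smooth $h$-PMC extension into $A(y,\rho/2,\rho)$. Iterating $y$ through a dense set of radii in $(0, s/2)$ and using the $C^{1,1}$ compactness of Part \ref{part:compactness}, I obtain a smooth embedded stable $h$-PMC surface $N$ with respect to $\Omega$ agreeing with $V$ on all of $B_{s/2}(x)\setminus\{x\}$.

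Finally, one must remove the possible isolated singularity at $x$ and rule out higher multiplicity there. The curvature estimate of Theorem \ref{thm:main_estimate} gives uniformly bounded $|A^N|$ on the punctured ball, so $N$ extends across $x$ to a $C^{1,1}$ embedded surface (Allard's theorem then upgrades this to smooth). If the tangent cone of $V$ at $x$ had multiplicity $m\geq 2$, applying the third round of replacement in a small annulus around $x$ to $(V^{(2)},\Omega^{(2)})$ and invoking the good replacement property (3), together with $C^{1,\alpha}_{\mathrm{loc}}$ graphical convergence, would produce two stable $h$-PMC sheets through $x$ meeting tangentially with mean curvature vectors forced by $\Omega$ to be either equal or opposite. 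The strong maximum principle then collapses these sheets or contradicts the $h$-PMC equation unless $m=1$.

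The main obstacle is the unique continuation / gluing step: one needs the smooth replacements to match sheet-by-sheet on overlaps, not only as varifolds. This uses in an essential way the $C^{1,1}$ curvature estimates and the strong compactness theorem for weakly stable $h$-surfaces developed in Part \ref{part:compactness}, together with the fact that sufficiently close $C^{1,1}$ stable $h$-PMC graphs that agree on an open set agree globally by classical elliptic unique continuation for the prescribed mean curvature equation. A secondary subtlety is that unique continuation is only available for the open-dense class of prescribing functions coming from \cite{ZZPMC}, which is precisely why Theorem \ref{thm:PMC_in_sphere} is stated on an open dense set.
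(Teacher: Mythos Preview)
Your approach is essentially the same as the paper's: follow the Colding--De Lellis scheme using unique continuation (available for $h$ satisfying (\dag)/(\ddag)) and the curvature estimates of Part~\ref{part:compactness}. The paper's proof is indeed just a pointer to \cite[Proposition 6.3]{Colding_DeLellis}.

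There is, however, one genuine subtlety you have glossed over. When you write that the constructed surface $N$ ``agrees with $V$ on all of $B_{s/2}(x)\setminus\{x\}$,'' you have not justified the inclusion $N \subset \mathrm{spt}\|V\|$. In the area case of \cite{Colding_DeLellis} this step uses the exact mass equality $\|V\|(S^3) = \|V'\|(S^3)$ between $V$ and its replacement; in the PMC setting one only has $|\|V\|(S^3) - \|V'\|(S^3)| \leq c\,\cH^3(U)$, and the original argument breaks down. The paper singles out precisely this step as the sole modification required, and defers to the argument in \cite[Theorem 6.1, Claim 5]{ZZCMC} and \cite[Theorem 7.1, Claim 5]{ZZPMC}, which handles it via a transversality/monotonicity argument that does not rely on mass equality. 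You should cite or reproduce that argument rather than assert the conclusion.

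Your final paragraph on ruling out higher multiplicity at $x$ via a third replacement and the maximum principle is more elaborate than necessary: once $N$ is a smooth embedded multiplicity-one surface on the punctured ball with the curvature bound of Theorem~\ref{thm:main_estimate}, removable singularity at $x$ gives a smooth embedded extension, and the density of $V$ at $x$ is then forced to be $1$ automatically.
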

\begin{proof}
Since unique continuation and curvature estimates hold, the proof is the same as the proof of \cite[Proposition 6.3]{Colding_DeLellis}. The only exception is the argument showing $\Sigma \subset \mathrm{spt}\|V\|$ on $B_\rho(x) \setminus \{x\}$, which in \cite{Colding_DeLellis} uses that $\|V\|(S^3) = \|V'\|(S^3)$. Instead, we follow the argument from \cite[Theorem 6.1 Claim 5]{ZZCMC} and \cite[Theorem 7.1 Claim 5]{ZZPMC}, which applies to our setting.
\end{proof}

\subsection{Construction of replacements}
The construction in \cite[\S7]{Colding_DeLellis} is general and applies to our setting (see also \cite[\S6]{ZZPMC}). We need only make three observations.

First, to go from the regularity of minimizers in isotopy classes to the regularity of minimizers in constrained isotopy classes, we need a squeezing lemma like \cite[Lemma 7.4]{Colding_DeLellis}. We note that the same proof applies in our setting, as volume changes after squeezing will be arbitrarily small.

Second, since the weak replacement is constructed as the limit of solutions to constrained minimization problems for $\cA^h$, the inequality
\begin{equation}\label{eqn:hypothesis_for_cor}
\|V'\|(S^3) - \int_{\Omega'}h\ d\cH^3 \leq \|V\|(S^3) - \int_{\Omega}h\ d\cH^3 = \omega_1(g, h)
\end{equation}
for the weak replacement $(V', \Omega')$ of the pair $(V, \Omega)$ from (\ref{eqn:achieves_width}) follows immediately.

Finally, we observe that the replacements constructed in this way are a priori only weak replacements. However, the pull-tight argument, Lemma \ref{lem:regularity_weak_replacement}, (\ref{eqn:hypothesis_for_cor}), and Corollary \ref{cor:density_one} imply that the weak replacements we construct are in fact strong replacements under the assumptions of Theorem \ref{thm:min-max}.

\subsection{Genus bound}
We follow the argument of \cite{DeLellis_Pell}.

The analogue of \cite[Proposition 3.2]{DeLellis_Pell}, i.e. regularity of the constrained isotopy minimization problem, follows from the above interior and boundary regularity theorems, combined with the observation that the squeezing argument at the boundary from \cite[Lemma 6.1]{DeLellis_Pell} works in our setting (after squeezing, volume changes will be arbitrarily small).

The proof of \cite[Proposition 2.1]{DeLellis_Pell} in \cite[\S4]{DeLellis_Pell} follows the same argument combined with the following observations. First, the min-max limit $V$ is a smooth embedded surface of prescribed mean curvature and satisfies unique continuation. Hence, when we solve local constrained minimization problems and take limits using the compactness theory developed in Part \ref{part:compactness}, the apriori $C^{1,1}$ local minimizers converge in $C^{1,\alpha}_{\text{loc}}$ to the smooth limit $V$. Since local minimizers are smooth where they have density 1, we can always take the local constrained minimizers to be smooth stable embedded surfaces of prescribed mean curvature. Second, the estimate \cite[Lemma 4.2]{DeLellis_Pell} holds for smooth surfaces with bounded mean curvature, where the constant $C$ depends on the mean curvature bound. The proof follows by keeping track of the nonvanishing left hand side in the first variation formula \cite[(A.2)]{DeLellis_Pell}.

Equipped with Simon's Lifting Lemma \cite[Proposition 2.1]{DeLellis_Pell}, the rest of the proof is identical.

\appendix
\section{Maximum Principle}\label{sec:max_principle}

A maximum principle is required to patch together the limits of decomposed stacked disk minimization problems. Since we obtain lower regularity than in the minimal surface setting, we need a maximum principle in the setting of $C^{1,\alpha}$ surfaces.

We adopt the following definition from \cite{ZZCMC}.

\begin{definition}
Let $N_i$ ($i=1,2$) be connected embedded two-sided $C^1$ surfaces in a connected open subset $W\subset M$ with $\partial N_i\cap W=\varnothing$. Let $\nu_i$ be the unit normal to $N_i$. We say that $N_2$ \emph{lies on one side of} $N_1$ if $N_1$ divides $W$ into two connected components $W_1\cup W_2=W\setminus N_1$, where $\nu_1$ points into $W_1$ and either:
\begin{itemize}
    \item $N_2\subset\overline{W_1}$, which we write as $N_1\leq N_2$; or
    \item $N_2\subset\overline{W_2}$, which we write as $N_1\geq N_2$.
\end{itemize}
\end{definition}

\begin{lemma}[Maximum Principle]\label{lem:maximum_principle}
Let $N_1$ and $N_2$ be $C^{1,\alpha}$ surfaces in a connected open set $W$ so that $N_1$ lies on one side of $N_2$, $N_1 \geq N_2$, $N_2 \geq N_1$, and $N_1$ and $N_2$ intersect tangentially at $x \in W$. If each $N_i$ has nonnegative mean curvature in the weak sense with respect to the normal $\nu_i$, then $N_1 = N_2$ and each is a smooth minimal surface.
\end{lemma}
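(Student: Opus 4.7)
The plan is to localize near the tangential touching point $x$, represent both surfaces as $C^{1,\alpha}$ graphs over their common tangent plane, derive a linear elliptic differential inequality satisfied weakly by their difference, and then invoke the Hopf strong maximum principle. A connectedness argument then propagates the coincidence across $W$, and the final smoothness is standard elliptic regularity for the minimal surface equation.

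First, I would work in a small neighborhood $W_x \subset W$ of the touching point $x$. Since $N_1, N_2$ are $C^{1,\alpha}$ and tangent at $x$, after passing to Fermi-type coordinates adapted to the common tangent plane $T := T_xN_1 = T_xN_2$, I can write $N_i \cap W_x$ as the graph $\{z = u_i(y)\}$ over a disk $D \subset T$, with $u_i \in C^{1,\alpha}(D)$, $u_i(0) = 0$, and $\nabla u_i(0) = 0$. The combined hypotheses $N_1 \geq N_2$ and $N_2 \geq N_1$ force $\nu_1$ and $\nu_2$ to point in opposite ambient vertical directions, while $u_1 \geq u_2$ on $D$. The weak-mean-curvature hypothesis then translates, in this graphical representation, into the statement that $u_1$ is a weak supersolution and $u_2$ a weak subsolution of the same quasilinear divergence-form equation $\mathrm{div}(F(y,\nabla u)) = 0$ associated to the ambient minimal surface operator. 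That is, for every nonnegative $\psi \in C_c^\infty(D)$,
\[
\int_D F(y, \nabla u_1)\cdot \nabla \psi\, dy \,\geq\, 0 \,\geq\, \int_D F(y, \nabla u_2)\cdot\nabla \psi\, dy.
\]

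Subtracting these and using the identity $F(y, \nabla u_1) - F(y, \nabla u_2) = A(y)\,\nabla w$, where $w := u_1 - u_2$ and $A(y) := \int_0^1 D_p F(y, \nabla u_2 + t\nabla w)\,dt$, I obtain that $w \geq 0$ is a weak supersolution of the linear divergence-form equation $\mathrm{div}(A(y)\nabla w) \leq 0$. Because both $\nabla u_1$ and $\nabla u_2$ vanish at $x$ and are $C^{0,\alpha}$, after shrinking $D$ the matrix $A$ is uniformly elliptic with $C^{0,\alpha}$ coefficients. Since $w \in C^{1,\alpha}$, $w \geq 0$, and $w(0) = 0$, the strong maximum principle (Hopf; see e.g.\ Gilbarg–Trudinger Theorem 8.19, or, after Schauder-bootstrapping $u_1,u_2$ to $C^{2,\alpha}$ solutions of their respective quasilinear equations, the classical Hopf lemma) yields $w \equiv 0$ in a neighborhood of $x$. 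Thus $N_1 = N_2$ near $x$.

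To conclude globally, I would observe that the set $S := \{y \in W : N_1 \text{ and } N_2 \text{ coincide in a neighborhood of } y\}$ is open by the preceding argument (applied at every tangential touching point in $S$), is closed in $W$ by continuity of $N_1$ and $N_2$, and is nonempty since it contains $x$. Connectedness of $W$ gives $N_1 = N_2$ throughout $W$. Finally, since the single surface $N := N_1 = N_2$ has mean curvature simultaneously $\geq 0$ with respect to each of the two opposite normals $\nu_1$ and $\nu_2 = -\nu_1$, its mean curvature vanishes identically, and standard elliptic regularity for the (smooth, uniformly elliptic in the $C^{1,\alpha}$ regime) minimal surface equation upgrades $N$ from $C^{1,\alpha}$ to smooth.

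The main technical obstacle is Step 4: rigorously justifying the Hopf strong maximum principle in the $C^{1,\alpha}$ regularity class for weak solutions of divergence-form elliptic equations with Hölder coefficients. The cleanest route is the Schauder bootstrapping: each $u_i$ individually satisfies a quasilinear divergence-form equation (not merely an inequality — after one notes that the prescribed mean curvature value is fixed, up to sign, by the orientation) with $C^{0,\alpha}$ data and is $C^{1,\alpha}$, so elliptic regularity upgrades it to $C^{2,\alpha}$. Alternatively, one can apply directly the De Giorgi–Nash–Moser weak Harnack inequality to the supersolution $w$ of the linear divergence-form inequality, bypassing any $C^2$ assumption.
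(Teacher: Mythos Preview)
Your approach is essentially the same as the paper's: localize at the tangential touching point, write both surfaces as ordered $C^{1,\alpha}$ graphs $u_1\ge u_2$ over the common tangent plane, linearize the difference $w=u_1-u_2\ge 0$ to a weak supersolution of a linear uniformly elliptic equation, and apply a weak strong maximum principle / weak Harnack inequality (the paper cites Gilbarg--Trudinger Theorem 8.18) to force $w\equiv 0$ near $x$; then propagate.

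Two corrections are worth flagging. First, your ``cleanest route'' via Schauder bootstrap is not available: the hypothesis is only that each $N_i$ has \emph{nonnegative} weak mean curvature with respect to $\nu_i$, i.e.\ each $u_i$ satisfies a one-sided differential \emph{inequality}, not an equation, so there is no Schauder theory to upgrade $u_i$ to $C^{2,\alpha}$ individually. Your alternative route via the De Giorgi--Nash--Moser weak Harnack inequality for the supersolution $w$ is the correct one, and is exactly what the paper does. Second, in a curved ambient manifold the graphical area integrand $A(x,z,p)$ genuinely depends on the height $z$, so the linearized operator for $w$ picks up first- and zeroth-order terms (the $b^i$ and $c$ coefficients in the paper's proof), not just $\mathrm{div}(A(y)\nabla w)$; this does not affect the argument since the weak Harnack inequality accommodates such terms.
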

\begin{proof}
Since the unit normals point in opposite directions, being smooth and minimal follows if we show $N_1 = N_2$.

Suppose for contradiction $N_1 \neq N_2$. Without loss of generality, there is a sequence $x_j \to x$ with $x_j \in N_1 \setminus N_2$. Let $P \subset T_xM$ be the common tangent plane of $N_1$ and $N_2$ at $x$, and let $\nu=\nu_1(x)$ be a unit normal to $P$, so that $\nu_2(x)=-\nu$. Let $r_0 \in (0, 1)$ be sufficiently small so that $\phi_x^{-1}(N_1 \cap B_r^M(x))$ and $\phi_x^{-1}(N_2 \cap B_{r_0}^M(x))$ are $C^{1,\alpha}$ graphs over $B \subset P$, given by $C^{1,\alpha}$ functions $u_1 \geq u_2$ and let $\rho \in (0, r_0/2)$. Let $B_r$ denote $B^{\R^2}_{r}(0)$.

We shall use the same setup as in the proof of Proposition \ref{prop:C1,1_regularity}. We let
\begin{itemize}
    \item $\tilde{g}$ be the rescaled pullback metric $(\phi_x \circ \mu_{\rho})^*g$, so that
\begin{equation*}
    \|\tilde{g} - g_{\text{Euc}}\|_{C^{\infty}(B_2 \times [-1, 1])} \leq \delta(\rho),
\end{equation*}
where $\delta(\rho) \to 0$ as $\rho \to 0$; 
    \item $\tilde{u}_i:B_2\to \R$ be defined by $\tilde{u}_i(x)=\rho^{-1}u_i(\rho x)$ so that 
\begin{equation*}
    \|\tilde{u}_i\|_{C^{1}(B_2)} \leq \delta(\rho);
\end{equation*}
    \item $A(x,z,p)$ be such that with the rescaled metric $\tilde{g}$ on $\R^3$, we have $\cH^2(\text{graph }u\vert_{B_r})=\int_{B_r}A(x,u,Du)\ dx$ for all $C^1$ functions $u:B_2\to \R$ and $r\in(0,2)$;
    \item $Q$ be the mean curvature operator for graphs over $B_2$ with respect to $\tilde{g}$.
\end{itemize}

Then $Q(u) \leq 0$ (resp. $\geq 0$) in the weak sense in $B_1$ is equivalent to (in our choice of sign convention)
\[ \int_{B_1} (A_p(x,u,Du) \cdot D\phi + A_z(x,u,Du)\phi) \leq 0 \ (\text{resp. }\geq 0) \]
for all $\phi \in W^{1,2}_0(B_1)$ with $\phi \geq 0$.

By assumption, we have $Q(\tilde{u}_1)\geq 0$ and $Q(\tilde{u}_2)\leq 0$ in the weak sense. If we let $v=\tilde{u}_1-\tilde{u}_2\geq 0$, then $v\in C^{1,\alpha}$ is a non-negative weak supersolution of a linear elliptic equation with continuous coefficients (depending on $\tilde{u}_1$ and $\tilde{u}_2$) as in \cite[Theorem 10.7]{GT}, i.e.
\begin{equation*}
    \int_{B_1} (a^{ij}D_jv + b^iv)D_i\phi - (-b^iD_iv + cv)\phi\geq 0
\end{equation*}
for all non-negative $\phi\in W_0^{1,2}(B_1)$, where $\tilde{u}_t=t\tilde{u}_1+(1-t)\tilde{u}_2$ and
\begin{align*}
    a^{ij} & \coloneqq \int_0^1 A_{p_ip_j}(x, \tilde{u}_t, D\tilde{u}_t)\ dt\\
    b^i & \coloneqq \int_0^1 A_{p_iz}(x, \tilde{u}_t, D\tilde{u}_t)\ dt\\
    c & \coloneqq -\int_0^1 A_{zz}(x, \tilde{u}_t, D\tilde{u}_t)\ dt.
\end{align*}

Since $N_1$ and $N_2$ intersect tangentially at $x$, $v(0)=0$, and $\inf_{B_{1/4}}v=0$. Hence, the weak Harnack inequality for supersolutions \cite[Theorem 8.18]{GT} implies $\|v\|_{L^1(B_{1/2})}=0$. As $v$ is $C^{1,\alpha}$, this means $v\vert_{B_{1/2}}\equiv 0$, contradicting the choice of the sequence $x_j\to x$. 
\end{proof}

\bibliographystyle{amsalpha}
\bibliography{bib}

\end{document}